\newcolumntype{L}{>{$}l<{$}} 
\newcommand{\llangle}{\langle\hspace{-2.5pt}\langle}
\newcommand{\Oname}{\operatorname{O}}
\newcommand{\im}{\operatorname{im}}
\newcommand{\rrangle}{\rangle\hspace{-2.5pt}\rangle}
\newlength{\claimstepvspace} 
\newlength{\theoremstepvspace} 
\newtheoremstyle{mystyle}
  {}
  {}
  {}
  {}
  {\bfseries}
  {.}
  { }
  {\thmname{#1}\thmnumber{ #2}\thmnote{ (#3)}}
\newtheorem{theorem}{Theorem}
\newtheorem{corollary}[theorem]{Corollary}
\newtheorem{lemma}[theorem]{Lemma}
\newtheorem{proposition}[theorem]{Proposition}
\newtheorem{remark}[theorem]{Remark}
\theoremstyle{definition}
\newtheorem{problem}[theorem]{Problem}
\newtheorem{example}[theorem]{Example}
\newtheorem{definition}[theorem]{Definition}
\newcounter{claimcounter}
\theoremstyle{mystyle}
\newtheorem{claim}[claimcounter]{Claim}
\crefname{claim}{Claim}{Claims}
\crefname{figure}{Figure}{Figures}
\crefname{lemma}{Lemma}{Lemmas}
\crefname{example}{Example}{Examples}
\crefname{definition}{Definition}{Definitions}
\crefname{remark}{Remark}{Remarks}
\crefname{theorem}{Theorem}{Theorems}
\crefname{corollary}{Corollary}{Corollaries}
\crefname{proposition}{Proposition}{Propositions}
\crefname{section}{Section}{Sections}
\crefname{appendix}{Appendix}{Appendices}
\crefname{problem}{Problem}{Problems}
\crefname{claimcounter}{Claim}{Claims}
\newlist{propenum}{enumerate}{1} 
\setlist[propenum]{label=(\alph*), ref=\theproposition(\alph*)}
\newlist{claimenum}{enumerate}{1} 
\setlist[claimenum]{label=(\alph*), ref=\theclaim(\alph*)}
\newlist{lemenum}{enumerate}{1} 
\setlist[lemenum]{label=(\alph*), ref=\thelemma(\alph*)}
\newlist{thmenum}{enumerate}{1} 
\setlist[thmenum]{label=(\alph*), ref=\thetheorem(\alph*)}
\newlist{corenum}{enumerate}{1} 
\setlist[corenum]{label=(\alph*), ref=\thecorollary(\alph*)}
\title{A max filtering local stability theorem with application to weighted phase retrieval and cryo-EM}
\author{Yousef~Qaddura\footnote{Department of Mathematics, The Ohio State University, Columbus, OH}
}
\date{}
\begin{document}
\maketitle
\begin{abstract}
    Given an inner product space $V$ and a group $G$ of linear isometries, max filtering offers a rich class of convex $G$-invariant maps. In this paper, we identify sufficient conditions under which these maps are locally bilipschitz on $R(G)$, the set of orbits with maximal dimension, with respect to the quotient metric on the orbit space $V/G$. Central to our proof is a desingularization theorem, which applies to open, dense neighborhoods around each orbit in $R(G)/G$ and may be of independent interest.

    As an application, we provide guarantees for stable weighted phase retrieval. That is, we construct componentwise convex bilipschitz embeddings of weighted complex (resp.\ quaternionic) projective spaces. These spaces arise as quotients of direct sums of nontrivial unitary irreducible complex (resp.\ quaternionic) representations of the group of unit complex numbers $S^1\cong \operatorname{SO}(2)$ (resp.\ unit quaternions $S^3\cong \operatorname{SU}(2)$).

We also discuss the relevance of such embeddings to a nearest-neighbor problem in single-particle cryogenic electron microscopy (cryo-EM), a leading technique for resolving the spatial structure of biological molecules.

\end{abstract}

\section{Introduction}

Machine learning algorithms are often designed for Euclidean data, represented as vectors in an inner product space $V$. For instance, fast randomized nearest neighbor algorithms, such as the one in \cite{JonesOR:11}, efficiently approximate nearest neighbors in large Euclidean datasets by avoiding explicit computation of all pairwise distances.

However, many practical data representations in $V$ involve ambiguities arising from an orthogonal symmetry group $G\leq \Oname(V)$. For example, as discussed in \cite{ZhaoS:14,ZhaoSS:16}, data from cryogenic electron microscopy (cryo-EM) may reside in a finite-dimensional complex vector space $\mathbb C^d$, subject to an ambiguity induced by a diagonal circle action $S^1\to \mathbb C^{d\times d}$ defined by $\theta \mapsto \operatorname{diag}\{e^{\mathrm ik_j\theta}\}_{j=1}^d$, where $\{k_{j}\}_{j=1}^d$ are arbitrary fixed integers. This example is elaborated in \cref{sec.relevance of results}. (Throughout the paper, we view $\mathbb C^d$ as a real vector space with inner product $\langle z,x\rangle := \operatorname{Re}(z^*x)$. In particular, the aforementioned action is indeed real-orthogonal.)

To address such ambiguities, one represents the data unambiguously as orbits
$[x]:= G\cdot x$ in the quotient space $V/G$, equipped with the quotient metric
\[d([x],[y]):= \inf_{\substack{p\in [x]\\ q\in [y]}}\|p-q\|.\]
(Indeed, this metric is nondegenerate provided the $G$-orbits are topologically closed). To leverage the extensive machinery of Euclidean-based machine learning, it is desirable to embed the orbit space into Euclidean space in a \textbf{bilipschitz} manner. Specifically, we seek a map $f\colon V/G\to \mathbb R^n$ and constants $\alpha,\beta > 0$ such that
\[\alpha \cdot d([x],[y]) \leq \|f([x])-f([y])\|\leq \beta\cdot d([x],[y]) \qquad \forall x,y\in V.\]
The bilipschitz requirement ensures that distances in $V/G$ are faithfully preserved, enabling robust transfer of Euclidean algorithms to the orbit space. For example, adapting the $\lambda$-approximate nearest neighbor problem to $V/G$ becomes straightforward when such embeddings are available.

\begin{example}[Example~1 in~\cite{CahillIM:24}]
  \label{ex.bilip nearest}
  Given $\lambda\geq 1$ and data $[x_1],\dots,[x_m]\in V/G$, the \textbf{$\lambda$-approximate nearest neighbor problem} takes as input $[x]\in V/G$ and outputs $j\in \{1,\dots,m\}$ such that
  \[d([x],[x_j])\leq \lambda\cdot \min_{1\leq i\leq m} d([x],[x_i]).\]
  Given a map $f\colon V/G\to \mathbb R^n$ with bilipschitz bounds $\alpha,\beta\in (0,\infty)$ and given a black box algorithm that solves the problem in $\mathbb R^n$, one may transfer the algorithm to $V/G$ by pulling back through $f$. This results in a solution of the $\frac{\beta}{\alpha}\lambda$-approximate nearest neighbor problem in $V/G$. To see this, first use the black box algorithm to find $j\in \{1,\dots,m\}$ such that
  \[\|f([x])-f([x_j])\|\leq \lambda\cdot \min_{1\leq i\leq m} \|f([x])-f([x_i])\|.\]
  Then 
  \[d([x],[x_j])\leq\frac{1}{\alpha}\cdot \|f([x])-f([x_j])\|
  \leq\frac{\lambda}{\alpha}\cdot\min_{i\in I}\|f([x])-f([x_i])\|
  \leq \frac{\beta}{\alpha} \lambda\cdot\min_{i\in I} d([x],[x_i]).
  \]
\end{example}

To this end, \cite{CahillIMP:22} recently introduced a family of embeddings called \textit{max filter banks} that enjoy explicit bilipschitz bounds whenever $G$ is finite. Later work improved on those bounds~\cite{MixonP:22,MixonQ:22}. 

\begin{definition}
Consider any real inner product space $V$ and $G\leq\operatorname{O}(V)$.
\begin{itemize}
\item[(a)]
The \textbf{max filtering map} $\llangle\cdot,\cdot\rrangle\colon V/G\times V/G\to\mathbb{R}$ is defined by
\[
\llangle [x],[z]\rrangle
:=\sup_{\substack{p\in[x]\\q\in[z]}}\langle p,q\rangle.
\]
\item[(b)]
Given \textbf{templates} $z_1,\ldots,z_n\in V$, the corresponding \textbf{max filter bank} $\Phi\colon V/G\to\mathbb{R}^n$ is defined by
\[
\Phi([x])
:=\{\llangle [x],[z_i]\rrangle\}_{i=1}^n.
\]
\end{itemize}
\end{definition}
(Since $G\leq \Oname(V)$, $\llangle [x],[z]\rrangle = \sup_{q\in [z]}\langle x , q\rangle$ is a supremum of linear functionals.) In broad terms, an individual max filtering map $\llangle [\cdot], [z] \rrangle$ is a scalar feature map which takes $[x]\in V/G$ as input and measures the maximal alignment between the orbits $[x]$ and $[z]$ when interpreted as subsets of $V$. Notably, the map is a convex, $\|z\|$-Lipschitz continuous invariant, as it is defined as the supremum of $\|z\|$-Lipschitz linear functionals. A max filter bank consists of a collection of such maps, making it componentwise convex and Lipschitz continuous. Furthermore, it is uniquely determined up to origin-fixing isometries of $V/G$, as demonstrated by the following polarization identity, which holds for all $x,z\in V$:
\begin{equation}
    \label{prop.max filter lemma2 distance}
d([x],[z])^2 = \|x\|^2 - 2\llangle [x],[z]\rrangle + \|z\|^2,
\end{equation}
where we note that $\|x\| = d([x],[0])$.

The max filtering map can also be regarded as a fundamental convex invariant since every convex invariant $f\colon  V\to \mathbb R$ can be expressed as a supremum of affine max filters, that is
\[f(x) = \sup_{z\in \Omega}[\llangle [x], [z]\rrangle + b_z],\]
 for some $\Omega\subseteq  V$ and $\{b_z\}_{z\in\Omega}\in \mathbb R^\Omega$. Additionally, when $G\leq \Oname(V)$ is compact, the subgradient of the max filtering map has an explicit form:
\[\partial\llangle [\cdot], [z]\rrangle|_{x} = \operatorname{conv}\{q\in [z]: \langle x,q\rangle = \llangle[x],[z]\rrangle\}.\]
This follows from the general fact that $\partial(\max_{i\in I}f_i)|_x = \operatorname{conv}\{\nabla f_j(x): j\in \arg\max_{i\in I}f_j(x)\}$, where $\{f_i\}_{i\in I}$ is a collection of convex differentiable functions and $\operatorname{conv}$ denotes the convex hull operator.

From a machine learning perspective, convexity is desirable because max filter banks can serve as $G$-invariant layers in classification models, with their constituent templates as trainable parameters. Relevant numerical examples are discussed in Section~6 of~\cite{CahillIMP:22}. We also hypothesize that one could generalize input convex neural networks \cite{AmosXK:17} to the convex invariant setting using max filters, but we leave this as a direction for future exploration.

A theoretical question posed in~\cite{CahillIMP:22} asks whether every injective max filter bank is bilipschitz. When $G$ is finite, this question was resolved by~\cite{BalanT:23}, which showed that every injective max filter bank admits bilipschitz bounds. However, the question remains open for infinite $G$, with only three exceptions:
\begin{itemize}
    \item Complex phase retrieval~\cite{Alharbi:22,CahillCD:16}, where $V=\mathbb C^d$ and $G=\{z\cdot \operatorname{id}:z\in \mathbb C, |z|=1\}$. 
    \item Polar actions~\cite{MixonQ:22}, where $V/G$ is isometrically isomorphic to $V'/G'$ for some finite $G'\leq \Oname(V')$.
\end{itemize}
 
For general infinite $G$, the bilipschitz property of injective max filter banks remains an open question. While we do not fully resolve this issue, we investigate conditions under which these maps are bilipschitz, given sufficiently many generic templates. Specifically, we prove that this property holds locally near orbits of maximal dimension and hence globally for groups where all nonzero orbits have constant dimension.

\subsection{Main results and paper outline}
Our sufficient bound on the number of generic templates is determined by the following complexity parameter:
\begin{definition}\label{def.regular}
    Let $G\leq \Oname(d)$ be a compact group. The set of \textbf{regular points} is defined as
    \[R(G) := \big\{x\in\mathbb R^d: \dim([x]) = \max_{y\in\mathbb R^d}\dim([y])\big\}.\]
    The \textbf{regular Voronoi complexity} of $G$ is then given by
    \[\chi(G) := \max_{x,p\in R(G)}\{|G_x/G_p|:G_p\leq G_x\},\]
    where $G_y:=\{g\in G:gy=y\}$ denotes the \textbf{stabilizer} of $y$ in $G$.
\end{definition}

Note that the set $R(G)$ is an open, dense, $G$-invariant subset of $\mathbb R^d$ (for example, see Theorems~3.49~and~3.82 in~\cite{AlexandrinoB:15}). Intuitively, $\chi(G)$ measures the maximum relative discrepancy in discrete degrees of freedom among orbits with the highest infinitesimal degrees of freedom, i.e., orbits of regular points.

We now present our two main results. The first establishes sufficient conditions under which max filter banks are locally bilipschitz over $R(G)$, without imposing restrictions on $G$ beyond compactness and orthogonality.
\begin{theorem}
    \label{thm.regular lower lip generic}
    Let $G\leq \operatorname{O}(d)$ be a compact group and define $c:= d- \max_{x\in \mathbb R^d}\dim([x])$. For generic $z_1,\dots, z_n\in \mathbb R^d$, the max filter bank $\Phi\colon \mathbb R^d/G\to \mathbb R^n$ given by $\Phi([x]) := \{\llangle [x],[z_i] \rrangle\}_{i=1}^n$ is locally bilipschitz at every $x\in R(G)$, provided $n > 2\cdot \chi(G)\cdot (c - 1)$.
\end{theorem}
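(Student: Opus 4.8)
\emph{Step 1 (a local model near a regular orbit).} The plan is to localize: near every regular orbit the pair $(\mathbb R^d/G,\Phi)$ is, up to a controlled error, a max filtering problem for a \emph{finite} group, and there the bound on the number of templates comes from a transversality count. Fix $x\in R(G)$ and put $N_x:=T_x([x])^\perp$, so $\dim N_x=c$. Since $G$ acts linearly, $S:=x+N_x^{<\epsilon}$ (the $\epsilon$-ball around $x$ inside $x+N_x$) is a slice at $x$ for small $\epsilon$, and $\Gamma_x:=\im(G_x\to\operatorname{O}(N_x))$ is a finite group acting orthogonally on $N_x$. By the slice theorem, a neighbourhood of $[x]$ in $(\mathbb R^d/G,d)$ is bilipschitz onto a neighbourhood of $[0]$ in the flat quotient $N_x^{<\epsilon}/\Gamma_x$, with bilipschitz constant tending to $1$ as $\epsilon\to 0$; the careful version of this, valid on open dense subsets of $R(G)/G$ and handling the orbifold singularities produced by exceptional orbits, is the desingularization theorem, which I assume. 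Two facts will be used: first, $|\Gamma_x|\le\chi(G)$, because the kernel of $G_x\to\operatorname{O}(N_x)$ consists of the elements fixing all of $S$, hence equals the stabilizer $G_p$ of any principal $p\in S$, and $G_p\le G_x$ with $p,x\in R(G)$; second, $\Gamma_x$ fixes $x\in N_x$, hence fixes the line $\mathbb Rx$, so $F:=\Fix(\Gamma_x)$ has $\dim F\ge 1$, and writing $N_x=F\oplus F^\perp$ the group $\Gamma_x$ acts effectively on $F^\perp$ with $\dim F^\perp\le c-1$.

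\emph{Step 2 (the max filter bank in the model).} For generic $z$ the maximizer set $M(x,z):=\{q\in Gz:\langle x,q\rangle=\llangle[x],[z]\rrangle\}$ of $\langle x,\cdot\rangle$ on the orbit is a single $G_x$-orbit; every maximizer lies in $N_x$ (its derivative along the orbit vanishes), so $M(x,z)$ is a $\Gamma_x$-orbit of at most $|\Gamma_x|\le\chi(G)$ points of $N_x$, and for generic $z$ this maximum is nondegenerate transverse to $M(x,z)$. A second-order expansion of the support function gives, for $v\in N_x^{<\epsilon}$,
\[\llangle[x+v],[z_i]\rrangle=\llangle[x],[z_i]\rrangle+\psi_i(v)+E_i(v),\qquad \psi_i(v):=\max_{q\in M(x,z_i)}\langle v,q\rangle,\]
where $\psi=(\psi_i)_i$ is a max filter bank on $N_x/\Gamma_x$ and $E=(E_i)_i$ is Lipschitz with constant $O(\epsilon)$ on $N_x^{<\epsilon}$ (the maximizers move Lipschitz-continuously with $v$ because they are nondegenerate). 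Since $\psi$ is positively homogeneous of degree $1$ and $N_x/\Gamma_x$ is a metric cone, $\psi$ is lower Lipschitz near $[0]$ iff it is lower Lipschitz on all of $N_x/\Gamma_x$; once it has there a lower Lipschitz constant $\alpha_0>0$, choosing $\epsilon$ with $O(\epsilon)<\alpha_0/2$ absorbs $E$, and the comparison of Step 1 finishes the proof at $x$. Finally, writing $v=(v_F,v_\perp)$ and using $\gamma(q_F,q_\perp)=(q_F,\gamma q_\perp)$, the bank decouples,
\[\psi_i(v)=\langle v_F,q_{i,F}\rangle+\varphi_i(v_\perp),\qquad \varphi_i(v_\perp):=\max_{\gamma\in\Gamma_x}\langle v_\perp,\gamma q_{i,\perp}\rangle,\]
and a short convexity argument reduces lower Lipschitzness of $\psi$ on $F\oplus(F^\perp/\Gamma_x)$ to: (a) the $q_{i,F}$ span $F$ (generic, which the bound on $n$ ensures); (b) the finite-group bank $\varphi=(\varphi_i)_i$ is lower Lipschitz on $F^\perp/\Gamma_x$.

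\emph{Step 3 (the finite-group core and the counting).} It remains to prove: for finite $\Gamma\le\operatorname{O}(m)$ with $m\le c-1$ and generic $q_1,\dots,q_n$, the bank $\varphi([v])=(\max_\gamma\langle v,\gamma q_i\rangle)_i$ is lower Lipschitz on $\mathbb R^m/\Gamma$ whenever $n>2\chi(G)(c-1)\ (\ge 2|\Gamma|m)$. Given $[v]\ne[v']$, choose lifts with $|v-v'|=d([v],[v'])$, set $w:=v'-v$, and let $\gamma_i^*\in\arg\max_\gamma\langle v,\gamma q_i\rangle$, $\gamma_i^{**}\in\arg\max_\gamma\langle v',\gamma q_i\rangle$. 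From the subgradient formula and convexity,
\[\langle w,\gamma_i^*q_i\rangle\ \le\ \varphi_i([v'])-\varphi_i([v])\ \le\ \langle w,\gamma_i^{**}q_i\rangle,\]
and the outer terms automatically satisfy $\langle w,\gamma_i^*q_i\rangle\le\langle w,\gamma_i^{**}q_i\rangle$, so
\[\|\varphi([v'])-\varphi([v])\|^2\ \ge\ \sum_{i=1}^n\Big(\big(\langle w,\gamma_i^*q_i\rangle\big)_+^2+\big(\langle w,\gamma_i^{**}q_i\rangle\big)_-^2\Big).\]
Call $i$ \emph{active} at $(v,v')$ if $\gamma_i^*=\gamma_i^{**}$, in which case its term equals $\langle w,\gamma_i^*q_i\rangle^2$; a template can be inactive only when the segment $[v,v']$ crosses a bisecting hyperplane $\{u:\langle u,(\gamma-\gamma')q_i\rangle=0\}$ of the orbit $\Gamma q_i$. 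The plan is to isolate a condition on $(q_i)$ — bounding the number of simultaneously inactive templates by a multiple of $m$, while forcing the active translates to span $\mathbb R^m$ with a uniform quantitative margin — which (i) holds for generic $q_1,\dots,q_n$ once $n>2\chi(G)(c-1)$, and (ii) makes the last sum at least $\alpha_0^2|w|^2$ for a uniform $\alpha_0>0$. Heuristically, the factor $2$ supplies a spare spanning configuration so that the active templates suffice no matter which $O(c-1)$ of them drop out, and the factor $\chi(G)$ accounts for each orbit contributing up to $\chi(G)$ translates to the bisecting arrangement.

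\emph{Step 4 (genericity, uniformity over $R(G)$, and the main difficulty).} Two genericity inputs remain. First, generic $z_i\in\mathbb R^d$ must induce generic templates $M(x,z_i)\subset N_x$: this holds because the maximizer map $z\mapsto$ (a chosen point of $M(x,z)$) is a submersion onto an open subset of $N_x$ near suitable $z$ — e.g.\ near $z=x$, where the maximizer equals $x$ — so generic $z_i$ push forward to generic $q_i$. Second, the theorem demands a \emph{single} generic choice of $z_1,\dots,z_n$ valid at \emph{every} $x\in R(G)$; since the relevant bad condition on $(z_i)$ is $G$-invariant in $x$, its bad locus fibres over $R(G)/G$, which has dimension $c$, so one needs the bad locus for a fixed orbit to have codimension strictly greater than $c$ in $(\mathbb R^d)^n$. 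Arranging that the condition from Step 3 is simultaneously generic, of codimension exceeding $c$, and sufficient for the lower bound — which is exactly what calibrates the bound $n>2\chi(G)(c-1)$ — is where I expect the bulk of the work to lie; everything else (the slice-theoretic bilipschitz comparison, the second-order expansion, the homogeneity reduction, and the convexity inequalities) is routine.
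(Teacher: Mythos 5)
Your strategy is genuinely different from the paper's: you localize via a slice theorem to a finite-group max filtering problem on $N_x/\Gamma_x$ and plan to invoke a quantitative finite-group lower-Lipschitz theorem plus a genericity transfer, whereas the paper never reduces to a finite group. Instead it bounds, by a semialgebraic dimension count, the set of template tuples for which $\Phi$ fails to be locally lower Lipschitz at \emph{some} regular point, showing this bad set has dimension $<nd$ once $n>2\chi(G)(c-1)$. The paper's mechanism for the two factors is concrete: a pigeonhole over $G_x$ shows that along a failing sequence at least $\lceil n/\chi(G)\rceil$ templates have maximizers converging into a common $G_x$-translate, forcing a rank-deficient $\lceil n/\chi(G)\rceil\times c$ linear system (codimension $\lceil n/\chi(G)\rceil-c+1$), and this codimension must exceed the dimension $c-1$ of the family of regular orbits — hence $\lceil n/\chi(G)\rceil\geq 2c-1$. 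Your heuristic for the factor $2$ (robustness against inactive templates) does not match this and is not shown to produce the stated bound.

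The proposal as written has genuine gaps, which you yourself flag. First, Step 3 — the finite-group core with the specific count — is only a plan: you never exhibit the generic condition on $(q_i)$, prove it generic, or derive the uniform $\alpha_0$. Second, and more seriously, Step 4 is the actual heart of the theorem: a \emph{single} generic $(z_1,\dots,z_n)$ must work at every $x\in R(G)$ simultaneously, which requires showing the bad locus for a fixed orbit has codimension exceeding $c-1$ in $(\mathbb R^d)^n$ and that the relevant conditions are semialgebraic so that dimension counting applies; none of this is carried out, and it is exactly where the bound is calibrated. Two further points are not routine as claimed: (i) the decoupling at the end of Step 2 does not reduce to conditions (a) and (b) by a ``short convexity argument,'' since for a given pair the linear $F$-contribution and the $\varphi$-contribution to $\psi_i(v)-\psi_i(v')$ can cancel coordinatewise, and the same templates serve both factors so they cannot be rescaled independently; (ii) the linearization in Step 2 needs the maximizers over $[z_i]$ to vary smoothly near $x$ for \emph{all} regular $x$ at once, which in the paper is supplied by the geometric characterization $z\in V_x\Rightarrow x\in V_z^{loc}$ of regular points (the desingularization machinery), not by a genericity assumption on $z_i$. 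So the outline is plausible but the proof is incomplete precisely at the steps that carry the quantitative content.
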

Here, a map $f\colon \mathbb R^d/G \to \mathbb R^n$ is said to be locally bilipschitz at $x$ if it is bilipschitz when restricted to a neighborhood of $x$, with respect to the quotient distance induced by $G$. The above result is core to ensuring that max filter banks can form componentwise convex bilipschitz embeddings when all nonzero vectors lie within $R(G)$. This is the content of the following primary result:
\begin{theorem}
    \label{thm.almost free actions}
    Let $G\leq \operatorname{O}(d)$ be a compact group such that $\mathbb R^d-\{0\}\subseteq R(G)$, and define $c:= d- \max_{x\in \mathbb R^d}\dim([x])$. For generic $z_1,\dots, z_n\in \mathbb R^d$, the max filter bank $\Phi\colon \mathbb R^d/G\to \mathbb R^n$ given by $\Phi([x]) := \{\llangle [x],[z_i] \rrangle\}_{i=1}^n$ is bilipschitz, provided $n > 2\cdot \chi(G)\cdot (c-1)$.
\end{theorem}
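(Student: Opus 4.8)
The plan is to derive Theorem~\ref{thm.almost free actions} from Theorem~\ref{thm.regular lower lip generic} by a compactness argument that exploits the degree‑one positive homogeneity of both $\Phi$ and the quotient metric. The upper bound requires no genericity: for each template $z_i$, the scalar map $[x]\mapsto\llangle[x],[z_i]\rrangle=\sup_{q\in[z_i]}\langle x,q\rangle$ is a supremum of $\|z_i\|$-Lipschitz linear functionals, hence is $\|z_i\|$-Lipschitz with respect to $d$; summing the squared componentwise bounds gives $\|\Phi([x])-\Phi([y])\|\le\beta\, d([x],[y])$ with $\beta:=\bigl(\sum_{i=1}^{n}\|z_i\|^2\bigr)^{1/2}$. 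So everything reduces to producing a uniform constant $\alpha>0$ with $\|\Phi([x])-\Phi([y])\|\ge\alpha\, d([x],[y])$ for all $x,y$.

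For $t>0$ one has $\llangle[tx],[z_i]\rrangle=t\llangle[x],[z_i]\rrangle$ and $d([tx],[ty])=t\, d([x],[y])$, so the lower bound is invariant under the simultaneous rescaling $(x,y)\mapsto(tx,ty)$; it therefore suffices to establish it for pairs with $\|x\|^2+\|y\|^2=1$. Suppose this fails. Then there are $x_k,y_k$ with $[x_k]\ne[y_k]$, $\|x_k\|^2+\|y_k\|^2=1$, and $\|\Phi([x_k])-\Phi([y_k])\|<\tfrac1k\, d([x_k],[y_k])$. The pairs $(x_k,y_k)$ lie in a compact sphere of $\mathbb R^d\times\mathbb R^d$, so along a subsequence $x_k\to x_*$ and $y_k\to y_*$ with $\|x_*\|^2+\|y_*\|^2=1$. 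Since $\Phi$ is continuous on $V/G$ and $d([x_k],[y_k])\le\sqrt2$ is bounded, passing to the limit forces $\Phi([x_*])=\Phi([y_*])$; because $\Phi$ is injective for generic templates (this holds for max filter banks with sufficiently many generic templates; cf.\ \cite{CahillIMP:22} and the discussion above), we get $[x_*]=[y_*]$, and as $G$ acts by isometries, $\|x_*\|=\|y_*\|=1/\sqrt2$, so $x_*\ne0$ and hence $x_*\in\mathbb R^d\setminus\{0\}\subseteq R(G)$. Now invoke Theorem~\ref{thm.regular lower lip generic} at $x_*$: there is a neighborhood $U$ of $[x_*]$ in $V/G$ and $\alpha_*>0$ with $\|\Phi([p])-\Phi([q])\|\ge\alpha_*\, d([p],[q])$ for all $[p],[q]\in U$. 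For large $k$ both $[x_k]$ and $[y_k]$ lie in $U$, and $d([x_k],[y_k])>0$ since $d$ is nondegenerate (the orbits of the compact group $G$ are closed), so $\alpha_*\le\tfrac1k$, a contradiction once $k>1/\alpha_*$. This yields the lower bound, and $\Phi$ is bilipschitz with the displayed $\beta$ and the resulting $\alpha$.

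The one nontrivial point beyond Theorem~\ref{thm.regular lower lip generic} is ensuring that the limiting orbit is regular, which is precisely why the hypothesis $\mathbb R^d\setminus\{0\}\subseteq R(G)$ cannot be dropped: homogeneity already prevents the offending sequence from escaping to infinity or collapsing at the origin, pinning $\|x_*\|=1/\sqrt2$, but without this hypothesis the limit could land on a non‑regular orbit, where Theorem~\ref{thm.regular lower lip generic} gives no information. The only other input is the injectivity of generic max filter banks, used to conclude $[x_*]=[y_*]$ (and, as the special case $[y]=[0]$, to separate the zero orbit); if one prefers a self‑contained treatment, this can be proved by a dimension/genericity argument in the same spirit as the proof of Theorem~\ref{thm.regular lower lip generic}.
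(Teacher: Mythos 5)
Your proof is correct and follows essentially the same route as the paper: bound above by the Frobenius norm of the templates, reduce the lower bound by homogeneity to a compact set of pairs, extract convergent subsequences, dispose of the distinct-limit case by generic injectivity (the paper's Proposition~\ref{prop.injective}) and of the coincident-limit case by local lower Lipschitzness at the regular limit point via Theorem~\ref{thm.regular lower lip generic}. The only differences (normalizing $\|x\|^2+\|y\|^2=1$ rather than $\|y_j\|\le\|x_j\|=1$, and unfolding ``locally lower Lipschitz'' into an explicit neighborhood statement) are cosmetic.
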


In \cref{sec.relevance of results}, we discuss the significance of \cref{thm.almost free actions} in the contexts of stable weighted phase retrieval and a nearest neighbor problem in cryo-EM. In \cref{sec.voronoi BIG SECTION}, we introduce a Voronoi cell decomposition (\cref{def.normal voronoi}) that establishes a geometric framework for the proofs presented in this paper. We also give a key geometric characterization of $R(G)$ (\cref{lem.regular char}) leveraging a desingularization theorem (\cref{lem.desingularization}) which may be of independent interest. The proofs of our main results (\cref{thm.regular lower lip generic,thm.almost free actions}) are presented in \cref{sec.local lower stability}. Finally, we conclude with a discussion in \cref{sec.discussion}.

\subsection{Relevance of results}
\label{sec.relevance of results}
In this section, we highlight the relevance of \cref{thm.almost free actions} in the contexts of \textit{stable weighted phase retrieval} and a nearest neighbor problem in cryogenic electron microscopy (cryo-EM). Throughout, $\mathbb C^*\cong \operatorname{SO}(2) \cong S^1$ (resp.\ $\mathbb H^*\cong \operatorname{SU}(2)\cong S^3$) denotes the group of unit complex numbers (resp.\ unit quaternions).
\subsubsection{Stable weighted phase retrieval}
In this section, let $V$ be a finite dimensional real Hilbert space and let $G\leq \Oname(V)$ be a compact-connected group. The main result of this work (\cref{thm.almost free actions}) establishes that max filter banks are bilipschitz if they include sufficiently many generic templates and if the nonzero orbits of $G$ have constant dimension.

In particular, if $\dim([p])=\dim(G)$ for some $p\in V$, the latter condition corresponds to the action of $G$ being \textit{almost free} on the unit sphere $\mathbb S(V)$, meaning that every nonzero orbit has a finite stabilizer. As noted in Section~3.2 in~\cite{GordoskiL:16} and since $G$ is connected, this happens if and only if $G$ is the image of a representation $\phi\colon K^* \to \Oname(V)$, where $K = \mathbb C$ or $K = \mathbb H$, and $\phi$ is a direct sum of nontrivial irreducible complex (resp.\ quaternionic) representations of $K^*$. In this case, we call $\mathbb S(V)/G$ a \textit{weighted complex (resp.\ quaternionic) projective space}.

In these settings, templates $z_1,\dots, z_n\in V$ are said to achieve \textbf{stable weighted phase retrieval} if the max filter bank $\Phi\colon V/G\to \mathbb R^n$, defined by $\Phi([x])= \{\llangle [x],[z_i]\rrangle\}_{i=1}^n$, is bilipschitz. This leads to the following corollary of \cref{thm.almost free actions}:

\begin{corollary}
    \label{cor.phase}
    Suppose that $G\leq \Oname(V)$ is compact, connected and acting almost freely on $\mathbb S(V)$. Then, $\chi(G)=\max_{x\neq 0}|G_x|$ and  generic templates $z_1,\dots, z_n\in V$ achieve stable weighted phase retrieval provided $n > 2\cdot \chi(G)\cdot (c-1)$, where $c:=\dim_{\mathbb R}(V)-\dim(G)$.
\end{corollary}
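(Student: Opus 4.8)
The plan is to deduce \cref{cor.phase} from \cref{thm.almost free actions}, once we (i) check that the almost‑free hypothesis forces $\mathbb R^d\setminus\{0\}\subseteq R(G)$ after identifying $V$ isometrically with $\mathbb R^d$ for $d:=\dim_{\mathbb R}(V)$, and (ii) evaluate the parameters $c$ and $\chi(G)$ appearing there.

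For (i): since $G$ is connected, orbit–stabilizer gives $\dim([x])=\dim(G)-\dim(G_x)$ for every $x$, and a closed subgroup $G_x$ of the compact Lie group $G$ has $\dim(G_x)=0$ precisely when it is finite. The almost‑free hypothesis thus yields $\dim([x])=\dim(G)$ for every $x\neq 0$; since $\dim([y])\le\dim(G)$ always and (assuming $V\neq\{0\}$, the only nontrivial case) equality already holds at any nonzero $y$, we get $\max_{y}\dim([y])=\dim(G)$, whence $\mathbb R^d\setminus\{0\}\subseteq R(G)$, and in fact $R(G)=\mathbb R^d\setminus\{0\}$ when $\dim(G)>0$ since then $\dim([0])=0<\dim(G)$. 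In particular $c=d-\max_y\dim([y])=\dim_{\mathbb R}(V)-\dim(G)$, so \cref{thm.almost free actions} applies as stated: generic templates yield a bilipschitz $\Phi$ whenever $n>2\chi(G)(c-1)$, which is exactly the asserted stable weighted phase retrieval guarantee.

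For (ii) the claim is $\chi(G)=\max_{x\neq 0}|G_x|$. The inequality ``$\le$'' is immediate: all stabilizers of nonzero points are finite, so for regular $x,p$ with $G_p\le G_x$ we have $|G_x/G_p|=|G_x|/|G_p|\le|G_x|\le\max_{y\neq 0}|G_y|$ (the trivial‑group case being clear). For ``$\ge$'' it suffices to exhibit one nonzero vector $p$ with $G_p=\{e\}$: then the admissible pair $(x,p)$, with $x$ chosen so that $|G_x|=\max_{y\neq 0}|G_y|$, realizes $|G_x/G_p|=|G_x|$. To produce such a $p$ I would invoke the structure theorem (\cite{GordoskiL:16}) quoted above: up to isometric isomorphism $G$ is the image of $\phi=\bigoplus_j\phi_j\colon K^*\to\Oname(V)$ with $K=\mathbb C$ (so $K^*\cong S^1$) or $K=\mathbb H$ (so $K^*\cong SU(2)$), each $\phi_j$ nontrivial irreducible; faithfulness lets us take $\phi$ injective (forced when $K=\mathbb H$, a reparametrization when $K=\mathbb C$), so in the complex case the integer weights $\{k_j\}$ satisfy $\gcd_j|k_j|=1$, and then a generic vector $x$ (all $\phi_j$‑components nonzero) has $G_x=\{z\in S^1:z^{k_j}=1\ \forall j\}=\mu_{\gcd_j|k_j|}=\{1\}$.

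I expect the quaternionic instance of (ii) — that the principal isotropy group is still trivial when $G\cong SU(2)$ — to be the only point needing real care, since then stabilizers can only be nontrivial finite subgroups of $SU(2)$. Here one uses that $-\Id\in SU(2)$ acts as $-\Id$ on each quaternionic irreducible summand (so it fixes no nonzero vector, whence every nontrivial finite stabilizer, not containing $-\Id$, must be cyclic of odd order), and then a short weight/dimension count on the irreducibles $V_{2\ell+1}$ — noting that a generic point avoids the nowhere‑dense union of the proper fixed subspaces $\Fix(g)$ over the finitely many relevant conjugacy classes — rules those out, giving trivial stabilizers on an open dense set. Combining (i), (ii), and \cref{thm.almost free actions} then yields the corollary.
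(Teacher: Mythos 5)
Your proposal is correct and is exactly the derivation the paper intends: the corollary is stated there as an immediate consequence of \cref{thm.almost free actions}, obtained by checking that almost-freeness forces $\mathbb S(V)\subseteq R(G)$ (so $c=\dim_{\mathbb R}(V)-\dim(G)$) and evaluating $\chi(G)$. The one genuinely nonobvious step, which the paper leaves unstated, is the inequality $\chi(G)\ge\max_{x\neq 0}|G_x|$, i.e.\ the existence of a nonzero point with trivial stabilizer; your verification of this via faithfulness (coprime weights in the complex case; in the quaternionic case, that $-\Id$ acts as $-\Id$ on each odd irreducible so stabilizers are odd cyclic, together with the weight/dimension count showing $G\cdot\Fix(C)$ is nowhere dense for each relevant cyclic $C$) is sound.
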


This framework generalizes \textit{stable complex phase retrieval} \cite{Alharbi:22,CahillCD:16}, where one seeks to \textit{stably} recover $x\in \mathbb C^d$, up to the equivalence $x\sim e^{\mathrm i\theta} x$, from magnitude measurements $|x^*z_i|= \llangle [x],[z_i]\rrangle$. This setup corresponds to a bilipschitz max filter bank.

In the complex setting, stable weighted phase retrieval generalizes this by incorporating weights into the orbit equivalence relation induced by $G \leq \Oname(\mathbb C^d)$:
\[x \sim \operatorname{diag}\{e^{\mathrm ik_j\theta}\}_{j=1}^d \cdot x, \quad \forall \, \theta \in [-\pi,\pi],\]
where $\{k_j\}_{j=1}^d$ is a collection of nonzero integers, namely the \textit{weights}. Viewing this problem from the lens of $\phi\colon K^*\to O(V)$ as above, we obtain the natural extension to the quaternionic setting.

\subsubsection{Approximate nearest neighbor problem in cryo-EM}

Given a 3D macromolecular complex $P$ with unknown structure, cryo-EM produces a set of 2D noisy projection images $\mathcal I = \{I_j\}_{j=1}^M$, corresponding to different 3D viewing angles. As argued in \cite{ZhaoS:14,ZhaoSS:16}, each image $I_j$ can be approximated as an $L\times L$ pixel sampling of a function $f_j \in L^1(\mathbb R^2)\cap L^2(\mathbb R^2)$, with Fourier transform $\mathcal Ff_j \in L^2(\mathbb R^2)$ compactly supported in the disk $\Omega_{c_0}\subseteq \mathbb R^2$ of radius $c_0$. Furthermore, $\mathcal F f_j$ is approximated in polar coordinates by a finite expansion
    \begin{equation}
        \label{eq.finite expansion}
        \mathcal Ff_j(r,\theta) \approx \sum_{k=-k_{\max}}^{k_{\max}}\sum_{q=1}^{p_k}a_{k,q}^{f_j} \psi_{k,q}^{c_0}(r)e^{ik\theta},
    \end{equation}
    where $\{\psi_{k,q}^{c_0}(r)e^{ik\theta}\}_{k\in \mathbb Z, q\in \mathbb N}$ is the scaled Fourier-Bessel $L^2(\Omega_{c_0})$-orthonormal basis, and one may take $k_{max} = O(L)$ and $p := \sum_{-k_{\max}}^{k_{\max}} p_k = O(L^2)$.
    
    Denoising an image $I_j$ involves finding its nearest neighbors under the \textit{rotational alignment} distance:
    \[d_{\operatorname{SO}(2)}(f,g) := \inf_{R\in \operatorname{SO}(2)}\big\|f - g\circ R^{-1}\big\|_{2}.\]
    Since this is computationally expensive, an approximation via \eqref{eq.finite expansion} is used:
    \[
    d_{\operatorname{SO}(2)}(I_i,I_j) \approx \min_{\alpha\in [0,2\pi)}\big\|\big\{a_{k,q}^{f_i}\big\} - \big\{a_{k,q}^{f_j}e^{\mathrm ik\alpha}\big\}\big\|_{2}.
    \]
    This shifts the setting of the nearest neighbor task into the orbit space of a finite dimensional orthogonal representation $W$ of $\mathbb C^*$ given by the space of coefficients
    \[W:= \big\{\{a_{k,q}\}: |k|\leq k_{\max}, p_k > 0, q\in \{1,\dots,p_k\}\big\},\]
    equipped with the unitary action $z \cdot \{a_{k,q}\} := \{z^k\cdot a_{k,q}\}$, for $z\in\mathbb C^*$.
    By virtue of \cref{ex.bilip nearest}, we seek a \textit{fast} bilipschitz embedding of $W/\mathbb C^*$ into Euclidean space. In \cite{ZhaoS:14}, the authors use the \textit{bispectrum} embedding $\Phi_B\colon W/\mathbb C^* \to \mathbb R^{N}$, with $N=O(L^5)$, defined by
    \[\Phi_B(\mathbb C^*\cdot \{a_{k,q}\}) := \{a_{k_1,q_1}a_{k_2,q_2}\overline{a_{k_1+k_2,q_3}}: p_{k_1},p_{k_2},p_{k_1+k_2} > 0\}.\]
    However, by \cref{prop.nondiff} in \cref{app.nondiff}, $\Phi_B$ fails to be bilipschitz if $p_{k}, p_{k'}>0$ for some $k\neq \pm k'$. The authors in \cite{ZhaoS:14} also suggest precomposing $\Phi_B$ with the continuous scaling $\{a_{k,q}\}\mapsto \left\{{a_{k,q}}/{|a_{k,q}|^{2/3}}\right\}$, but this fails to be upper Lipschitz.

    We propose a replacement by using max filter banks. By projecting away from the trivial component  as in \cref{rem.trivial} below, the problem reduces to stable weighted phase retrieval. Then \cref{cor.phase} applies with $\chi(G) \leq k_{\max} = O(L)$ and $c \leq p = O(L^2)$, and it entails that $O(L^3)$ generic templates constitute bilipschitz max filter banks (cf. $N=O(L^5)$ above.) Moreover, each constituent max filtering map can be approximated by a linear search over samples obtained via the fast Fourier transform. We hypothesize that max filter banks will outperform the bispectrum in accuracy and efficiency, but since our focus is theoretical, we leave the numerical validation of this for future work.

\begin{remark}
    \label{rem.trivial}
    Suppose $G\leq \Oname(W)$ is compact and let $F:= \{w\in W: G_w=G\}$ denote its fixed subspace. Put $V:= F^\perp$ and let $P_F$ and $P_V$ denote corresponding linear orthogonal projections. By the proof of Lemma~39 in~\cite{CahillIM:24} and given a bilipschitz embedding $f\colon V/G \to \mathbb R^n$ with bounds $\alpha \leq \beta$, the map $\Psi\colon W/G \to F\times \mathbb R^n$ defined by $\Psi([x]) := \big(\alpha \cdot P_Fx, \beta \cdot f([P_Vx])\big)$ also has bilipschitz bounds $\alpha\leq \beta$.
\end{remark}

\section{The Voronoi Decomposition}
\label{sec.voronoi BIG SECTION}
In this section, we conduct a geometric analysis of isometric linear actions by introducing a Voronoi cell decomposition of the space. A key result, \cref{lem.regular char}, will play a central role in proving the key \cref{lem.regular local avoidance} of \cref{sec.local lower stability}.

We begin in \cref{sec.voronoi intro} by introducing the decomposition and presenting the main results of this section: \cref{lem.PG char,lem.regular char}. In \cref{sec.concrete example}, we provide a concrete example to illustrate the underlying concepts. \cref{sec.voronoi prelim} covers the necessary preliminaries, while \cref{sec.voronoi properties} establishes and proves key properties of the Voronoi decomposition. Finally, \cref{sec.PG char proof,sec.RG char proof} contain the proofs of \cref{lem.PG char,lem.regular char}, respectively.
\subsection{Setup and main results}
\label{sec.voronoi intro}
For a subset $S\subseteq \mathbb R^d$, let $\operatorname{relint}(S)$ denote the interior of $S$ relative to its affine span $\operatorname{aff}(S)$.
\begin{definition}
    \label{def.normal voronoi}
    Let $G\leq \operatorname{O}(d)$ be a compact group. For $x\in \mathbb R^d$, the \textbf{unique Voronoi cell} of $x$, denoted $U_x$, is defined by:
    \[z \in U_x \iff \{x\} = \arg\max_{p\in [x]}\langle p,z\rangle \iff \{x\} = \arg\min_{p\in [x]}\| z - p\|.\]
    The \textbf{open Voronoi cell} of $x$ is then defined as:
    \[V_{x} := \operatorname{relint}\big(U_{x}\big),\]
    and the \textbf{open Voronoi diagram} of $x$ is given by
    \[Q_{x} := \bigsqcup_{p\in [x]}V_{p}.\]
\end{definition}

The equivalence of the two characterizations of $U_x$ follows directly from the polarization identity $\|z-p\|^2 = \|x\|^2 + \|z\|^2 - 2\langle p,z\rangle$ for $p\in [x]$. In \cref{sec.voronoi properties}, we explore the properties of this decomposition and provide characterizations for its components.

The distinction between $U_x$ and its relative interior $V_x$ is important. First, the definition is non-redundant, as $U_x\neq V_x$ may occur (e.g., see \cref{ex.circle UXVX}.) Second, $V_x$ is functionally essential, as demonstrated in the proof of \cref{lem.regular local avoidance} in \cref{sec.local lower stability}. There, we use \cref{lem.regular char}, which provides a geometric “interchangeability” characterization of $V_x$ when $[x]$ has maximal dimension, i.e., $x\in R(G)$. To state both of our Voronoi `interchangeability' results, we first introduce the concept of principality.

\begin{definition}\label{def.principal}
    Let $G\leq \Oname(d)$ be compact. The set of \textbf{principal points} is defined as
    \[P(G) := \big\{x\in\mathbb R^d: \forall\, z\in\mathbb R^d, G_z \leq G_x \implies G_z = G_x\}.\]
\end{definition}

Notably, $P(G)\subseteq R(G)$ is a $G$-invariant open and dense subset of $\mathbb R^d$, as established in Theorems~3.49~and~3.82 in~\cite{AlexandrinoB:15}. Intuitively, orbits of principal points possess maximal degrees of freedom, both in the infinitesemal and discrete sense.

In \cref{sec.PG char proof}, we prove the following geometric characterization of principal points in terms of open Voronoi cells.
\begin{theorem}
    \label{lem.PG char}
    Let $G\leq \operatorname{O}(d)$ be compact. The following are equivalent:
    \begin{itemize}
        \item[(a)] $x\in P(G)$.
        \item[(b)] $z\in V_{x}$ implies $x\in V_z$.
    \end{itemize}
\end{theorem}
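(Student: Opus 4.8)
The plan is to prove the two implications separately, working throughout with the polarization identity $\|z-p\|^2 = \|x\|^2 + \|z\|^2 - 2\langle p, z\rangle$ (for $p \in [x]$), which translates membership in $U_x$ into the statement that $x$ is the unique closest point of $[x]$ to $z$. A recurring tool will be the fact that $gU_x = U_{gx}$ and $gV_x = V_{gx}$ for $g \in G$, and that $z \in U_x$ forces $G_z \leq G_x$: indeed if $hz = z$ then $x$ and $hx$ are both closest points of $[x]$ to $z$, so $hx = x$.

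\textbf{(a) $\Rightarrow$ (b).} Assume $x \in P(G)$ and let $z \in V_x$; I want $x \in V_z$, equivalently $x \in \operatorname{relint}(U_z)$. First, $x \in U_z$ is nearly automatic from the symmetry of the two-point Voronoi condition: $z \in U_x$ says $\langle x, z\rangle \geq \langle gx, z\rangle$ for all $g$, and one checks using $G_z \leq G_x$ (hence actually $G_z = G_x$ by principality) that $\langle z, x\rangle \geq \langle gz, x\rangle$ for all $g$, with equality only when $gz \in [z]$ coincides with $z$; the point here is that distinct elements of $[z]$ give distinct inner products against $x$ because the stabilizers match up. The subtle part is promoting $x \in U_z$ to $x \in \operatorname{relint}(U_z)$. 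For this I would argue that $z \in V_x = \operatorname{relint}(U_x)$ means $z$ is a relative-interior point, and use a local interchange: perturbing $x$ slightly within $\operatorname{aff}(U_z)$ keeps $z$ the unique closest point of the corresponding orbit — this should follow because the "active" competitors $gx$ with $\langle gx, z\rangle = \langle x, z\rangle$ are exactly $G_x \cdot x = \{x\}$ when $z$ is relatively interior (any other active competitor would put $z$ on a boundary wall of $U_x$), so the maximizer condition is stable under small moves of $x$ in the affine hull.

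\textbf{(b) $\Rightarrow$ (a).} I would prove the contrapositive. Suppose $x \notin P(G)$, so there is $z$ with $G_z \lneq G_x$. The goal is to produce a witness $w \in V_x$ with $x \notin V_w$. The natural candidate is to take a generic point $w$ on the ray or in a neighborhood determined by $z$ lying in $V_x$ — more precisely, since $V_x$ is open in $\operatorname{aff}(U_x)$ and nonempty, pick $w \in V_x$ with $G_w = G_x$ (generic points of $U_x$ have this stabilizer). Then $G_w = G_x \gneq G_z$ would need to be reconciled with $x \in U_w$; the strict containment of stabilizers should force $x$ onto the relative boundary of $U_w$, because some nontrivial $g \in G_x \setminus G_z$ moving around the orbit structure creates a competitor $g' x$ tied with $x$ for the max against $w$. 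Concretely: if $G_x$ is strictly larger than "generic," the cell $U_x$ is a fundamental-domain piece that is too small — its relative interior point $w$ sees $x$ only on the wall of $U_w$.

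\textbf{Main obstacle.} The hard part is the relative-interior bookkeeping in both directions: going from "closest point" to "closest point in the relative interior" requires controlling exactly which orbit elements are tied for the optimum, and relating the dimension of $\operatorname{aff}(U_x)$ to the stabilizer $G_x$. I expect the cleanest route is to establish, as a lemma, that for $z \in V_x$ the set of maximizers $\arg\max_{p \in [x]}\langle p, z\rangle$ is precisely $\{x\}$ together with the statement that $z \mapsto$ (its closest orbit point) is locally constant on $V_x$, and dually to characterize $\operatorname{relint}(U_z)$ via this tie-breaking; the interchangeability in (b) then falls out by symmetry once stabilizers are equal, while the failure in the non-principal case is detected by a persistent tie. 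I would lean on the properties of the Voronoi decomposition established in the section's later subsections (the analogues of \cref{sec.voronoi properties}) to make the tie-set analysis rigorous.
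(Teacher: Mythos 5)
Your outline of (b)$\Rightarrow$(a) is morally on track but has the stabilizers backwards: when $x\notin P(G)$, a generic (principal) point $w\in V_x$ has $G_w\lneq G_x$, not $G_w=G_x$; the correct mechanism is that $w\in P(G)\cap V_x$ together with $x\in U_w$ would force $G_x=G_w$ and hence $x\in P(G)$ (this is \cref{prop.stab PG to PG}), so the hypothesis in (b) must fail at $w$. That direction is repairable. The genuine gap is in (a)$\Rightarrow$(b), precisely at the step you flag as "promoting $x\in U_z$ to $x\in\operatorname{relint}(U_z)$." Your proposed justification --- that the only active competitor is $x$ itself, so "the maximizer condition is stable under small moves of $x$ in the affine hull" --- is the finite-group picture, where ties are cut out by finitely many linear inequalities and uniqueness of the argmax is an open condition. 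For an infinite compact $G$ the orbit $[z]$ is a continuum, and uniqueness of $\arg\max_{q\in[z]}\langle q,\cdot\rangle$ at a point does \emph{not} imply uniqueness nearby: upper semicontinuity of the argmax only says that maximizers for nearby $x'$ cluster near $z$, not that there is one of them. This is exactly the distinction between $U_x$ and $V_x$ that the paper insists on, and Case~4 of \cref{ex.circle UXVX} (the $4\cos\theta-\cos 2\theta$ example) exhibits a point with a unique maximizer that is a limit of points with non-unique maximizers. Even \cref{prop.daduk interval in Omega} only gives a good neighborhood of the half-open segment $(x,z]$, excluding the endpoint $x$; including $x$ (\cref{prop.daduk Omega normal neighborhood}) requires already knowing uniqueness near $x$, which is circular here.

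The paper closes this gap with a genuinely Riemannian argument that your proposal does not contain: it identifies $z\in Q_x$ with "$[z]$ lies in the interior of a minimal geodesic of $[P(G)]$ emanating from $[x]$" (\cref{lem.Voronoi char minimal geo ext}) and then proves a cut-locus symmetry statement (\cref{lem.manifold unique neighborhoods}, adapted from do Carmo's Proposition~13.2.2): if the reversed geodesic from $[z]$ stopped minimizing exactly at $[x]$, then $d\exp_{[z]}$ would be singular there, making $[x]$ and $[z]$ conjugate, which contradicts the forward geodesic remaining minimizing beyond $[z]$. Principality enters through \cref{prop.principal stratum}, which supplies the manifold structure on $[P(G)]$ and the correspondence between straight-line minimizers in $N_x$ and minimal geodesics in the quotient. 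Some substitute for this conjugate-point analysis is needed; a purely convex-geometric "tie-set" argument of the kind you sketch cannot detect the focal-point degeneracy that separates $U_z$ from $V_z$.
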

\begin{proof}
    See \cref{sec.PG char proof}.
\end{proof}
Building on this, we refine the Voronoi decomposition to the local level, enabling a similar characterization for regular points.
\begin{definition}
    Let $G\leq \Oname(d)$ be compact. For $z\in\mathbb R^d$, the \textbf{local open Voronoi cell} $V_{z}^{loc}$, is defined as follows: $x\in V_{z}^{loc}$ if there exist open neighborhoods $U$ of $z$ and $V$ of $x$ such that:
    \[\forall q\in V, \ \ \Big|\arg\sup_{p\in[z]\cap U}\langle p,q\rangle \Big|= 1.\]
\end{definition}
\begin{samepage}
\begin{theorem}\label{lem.regular char}
    Let $G\leq \Oname(d)$ be compact. For $x\in \mathbb R^d$, the following are equivalent:
    \begin{lemenum}
        \item $x\in R(G)$.
        \item $z\in V_x$ implies $x\in V_{z}^{loc}$.
    \end{lemenum}
\end{theorem}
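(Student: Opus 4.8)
The plan is to prove both implications by comparing the global Voronoi data of $[z]$ with its local data near a fixed point, exploiting the fact that on $R(G)$ all orbits have the same (maximal) dimension. Throughout, fix $z \in V_x$ and recall from the equivalence in \cref{def.normal voronoi} that $z \in V_x$ means $x$ is the unique closest point of $[z]$... wait, rather that $x$ uniquely maximizes $\langle \cdot, z\rangle$ over $[x]$, equivalently $z$ uniquely maximizes over $[z]$; and $V_x$ being a relative interior will give us an open supply of such $z$. The two one-sided statements ``$z \in V_x \implies x \in V_z$'' (principality, \cref{lem.PG char}) and ``$z \in V_x \implies x \in V_z^{loc}$'' differ exactly in that a \emph{local} tie-breaking near $z$ is weaker than a \emph{global} one; the content of the theorem is that when $[x]$ has maximal dimension this weakening is not a real weakening in one direction but genuinely is in the other, so the two conditions (a) and (b) do land on $R(G)$ rather than $P(G)$.

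For the direction (a) $\Rightarrow$ (b): assume $x \in R(G)$ and $z \in V_x$. I want open neighborhoods $U \ni z$, $V \ni x'$ (for $x'$ ranging in a neighborhood of $x$) realizing the unique-argmax condition defining $V_z^{loc}$ — i.e. $x \in V_z^{loc}$. Since $z \in V_x = \operatorname{relint}(U_x)$, the set $U_x$ contains a relatively open neighborhood of $z$ inside $\operatorname{aff}(U_x)$, and for all $p \in [x]$ with $p \neq x$ we have $\langle p, z\rangle < \langle x, z\rangle$ strictly. The key step is a \emph{local} rigidity: because $[x]$ has \emph{maximal} dimension, the orbit map $G/G_x \to [x]$ is a submersion onto a manifold of top dimension, and near $x$ the function $p \mapsto \langle p, q\rangle$ on $[x]$ has $x$ as a strict local max for $q$ near $z$; combined with compactness of $[z]$ (so that $\arg\sup_{p \in [z] \cap U}\langle p, q\rangle$ is attained and, for $U$ small, forced to be near $x$), I get the required uniqueness. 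Here I expect to need the desingularization theorem \cref{lem.desingularization} or at least the slice theorem to control how the orbit $[x]$ (equivalently the level sets of the $G$-action) sits near $x$ — this is where maximal dimension of the orbit is used essentially, ruling out the degenerate ``folding'' that a lower-dimensional orbit could produce.

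For the converse (b) $\Rightarrow$ (a): I argue the contrapositive. Suppose $x \notin R(G)$, so $\dim([x]) < c' := \max_y \dim([y])$. I must produce some $z \in V_x$ with $x \notin V_z^{loc}$, i.e. for \emph{every} pair of neighborhoods $U \ni z$, $V \ni x$ there is some $q \in V$ with at least two points of $[z] \cap U$ maximizing $\langle \cdot, q\rangle$. The idea is that a small orbit $[x]$ is ``unstable'': one can choose $z$ in $V_x$ but arrange a nearby regular point $z$ whose orbit, restricted to a neighborhood of $x$, is higher-dimensional and wraps around so that generic nearby directions $q$ are maximized at two distinct nearby points of $[z]$. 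Concretely I would pick $z \in V_x$ close to $x$ with $z$ regular (possible since $R(G)$ is dense and $V_x$ is relatively open and nonempty for $x$ in the relevant stratum — here I should double-check $V_x \cap R(G) \neq \emptyset$; if $V_x$ itself were too thin this needs the structure of $U_x$), so that $[z] \cap U$ near $x$ is a manifold of dimension $> \dim([x])$; then a dimension count shows the set of $q$ with a unique maximizer cannot contain a neighborhood of $x$, because the ``unique-maximizer'' locus is contained in (a neighborhood of) the normal directions to $[x]$, which has dimension $d - \dim([x]) > d - \dim([z])$ — contradicting that $x \in V_z^{loc}$ should force uniqueness on a full neighborhood $V$ of $x$.

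The main obstacle will be the converse direction, specifically making the informal ``a small orbit causes nearby folding of a regular orbit'' argument precise: I need to guarantee that $V_x$ contains a regular point \emph{whose local Voronoi structure near $x$ actually exhibits a tie}, and to do the dimension count for the tie-locus carefully using the slice/desingularization description of both $[x]$ and the nearby regular orbit. The forward direction should be comparatively routine once the slice theorem is invoked, amounting to an openness/compactness argument; the real work is packaging \cref{lem.desingularization} into a statement about simultaneous local normal forms for the family of orbits near $x$.
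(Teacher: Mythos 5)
Your direction (b)$\Rightarrow$(a) is essentially the paper's argument read contrapositively, and it can be closed cleanly: for $z\in V_x\cap R(G)$ one has $\arg\min_{q\in[z]}\|q-x\|=G_xz\cong G_x/G_z$ with $G_z\leq G_x$, and $x\notin R(G)$ forces $\dim(G_x)>\dim(G_z)$, so $G_x^0z$ is a positive-dimensional set of minimizers accumulating at $z$; hence already $q=x$ has infinitely many maximizers in $[z]\cap U$ for every neighborhood $U$ of $z$, and $x\notin V_z^{loc}$. You do not need the dimension count on the ``unique-maximizer locus'' (and as stated it is not right: that locus concerns projections onto $[z]\cap U$ and is not contained in anything like $N_x$), nor any normal form for the family of orbits --- only the stabilizer comparison.

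The genuine gap is in (a)$\Rightarrow$(b), which you call ``comparatively routine'' but which is in fact the hard direction and the reason the paper develops \cref{lem.desingularization} and \cref{lem.manifold unique neighborhoods}. Your mechanism --- $x$ (resp.\ $z$) is a strict maximizer, plus compactness, hence unique maximizers persist for $q$ in a full neighborhood of $x$ --- is false as a general principle because of focal points. Take $M=\{(t,t^2)\}$ and $x=(0,\tfrac12)$ as in \cref{fig:manifold nonlinear orthogonal}: the origin is the unique, strict nearest point of $M$ to $x$, yet every neighborhood of $x$ contains points with two nearest points of $M$. \cref{prop.daduk interval in Omega} only yields uniqueness on a neighborhood of the half-open segment $(x,z]$; upgrading this to a neighborhood of $x$ itself (\cref{prop.daduk Omega normal neighborhood}) requires first ruling out that $x$ is a focal point of $[z]$ at $z$. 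That is exactly what must be proved, and the hypothesis that buys it is $z\in V_x=\operatorname{relint}(U_x)$ rather than merely $z\in U_x$: the segment $[x,z]$ extends past $z$ inside $V_x$, so in the desingularized quotient $M_x$ the point $\pi(z)$ lies in the \emph{interior} of a minimal geodesic issuing from $\pi(x)$, and the cut-locus symmetry argument (\cref{lem.manifold unique neighborhoods}, adapted from do Carmo because $[R(G)]$ is not complete, indeed not even a manifold) shows the reverse geodesic stays minimizing shortly beyond $\pi(x)$, which is what translates into $x\in V_z^{loc}$. Your sketch never uses the relative-interior hypothesis in this second-order way, and it also misstates the role of the desingularization (it is needed to have a Riemannian manifold on which to run the cut-locus argument, not to control ``folding'' of $[x]$); without some substitute for the focal-point symmetry step, the forward implication does not go through.
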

\begin{proof}
See \cref{sec.RG char proof}.
\end{proof}
\end{samepage}
Although \cref{lem.PG char} is not directly referenced later in the paper, it is an independently interesting geometric result. Additionally, its proof in \cref{sec.PG char proof} provides preparatory groundwork for the more technical and analogous proof of \cref{lem.regular char} in \cref{sec.RG char proof}.\subsection{Concrete example}
\label{sec.concrete example}
In this section, we provide a concrete three-dimensional example to illustrate the definitions and theorems we have presented in this section so far. In this example, we will observe that the action behaves `nicely’ in the sense that $U_x = V_x$ for all $x \in \mathbb{R}^3$. However, as we will see later in \cref{ex.circle UXVX}, this is not always the case.

\begin{figure}[t]
    \centering
    \includegraphics[scale=0.8]{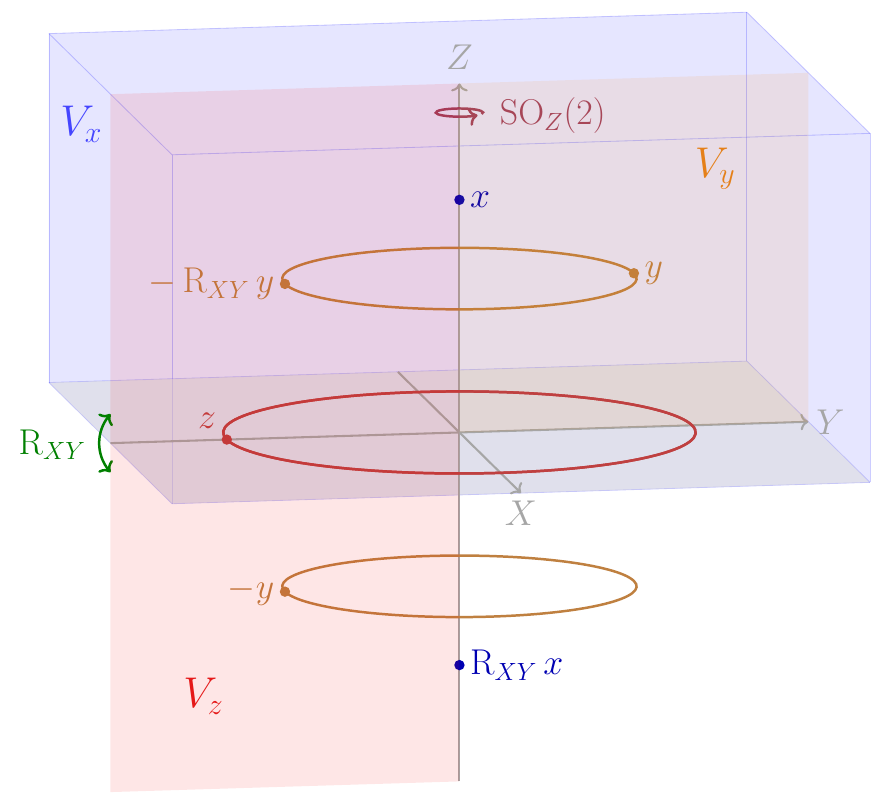}
    \caption{Illustration for \cref{ex.3d voronoi SOReflection}, showing instances of $x$, $y$, and $z$ along with their orbits and Voronoi cells, all of which are precisely described in the referenced example.}
    \label{fig:3d voronoi SOReflection}
  \end{figure}

\begin{example}\label{ex.3d voronoi SOReflection}
Suppose $G\leq \Oname(3)$ is the commutative group generated by $\operatorname{SO}_Z(2)$, the subgroup consisting of all counterclockwise rotations around the $Z$-axis, and $R_{XY}$, the reflection across the $XY$-plane. We begin by computing $U_w$ and $V_w^{loc}$ for each $w\in\mathbb R^3$, dividing analysis into cases; see \cref{fig:3d voronoi SOReflection}.

For $x\in \{(0,0,x_3)\in\mathbb R^3: x_3>0\}$, it holds that $V_x^{loc} = \mathbb R^3$ since all points project uniquely onto $\{x\}$. Next, $U_x$ is the open upper half-space (depicted in blue in~\cref{fig:3d voronoi SOReflection}.) Similarly, $U_{R_{XY}x}=R_{XY}U_{x}$. This covers the case of nonregular nonzero points.

For $y \in \{(0,x_2,x_3)\in\mathbb R^3: x_2,x_3 > 0\}$, it holds that $V_y^{loc}$ is the complement of the $Z$-axis, as one can take a tubular neighborhood $U$ of $\operatorname{SO}_Z(2)\cdot y$ so that all points, except those lying on the $Z$-axis, project uniquely onto $[y]\cap U=\operatorname{SO}_Z(2)\cdot y$. Next, $U_y$ is the open first quadrant of the $YZ$-plane (shown in orange in~\cref{fig:3d voronoi SOReflection}.) Additionally, $U_{gy}=gU_y$ for all $g\in G$. This covers the case of principal points.

For $z\in \{(0,-x_2,0)\in\mathbb R^3: x_2 > 0\}$, we have that $V_{z}^{loc}$ is again the complement of the $Z$-axis by a similar argument as in the previous paragraph. Next, $U_z$ is given by the open half-plane $\{(0,-x_2,x_3)\in\mathbb R^3: x_2 > 0, x_3\in\mathbb R\}$. Additionally, $U_{R_{XY}z} = U_z$ and $U_{gz} = gU_z$ for each $g\in G$. This covers the case of regular nonprincipal points.

Lastly, $U_{(0,0,0)} = \mathbb R^3$ and $V_{(0,0,0)}^{loc} = \mathbb R^3$.

Note that $V_w=U_w$ for each $w\in \mathbb R^3$ since in each of the above cases, $U_w$ is open in its affine hull. Moreover, $Q_x$ is the complement of the $XY$-plane, $Q_z$ is the complement of the $Z$-axis, $Q_y = Q_x\cap Q_z$, and $Q_{(0,0,0)} = \mathbb R^3$.

Finally, we verify the statements of \cref{lem.PG char,lem.regular char}. For each $q\in V_y$, it holds that $y\in V_q=V_y$. Since $y\in P(G)$, this is consistent with the implication (a)$\Rightarrow$(b) in \cref{lem.PG char}. On the other hand, observe that $-y\in V_z$, $z\notin V_{-y}$, and $z\in V_{-y}^{loc}$. Since $z\notin P(G)$, this is consistent with the implication (b)$\Rightarrow$(a) in \cref{lem.PG char}. Moreover, since $z\in R(G)$, this is consistent with the implication (a)$\Rightarrow$(b) in \cref{lem.regular char}. Lastly, the statements that $x\notin R(G)$, $y\in V_x$ and $x\notin V_y^{loc}$ are consistent with the implication (b)$\Rightarrow$(a) in \cref{lem.regular char}.
\end{example}

\subsection{Preliminary Results}
\label{sec.voronoi prelim}
We begin with a crucial preliminary result drawn from the theory of nonlinear orthogonal projection on manifolds, which will be referenced frequently throughout the paper. A visual illustration is provided in~\cref{fig:manifold nonlinear orthogonal}. For $x\neq y\in \mathbb R^d$, let $(x,y]$ denote the line segment from $x$ to $y$, which includes $y$ but excludes $x$, and let $[x,y]$ denote the segment that includes both $x$ and $y$; additionally, define $(x,x] := [x,x] = \{x\}$. 

\begin{proposition}[Remark~3.1, Corollary~3.9, Theorem~3.13a and Theorem~4.1 in~\cite{DudekH:94}]
    \label{prop.daduk proposition}
    Let $M$ be a smooth embedded submanifold of $\mathbb R^{d}$. For $z\in\mathbb R^d$ and $x\in \arg\min_{p\in M}\| z-p\|$, each of the following statements holds:
    \begin{propenum}
        \item \label{prop.daduk argmax in normal space} $z\in N_xM$, the orthogonal complement of the tangent space to $M$ at $x$.
        \item \label{prop.daduk interval in Omega} There exists an open neighborhood $U$ of $(z,x]$ such that $\{x\} = \arg\min_{p\in M}\| n-p\|$ for $n\in U\cap N_xM$ and $\big|\arg\min_{p\in M}\| t-p\|\big|= 1$ for all $t\in U$. Moreover, the map $v_x$, which sends $t\in U$ to the unique element in $\arg\min_{p\in M}\| t-p\|$, is smooth over $U$.
        \item \label{prop.daduk Omega normal neighborhood} If there exists an open neighborhood $W_z$ around $z$ such that $|\arg\min_{p\in M}\| t-p\||=1$ for all $t\in W_z$, then the neighborhood $U$ in (b) can be enlarged to include $z$.
    \end{propenum} 
\end{proposition}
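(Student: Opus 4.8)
The plan is to treat these four facts as the classical local theory of nearest-point (metric) projection onto a smooth submanifold: the quantitative tubular-neighborhood theorem, together with the companion fact that the points with a unique nearest point are, locally, the ones lying off the focal set. I would fix $k:=\dim M$, choose a smooth local parametrization $\phi\colon\Omega\to M\subseteq\mathbb R^d$ with $\phi(0)=x$ (possible since $M$ is embedded) and a smooth orthonormal frame $n_1(u),\dots,n_{d-k}(u)$ of the normal spaces $(T_{\phi(u)}M)^\perp$, write $e_i(u):=\partial_{u_i}\phi(u)$, and let $W_\nu\colon T_pM\to T_pM$ denote the shape operator of $M$ in a normal direction $\nu$, characterized by $\langle W_\nu X,Y\rangle=\langle\nu,\mathrm{II}(X,Y)\rangle$ with $\mathrm{II}$ the second fundamental form. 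Part~(a) is then just first-order optimality: differentiating $s\mapsto\|z-\gamma(s)\|^2$ at $s=0$ along an arbitrary smooth curve $\gamma$ in $M$ with $\gamma(0)=x$ gives $\langle z-x,\dot\gamma(0)\rangle=0$, so $z-x\perp T_xM$, i.e.\ $z\in N_xM:=x+(T_xM)^\perp$; in particular the whole segment $(z,x]$ lies in $N_xM$.

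For part~(b) I would first extract two inputs along the segment. Write $w_\lambda:=x+\lambda(z-x)$, so $(z,x]=\{w_\lambda:\lambda\in[0,1)\}$ and $\|w_\lambda-x\|=\lambda\|z-x\|$. The first input is that $x$ is the \emph{unique} nearest point of $M$ to each $w_\lambda$, $\lambda\in[0,1)$: if $y\in M$ satisfies $\|w_\lambda-y\|\le\lambda\|z-x\|$, then since $\|z-w_\lambda\|=(1-\lambda)\|z-x\|$ the triangle inequality gives $\|z-y\|\le\|z-x\|$, hence $\|z-y\|=\|z-x\|$ with equality in the triangle inequality; in Euclidean norm this forces $w_\lambda$ to be an interior point of $[z,y]$, and since it is also interior to $[z,x]$ the points $x,y$ lie on a common ray from $z$ at a common distance, so $y=x$ (a routine limiting variant of the same estimate also shows the infimum is attained, which matters only if $M$ is not closed). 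The second input is the second-order necessary condition at the minimizer $x$: the Hessian of $p\mapsto\|z-p\|^2$ on $M$ at $x$ is positive semidefinite, and a short computation in the chart $\phi$ identifies it at $u=0$ with the matrix $\big[2\langle(\Id-W_{z-x})e_l(0),e_i(0)\rangle\big]_{l,i}$; since $\{e_i(0)\}$ is a basis of $T_xM$ this yields $W_{z-x}\preceq\Id$ on $T_xM$.

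Next I would introduce the normal-coordinate map $F(u,s):=\phi(u)+\sum_j s_jn_j(u)$, note $z=F(0,s^0)$ with $s^0_j:=\langle z-x,n_j(0)\rangle$ (so $\sum_j s^0_jn_j(0)=z-x$ by part~(a)), and compute its differential along the lifted segment. The columns $\partial_{s_j}F(0,\lambda s^0)=n_j(0)$ span $N_xM$; differentiating $\langle n_j,e_l\rangle\equiv0$ gives $\langle\partial_{u_i}n_j,e_l\rangle=-\langle W_{n_j}e_i,e_l\rangle$, so the $T_xM$-component of $\partial_{u_i}F(0,\lambda s^0)$ equals $(\Id-\lambda W_{z-x})e_i(0)$ (using linearity of $\nu\mapsto W_\nu$). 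By the second-order condition, $\Id-\lambda W_{z-x}$ is positive definite for every $\lambda\in[0,1)$, so those components form a basis of $T_xM$; together with a spanning set of $N_xM$, this makes $DF(0,\lambda s^0)$ invertible, i.e.\ the segment $(z,x]$ avoids the focal set. Hence $F$ is a local diffeomorphism near each $(0,\lambda s^0)$, $\lambda\in[0,1)$; since $\phi(u)$ is a critical point of $\|t-\cdot\|^2$ exactly when $t\in\phi(u)+(T_{\phi(u)}M)^\perp=\{F(u,s):s\}$, this produces for each $\lambda$ a neighborhood $U_\lambda\ni w_\lambda$ and a smooth map $r_\lambda\colon U_\lambda\to M$ with $r_\lambda(w_\lambda)=x$ and $r_\lambda(t)$ the unique critical point of $\|t-\cdot\|^2$ near $x$. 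After shrinking $U_\lambda$ and combining the uniqueness-along-the-segment input with a ``no far competitor'' estimate (no point of $M$ outside a small ball about $x$ is as close to $t$ as $r_\lambda(t)$, for $t$ near $w_\lambda$), $r_\lambda(t)$ becomes the unique global nearest point; gluing the $r_\lambda$ over $\lambda\in[0,1)$ (they agree on overlaps) gives the open set $U:=\bigcup_\lambda U_\lambda\supseteq(z,x]$, single-valuedness of $\arg\min$ on $U$, and the smooth map $v_x$, while the claim about $U\cap N_xM$ is read off the $s$-variable of $F$.

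For part~(c), the extra hypothesis that a whole neighborhood $W_z$ of $z$ consists of points with a unique nearest point forces $z$ not to be a focal point of $M$: if $1$ were an eigenvalue of $W_{z-x}$ (so $\Id-W_{z-x}$ is only semidefinite), a second-order Taylor analysis of the squared distance function—precisely the behavior of the center of an osculating circle—produces points arbitrarily near $z$ with two distinct nearest points, a contradiction. Hence $\Id-W_{z-x}$ is positive definite, $DF(0,s^0)$ is invertible as well, and the construction of part~(b) extends to a neighborhood of the closed segment $[z,x]$, so the $U$ of~(b) may be taken to contain $z$. The part I expect to be the real obstacle is not any one differential-geometric computation but the two ``global'' upgrades: (i) passing from ``$F$ is a local diffeomorphism near each point of the non-compact half-open segment $(z,x]$'' to an honest open neighborhood on which $\arg\min$ \emph{over all of $M$} is single-valued and $v_x$ is smooth—this is where one needs the uniqueness-along-the-segment lemma, the quantitative no-competitor estimate, and (if $M$ is not closed) control of escaping minimizing sequences; and (ii) the focal-point argument underlying~(c). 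Both are standard in the metric-projection literature, and I would follow the treatment in~\cite{DudekH:94}.
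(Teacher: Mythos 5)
The paper does not actually prove this proposition: it is imported wholesale from the literature, the stated ``proof'' being the citation to Remark~3.1, Corollary~3.9, Theorem~3.13a and Theorem~4.1 of~\cite{DudekH:94}. So your proposal, which reconstructs the classical tubular-neighborhood/focal-set argument, is by necessity a different route from the paper's (which is no internal route at all). On its own merits, your treatment of (a) and (b) is essentially the standard and correct one: first-order optimality for (a); the strict-triangle-inequality uniqueness of the foot point along $(z,x]$, the second-order bound $W_{z-x}\preceq \Id$, invertibility of the normal-bundle map $DF$ at $(0,\lambda s^0)$ for $\lambda\in[0,1)$, and the local-diffeomorphism-plus-no-far-competitor upgrade for (b). The attainment/escape worry you flag for non-closed $M$ can in fact be dispatched cleanly: run the triangle-inequality argument over $\overline M$ (the infimum over $M$ and $\overline M$ coincide), conclude that $x$ is the unique nearest point of $\overline M$ to each $w_\lambda$, and use that an embedded submanifold is locally closed, so $\overline M\cap \overline B(x,\rho)\subseteq M$ for small $\rho$; then minimizing sequences for nearby $t$ are trapped near $x$ and converge in $M$.

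The soft spot is (c). Your stated mechanism --- ``if $1$ is an eigenvalue of $W_{z-x}$, a second-order Taylor analysis produces points arbitrarily near $z$ with two distinct nearest points'' --- does not work as described: at such a $z$ the second-order term of the squared distance vanishes identically in the degenerate direction (that is exactly what degeneracy means), so second-order information alone cannot exhibit nearby non-uniqueness. With $x$ still the global foot of $z$, the behavior is governed by higher-order terms (e.g.\ the curve $y=t^2-t^4$ with $z=(0,\tfrac12)$ has $x=0$ as unique nearest point at the focal point), and producing nearby bifurcating minimizers requires an unfolding/catastrophe-type argument in the translation parameters of $z$ --- and for flat or higher-order degeneracies this is precisely the nontrivial content of Theorem~4.1 in~\cite{DudekH:94}, which you ultimately defer to. So as written, your proposal is a sound sketch of (a)--(b) and an outline of (c) that leans on the same reference the paper leans on; it is acceptable in the same sense the paper's citation is, but it is not a self-contained proof of (c).
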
    
\begin{remark} If $M\subseteq \mathbb R^d$ lies on a sphere centered at the origin (e.g., $M = G\cdot x$ for some $G\leq \operatorname{O}(d)$), then for all $z\in \mathbb R^d$, the polarization identity gives
\[\arg\min_{p\in M}\| z-p\| = \arg\max_{p\in M}\langle p,z\rangle.\]
\end{remark}

\begin{figure}[t]
    \centering
    \includegraphics[scale=1.2]{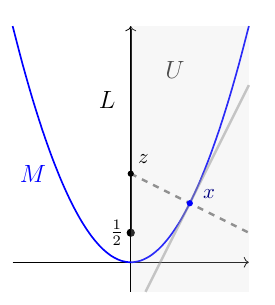}
    \includegraphics[scale=1.2]{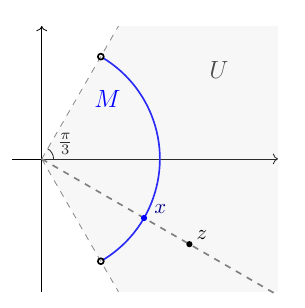}
    \caption{Illustration for \cref{prop.daduk proposition}. \textbf{(left)} Here, $M = \{(t,t^2)\in\mathbb R^2: t\in\mathbb R\}$ and $U = \{(x_1,x_2)\in\mathbb R^2:x_1 > 0\}$. The line $L = \{(0,x_2)\in\mathbb R^2:x_2 > \frac{1}{2}\}$ is the set of points with multiple nearest neighbors to $M$, and its closure is $\overline L = L\cup \{(0,\frac{1}{2})\}$. For each $z\in U \cup L$ and $x \in U\cap \arg\min_{p\in M}\|p-z\|$, the neighborhood $U$ satisfies assertions (b) and (c) in \cref{prop.daduk proposition}. For $z_0\in R:= \{(0,x_2)\in\mathbb R^2:x_2 < \frac{1}{2}\}$, it holds that $\arg\min_{p\in M}\|p-z\| = \{(0,0)\}$, and any neighborhood $U$ of $R$ with $U\cap \overline L = \varnothing$ satisfies those assertions with respect to each $z_0\in R$. \textbf{(right)} Here, $M = \{(\cos(\theta),\sin(\theta))\in\mathbb R^2: \theta\in (-\frac{\pi}{3},\frac{\pi}{3})\}$ and $U = \operatorname{int}(\operatorname{cone}(M))$. For each $z\in U\cup \{(0,0)\}$ and $x \in \arg\min_{p\in M}\|p-z\|$, the neighborhood $U$ satisfies assertions (b) and (c) in \cref{prop.daduk proposition}. For nonzero $z\in U^c$, the set $\arg\min_{p\in M}\|p-z\|$ is empty.}
    \label{fig:manifold nonlinear orthogonal}
  \end{figure}

Next, for a compact group $G\leq \operatorname{O}(d)$ with Lie algebra $\mathfrak g$ and $x\in \mathbb R^d$, the orbit $G\cdot x$ is an embedded submanifold of $\mathbb R^d$ (Proposition~3.41 in~\cite{AlexandrinoB:15}). We define the \textbf{tangent space} at $x$ to its orbit $G\cdot x$ by
\[T_x := \mathfrak g \cdot x,\] 
and the \textbf{normal space} at $x$ to its orbit $G \cdot x$ as:
\[N_x:=(\mathfrak g\cdot x)^{\perp}.\]
We have the following (presumably folklore) result which establishes the $G_x$-invariance of $T_x$ and $N_x$. While we could not locate a reference, we provide a proof.
\begin{proposition}
    \label{prop.Nx Tx Gx invariant}
   Let $G\leq \Oname(d)$ be a compact group and fix $x\in\mathbb R^d$. Then, for each $h\in G$, we have $T_{hx}=h\cdot T_x$ and $N_{hx} = h\cdot N_x$. In particular, $T_x\oplus N_x$ is a $G_x$-invariant orthogonal decomposition of $\mathbb R^d$.
\end{proposition}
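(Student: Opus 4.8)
The plan is to reduce both claimed identities to a single structural fact: the Lie algebra $\mathfrak g\subseteq \mathbb R^{d\times d}$ of $G$ (via the inclusions $G\leq \Oname(d)\leq \operatorname{GL}(d)$) is invariant under conjugation by elements of $G$, i.e.\ $h\xi h^{-1}\in \mathfrak g$ for all $h\in G$ and $\xi\in\mathfrak g$. This holds precisely because $G$ is a group: given $\xi\in\mathfrak g$, choose a smooth curve $\gamma$ in $G$ with $\gamma(0)=\Id$ and $\gamma'(0)=\xi$; then $t\mapsto h\gamma(t)h^{-1}$ is a smooth curve in $G$ passing through $\Id$ with derivative $h\xi h^{-1}$ at $t=0$, whence $h\xi h^{-1}\in\mathfrak g$. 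This shows $h\mathfrak g h^{-1}\subseteq \mathfrak g$, and applying the same inclusion with $h^{-1}$ in place of $h$ upgrades it to $h\mathfrak g h^{-1}=\mathfrak g$.

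Granting this, I would compute $T_{hx}$ directly from the definition $T_y=\mathfrak g\cdot y$. A typical element of $T_{hx}$ has the form $\xi h x$ with $\xi\in\mathfrak g$, and $\xi h x = h\,(h^{-1}\xi h)\,x\in h\cdot(\mathfrak g\cdot x)=h\cdot T_x$ because $h^{-1}\xi h\in\mathfrak g$; conversely a typical element of $h\cdot T_x$ has the form $h\eta x$ with $\eta\in\mathfrak g$, and $h\eta x=(h\eta h^{-1})(hx)\in\mathfrak g\cdot(hx)=T_{hx}$ because $h\eta h^{-1}\in\mathfrak g$. Hence $T_{hx}=h\cdot T_x$. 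For the normal spaces I would use that $N_y=T_y^{\perp}$ together with the elementary fact that an orthogonal transformation commutes with taking orthogonal complements: for any linear subspace $W\subseteq\mathbb R^d$ and $h\in\Oname(d)$ one has $(h\cdot W)^{\perp}=h\cdot(W^{\perp})$ (since $\langle v,hw\rangle=\langle h^{-1}v,w\rangle$). Therefore $N_{hx}=(T_{hx})^{\perp}=(h\cdot T_x)^{\perp}=h\cdot(T_x^{\perp})=h\cdot N_x$.

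For the final clause I would simply specialize to $h\in G_x$, so that $hx=x$; the two identities just established become $h\cdot T_x=T_x$ and $h\cdot N_x=N_x$, i.e.\ both $T_x$ and $N_x$ are $G_x$-invariant, and $\mathbb R^d=T_x\oplus N_x$ is an orthogonal decomposition by the very definition $N_x=T_x^{\perp}$ (valid as $T_x$ is a linear subspace of the finite-dimensional inner product space $\mathbb R^d$). I do not anticipate a genuine obstacle: the only step deserving care is the conjugation-invariance $h\mathfrak g h^{-1}=\mathfrak g$, which is where the hypothesis that $G$ is a group (and not merely a submanifold) is used, and which accordingly is where I would spend most of the exposition.
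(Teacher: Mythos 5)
Your proposal is correct and follows essentially the same route as the paper: the key step in both is conjugating a curve through the identity to see that $h\xi h^{-1}\in\mathfrak g$, from which $T_{hx}=h\cdot T_x$ follows, and then transferring to normal spaces via the fact that orthogonal maps commute with orthogonal complements. The only cosmetic difference is that you verify both inclusions algebraically, whereas the paper proves one inclusion and concludes equality by comparing dimensions.
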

\begin{proof}
    Since each $h\in G$ is an isometry, it suffices to show that $T_{hx}=h\cdot T_x$; this would then imply that $h\cdot N_x = h\cdot(T_x)^\perp = (T_{hx})^{\perp} = N_{hx}$. Since $h$ is linear and $\dim(T_x) = \dim(T_{hx})$,  it suffices to show that $h\cdot T_x \subseteq T_{hx}$ for each $h\in G$. To this end, fix $h\in G$ and $t\in T_x$. We aim to show that $ht\in T_{hx}$. By the definition of $T_x$, there exists $\omega\in \mathfrak g$ such that $t = \omega \cdot x$. Since $\mathfrak g$ is the tangent space to $G$ at its identity $e$, there exists $\varepsilon >0$ and a smooth curve $\alpha\colon (-\varepsilon,\varepsilon)\to G$ such that $\alpha(0) = e$ and $\alpha'(0)=\omega$. The smoothness of the action of $G$ on $\mathbb R^d$ implies:
    \[t = \omega\cdot x = \alpha'(0)\cdot x = \left.\frac{d(\alpha(t) x)}{dt}\right|_{t=0}.\]
    Now, let $\gamma := h\alpha h^{-1}\colon (-\varepsilon,\varepsilon)\to G$ denote the conjugation of $\alpha$ by $h$. Then $\gamma$ is a smooth curve satisfying $\gamma(0) = e$ and $\gamma'(0) \in \mathfrak g$. Thus
    \[ht = h\cdot\left.\frac{d(\alpha(t) x)}{dt}\right|_{t=0} = \left.\frac{d(h\alpha(t)h^{-1}  (hx))}{dt}\right|_{t=0} = \gamma'(0)\cdot hx \in T_{hx}.\]
\end{proof}

\subsection{Properties of the Voronoi Decomposition}
\label{sec.voronoi properties}
The following lemma explores the properties of $U_x$ and $V_x$, and the third statement provides justification for using a disjoint union in the definition of $Q_x$. 

\begin{lemma}
    \label{lem.UV facts}
    Let $G\leq \operatorname{O}(d)$ be a compact group. For $x\in \mathbb R^d$, each of the following statements holds:

    \begin{lemenum}
        \item \label{lem.UV facts Gx char}$z\in U_x$ if and only if $G_x = \{g\in G: \llangle [z],[x]\rrangle = \langle gz, x\rangle\}$.
        \item \label{lem.UV facts equivariance}$U_{gx} = g\cdot U_{x}$ and $V_{gx} = g\cdot V_{x}$ for all $g\in G$.
        \item \label{lem.UV facts intersection}For $q_1,q_2\in [x]$, if $U_{q_1}\cap \overline{U_{q_2}} \neq \varnothing$,  then $q_1=q_2$.
        \item \label{lem.UV facts intersection V open in normal}$\operatorname{aff}(U_x) = \operatorname{span}(U_x) = N_x$, and $V_x$ is a star convex open neighborhood of $x$ in $N_x$.
        \item \label{lem.V char} The following characterization holds:
        \[z\in V_x \Longleftrightarrow z\in U_x \wedge |\arg\max_{p\in[x]}\langle p,t\rangle|=1 \text{ for $t$ in a neighborhood of $z$}.\]
        \item \label{lem.UV facts semialgebraic}The sets $N_x$, $U_x$ and $V_x$ are semialgebraic subsets of $\mathbb R^d$.
    \end{lemenum}

\end{lemma}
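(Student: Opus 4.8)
The plan is to establish the six items essentially in order, since (e) depends on (d) and (d) on \cref{prop.daduk proposition}, whereas (a)--(c) are purely formal. For (a), rewrite $\llangle[z],[x]\rrangle=\sup_{g\in G}\langle gz,x\rangle$ and let $S$ denote the (nonempty, by compactness) set of maximizers $g$, i.e.\ the right-hand side of the claimed identity. Using $\langle gx,z\rangle=\langle x,g^{-1}z\rangle$ one checks $\arg\max_{p\in[x]}\langle p,z\rangle=S^{-1}\cdot x$ and $G_x S=S$; hence $z\in U_x\iff S^{-1}x=\{x\}\iff S^{-1}\subseteq G_x$, and a one-line coset argument (pick $g_0\in S$, so $G_x g_0\subseteq G_xS=S\subseteq G_x$) promotes this to $S=G_x$, the converse being immediate. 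For (b), the substitution $q=gp$ gives $\arg\max_{p\in[x]}\langle gp,z\rangle=g^{-1}\arg\max_{q\in[x]}\langle q,z\rangle$, so $g^{-1}z\in U_x\iff z\in U_{gx}$, i.e.\ $U_{gx}=gU_x$; since $g$ is a linear homeomorphism it commutes with $\operatorname{relint}$, giving $V_{gx}=gV_x$. For (c), if $z\in U_{q_1}$ and $q_1\neq q_2$ then $\langle q_1,z\rangle>\langle q_2,z\rangle$ by uniqueness of the maximizer over $[x]=[q_1]=[q_2]$; taking $z_n\to z$ with $z_n\in U_{q_2}$ we have $\langle q_2,z_n\rangle\geq\langle q_1,z_n\rangle$, and passing to the limit contradicts the strict inequality.

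For (d), I would argue as follows. First $x\in U_x$ by equality in Cauchy--Schwarz ($\langle gx,x\rangle\leq\|x\|^2$ with equality iff $gx=x$). Next $U_x\subseteq N_x$: for $z\in U_x$ the point $x$ is the unique nearest point of the orbit $M:=G\cdot x$ to $z$, and the tangent space to $M$ at $x$ is $T_x=\mathfrak g\cdot x$, so \cref{prop.daduk argmax in normal space} gives $z\in N_x$. For the reverse inclusion of spans, apply \cref{prop.daduk interval in Omega} with $z=x$ (for which $x$ is trivially the unique nearest orbit point) to obtain an open $U\ni x$ with $U\cap N_x\subseteq U_x$; this relative neighborhood of $x$ in $N_x$ forces $\operatorname{aff}(U_x)=N_x$, and since $0\in N_x$ also $\operatorname{span}(U_x)=N_x$. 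Consequently $V_x=\operatorname{relint}(U_x)=\operatorname{int}_{N_x}(U_x)$ contains that relative neighborhood, so $x\in V_x$ and $V_x$ is open in $N_x$. Star convexity about $x$ follows from bilinearity: $\langle gx,(1-t)x+tz\rangle\leq(1-t)\|x\|^2+t\langle x,z\rangle$, with equality for $t\in(0,1)$ forcing $\langle gx,x\rangle=\|x\|^2$, hence $gx=x$; so $[x,z]\subseteq U_x$ whenever $z\in U_x$ (the same computation with two arbitrary points of $U_x$ in fact shows $U_x$ is convex), and then $V_x$ is star convex as well.

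For (e), the two membership conditions match after translating $\arg\min_{p\in[x]}\|t-p\|$ into $\arg\max_{p\in[x]}\langle p,t\rangle$ via the polarization remark following \cref{prop.daduk proposition}. For ``$\Leftarrow$'', given $z\in U_x$ together with a neighborhood on which the nearest-orbit-point map is single-valued, \cref{prop.daduk Omega normal neighborhood} lets me enlarge the neighborhood $U$ from \cref{prop.daduk interval in Omega} to contain $z$, so that $U\cap N_x$ is a relative neighborhood of $z$ in $N_x=\operatorname{aff}(U_x)$ lying inside $U_x$; hence $z\in\operatorname{relint}(U_x)=V_x$. For ``$\Rightarrow$'', here is the key move: since $z\in\operatorname{relint}(U_x)$ and $[x,z]\subseteq U_x$ by (d), I can push slightly past $z$ along the ray from $x$ through $z$ to find $z^{+}\in U_x$ with $z\in(x,z^{+})$; applying \cref{prop.daduk interval in Omega} to $z^{+}$ then produces an open neighborhood of the segment $(z^{+},x]$, which contains $z$, on which the nearest-point map is single-valued. (The degenerate case $z=x$ is handled by applying \cref{prop.daduk interval in Omega} to $x$ directly.)

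For (f), $N_x$ is a linear subspace and therefore semialgebraic; for $U_x$ and $V_x$ I would invoke the classical fact that a compact subgroup $G\leq\Oname(d)$ is a real-algebraic, in particular semialgebraic, subset of $\mathbb R^{d\times d}$, so that $[x]=\{gx:g\in G\}$ is semialgebraic, and then $U_x=\{z:\langle x,z\rangle\geq\langle p,z\rangle\text{ for all }p\in[x],\text{ and }\langle p,z\rangle=\langle x,z\rangle\Rightarrow p=x\}$ is semialgebraic by Tarski--Seidenberg; $V_x=\operatorname{relint}(U_x)$ then follows because closure, interior and affine hull are semialgebraic operations. I expect the genuine obstacle to be part (e): both directions hinge on extracting precisely the right statement from \cref{prop.daduk proposition}, and the ``push past $z$'' step in the forward direction --- which is exactly what distinguishes $V_x$ from $U_x$ and which the paper flags as the functionally essential point --- is the only step that is not bookkeeping; the identification $\operatorname{aff}(U_x)=N_x$ in (d) is a close second, while in (f) the sole non-routine ingredient, the algebraicity of compact linear groups, should be cited rather than reproved.
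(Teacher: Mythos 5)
Your proposal is correct and follows essentially the same route as the paper: (a)--(c) by direct manipulation, (d)--(e) via \cref{prop.daduk proposition} applied to the degenerate segment $(x,x]$, to $[z,x]$, and to a segment pushed slightly past $z$ inside $V_x$, and (f) by first-order definability together with the semialgebraicity of compact matrix groups. The only (pleasant) divergence is in (d), where you note that $U_x=\bigcap_{gx\neq x}\{z:\langle gx-x,z\rangle<0\}$ is convex outright, so star convexity of $V_x$ follows from elementary convexity rather than from the paper's second appeal to \cref{prop.daduk interval in Omega}.
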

\begin{proof}
    The proofs of (a), (b) and (c) are straightforward.

    To prove (d), note that $U_x \subseteq N_x$ by \cref{prop.daduk argmax in normal space}, so we have $\operatorname{aff}(U_x) \subseteq \operatorname{span}(U_x) \subseteq N_x$. Since $x\in U_x$, \cref{prop.daduk interval in Omega} guarantees the existence of an open neighborhood $U$ of $(x,x] = \{x\}$ in $\mathbb R^d$ such that $U\cap N_x$ is a nonempty subset of $U_x$. This implies that $N_x = \operatorname{aff}(U\cap N_x) \subseteq \operatorname{aff}(U_x)$, so $\operatorname{aff}(U_x) = \operatorname{span}(U_x) = N_x$. Furthermore, $U$ is witnesses that $V_x$ is open in $N_x$ and that $x\in V_x$. For the star convexity of $V_x$ at $x$, let $z\in V_x$ and note that by \cref{prop.daduk interval in Omega}, there exists an open neighborhood $U'$ of $(z,x]$ such that $(z,x] \subseteq U'\cap N_x \subseteq V_x$.

    For the forward implication in (e), the openness of $V_x$ in $N_x$ and its star convexity at $x$ imply that there exists $q\in V_x$ such that $[z,x]\subseteq (q,x]\subseteq V_x$. Then the desired result follows from \cref{prop.daduk interval in Omega} applied to the interval $(q,x]$.
    Next, the reverse implication in (e) follows immediately from \cref{prop.daduk interval in Omega,prop.daduk Omega normal neighborhood} applied to the interval $[z,x]$.

    Lastly, (f) follows from a straightforward argument in first-order logic. A restatement and proof can be found in \cref{prop.distance is semialgebraic}.
\end{proof}

As a first application of \cref{lem.UV facts}, we derive stabilizer inclusions implied by the Voronoi decomposition.
\begin{lemma}
    \label{prop.stab subset lemma whole}
Let $G\leq \Oname(d)$ be a compact group. For $x,z\in\mathbb R^d$, each of the following statements holds:
\begin{lemenum}
    \item \label{prop.stab subset}$z\in U_x$ implies $G_z \leq G_x$.
    \item \label{prop.stab PG subset}$x \in P(G)$ and $z\in N_x$ imply $G_x \leq G_z$.
    \item \label{prop.stab PG to PG}$x\in P(G)$ and $z\in U_x$ imply $G_x = G_z$ and $z\in P(G)$.
\end{lemenum}
\end{lemma}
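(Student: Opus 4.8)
## Proof Proposal for Lemma~\ref{prop.stab subset lemma whole}

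The plan is to prove the three assertions in order, using earlier each part to bootstrap the next, and relying on \cref{lem.UV facts} (especially parts (a), (b), and (d)) together with \cref{prop.Nx Tx Gx invariant}.

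For part (a), I would start from the characterization in \cref{lem.UV facts Gx char}: if $z\in U_x$, then $G_x = \{g\in G: \llangle[z],[x]\rrangle = \langle gz,x\rangle\}$. Now take any $g\in G_z$. Since $gz = z$, we have $\langle gz, x\rangle = \langle z,x\rangle$. Because $z\in U_x$ means $x$ is the unique maximizer of $p\mapsto \langle p, z\rangle$ over $[x]$, equivalently (using $G$ orthogonal so $\llangle[x],[z]\rrangle = \sup_{p\in[x]}\langle p,z\rangle = \sup_{q\in[z]}\langle x,q\rangle$) the identity $e$ lies in the witnessing set, so $\langle z,x\rangle = \llangle[z],[x]\rrangle$; hence $\langle gz,x\rangle = \llangle[z],[x]\rrangle$, which by the characterization places $g\in G_x$. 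Thus $G_z\leq G_x$.

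For part (b), suppose $x\in P(G)$ and $z\in N_x$. By \cref{lem.UV facts intersection V open in normal}, $V_x$ is a star-convex open neighborhood of $x$ in $N_x$, so the segment $[x, x + \varepsilon(z-x)]$ — more simply, since $N_x$ is a linear subspace containing both $x$ and $z$, some point $w$ on the segment $(z,x]$ near $x$ lies in $V_x\subseteq U_x$. Actually it is cleaner to argue: pick $w\in V_x$ on the ray from $x$ toward $z$ inside $N_x$; then $G_w\leq G_x$ by part (a). Since $x\in P(G)$, \cref{def.principal} forces $G_w = G_x$. I then need to transfer this equality from the nearby point $w$ to the original $z$. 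For this I would use that $G_w$ fixes $w$ and fixes $x$ (as $G_w=G_x$), hence fixes the whole line through $x$ and $w$, which is the line through $x$ and $z$ inside $N_x$; therefore $G_x = G_w \leq G_z$. (If $z = x$ the claim is trivial; if $z$ is a scalar multiple of $x$ along that ray this is immediate, and the general $z\in N_x$ case follows because $w$ can be chosen on the segment from $x$ to $z$.) The one subtlety is ensuring $w\neq x$ can be taken genuinely between $x$ and $z$ when $z\neq x$, which is exactly what star-convexity of $V_x$ at $x$ (part (d)) provides.

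For part (c), assume $x\in P(G)$ and $z\in U_x$. By part (a), $G_z\leq G_x$. Since $x\in P(G)$, the defining property in \cref{def.principal} gives $G_z = G_x$. It remains to show $z\in P(G)$: suppose $y\in\mathbb R^d$ with $G_y\leq G_z = G_x$; then $G_y\leq G_x$, and $x\in P(G)$ forces $G_y = G_x = G_z$, so $z\in P(G)$. I expect part (b) to be the main obstacle: the passage from the stabilizer equality at a perturbed point $w\in V_x$ to the equality $G_x\leq G_z$ at the arbitrary normal vector $z$ requires care about how the stabilizer of a point acts on the line joining it to $x$, and one must be sure the chosen $w$ actually lies strictly between $x$ and $z$ so that fixing $x$ and $w$ forces fixing $z$. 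Parts (a) and (c) are short formal consequences of the characterization lemma and the definition of principality.
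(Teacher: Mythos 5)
Your proof is correct, and it rests on the same supporting lemma as the paper's (the properties of $U_x$ and $V_x$ collected in \cref{lem.UV facts}), but it invokes different clauses of that lemma in two places. For part (a), the paper argues via equivariance and disjointness of Voronoi cells of distinct points in the same orbit (\cref{lem.UV facts equivariance} and \cref{lem.UV facts intersection}): $g\in G_z$ gives $z=gz\in U_x\cap U_{g^{-1}x}$, forcing $g^{-1}x=x$. You instead use the stabilizer characterization \cref{lem.UV facts Gx char}, checking that any $g\in G_z$ witnesses the max and therefore lies in $G_x$; both are one-line arguments and equally valid. For part (b), the paper shows that $G_x$ fixes \emph{every} point of $U_x$ (by (a) plus principality) and then fixes $\operatorname{span}(U_x)=N_x$ by linearity, whereas you fix a single point $w=x+\varepsilon(z-x)\in V_x$ on the segment toward $z$ (which exists by star-convexity and openness of $V_x$ in $N_x$, \cref{lem.UV facts intersection V open in normal}) and conclude by collinearity that fixing $x$ and $w$ forces fixing $z$. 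Your pointwise version is sound — the only thing to check is $w\neq x$ when $z\neq x$, which you correctly flag and which openness of $V_x$ in $N_x$ guarantees — while the paper's spanning argument dispatches all $z\in N_x$ at once. Part (c) you derive from (a) and principality alone, whereas the paper cites (a) and (b); both give $G_x=G_z$ and the principality of $z$ follows identically. No gaps.
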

Intuitively, (a) states that if $z$ uniquely projects to $x$ within $[x]$, then $z$ has `$G_x/G_z$' more degrees of freedom than $x$. (b) states that if $x\in P(G)$, meaning $x$ has maximal degrees of freedom among all orbits, then $N_x$ is fixed by $G_x$, i.e., while fixing $x$, $G_x$ does not introduce any degrees of freedom to $N_x$. Finally, (c) combines (a) and (b).
\begin{proof}[Proof of \cref{prop.stab subset lemma whole}]
    First, we address (a). For $z\in U_x$ and $g\in G_z$, we have $z=gz\in U_x$. By \cref{lem.UV facts equivariance}, we have $z \in U_{g^{-1}x}\cap U_x$, and by \cref{lem.UV facts intersection}, it follows that $g\in G_x$, as required.

    Next, we prove (b). For each $y\in U_x$, (a) gives that $G_y\leq G_x$. Since $x\in P(G)$, it follows that $G_y=G_x$. Thus, $G_x$ fixes $U_x$, and by linearity and \cref{lem.UV facts intersection V open in normal}, we conclude that $G_x$ fixes $\operatorname{span}(U_x) = N_x$.

    Finally, (c) follows immediately from (a) and (b).
\end{proof}

The following lemma elaborates on the properties of the open Voronoi diagram $Q_x$.

\begin{lemma}
    \label{lem.Q lemma}
    Let $G\leq \operatorname{O}(d)$ be a compact group. For $x\in\mathbb R^d$, each of the following statements holds:
    \begin{lemenum}
        \item \label{lem.Q invariant} $Q_{gx} = Q_x = g\cdot Q_x = G\cdot V_x$ for all $g\in G$.
        \item \label{lem.Q semialgebraic} The set $Q_x$ is a semialgebraic subset of $\mathbb R^d$.
        \item \label{lem. Q char}The following characterization holds:
        \[z\in Q_x \Longleftrightarrow |\arg\max_{p\in[x]}\langle p,t\rangle|=1 \text{ for $t$ in a neighborhood of $z$}.\]
        \item \label{lem.Q open dense} $Q_x$ is an open and dense subset of $\mathbb R^d$.
    \end{lemenum}
\end{lemma}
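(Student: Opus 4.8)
The plan is to derive everything from the local characterization in part (c), treating the parts in the order (a), (c), and then (b) and (d) in parallel. Part (a) is immediate: since the orbit is unchanged, $[gx]=[x]$, so $Q_{gx}=\bigsqcup_{p\in[gx]}V_p=\bigsqcup_{p\in[x]}V_p=Q_x$ straight from the definition; applying the bijection $g$ of $\mathbb R^d$ termwise and using $g\cdot V_p=V_{gp}$ from \cref{lem.UV facts equivariance} (together with the fact that $p\mapsto gp$ permutes $[x]$) gives $g\cdot Q_x=Q_x$; and $G\cdot V_x=\bigcup_{g\in G}V_{gx}=\bigcup_{p\in[x]}V_p=Q_x$, the union being disjoint by \cref{lem.UV facts intersection}. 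For part (c), the forward direction is a direct appeal to \cref{lem.V char}: if $z\in Q_x$ then $z\in V_{p_0}$ for some $p_0\in[x]$, and \cref{lem.V char} (with $[p_0]=[x]$) yields $|\arg\max_{p\in[x]}\langle p,t\rangle|=1$ for $t$ in a neighborhood of $z$. Conversely, if this holds on an open neighborhood $W\ni z$, then evaluating at $t=z$ produces a unique maximizer $p_0\in[x]$, hence $z\in U_{p_0}$ by definition of the unique Voronoi cell; applying \cref{lem.V char} to $p_0$ then gives $z\in V_{p_0}\subseteq Q_x$.

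For part (d), I would first observe that (c) exhibits $Q_x$ as an interior: $z\in Q_x$ exactly when $z$ has an open neighborhood on which the maximizer over $[x]$ is everywhere unique, i.e.\ $Q_x=\operatorname{int}(A)$ with $A:=\{t\in\mathbb R^d:|\arg\max_{p\in[x]}\langle p,t\rangle|=1\}$; in particular $Q_x$ is open. For density, let $W\subseteq\mathbb R^d$ be nonempty open and fix $z_0\in W$; choose $x_0\in\arg\max_{p\in[x]}\langle p,z_0\rangle$, which is nonempty by compactness of $[x]$ and equals $\arg\min_{p\in[x]}\|z_0-p\|$ because $[x]$ lies on a sphere about the origin. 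Applying \cref{prop.daduk interval in Omega} to the embedded submanifold $M=[x]$ supplies an open set $\tilde U\supseteq(z_0,x_0]$ on which the nearest-point map to $[x]$ is single-valued; by the converse direction of (c), every point of $\tilde U$ lies in $Q_x$, so $(z_0,x_0]\subseteq Q_x$, and since $z_0\in\overline{(z_0,x_0]}$ we get $z_0\in\overline{Q_x}$. As $z_0\in W$ was arbitrary, $\overline{Q_x}=\mathbb R^d$.

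For part (b), the key point is again $Q_x=\operatorname{int}(A)$: since semialgebraic sets are closed under taking interiors, it suffices to show $A$ is semialgebraic. The condition ``$|\arg\max_{p\in[x]}\langle p,t\rangle|=1$'' unfolds into a first-order formula over $\mathbb R$ — there exists $p\in[x]$ achieving $\llangle[t],[x]\rrangle=\max_{q\in[x]}\langle q,t\rangle$, and any $p'\in[x]$ achieving this value equals $p$ — so the only nonelementary ingredient is that the predicate ``$p\in[x]$'' is semialgebraic, which is precisely the content isolated in \cref{prop.distance is semialgebraic} (proved there via finite generation of the ring of $G$-invariant polynomials together with the fact that, for compact $G$, invariant polynomials separate orbits, so $[x]$ is a common zero locus of finitely many polynomials). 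Granting that, Tarski--Seidenberg quantifier elimination gives that $A$, and hence $Q_x$, is semialgebraic. I expect this last reduction — from semialgebraicity of $Q_x$ down to semialgebraicity of the orbit $[x]$, which is genuine input rather than bookkeeping for a general compact Lie subgroup of $\Oname(d)$ — to be the only real obstacle; the rest is routine given \cref{prop.daduk proposition}, \cref{lem.UV facts}, and \cref{lem.V char}.
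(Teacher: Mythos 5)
Your proof is correct and follows essentially the same route as the paper: (a) from the definition and \cref{lem.UV facts}, (c) from \cref{lem.V char}, density in (d) via \cref{prop.daduk interval in Omega} applied to a segment $(z_0,x_0]$, and (b) by expressing the uniqueness-of-maximizer condition and its interior in first-order logic over semialgebraic data. The only small inaccuracy is your parenthetical about \cref{prop.distance is semialgebraic}: the paper obtains semialgebraicity of $[x]$ (and of $Q_x$ directly, via an explicit $\exists\varepsilon\,\forall y'$ formula rather than a "closed under interiors" appeal) from the fact that the compact group $G$ is itself a semialgebraic subset of $\mathbb R^{d\times d}$ (\cref{prop.closed subgroups}), not from finite generation and separation by invariant polynomials — though that alternative justification would also work.
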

\begin{proof}
    The proofs of (a) is straightforward. 

    Part (b) follows from a standard argument using first-order logic. A restatement and proof are provided in \cref{prop.distance is semialgebraic}.
    
    The proof of (c) directly follows from \cref{lem.V char}.
    
    Finally, we address (d). The openness of $Q_x$ follows immediately from the openness of the characterization in (c). For denseness, let $z\in \mathbb R^d$ and choose $q\in \arg\max_{p\in[x]}\langle p,z\rangle$, which is possible because $[x]$ is closed. By \cref{prop.daduk interval in Omega}, there exists an open neighborhood $U$ of $(z,q]$ such that $|\arg\max_{p\in[x]}\langle p,u\rangle|=1$ for all $u\in U$. By (c), we have $U\subseteq Q_x$. Since $z\in \overline{(z,q]}\subseteq \overline U\subseteq \overline{Q_x}$, the denseness of $Q_x$ follows.
\end{proof}

As a first application of \cref{lem.Q lemma}, we identify the unitary representations of the circle group for which $U_x=V_x$ for all $x\in\mathbb R^d$.
\begin{example}
    \label{ex.circle UXVX}
    Let $k_1,\dots,k_d\in\mathbb Z$ be fixed integers, and let $G\leq \operatorname U(d)$ be the commutative group defined by $G:=\{\operatorname{diag}(\{e^{\mathrm ik_j\theta}\}_{j=1}^d):\theta \in [-\pi,\pi]\}$. We compute the max filter and then perform a case analysis on $\{k_j\}_{j=1}^d$ to determine when $U_x=V_x$ for all $x\in\mathbb R^d$.

    Suppose, without loss of generality, that $\{k_j\}_{j=1}^d$ is sorted in ascending order. Then there exist $l \in \mathbb N$, $d_1,\dots,d_l\in \mathbb N$, and distinct weights $w_1,\dots, w_l\in \mathbb Z$ such that $k_j = w_m$ for $j \in r_m := \{d_{m-1} + 1, \dots, d_m\}$ (with $d_0 := 0$.) For $x\in \mathbb R^d$, we have an orthogonal decomposition $x = \sum_{m=1}^l x_m$, where $(x_m)_j := 1_{j\in r_m}\cdot (x)_j$ for each $j\in \{1,\dots,d\}$.

    With this notation, the max filter is given by
    \[\llangle [x],[z]\rrangle = \max_{-\pi\leq \theta\leq \pi}\, \operatorname{Re}\left(\sum_{m=1}^l x_m^* z_m e^{\mathrm i w_m \theta}\right).\]

    We claim that $U_x = V_x$ holds for all $x\in\mathbb R^d$ if and only if $l=1$ or $l=2$ with  $0\in \{w_1,w_2, w_1+w_2\}$. These cases correspond to the nontrivial component of the action having a spherical quotient diffeomorphic to a standard complex projective space.

    \textbf{Case 1.} Suppose that $l=2$ with $0\in \{w_1,w_2\}$. This case reduces immediately to $l=1$. For if $w_1=0$, then it holds that $U_x = \mathbb C^{d_1}\times U_{x_2}$ and $V_x = \mathbb C^{d_1}\times V_{x_2}$ where $U_{x_2}$ and $V_{x_2}$ correspond to the case $l=1$ and unique weight $w_2$. 
    
    \textbf{Case 2.} Suppose that $l=2$ with $w_1=-w_2$. This case is orthogonally equivalent the case $l=1$ with unique weight $w_1$. This can be seen by the map $x=x_1+x_2 \mapsto x_1 + \overline{x_2}$, which is an orthogonal (nonunitary) transformation of space.

    \textbf{Case 3.} Suppose that $l=1$ and $x\neq 0$ (indeed, $U_0 = V_0 = \mathbb C^d$.) Then
    \[\llangle [x],[z]\rrangle = \max_{-\pi\leq \theta\leq \pi}\, \operatorname{Re}\left(x^* z e^{\mathrm i w_1\theta}\right) = |x^*z|.\]
    If $x^*z = 0$, then $(gx)^*z = 0$ for each $g\in G$ implying $z\notin G\cdot U_x$. Otherwise, if $x^*z = re^{\mathrm i\phi}$, the condition $\operatorname{Re}(re^{\mathrm i\phi}e^{\mathrm i w_1\theta}) = r$ holds if and only if $\phi+w_1\theta \in 2\pi\mathbb Z$, which uniquely determines $e^{\mathrm i w_1\theta}$. This implies that $z\in G\cdot U_x$. As such
    \[G\cdot U_x = \{z\in \mathbb R^d: x^*z \neq 0\}.\]
    
    Since $x^*z=0$ is a closed condition, it follows that $G\cdot U_x$ is open and hence equal to $Q_x = G\cdot V_x$ by \cref{lem. Q char}. By \cref{lem.UV facts intersection}, we conclude that $U_x = U_x \cap G\cdot U_x = U_x \cap G\cdot V_x = V_x$ for all $x\in \mathbb R^d$.

    \begin{figure}[t]
        \centering
        \includegraphics[scale=1.5]{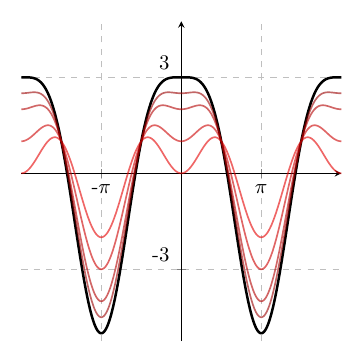}
        \caption{Illustration for Case~4 in~\cref{ex.circle UXVX} when $w_1=1$ and $w_2 = 2$. The graph with the highest $y$-intercept of $3$ corresponds to the function $y = 4\cos(x)-\cos(2x)$. It has a unique global maximum over $[-\pi,\pi]$, attained as a flat local maximum at $x=0$. The other graphs, with $y$-intercepts $k-1$, correspond to the functions $y = k\cos(x)-\cos(2x)$ for $k\in \{1,2,3,3.5\}$. Each attains its global maximum over $[-\pi,\pi]$ at $x=\pm \cos^{-1}\left(k/4\right)\neq 0$. While the values of $k$ here do not exactly correspond to $4-\frac{1}{n}$, the behavior of the global maxima remains the same for that sequence. (We thank Aleksei Kulikov for bringing this example to our attention.)}
        \label{fig:cosine plot}
      \end{figure}    

    \textbf{Case 4.} Suppose that $l\geq 2$ and $w_1, w_2, w_1+w_2\neq 0$. Define $x\in \mathbb R^d$ and a sequence $z_n\in\mathbb R^d$ as follows:
    \[
    \begin{array}{cc}
        (x)_j = \begin{cases}
            1 &\text{if } j\in\{1,d_1+1\},\\
            0 &\text{otherwise,}
            \end{cases}    
        & \text{\quad and \quad}
        (z_n)_j = \begin{cases}
            w_2^2 - \frac{1}{n} &\text{if } j= 1,\\
            -w_1^2 &\text{if } j= d_1+1,\\
            0 &\text{otherwise.}
            \end{cases}    
        \end{array}
        \]
    Then 
    \[\llangle [x],[z_n]\rrangle = \max_{-\pi\leq \theta\leq \pi} \left(w_2^2 - \frac{1}{n}\right)\cos(w_1\theta) -w_1^2\cos(w_2\theta).\]
    Let $z:= \lim_{n\to \infty}z_n$. Then $z\in U_x$, but $z_n\notin Q_x$ for each $n$ (see \cref{fig:cosine plot}.) Since $Q_x$ is open (\cref{lem.Q open dense}), we have $z\in U_x\cap Q_x^c = U_x\cap V_x^c$, which means that $U_x\neq V_x$, as desired.
\end{example}

\subsection{Geometric Characterization of Principal Orbits}
\label{sec.PG char proof}
This section is dedicated to proving \cref{lem.PG char}. To enhance readability and help absorb the core ideas, we recommend skipping the proofs of \cref{prop.principal stratum,lem.PG manifold hypo,lem.Voronoi char minimal geo ext} on the first read. To appreciate the nuance in \cref{lem.PG char}, consider the following easier characterization of principality.
\begin{lemma}\label{lem.PG easier char}
Suppose $G\leq \operatorname O(d)$ is compact. For $x\in \mathbb R^d$, the following statements are equivalent:
    \begin{lemenum}
        \item $x\in P(G)$.
        \item $z\in U_x$ implies $x\in U_{z}$.
    \end{lemenum}
\end{lemma}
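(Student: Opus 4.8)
The plan is to prove the two implications separately, the direction $(\mathrm a)\Rightarrow(\mathrm b)$ being a short repackaging of results already in hand and the direction $(\mathrm b)\Rightarrow(\mathrm a)$ requiring one genuinely geometric observation. For $(\mathrm a)\Rightarrow(\mathrm b)$: suppose $x\in P(G)$ and $z\in U_x$. By \cref{prop.stab PG to PG} we already know $G_z=G_x$, so it only remains to translate this back into Voronoi language. Since $G\leq\operatorname O(d)$ we have $\llangle[z],[x]\rrangle=\sup_{p\in[z]}\langle p,x\rangle$, so \cref{lem.UV facts Gx char} identifies the maximizing set $\arg\max_{p\in[z]}\langle p,x\rangle$ as exactly $G_x\cdot z$, and $G_x\cdot z=G_z\cdot z=\{z\}$; by definition of the Voronoi cell this says $x\in U_z$.

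For $(\mathrm b)\Rightarrow(\mathrm a)$ I would first observe that hypothesis (b) forces the stabilizer to be constant on $U_x$: given $z\in U_x$, \cref{prop.stab subset} applied to $z\in U_x$ gives $G_z\leq G_x$, while applying it to $x\in U_z$ (which holds by (b)) gives $G_x\leq G_z$, hence $G_z=G_x$. Thus every element of $G_x$ fixes every point of $U_x$, so by linearity $G_x$ fixes $\operatorname{span}(U_x)=N_x$ pointwise, using \cref{lem.UV facts intersection V open in normal}. The remaining task is to show the implication ``$G_x$ fixes $N_x$ pointwise $\Rightarrow x\in P(G)$''. To prove it, take any $w\in\mathbb R^d$ with $G_w\leq G_x$ and pick $x'\in\arg\min_{p\in[x]}\|w-p\|$ (nonempty since $[x]$ is compact); as $[x]$ lies on a sphere about the origin, this set equals $\arg\max_{p\in[x]}\langle p,w\rangle$, so \cref{prop.daduk argmax in normal space} gives $w\in N_{x'}$. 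Writing $x'=hx$, \cref{prop.Nx Tx Gx invariant} gives $N_{x'}=h\cdot N_x$ and $G_{x'}=hG_xh^{-1}$, so $G_{x'}$ fixes $N_{x'}$ pointwise and in particular $G_{x'}\leq G_w$. Chaining the inclusions, $hG_xh^{-1}=G_{x'}\leq G_w\leq G_x$; since a closed subgroup of the compact group $G_x$ that is isomorphic to $G_x$ (here via conjugation by $h$, so with the same dimension and same number of connected components) must equal $G_x$, we conclude $G_x=G_{x'}\leq G_w\leq G_x$, i.e.\ $G_w=G_x$. As $w$ was arbitrary with $G_w\leq G_x$, this gives $x\in P(G)$.

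The one step that is not pure bookkeeping is the last one, ``$G_x$ fixes $N_x$ pointwise $\Rightarrow x\in P(G)$'': the converse ``$x\in P(G)\Rightarrow G_x$ fixes $N_x$ pointwise'' is precisely \cref{prop.stab PG subset}, but the direction we need must relate an \emph{arbitrary} $w$ with $G_w\leq G_x$ to the normal geometry at $x$, which is why I route through a nearest orbit point $x'$, invoke the equivariance of the normal spaces (\cref{prop.Nx Tx Gx invariant}), and then use the rigidity of conjugate closed subgroups of a compact group to collapse $hG_xh^{-1}\leq G_x$ to equality. Everything else is a translation between the stabilizer language of \cref{def.principal} and the Voronoi language provided by \cref{lem.UV facts} and \cref{prop.stab subset lemma whole}.
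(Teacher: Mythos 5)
Your proof is correct. The direction (a)$\Rightarrow$(b) is essentially the paper's argument: both reduce to $G_z=G_x$ via \cref{prop.stab PG to PG} and then translate back through \cref{lem.UV facts Gx char} (the paper inverts the set of maximizing group elements, you apply that set to $z$ and note its orbit is $G_z\cdot z=\{z\}$; this is the same computation). For (b)$\Rightarrow$(a) you take a genuinely different route. The paper's proof is a two-line density argument: since $P(G)$ and $Q_x\subseteq G\cdot U_x$ are $G$-invariant, open and dense, one may pick $z\in P(G)\cap U_x$; then $x\in U_z$ by hypothesis and \cref{prop.stab PG to PG} transfers principality from $z$ to $x$. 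You instead verify \cref{def.principal} directly: hypothesis (b) together with \cref{prop.stab subset} forces $G_z=G_x$ on all of $U_x$, hence $G_x$ fixes $N_x=\operatorname{span}(U_x)$ pointwise (\cref{lem.UV facts intersection V open in normal}); then for an arbitrary $w$ with $G_w\leq G_x$ you project $w$ onto the orbit to get $w\in N_{x'}$ with $x'=hx$, use the equivariance $N_{hx}=hN_x$ from \cref{prop.Nx Tx Gx invariant}, and close the sandwich $hG_xh^{-1}=G_{x'}\leq G_w\leq G_x$ with the rigidity fact that a conjugate of a compact Lie group contained in itself must equal it (same dimension and same finite number of components). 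All steps check out. What your route buys is independence from the openness and density of $P(G)$, which the paper quotes from Alexandrino--Bettiol; what it costs is importing the conjugation-rigidity fact and essentially re-deriving the content of \cref{prop.stab PG subset} under hypothesis (b). The paper's route is shorter because it reuses \cref{prop.stab PG to PG} in both directions and leans on the cited structure theory.
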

\begin{proof}
(a)$\Rightarrow$(b). Let $z\in U_x$. Since $x\in P(G)$ by assumption, \cref{prop.stab PG to PG} implies that $G_z = G_x$. By \cref{lem.UV facts Gx char}, it follows that
\[G_z = G_x^{-1} = \{g^{-1}\in G: \llangle [z],[x]\rrangle = \langle gz, x\rangle\} = \{g\in G: \llangle [x],[z]\rrangle = \langle gx, z\rangle\}.\]
Thus, another application of \cref{lem.UV facts Gx char} shows that $x\in U_z$.

(b)$\Rightarrow$(a). As noted after \cref{def.principal} and by \cref{lem.Q invariant,lem.Q open dense}, we have that $P(G)$ and $Q_x \subseteq G\cdot U_x$ are $G$-invariant, open, and dense in $\mathbb R^d$. As such, we can choose $z\in P(G)\cap U_x$. Then $x\in U_z$ by assumption, and we conclude that $x\in P(G)$ by \cref{prop.stab PG to PG}.
\end{proof}
Intuitively, the proof of the implication (a)$\Rightarrow$(b) in \cref{lem.PG easier char} may be interpreted as follows: if $x$ has maximal degrees of freedom (i.e., $x\in P(G)$) and $z$ projects uniquely to $x$ in $[x]$ (i.e., $z\in U_x$), then $z$ has the same degrees of freedom as $x$ (i.e., $G_x=G_z$) and so $x$ has to project uniquely onto $z$ in $[z]$ (i.e., $x\in U_z$).

Now, \cref{lem.PG char} asks for the stronger interchangeability in relative interiors of unique Voronoi cells. The trickiest implication to prove is (a)$\Rightarrow$(b). We begin by recalling the manifold structure of principal points in the orbit space.

\begin{proposition}
    \label{prop.principal stratum}
    Suppose $G\leq \operatorname O(d)$ is compact. Then, each of the following statements holds
    \begin{itemize}
    \item[(a)]\label{prop.principal stratum A} $[P(G)]$ is an open and dense connected manifold in $\mathbb R^d/G$. It admits a unique Riemannian structure, whose geodesic distance agrees with the quotient distance, and where the restricted orbit map $[\cdot]\big|_{P(G)}\colon P(G)\to [P(G)]$ is a Riemannian submersion.
    \item[(b)]\label{prop.principal stratum B} For each $x,y\in P(G)$, let $C([x],[y])$ denote the set of minimal geodesics joining $[x]$ to $[y]$ in $[P(G)]$. Then, there exists a bijection
    \[
        \arg\min_{q\in[y]}\|q-x\| \longrightarrow C([x],[y])
    \]
    induced by projecting straight lines joining $x$ to $\arg\min_{q\in[y]}\|q-x\|$ into the orbit space.
    \end{itemize}
\end{proposition}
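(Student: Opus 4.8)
The plan is to treat (a) in two stages — first the smooth and Riemannian structure, then the distance identity — and to read off (b) from the proof of the distance identity. For the structure in (a), I would invoke the principal orbit theorem (\cite{AlexandrinoB:15}, Theorems~3.49 and~3.82): $P(G)$ is $G$-invariant, open and dense, and all isotropy groups of points of $P(G)$ lie in a single conjugacy class, so the $G$-action on $P(G)$ has constant orbit type. Standard slice-theorem arguments then give that $[P(G)]=P(G)/G$ is a smooth manifold, that $\pi:=[\cdot]\big|_{P(G)}\colon P(G)\to[P(G)]$ is a locally trivial fiber bundle with compact fiber $G/H$ ($H$ a principal isotropy group), and that $[P(G)]$ is open, dense and connected in $\mathbb R^d/G$ (the connectedness of the principal part of the orbit space of a connected space being classical). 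I would define the Riemannian metric on $[P(G)]$ by declaring $d\pi_x|_{N_x}\colon N_x\to T_{[x]}[P(G)]$ to be a linear isometry; this is independent of the representative $x$ of $[x]$ because $G$ acts by isometries and $h\cdot N_x=N_{hx}$ (\cref{prop.Nx Tx Gx invariant}), and it makes $\pi$ a Riemannian submersion with horizontal distribution $x\mapsto N_x$. Uniqueness is then automatic: any Riemannian-submersion metric has horizontal distribution equal to the orthogonal complement of the (fixed) fibers, hence equal to $N$, and the norm of a downstairs vector equals that of its horizontal lift, so the metric is forced.

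For the distance claim in (a), write $d_{[P(G)]}$ for the length metric of this Riemannian metric and fix $x,y\in P(G)$. To see $d_{[P(G)]}\ge d$, I would horizontally lift an arbitrary path $\sigma$ in $[P(G)]$ starting at $x$ — legitimate since the horizontal distribution of a fiber bundle with compact fiber is an Ehresmann connection, so lifts exist along all of $\sigma$ — obtaining a path in $P(G)\subseteq\mathbb R^d$ of equal Riemannian length from $x$ to a point of $[y]$, whose length is therefore $\ge d([x],[y])$. For $d_{[P(G)]}\le d$, pick $q\in\arg\min_{q'\in[y]}\|x-q'\|$; the substitution $g\mapsto g^{-1}$ shows $\|gx-q\|\ge\|x-q\|$ for all $g\in G$, i.e.\ $x\in\arg\min_{p\in[x]}\|p-q\|$, so \cref{prop.daduk argmax in normal space} (applied on the orbits $[x]$ and $[y]$, which lie on spheres, so nearest-point and maximal-inner-product arguments coincide) gives $q\in N_x$ and $x\in N_q$; since $G$ preserves norms we also have $x\in N_x$ and $q\in N_q$, whence the segment $\gamma_q(t):=(1-t)x+tq$ satisfies $\gamma_q'(t)=q-x\perp\mathfrak g\cdot\gamma_q(t)$ for every $t$, i.e.\ $\gamma_q$ is everywhere orthogonal to the $G$-orbits. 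By \cref{prop.daduk interval in Omega} applied to $M=[x]$ and $z=q$, every point of $(q,x]\subseteq N_x$ projects uniquely onto $x$ within $[x]$, so $(q,x]\subseteq U_x$, and \cref{prop.stab subset} gives $G_{\gamma_q(t)}\le G_x$; since $x\in P(G)$, \cref{def.principal} upgrades this to $G_{\gamma_q(t)}=G_x$, which in turn forces $\gamma_q(t)\in P(G)$. As $q\in[y]\subseteq P(G)$ too, the whole segment $[x,q]$ lies in $P(G)$, so $\pi\circ\gamma_q$ is a path in $[P(G)]$ from $[x]$ to $[y]$, and because $\gamma_q$ is horizontal its Riemannian length equals $\|x-q\|=d([x],[y])$. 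Hence $d_{[P(G)]}\le d$, giving equality; moreover the infimum is attained, so minimal geodesics always exist.

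For (b), fix $x,y\in P(G)$ and let $\gamma_q$ denote the straight segment from $x$ to $q$ for $q\in\arg\min_{q'\in[y]}\|x-q'\|$. By the previous paragraph each $\gamma_q$ lies in $P(G)$, is horizontal, and projects to a length-$d([x],[y])$ path that, being minimizing, is (after unit-speed reparametrization) a minimal geodesic, so $q\mapsto\pi\circ\gamma_q$ maps into $C([x],[y])$. For the inverse, given $\sigma\in C([x],[y])$ I would reparametrize it to unit speed and horizontally lift from $x$: the lift is a geodesic of $P(G)$ with the flat metric (horizontal lifts of geodesics are geodesics for Riemannian submersions), hence a straight segment $[x,\bar q]\subseteq P(G)$, of length $=\ell(\sigma)=d([x],[y])$, ending in the fiber $\pi^{-1}([y])=[y]$; thus $\bar q\in[y]$ and $\|x-\bar q\|=\min_{q'\in[y]}\|x-q'\|$, so $\bar q\in\arg\min_{q'\in[y]}\|x-q'\|$. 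Uniqueness of horizontal lifts then makes $q\mapsto\pi\circ\gamma_q$ and $\sigma\mapsto\bar q$ mutually inverse (the horizontal lift of $\pi\circ\gamma_q$ from $x$ is $\gamma_q$, since $\gamma_q$ is horizontal, and that of $\sigma$ is $\gamma_{\bar q}$), yielding the claimed bijection, which is precisely the one induced by projecting the straight lines joining $x$ to $\arg\min_{q\in[y]}\|q-x\|$.

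The step I expect to be the main obstacle is the inequality $d_{[P(G)]}\le d$ in (a) — equivalently, the claim that the minimizing Euclidean segment between two principal orbits stays inside $P(G)$. The plan reduces this to the stabilizer inclusion of \cref{prop.stab subset} together with the unique-projection statement of \cref{prop.daduk interval in Omega}, but one must be careful to apply \cref{prop.daduk interval in Omega} on the orbit $[x]$ and to verify $[x,q]\subseteq N_x$ (which uses $x,q\in N_x$, hence needs the observation that $x\in\arg\min_{p\in[x]}\|p-q\|$). A secondary technical point is the use of globally defined horizontal lifts in the incomplete manifold $P(G)$: this is justified because $\pi$ is a fiber bundle with compact fiber $G/H$, so horizontal lifts exist along every base path, and a finite-length horizontal geodesic in $P(G)$ is an honest straight segment contained in $P(G)$.
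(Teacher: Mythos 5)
Your proposal is correct, but it takes a substantially more self-contained route than the paper. The paper's proof of (a) is essentially by citation: openness, denseness, connectedness, and the unique Riemannian structure with the stated properties are all quoted from Theorem~3.82 and Exercise~3.81 of~\cite{AlexandrinoB:15}. For (b), the paper invokes Kleiner's Lemma (Lemma~3.70 in~\cite{AlexandrinoB:15}) to conclude that the minimizing segments $[x,q]$ lie in $P(G)$, uses \cref{prop.stab PG subset} to see that $G_x$ fixes $\arg\min_{q\in[y]}\|q-x\|$, and then cites Lemma~40 of~\cite{MixonQ:22}, which supplies the bijection between $(\arg\min_{q\in[y]}\|q-x\|)/G_x$ and $C([x],[y])$. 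You re-derive each of these ingredients: your argument that $(q,x]\subseteq U_x$ (via \cref{prop.daduk interval in Omega}), hence $G_{\gamma_q(t)}\leq G_x$ (via \cref{prop.stab subset}), hence $G_{\gamma_q(t)}=G_x$ and $\gamma_q(t)\in P(G)$ by principality, is precisely a proof of the case of Kleiner's Lemma needed here; your computation that $q-x\perp\mathfrak g\cdot\gamma_q(t)$ for all $t$ (using $q-x\in N_x\cap N_q$) plus the horizontal-lift/Ehresmann argument with compact fiber $G/H$ replaces the content of Lemma~40 of~\cite{MixonQ:22}; and your uniqueness-of-submersion-metric argument replaces Exercise~3.81. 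What the paper's approach buys is brevity; what yours buys is transparency and independence from the external Lemma~40, at the cost of having to justify the global existence of horizontal lifts on the incomplete manifold $P(G)$ and the identification of lifted geodesics with straight segments — both of which you handle correctly.
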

\begin{proof}

    For (a), the openness, denseness and connectedness of $[P(G)]$ follow from Theorem~3.82 in~\cite{AlexandrinoB:15}. The rest of the statement regarding the unique Riemannian structure follows from Exercise~3.81 in~\cite{AlexandrinoB:15}.
     
    For (b), Kleiner's Lemma (Lemma~3.70 in~\cite{AlexandrinoB:15}) and \cref{prop.daduk argmax in normal space} imply that straight lines joining $x$ to $\arg\min_{q\in[y]}\|q-x\|$ lie entirely in $P(G)$ and $N_x$, respectively. In particular, \cref{prop.stab PG subset} implies that every element of $\arg\min_{q\in[y]}\|q-x\|$ is fixed by $G_x$. Then, Lemma~40 in~\cite{MixonQ:22} yields the desired bijection
    \[\arg\min_{q\in[y]}\|q-x\| = \frac{\arg\min_{q\in[y]}\|q-x\|}{G_x} \longrightarrow C([x],[y]),\]
    induced by projecting (unit speed) straight lines joining $x$ to $\arg\min_{q\in[y]}\|q-x\|$ into the orbit space wherein they land in $[P(G)]$.
\end{proof}
Next, we reformulate the implication (a) $\Rightarrow$ (b) in \cref{lem.PG char} using a `cut-locus' interchangeability argument for the Riemannian manifold $[P(G)]$, via the following lemma.
\begin{lemma}\label{lem.Voronoi char minimal geo ext}
Suppose $G\leq \Oname(d)$ is compact. For $x,z \in P(G)$, the following statements are equivalent:
\begin{lemenum}
    \item $z\in Q_x$.
    \item There exists $q\in P(G)$ and a minimal geodesic $\gamma^r$ in $[P(G)]$ joining $[x]$ to $[q]$ such that $[z]$ lies in the interior of $\gamma^r$.
\end{lemenum}
\end{lemma}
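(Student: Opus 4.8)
The plan is to route the equivalence through the Riemannian geometry of the principal stratum from \cref{prop.principal stratum}, which matches minimal geodesics of $[P(G)]$ with straight segments from a principal point to a nearest point of another principal orbit. First, both conditions depend only on the orbits $[x]$ and $[z]$: (b) visibly, and (a) because $Q_x$ is $G$-invariant (\cref{lem.Q invariant}), so $z\in Q_x$ iff $[z]\subseteq Q_x$. So in (a) I may replace $z$ by a representative and assume $z\in V_x$, and then $z\in P(G)$ follows from \cref{prop.stab PG to PG}. I will also assume $z\ne x$ throughout; this is the only case needed in the sequel, and it forces $z\notin[x]$, hence $[z]\ne[x]$ (which is necessary anyway for $[z]$ to lie in the interior of a geodesic emanating from $[x]$).

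For (a)$\Rightarrow$(b): starting from $z\in V_x$, \cref{lem.UV facts intersection V open in normal} tells me $V_x$ is an open neighborhood of $x$ in $N_x$ that is star-convex at $x$, so I can prolong the ray from $x$ through $z$ slightly past $z$ inside $V_x$ and pick $q\in V_x$ with $z$ in the open segment $(x,q)$. Since $q\in U_x$ and $x\in P(G)$, \cref{prop.stab PG to PG} gives $q\in P(G)$ and \cref{lem.PG easier char} gives $x\in U_q$, i.e.\ $q$ is the unique nearest point of $[q]$ to $x$. By \cref{prop.principal stratum}(b), the image in $\mathbb R^d/G$ of the straight segment $[x,q]$ is then a minimal geodesic $\gamma^r$ joining $[x]$ to $[q]$; since a minimal geodesic is injective, this projection carries the open segment $(x,q)$ onto the interior of $\gamma^r$, so $[z]$ lies in the interior of $\gamma^r$.

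For (b)$\Rightarrow$(a): given such a $q$ and $\gamma^r$, \cref{prop.principal stratum}(b) supplies a representative $q'\in\arg\min_{p\in[q]}\|p-x\|$ (so $q'\in P(G)$) with $\gamma^r$ the image of $[x,q']$, together with a point $z'$ in the open segment $(x,q')$ satisfying $[z']=[z]$. Writing $z'=x+t(q'-x)$ with $t\in(0,1)$ and using $\|x-q'\|=d([x],[q])$, one gets $\|p-x\|^2\ge 2\langle p-x,q'-x\rangle$ for all $p\in[x]$, and then an elementary inequality in $t\in(0,1)$ gives $\|p-z'\|^2-\|x-z'\|^2=\|p-x\|^2-2t\langle p-x,q'-x\rangle\ge 0$, with equality only at $p=x$; hence $z'\in U_x$. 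It remains to produce a full neighborhood of $z'$ on which nearest-point projection onto $[x]$ is single-valued. For this I would apply \cref{prop.daduk interval in Omega} to $M=[x]$ with source point $q'$ (legitimate because $x\in\arg\min_{p\in[x]}\|q'-p\|$), obtaining an open neighborhood $U$ of the half-open segment $(q',x]$ on which projection onto $[x]$ is everywhere single-valued; since $z'$ lies strictly between $q'$ and $x$, $U$ is a genuine neighborhood of $z'$. Then \cref{lem.V char} yields $z'\in V_x\subseteq Q_x$, and $G$-invariance of $Q_x$ gives $z\in Q_x$.

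I expect the main obstacle to be this last step of (b)$\Rightarrow$(a): promoting ``$z'$ projects uniquely onto $[x]$'' to ``some neighborhood of $z'$ does.'' Invoking \cref{prop.daduk interval in Omega} at $z'$ toward its own nearest point $x$ only controls a neighborhood of the half-open segment $(z',x]$, which may pinch to $z'$; the fix is to run the lemma from the far endpoint $q'$, and this works precisely because $z'$ sits in the \emph{interior} of $[x,q']$, not at an endpoint. A lesser point is to check that the segment/geodesic correspondence of \cref{prop.principal stratum}(b) is endpoint-to-endpoint and interior-to-interior, which follows from injectivity of minimal geodesics.
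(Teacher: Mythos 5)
Your proof is correct and follows essentially the same route as the paper's: reduce to $z\in V_x$, extend the segment past $z$ inside $V_x$ via star-convexity, and use \cref{prop.principal stratum} to pass between horizontal segments and minimal geodesics, with \cref{prop.daduk interval in Omega} applied from the far endpoint $q'$ handling the neighborhood issue in (b)$\Rightarrow$(a) exactly as the paper does. The only differences are cosmetic: the paper builds the geodesic from $[q]$ to $[x]$ and reverses it rather than invoking \cref{lem.PG easier char} to go directly from $[x]$, and it reads $z'\in U_x$ off the Dudek--Holly neighborhood statement instead of your explicit inner-product computation.
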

\begin{proof}
    (a)$\Rightarrow$(b). Since $z\in Q_x$, there exists $g\in G$ such that $z \in g\cdot V_x = V_{gx}$. By openness of $V_x$ in $N_x$ and its star convexity at $x$ as shown in \cref{lem.UV facts intersection V open in normal}, there exists $q\in V_x$ such that $[x,g^{-1}z] \subseteq [x,q)\subseteq V_x$. By \cref{prop.stab PG to PG}, we have $[x,q]\subseteq V_x\subseteq P(G)$. Then the desired conclusion follows from \cref{prop.principal stratum B} by projecting the (unique) straight line minimizer $[q,x]$ joining $q$ to $\arg\min_{p\in [x]}\|q-p\| = \{x\}$ via the orbit map and reversing the parameterization of the resulting geodesic in $[P(G)]$.

    (b)$\Rightarrow$(a). Without loss of generality and by $G$-invariance, translate $q$ so that $x\in \arg\min_{p\in [x]}\|q-p\|$. By \cref{prop.principal stratum B}, a lift of $\gamma^r$ is given by the straight line $[x,q]$ and it holds that $g^{-1}z\in (q,x]$ for some $g\in G$. Then by \cref{prop.daduk interval in Omega} applied to the interval $(q,x]$, it follows that $g^{-1}z\in V_x$ and so $z\in Q_x$ as desired.
\end{proof}

If the geodesic metric of $[P(G)]$ were complete, the last statement of \cref{lem.Voronoi char minimal geo ext} would be symmetric in $[x]$ and $[z]$. This is due to the remarkable symmetry of the cut locus for metrically complete Riemannian manifolds (Scholium~3.78 in~\cite{GallotHL:90}.) Although $[P(G)]$ is almost never metrically complete, we are still able to prove the desired symmetry for all Riemannian manifolds with the help of additional hypotheses which $[P(G)]$ satisfies. Since the proof is an adaptation of the arguments for Proposition~13.2.2 in~\cite{DoCarmo:92}, we postpone it to the appendix.

\begin{lemma}
    \label{lem.manifold unique neighborhoods}
    Let $M$ be a Riemannian manifold and let $[x],[z]\in M$. Suppose that the following hypotheses, denoted $H_M([x],[z])$, hold:
    \begin{itemize}
        \item[(i)] There exists a minimal geodesic $\gamma^r$ joining $[x]$ to some $[q]\in M$ such that $[z]$ lies in the interior of $\gamma^r$.
        \item[(ii)] There exists a neighborhood $U$ of $[x]$ such that for every $[p]\in U$, there is a minimal geodesic joining $[z]$ to $[p]$.
        \item[(iii)] For any sequence $[x_i]\to [x]$ and any sequence $\sigma_i$ of minimal geodesics joining $[z]$ to $[x_i]$, it holds that $\sigma_i\rightarrow \gamma$ pointwise in $M$ and $\sigma_i'(0)\to \gamma'(0)$ in $T_{[z]}M$, where $\gamma$ is the geodesic joining $[z]$ to $[x]$ given by the reverse parameterization of $\gamma^r|_{[x]\to [z]}$. 
    \end{itemize}
    Then, $\gamma$ is a minimal geodesic joining $[z]$ to $[x]$ and it remains minimizing shortly beyond $[x]$.
\end{lemma}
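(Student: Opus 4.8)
The plan is to imitate the classical proof that the cut locus of a complete Riemannian manifold is symmetric (the argument behind Proposition~13.2.2 in~\cite{DoCarmo:92}), deploying hypotheses~(ii) and~(iii) at exactly the two places where that proof uses completeness. Note first that one half of the conclusion is automatic: by~(i), $\gamma$ is the reversal of the subsegment $\gamma^r|_{[x]\to[z]}$ of a minimal geodesic, and a subsegment of a minimal geodesic is minimal, so $\gamma$ is a minimal geodesic from $[z]$ to $[x]$ with $\ell(\gamma)=d([z],[x])=:\ell_0$; I take $\gamma$ --- and its maximal geodesic extension $\tilde\gamma$, which is defined on some interval $[0,\ell_0+\delta)$ by local existence of geodesics --- to be unit speed. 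So it remains to show that $\tilde\gamma|_{[0,\ell_0+\varepsilon]}$ is minimizing for all sufficiently small $\varepsilon>0$. Suppose, for contradiction, that it is not; then there is a sequence $\varepsilon_i\downarrow 0$ for which the points $[x_i]:=\tilde\gamma(\ell_0+\varepsilon_i)$ satisfy $d([z],[x_i])<\ell_0+\varepsilon_i$, and clearly $[x_i]\to[x]$.

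Next I would produce comparison minimizers and force a conjugate point. By~(ii), for all large $i$ the point $[x_i]$ lies in the neighborhood $U$, so there is a unit-speed minimal geodesic $\sigma_i$ from $[z]$ to $[x_i]$; write $\ell_i:=\ell(\sigma_i)=d([z],[x_i])$, so $\ell_i<\ell_0+\varepsilon_i$ and $\ell_i\to\ell_0$ by continuity of $d$. By~(iii), $\sigma_i'(0)\to\gamma'(0)=:v$ in $T_{[z]}M$. Now $\exp_{[z]}(\ell_i\,\sigma_i'(0))=[x_i]=\tilde\gamma(\ell_0+\varepsilon_i)=\exp_{[z]}\bigl((\ell_0+\varepsilon_i)\,v\bigr)$, and the vectors $\ell_i\,\sigma_i'(0)$ and $(\ell_0+\varepsilon_i)\,v$ both converge to $\ell_0 v$. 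If $[x]=\exp_{[z]}(\ell_0 v)$ were not conjugate to $[z]$ along $\gamma$, then $d(\exp_{[z]})_{\ell_0 v}$ would be invertible, so $\exp_{[z]}$ would be injective on a neighborhood of $\ell_0 v$; for large $i$ this would force $\ell_i\,\sigma_i'(0)=(\ell_0+\varepsilon_i)\,v$, and taking norms (both $\sigma_i'(0)$ and $v$ are unit vectors) would give $\ell_i=\ell_0+\varepsilon_i$, contradicting $\ell_i<\ell_0+\varepsilon_i$. Hence $[x]$ is conjugate to $[z]$ along $\gamma$.

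The last step is to contradict~(i). Conjugacy is symmetric along a geodesic --- a nontrivial Jacobi field vanishing at the two endpoints of $\gamma$, read in reverse, is again such a field along the reversed geodesic --- so $[z]$ is conjugate to $[x]$ along $\gamma^r|_{[x]\to[z]}$, i.e., $[z]$ is an interior conjugate point of $[x]$ along the minimal geodesic $\gamma^r$ joining $[x]$ to $[q]$. But a geodesic does not minimize strictly beyond a conjugate point of its initial endpoint, so $\gamma^r$ cannot be minimal all the way to $[q]$, contradicting~(i). This contradiction shows that $\tilde\gamma$ remains minimizing shortly past $[x]$, which together with the already-established minimality of $\gamma$ finishes the proof.

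The step I expect to be the genuine obstacle is not any single computation but keeping the entire argument \emph{local}: because $M$ is not assumed complete, one cannot call on Hopf--Rinow to produce the minimizing geodesics $\sigma_i$ or to extract a convergent subsequence of them, and this is precisely the role of~(ii) and~(iii). Everything else --- local extendability of geodesics, local injectivity of $\exp_{[z]}$ at a non-conjugate vector, symmetry of conjugate points, and loss of minimality past a conjugate point --- is a standard local fact. One should also verify that~(iii) already excludes the ``second alternative'' in do Carmo's dichotomy (a distinct minimal geodesic from $[z]$ to $[x]$ appearing as a limit of the $\sigma_i$); it does, since~(iii) pins the limiting initial velocity down to $\gamma'(0)$.
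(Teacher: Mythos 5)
Your proposal is correct and follows essentially the same route as the paper's proof: both adapt do Carmo's Proposition~13.2.2, using hypothesis~(ii) to produce the comparison minimizers $\sigma_i$, hypothesis~(iii) to pin down their limiting initial velocity, local injectivity of $\exp_{[z]}$ to force singularity of $d\exp_{[z]}$ at $\ell_0\gamma'(0)$ (hence conjugacy of $[z]$ and $[x]$), and then the symmetry of conjugate points together with hypothesis~(i) to reach a contradiction. The only cosmetic difference is how the non-conjugate case is contradicted (you take norms to get $\ell_i=\ell_0+\varepsilon_i$; the paper concludes $\sigma_j'(0)=\gamma'(0)$), which is immaterial.
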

\begin{proof}
See \cref{app.cut point symmetry}.
\end{proof}

\begin{lemma}\label{lem.PG manifold hypo}
Suppose $G\leq \Oname(d)$ is compact. For any $x,z\in P(G)$, the Riemannian manifold $[P(G)]$ satisfies the hypotheses $H_{[P(G)]}([x],[z])$ given in \cref{lem.manifold unique neighborhoods}, provided $z \in Q_x$.
\end{lemma}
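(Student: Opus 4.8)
The plan is to verify the three hypotheses (i)--(iii) of \cref{lem.manifold unique neighborhoods} for the Riemannian manifold $M := [P(G)]$ together with the points $[x]$ and $[z]$, one at a time. First note that since $z\in Q_x = \bigsqcup_{p\in[x]}V_p$, it lies in $V_{p_0}$ for a unique $p_0 = g_0 x\in[x]$; because both the quotient metric and the hypotheses $H_{[P(G)]}([x],[z])$ depend only on the orbits $[x]$ and $[z]$, we may replace $z$ by $g_0^{-1}z$ and assume $z\in V_x$, and since $V_x\cap[x]=\{x\}$ we may also assume $z\neq x$ (if $z\in[x]$ then $[z]=[x]$, and $H_{[P(G)]}([x],[z])$ is not needed). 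In particular $\arg\min_{q\in[x]}\|q-z\| = \{x\}$ because $z\in V_x\subseteq U_x$. Hypothesis (i) is then exactly the implication (a)$\Rightarrow$(b) of \cref{lem.Voronoi char minimal geo ext}, which supplies $q\in P(G)$ and a minimal geodesic $\gamma^r$ in $[P(G)]$ joining $[x]$ to $[q]$ with $[z]$ in its interior. Writing $\pi := [\cdot]\big|_{P(G)}\colon P(G)\to[P(G)]$ for the orbit map, one reads off from the proof of that implication that the geodesic $\gamma$ of \cref{lem.manifold unique neighborhoods}(iii) --- the reverse parameterization of the (automatically minimal) subarc $\gamma^r|_{[x]\to[z]}$ --- is the image under $\pi$ of the straight segment $[z,x]$; this segment lies in $P(G)$ by Kleiner's Lemma (cf.\ the proof of \cref{prop.principal stratum B}), so $\pi$ is defined and smooth along it, and by \cref{prop.principal stratum B} applied to the pair $(z,x)$, together with $\arg\min_{q\in[x]}\|q-z\|=\{x\}$, it is the \emph{unique} minimal geodesic from $[z]$ to $[x]$.

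For hypothesis (ii) I would take $U := [P(G)]$, which is a neighborhood of $[x]$ because $[P(G)]$ is open in $\mathbb R^d/G$ (\cref{prop.principal stratum A}). For each $[p]\in U$ the orbit $[p]$ is compact, so $\arg\min_{q\in[p]}\|q-z\|\neq\varnothing$, and \cref{prop.principal stratum B} applied to the pair $(z,p)$ then produces a minimal geodesic joining $[z]$ to $[p]$.

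Hypothesis (iii) is the substantive step, and the only place where the assumption $z\in Q_x$ is genuinely used --- through the resulting uniqueness of the minimal geodesic from $[z]$ to $[x]$. Given a sequence $[x_i]\to[x]$ in $[P(G)]$ and minimal geodesics $\sigma_i$ from $[z]$ to $[x_i]$, \cref{prop.principal stratum B} writes $\sigma_i = \pi\circ c_i$ with $c_i(t) = z + t(w_i - z)$ on $[0,1]$ for some $w_i\in\arg\min_{q\in[x_i]}\|q-z\|$, and I would first prove that $w_i\to x$. The sequence $(w_i)$ is bounded, each $w_i$ lying on the sphere of radius $\|x_i\|\to\|x\|$; since $d([x_i],[x])\to0$ we may pick lifts $x_i'\in[x_i]$ with $x_i'\to x$, and then orthogonality of $G$ gives $d(w_i,[x])\le\|x_i'-x\|\to0$, so any subsequential limit $w_\infty$ of $(w_i)$ lies in the (closed) orbit $[x]$; moreover $\|w_\infty-z\| = \lim_i d(z,[x_i]) = d(z,[x])$ by continuity of $p\mapsto d(z,[p])$, so $w_\infty\in\arg\min_{q\in[x]}\|q-z\| = \{x\}$, forcing $w_\infty = x$ and hence $w_i\to x$. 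Consequently $c_i\to c$ uniformly on $[0,1]$, where $c(t) = z + t(x-z)$; since $[z,x] = c([0,1])$ is compact in the open set $P(G)$, for large $i$ the curves $c_i$ lie in a fixed compact subset of $P(G)$ on which $\pi$ is uniformly continuous, whence $\sigma_i = \pi\circ c_i \to \pi\circ c = \gamma$ uniformly --- in particular pointwise --- and $\sigma_i'(0) = d\pi_z(w_i - z) \to d\pi_z(x-z) = \gamma'(0)$ by continuity of the linear map $d\pi_z$. The one genuinely delicate point is that (iii) requires \emph{every} such $\sigma_i$ to converge to the \emph{particular} geodesic $\gamma$, which is exactly what the singleton $\arg\min_{q\in[x]}\|q-z\| = \{x\}$ --- equivalently $z\in Q_x$ --- provides; the remaining ingredients (boundedness and subsequential limits of the feet $w_i$, closedness of orbits, continuity of $p\mapsto d(z,[p])$, and uniform continuity of $\pi$ and $d\pi_z$ near $[z,x]\subseteq P(G)$) are routine once the parameterization conventions are fixed.
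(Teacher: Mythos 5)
Your proposal is correct and follows essentially the same route as the paper: hypothesis (i) via \cref{lem.Voronoi char minimal geo ext}, hypothesis (ii) via \cref{prop.principal stratum B}, and hypothesis (iii) by showing the feet $w_i\in\arg\min_{q\in[x_i]}\|q-z\|$ converge to $x$ through a bounded-subsequence argument that exploits $\arg\min_{q\in[x]}\|q-z\|=\{x\}$. The only cosmetic difference is that you identify the subsequential limit as the minimizer via continuity of $p\mapsto d(z,[p])$, whereas the paper passes to the limit in the inequality $\|g_ix_i-z\|\leq\|hg_ix_i-z\|$; these are interchangeable.
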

\begin{proof}
    Since $z\in Q_x$, there exists $g\in G$ such that $z \in g\cdot V_x = V_{gx}$. By \cref{lem.Voronoi char minimal geo ext} and its proof, the first hypothesis holds with $\gamma^r|_{[x]\to [z]}$ given by the image of $[gx,z]$ under the orbit map. The second hypothesis follows from \cref{prop.principal stratum B} which entails that every pair of orbits in $[P(G)]$ are joined by some minimal geodesic in $[P(G)]$.
    
    For the third hypothesis, consider any sequence $[x_i]\to [x]$ and any sequence $\sigma_i$ of minimal geodesics joining $[z]$ to $[x_i]$. By \cref{prop.principal stratum B} and for each $i$, choose $g_i\in G$ so that $g_ix_i \in \arg\min_{q\in [x_i]}\|q-z\|$ and $\sigma_i$ are images of $[z,g_ix_i]$ under the orbit map. We claim that $g_ix_i\to gx$. Once this is established, the straight lines $[z,g_ix_i]$ and their derivatives at $z$ converge pointwise to the straight line $[z,gx]$ and its derivative at $z$, respectively; then these convergences descend, under the orbit map, to the desired limits $\sigma_i \to \gamma$ and $\sigma_i'(0)\to \gamma'(0)$.

    It only remains to prove the claim $g_ix_i\to gx$. Since the sequence is bounded, it suffices to show that every convergent subsequence converges to $gx$. We fix such a convergent subsequence with indices $m_i$. Since $[x_{m_i}]\to [x]$, we have $g_{m_i}x_{m_i}\to p$ for some $p\in [x]$, and we need to show that $p=gx$. By the definition of $g_i$, we have that $\|g_ix_i-z\| \leq \|hg_ix_i-z\|$ for each $i$ and each $h\in G$. Taking limits in $m_i$, we obtain that $\|p-z\|\leq \|hp-z\|$ for each $h\in G$. Thus, $p\in \arg\min_{q\in [x]}\|q-z\| = \{gx\}$, as desired.
\end{proof}

We are now ready to give an almost immediate proof of \cref{lem.PG char}.

\begin{proof}[Proof of \cref{lem.PG char}]
(a)$\Rightarrow$(b). By \cref{lem.PG easier char,lem.Voronoi char minimal geo ext,lem.manifold unique neighborhoods,lem.PG manifold hypo}, $z\in V_x\subseteq Q_x\cap U_x$ implies $x\in Q_z\cap U_z = V_z$ as desired. 

(b)$\Rightarrow$(a). The proof is identical to the (b)$\Rightarrow$(a) case in \cref{lem.PG easier char}, with each $U_x$ and $U_z$ replaced with $V_x$ and $V_z$, respectively.
\end{proof}

\subsection{Geometric Characterization of Regular Orbits}
\label{sec.RG char proof}
This section is dedicated to proving \cref{lem.regular char}. We proceed with similar arguments as in the previous section. To enhance readability and help absorb the core ideas, we recommend skipping the proofs of \cref{lem.desingularization,lem.hypo desing,lem.Voronoi global implies RGRG,lem.Voronoi char minimal geo ext RGRG} on the first read.

The first challenge we encounter is the non-manifold nature of $[R(G)]$. To address this issue, we desingularize the space and localize the analysis to an open neighborhood of a Voronoi cell. The following lemma could be seen as an analogue of \cref{prop.principal stratum}. Since the proof is long and technical, we postpone it to the appendix.

\begin{lemma}
    \label{lem.desingularization}
    Suppose $G\leq \Oname(d)$ is compact and fix $x\in R(G)$. There exists an embedded submanifold $S$ of $G$ passing through the identity $e\in G$ such that each of the following statements holds:
    \begin{lemenum}
        \item \label{lem.desingularization S sym and trans} $S=S^{-1}$ and $S\cap G_x=\{e\}$.
        \item \label{lem.desingularization disjoint fibers}For distinct $p \neq q\in V_x$, $S\cdot p\cap S\cdot q = \varnothing$.
        \item \label{lem.desingularization open SVX} $S\cdot V_x$ is an open subset of $\mathbb R^d$.
        \item \label{lem.desingularization Mx smooth structure} Let $M_x$ denote the space of equivalence classes $\{S\cdot p\}_{p\in V_x}$ equipped with the quotient topology. Then there exists a smooth structure and a Riemannian metric on $M_x$ such that the quotient map $\pi\colon S\cdot V_x\to M_x$ is a smooth Riemannian submersion.
        \item\label{lem.desingularization embedded fiber} For each $p\in V_x$, $S\cdot p$ is an open subset of $G\cdot p$.
        \item \label{lem.desingularization WP}For each $p\in V_x$, there exists an open neighborhood $W_p$ of $x$ in $V_x$ such that $\pi(W_p)$ is open in $M_x$ and for $q\in W_p$, the minimal geodesics joining $\pi(p)$ to $\pi(q)$ in $M_x$ exist and are precisely the $\pi$-images of straight line distance minimizers from $p$ to $\overline S\cdot q$, all of which lie in $S\cdot V_x$.
    \end{lemenum}
\end{lemma}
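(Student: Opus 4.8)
The plan is to realize $S$ as a small symmetric ``slice'' in $G$ transverse to the stabilizer $G_x$, and then to prove that the product map $(s,p)\mapsto s\cdot p$ is a \emph{global} diffeomorphism from $S\times V_x$ onto the open set $S\cdot V_x$; from this, essentially all of (a)--(e) drop out, and (f) comes from a local analysis at $x$ using this diffeomorphism together with \cref{prop.daduk proposition}. To construct $S$: since $G_x$ is compact I would fix a complement $\mathfrak m$ of $\mathfrak g_x$ in $\mathfrak g$ (an $\operatorname{Ad}(G_x)$-invariant one, say) and set $S:=\exp(B)$ for a small symmetric open ball $B\subseteq\mathfrak m$ about $0$. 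As $\exp$ is a diffeomorphism near $0\in\mathfrak g$, $S$ is an embedded submanifold through $e$ with $T_eS=\mathfrak m$, and $S=S^{-1}$ since $B=-B$. Because $\mathfrak m\cap\mathfrak g_x=0$, the submanifold $S$ meets the identity component of $G_x$ transversally at $e$ (hence only at $e$ nearby), while the remaining components of $G_x$ are bounded away from $e$; shrinking $B$ then gives $S\cap G_x=\{e\}$, which is (a). A few further mild shrinkings of $B$ (all ``near $e$'' conditions) will be used silently below.

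The heart of the matter is the pointwise identity $T_p\oplus N_x=\mathbb R^d$ for every $p\in V_x$. I would prove it like this: $p\in V_x\subseteq U_x$ forces $G_p\le G_x$ by \cref{prop.stab subset}, so $\dim[p]\ge\dim[x]$ is maximal and $\dim T_p=\dim T_x$; and since $p\in V_x\subseteq Q_x$, by \cref{lem.Q lemma} and \cref{prop.daduk interval in Omega} the nearest-point projection $v_x$ onto $[x]$ is smooth near $p$ with $v_x(p)=x$, is the identity on a neighborhood of $p$ inside $N_x$, and is $G$-equivariant near $e$; these give $N_x\subseteq\ker dv_x|_p$ and $dv_x|_p(\omega\cdot p)=\omega\cdot x$ for $\omega\in\mathfrak g$, hence $dv_x|_p(T_p)=T_x$, and a rank count forces $\ker dv_x|_p=N_x$ and $T_p\cap N_x=0$. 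With this in hand, for $F\colon S\times V_x\to\mathbb R^d$, $F(s,p):=s\cdot p$ (image $S\cdot V_x$), a kernel vector $(\dot s,v)$ of $dF_{(s,p)}$ satisfies $s\cdot v=-\dot s\cdot p\in T_{sp}=s\cdot T_p$ by \cref{prop.Nx Tx Gx invariant}, so $v\in T_p\cap N_x=0$, and then $\dot s\cdot p=0$ forces $\dot s=0$ once $B$ is small enough that the left-translate of $T_sS$ to $\mathfrak g$ meets $\mathfrak g_x$ (hence $\mathfrak g_p$) trivially throughout $S$. Since $\dim(S\times V_x)=\dim\mathfrak m+\dim N_x=d$, $F$ is a local diffeomorphism, in particular an open map, so $S\cdot V_x$ is open --- that is (c). It is injective: if $s_1p_1=s_2p_2$ with $p_i\in V_x$, this point lies in $V_{s_1x}\cap V_{s_2x}$, so $s_1x=s_2x$ by \cref{lem.UV facts equivariance,lem.UV facts intersection}, whence $s_2^{-1}s_1\in G_x$; for $B$ small this lies in the identity component of $G_x$, and local injectivity of $(\beta,\upsilon)\mapsto\exp(\beta)\exp(\upsilon)$ on $\mathfrak m\times\mathfrak g_x$ forces $s_1=s_2$, then $p_1=p_2$. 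Hence $F$ is a diffeomorphism onto the open set $S\cdot V_x$; this gives (b) (injectivity) and (e) ($S\cdot p=F(S\times\{p\})$ is a diffeomorphic image of an embedded submanifold). Finally $\dim(S\cdot p)=\dim S=\dim[x]=\dim[p]$ and $S\cdot p\subseteq[p]$, so $S\cdot p$ is an \emph{open} piece of $[p]$; hence inside $S\cdot V_x\subseteq R(G)$ the family $\{S\cdot p\}_{p\in V_x}$ is precisely the set of leaves of the restricted $G$-orbit foliation.

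For (d), I would transport smooth structures through $F$ to identify $\pi\colon S\cdot V_x\to M_x$ with the projection $S\times V_x\to V_x$, making $M_x$ a manifold (diffeomorphic to $V_x$), $\pi$ a smooth submersion, and the quotient topology the correct one; its fibers being the leaves of the regular $G$-orbit foliation --- a Riemannian foliation for the Euclidean metric by the singular Riemannian foliation theory of compact isometry groups (\cite{AlexandrinoB:15}; cf.\ \cref{prop.principal stratum}) --- the horizontal distribution $y\mapsto N_y$ descends to a well-defined Riemannian metric on the leaf space $M_x$ making $\pi$ a Riemannian submersion. For (f), fix $p\in V_x$: for $q$ near $x$ the compact set $\overline S\cdot q$ is a small perturbation of $\overline S\cdot x\subseteq[x]$, whose unique $p$-nearest point is $x$ (as $p\in U_x$), lying in the interior $S\cdot x$, so every $p$-nearest point of $\overline S\cdot q$ lies in $S\cdot q$; since $[p,x]\subseteq V_x$ by star-convexity (\cref{lem.UV facts intersection V open in normal}) and $S\cdot V_x$ is open, the straight segments from $p$ to these nearest points lie in $S\cdot V_x$, and I would take this as the defining property of $W_p$ (so $\pi(W_p)$ is open since $\pi$ restricts to the identity on $V_x$). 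Each such segment meets its terminal fiber $S\cdot q$ --- an open piece of $[q]$ --- orthogonally by \cref{prop.daduk argmax in normal space}, hence by Kleiner's Lemma (Lemma~3.70 in~\cite{AlexandrinoB:15}) stays orthogonal to every $G$-orbit it crosses, so it is a horizontal geodesic of $\pi$ whose $\pi$-image is a geodesic in $M_x$ from $\pi(p)$ to $\pi(q)$ of length $\operatorname{dist}(p,\overline S\cdot q)$. Conversely, the horizontal lift starting at $p$ of any sufficiently short curve from $\pi(p)$ to $\pi(q)$ in $M_x$ is a Euclidean segment ending in $S\cdot q$, of length $\ge\operatorname{dist}(p,\overline S\cdot q)$ with equality exactly for the straight-line minimizers; this yields the stated description.

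The main obstacle I expect is carrying out the Riemannian part of (d) and the geodesic bookkeeping of (f) rigorously: pinning down the smooth and Riemannian structure on the local leaf space $M_x$ from the foliation theory, verifying horizontal-lift existence, and matching minimal geodesics in $M_x$ with straight-line distance minimizers in $\mathbb R^d$ (including the claim that nearest points on $\overline S\cdot q$ stay in $S\cdot q$). By contrast, the worry that ``$S$ small enough'' might have to depend on the (possibly unbounded) cell $V_x$ is neutralized at one stroke by the pointwise identity $T_p\oplus N_x=\mathbb R^d$, which makes $dF$ nonsingular everywhere simultaneously.
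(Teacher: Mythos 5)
Your construction of $S$ and your route to (a), (b), (c) and (e) are genuinely different from the paper's and, as far as I can check, correct. The paper builds $S$ as an orthogonal slice to the right $G_x$-action on $G$ and then fights through a quantitative fiber-disjointness statement for products $\overline{S^a}p_1\cap\overline{S^b}p_2$ with $a+b\le 5$ (\cref{claim.fiber disjoint}), which it needs to describe saturations, build charts on $M_x$ one tubular neighborhood at a time, and verify smooth transitions. Your single global diffeomorphism $F\colon S\times V_x\to S\cdot V_x$, powered by the pointwise splitting $T_p\oplus N_x=\mathbb R^d$ for all $p\in V_x$ (your rank count on $dv_x|_p$ is clean and correct, and is where regularity of $x$ enters), collapses all of that: injectivity of $F$ plus local diffeomorphism gives (b), (c), (e) and the smooth structure in (d) at one stroke, with no $S^5$ bookkeeping. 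This is a real simplification. (Minor slip: $v_x$ is \emph{constant}, not the identity, on a neighborhood of $p$ in $N_x$; the conclusion $N_x\subseteq\ker dv_x|_p$ is the one for constancy, so nothing breaks.)

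The genuine gap is in (f), and it is not a bookkeeping issue — it is the technical heart of the paper's proof. Your forward direction (straight-line minimizers are horizontal by Kleiner's Lemma and project to geodesics of length $d(p,\overline Sq)$) is fine, but it only produces \emph{some} geodesics of that length. To conclude they are \emph{minimal}, and that \emph{every} minimal geodesic arises this way, you must show $d_{M_x}(\pi(p),\pi(q))\ge d(p,\overline Sq)$, i.e.\ rule out shorter competitor curves in the incomplete manifold $M_x$. Your one sentence "the horizontal lift starting at $p$ of any sufficiently short curve \dots is a Euclidean segment ending in $S\cdot q$" assumes exactly the three things that must be proved: that a global horizontal lift exists at all (it is only guaranteed piecewise, since $S\cdot V_x$ is incomplete), that it does not escape $S\cdot V_x$ before reaching the target fiber, and that it terminates in $S\cdot q$ rather than in some other sheet $g\cdot q$ of the orbit $[q]$. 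The paper spends \cref{claim.Wp neighbhorhood} (a compactness "no escape" estimate for all curves of length $\le d(p,\overline Sq)$ ending anywhere in $G\cdot q$) and the piecewise-lift-and-stitch argument of \cref{claim.minimallifts} (including the induction showing the stitching elements stay in $S$) precisely on these points, and your proposal contains no substitute for them. You flag this as "the main obstacle," correctly, but as written the proof of (f) is missing.

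A smaller but real gap sits in the Riemannian half of (d). Appealing to Riemannian foliation theory still requires verifying that the pushforward metric on $T_{\pi(p)}M_x$ obtained from the horizontal space $N_{up}$ is independent of $u\in S$, i.e.\ $d\pi(up)|_{uX_p}=d\pi(p)|_{X_p}$; this is the content of Step~3 of \cref{claim.riemannian submersion}, and its verification uses that $G_q^0=G_x^0$ for all $q\in V_x$ (\cref{claim.connected comp stab agree}) together with the smallness of $S^2,S^3$ relative to $G_x$ — facts your setup makes available but never deploys. This step is fillable within your framework (the global diffeomorphism $F$ actually makes the verification short), but it cannot be waved through: one must also check that your quotient by the classes $S\cdot p$ agrees with a local leaf space of the orbit foliation, which is not automatic since $[p]\cap S\cdot V_x$ may contain several sheets $S\cdot(hp)$, $h\in G_x$.
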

\begin{proof}
See \cref{app.desingularization proof}. We encourage the reader to glance at the figures therein to get a visual idea of what $S\cdot V_x$ and $M_x$ look like.
\end{proof}

This allows us to prove an analogue of \cref{lem.PG manifold hypo}.

\begin{lemma}
    \label{lem.hypo desing}
    Suppose $G\leq \Oname(d)$ is compact and fix $x\in R(G)$. Take $S$, $M_x$ and $\pi$ as in \cref{lem.desingularization}. The hypotheses $H_{M_x}(\pi(x),\pi(z))$ of \cref{lem.manifold unique neighborhoods} hold for each $z\in V_x$.
\end{lemma}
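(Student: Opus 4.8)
The plan is to mimic the proof of \cref{lem.PG manifold hypo}, transporting its three verifications through the desingularization $\pi\colon S\cdot V_x\to M_x$ of \cref{lem.desingularization} in place of the orbit map $[\cdot]\colon P(G)\to [P(G)]$. Fix $x\in R(G)$ and $z\in V_x$. We must check hypotheses (i)--(iii) of \cref{lem.manifold unique neighborhoods} for the Riemannian manifold $M_x$ at the pair of points $\pi(x),\pi(z)$.

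For (i), I would use \cref{lem.desingularization WP} directly: since $z\in V_x$, the neighborhood $W_z$ of $x$ in $V_x$ is available, and because $V_x$ is a star convex open neighborhood of $x$ in $N_x$ (\cref{lem.UV facts intersection V open in normal}), there is a point $q\in W_z$ with $[x,z]\subseteq [x,q)\subseteq V_x$ — in particular $z\in (x,q]$. Applying \cref{lem.desingularization WP} to $q$ in place of a generic point (after noting $x\in \arg\min_{p\in \overline S\cdot x}\|q-p\|$, which holds since $\overline S\cdot x$ meets a neighborhood of $x$ in $V_x$ only in $x$ by \cref{lem.desingularization S sym and trans} and \cref{lem.desingularization disjoint fibers}), the $\pi$-image $\gamma^r$ of the straight segment $[\,\pi\text{-lift of }x\to q\,]$ is a minimal geodesic in $M_x$ joining $\pi(x)$ to $\pi(q)$, and $\pi(z)$ lies in its interior since $z\in(x,q]$. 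Reversing the parameterization gives the geodesic $\gamma$ of hypothesis (iii) joining $\pi(z)$ to $\pi(x)$, whose lift is the straight segment $[z,x]$.

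For (ii), apply \cref{lem.desingularization WP} with the base point $z$: it furnishes an open neighborhood $W_z'$ of $z$'s image, but we need a neighborhood of $\pi(x)$; here I would instead use the neighborhood $W_x$ provided by \cref{lem.desingularization WP} at the base point $z$ — more precisely, \cref{lem.desingularization WP} applied to the pair $(z, \cdot)$ asserts that for $q$ in an open neighborhood of $x$ inside $V_x$, minimal geodesics from $\pi(z)$ to $\pi(q)$ exist, which is exactly (ii) after pushing through $\pi$ (using that $\pi$ is open, being a submersion, so $\pi$ of an open neighborhood of $x$ in $V_x$ is an open neighborhood of $\pi(x)$ in $M_x$). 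For (iii), given $\pi(x_i)\to\pi(x)$ and minimal geodesics $\sigma_i$ from $\pi(z)$ to $\pi(x_i)$, I lift: by \cref{lem.desingularization WP} each $\sigma_i$ is the $\pi$-image of a straight segment from $z$ to some $y_i\in \overline S\cdot x_i$ (choosing representatives $x_i$ converging to $x$, which is possible since $\pi$ is a quotient by the $S$-action and $\overline S$ is compact). The segments $[z,y_i]$ and their initial velocities are determined by $y_i$, so it suffices to show $y_i\to x$. Boundedness of $\overline S\cdot x$ reduces this to showing every convergent subsequence has limit $x$; a limit $y$ lies in $\overline S\cdot x$ and satisfies $\|y-z\|\le\|y'-z\|$ for all $y'\in\overline S\cdot x$ (passing to the limit in the minimality of $y_i$), hence $y\in\arg\min_{p\in\overline S\cdot x}\|p-z\|$. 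Since $z\in V_x$ and by the local interchangeability built into \cref{lem.desingularization WP} (the straight minimizer from $z$ to $\overline S\cdot x$ is unique and lands at $x$ when $z$ is close enough to $x$ along $V_x$; in general one invokes that $z\in V_x$ forces $x$ to be the unique nearest point of $\overline S\cdot x$ to $z$ via \cref{lem.desingularization disjoint fibers} and \cref{prop.daduk interval in Omega} applied to the embedded fiber $\overline S\cdot x$), this set is $\{x\}$, so $y=x$. Then $\sigma_i\to\gamma$ pointwise and $\sigma_i'(0)\to\gamma'(0)$ follow by applying the differential of $\pi$ to the convergent straight segments.

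The main obstacle I anticipate is hypothesis (iii): unlike the $P(G)$ case, where \cref{prop.principal stratum B} gives a clean global bijection between nearest points and minimal geodesics, here the correspondence between minimal geodesics in $M_x$ and straight-line minimizers to $\overline S\cdot q$ is only guaranteed \emph{locally} near $x$ by \cref{lem.desingularization WP}, so I must be careful that the sequence $\pi(x_i)\to\pi(x)$ eventually enters the good neighborhood and that the lifts $y_i$ can be taken in $\overline S\cdot x$ rather than only in $S\cdot V_x$; compactness of $\overline S$ and the embeddedness of the fibers (\cref{lem.desingularization embedded fiber}) are what make this work, together with the argument $\|y-z\|\le\|hy-z\|$ for all relevant $h$ that already appeared in \cref{lem.PG manifold hypo}.
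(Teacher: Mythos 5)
Your proposal follows the paper's proof essentially verbatim: all three hypotheses are verified via \cref{lem.desingularization WP}, with hypothesis (iii) handled by lifting the minimal geodesics $\sigma_i$ to straight-line distance minimizers $[z,y_i]$ and passing to the limit exactly as in \cref{lem.PG manifold hypo}. Two small corrections to your treatment of hypothesis (i). First, the requirement $q\in W_z$ is both unnecessary and, in general, unsatisfiable: $W_z$ is a neighborhood of $x$ (not of $z$), while any $q$ with $z\in[x,q)$ must satisfy $\|q-x\|>\|z-x\|$, so such a $q$ need not lie in $W_z$ at all. You only need $q\in V_x$ with $[z,x]\subseteq(q,x]\subseteq V_x$, which openness and star convexity of $V_x$ provide; the relevant application of \cref{lem.desingularization WP} is then at base point $q$ with target $x\in W_q$, and $x\in W_q$ holds automatically since every $W_p$ is a neighborhood of $x$. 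Second, the identity $\{x\}=\arg\min_{p\in\overline S\cdot x}\|q-p\|$ does not follow from $\overline S\cdot x$ meeting a neighborhood of $x$ only in $x$ (that only rules out nearby competitors); it follows immediately from $q\in V_x\subseteq U_x$, which makes $x$ the unique global minimizer over all of $G\cdot x\supseteq\overline S\cdot x$. Neither issue affects the structure of the argument, which is the paper's.
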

\begin{proof}
    By the openness of $V_x$ and its star convexity at $x$, as shown in \cref{lem.UV facts intersection V open in normal}, there exists $q\in V_x$ such that $[z,x]\subseteq (q,x]\subseteq V_x$. Then by definition of $V_x$, $[q,x]$ is the unique distance minimizing straight line joining $q$ to $\overline Sx$, and it contains the unique distance minimizing straight line $[z,x]$ joining $z$ to $\overline Sx$. By \cref{lem.desingularization WP} and since $x\in W_q$, we obtain that the $\pi$-image of the straight line $[q,x]$ satisfies the first hypothesis. The second hypothesis is immediate by considering the neighborhood $W_z$ given by \cref{lem.desingularization WP}. The third hypothesis follows by taking limits of distance minimizing straight line lifts $[z,g_ix_i]$, where $g_ix_i\in W_z$, to the unique distance minimizing straight line lift $[z,x]$ as we did in the proof of \cref{lem.PG manifold hypo}.
\end{proof}
We also have two analogues of \cref{lem.Voronoi char minimal geo ext}. The first concerns the (global) open Voronoi cell.
\begin{lemma}
    \label{lem.Voronoi global implies RGRG}
    Suppose $G\leq \Oname(d)$ is compact and fix $x\in R(G)$. Take $S$, $M_x$ and $\pi$ as in \cref{lem.desingularization}. Then for each $z\in V_x$, there exists $q\in V_x$ and a minimal geodesic $\gamma$ in $M_x$ joining $\pi(x)$ to $\pi(q)$ such that $\pi(z)$ lies in the interior of $\gamma$.
\end{lemma}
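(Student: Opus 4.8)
The plan is to follow the template of the implication (a)$\Rightarrow$(b) in \cref{lem.Voronoi char minimal geo ext}, with its appeal to \cref{prop.principal stratum B} replaced by the desingularization data of \cref{lem.desingularization}, in particular \cref{lem.desingularization WP}. Fix $z\in V_x$; I may assume $z\neq x$ (the case $z=x$ is immaterial here, since this lemma is only ever applied with $z\neq x$ --- for $z=x$ the point $\pi(z)=\pi(x)$ cannot be an interior point of a nondegenerate geodesic). Exactly as in the proof of \cref{lem.hypo desing}, I would first produce the relevant segment: by \cref{lem.UV facts intersection V open in normal} the set $V_x$ is open in the linear subspace $N_x$ and star convex at $x$, so extending $[x,z]$ a little past $z$ inside $N_x$ yields a point $q\in V_x$ with $[z,x]\subseteq(q,x]\subseteq V_x$; in particular $q\neq x$ and $z$ lies in the open segment $(q,x)$.

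Next I would pin down the corresponding straight-line minimizer into $\overline S\cdot x$. Since $q\in V_x\subseteq U_x$, \cref{def.normal voronoi} gives $\{x\}=\arg\min_{p\in[x]}\|q-p\|$. Because $S$ passes through the identity we have $x\in S\cdot x\subseteq G\cdot x=[x]$, and since $[x]$ is closed, $x\in\overline{S\cdot x}\subseteq[x]$; hence $\{x\}=\arg\min_{p\in\overline{S\cdot x}}\|q-p\|$ as well. Thus $[q,x]$ is the \emph{unique} straight-line distance minimizer from $q$ to $\overline S\cdot x$, and $[q,x]\subseteq V_x\subseteq S\cdot V_x$.

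Finally, I would invoke \cref{lem.desingularization WP} with the chosen $q$ in the role of its ``$p$'': there is a neighborhood $W_q$ of $x$ in $V_x$, and since $x\in W_q$, the minimal geodesics from $\pi(q)$ to $\pi(x)$ in $M_x$ are precisely the $\pi$-images of the straight-line distance minimizers from $q$ to $\overline S\cdot x$. By the previous step there is exactly one such minimizer, so $\pi([q,x])$ is the unique minimal geodesic joining $\pi(q)$ to $\pi(x)$; let $\gamma$ be its reverse parametrization, a minimal geodesic joining $\pi(x)$ to $\pi(q)$. Its image equals $\pi([q,x])$, which contains $\pi(z)$ because $z\in(q,x]\subseteq[q,x]$. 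Moreover, by \cref{lem.desingularization disjoint fibers} the map $p\mapsto S\cdot p$ (which is $\pi$ restricted to $V_x$, as $e\in S$) is injective on $V_x$, so from $z\in V_x\setminus\{x,q\}$ we get $\pi(z)\neq\pi(x)$ and $\pi(z)\neq\pi(q)$; since $\gamma$ is a minimal geodesic between distinct endpoints it is injective, and therefore $\pi(z)$ lies in the interior of $\gamma$, which is the assertion. I do not expect a genuine obstacle here: the only points requiring care are the passage from $[x]$ to the closed set $\overline S\cdot x$ in the uniqueness of the nearest point of $q$ (so that $\pi([q,x])$ is the unique minimal geodesic, not merely one of several) and the injectivity of $\pi$ on $V_x$ (so that $\pi(z)$ genuinely avoids both endpoints); both follow immediately from the properties of $S$ listed in \cref{lem.desingularization}, all of whose difficulty is already discharged in the appendix.
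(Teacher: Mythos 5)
Your proposal is correct and follows essentially the same route as the paper's proof: star convexity and openness of $V_x$ in $N_x$ to produce $q$ with $[z,x]\subseteq(q,x]\subseteq V_x$, the observation that $x$ is the unique nearest point of $\overline S\cdot x$ to $q$, an application of \cref{lem.desingularization WP} with $x\in W_q$, and a reverse parameterization. The extra care you take with the closure $\overline S\cdot x$, the endpoint-avoidance via fiber disjointness, and the degenerate case $z=x$ are details the paper leaves implicit, but they do not change the argument.
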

\begin{proof}
    By the openness of $V_x$ and its star convexity at $x$, as shown in \cref{lem.UV facts intersection V open in normal}, there exists $q\in V_x$ such that $[z,x]\subseteq (q,x]\subseteq V_x$. In particular, $\arg\min_{p\in Sx}\|q-p\| = \{x\}$ and $x\in W_q$, where $W_q$ is given by \cref{lem.desingularization WP}. By the definition of $W_q$, it follows that $\pi([q,x])$ is a minimal geodesic in $M_x$ joining $\pi(q)$ to $\pi(x)$ and containing $\pi(z)$ in its interior. The reverse parameterization, i.e., $\pi([x,q])$, yields the desired conclusion.
\end{proof}
Next, we have an analogue involving the local Voronoi cell.
\begin{lemma}\label{lem.Voronoi char minimal geo ext RGRG}
    Suppose $G\leq \Oname(d)$ is compact and fix $x\in R(G)$. Take $S$, $M_x$ and $\pi$ as in \cref{lem.desingularization}. For $z\in V_x$, the following statements are equivalent:
    \begin{lemenum}
        \item $x\in V^{loc}_z$.
        \item There exists $q\in \mathbb R^d$ such that $d(q,Sz)=\|q-z\|$ and $x\in [z,q)$.
        \item There exists $q\in V_x$ and a minimal geodesic $\gamma$ in $M_x$ joining $\pi(z)$ to $\pi(q)$ such that $\pi(x)$ lies in the interior of $\gamma$.
    \end{lemenum}
    \end{lemma}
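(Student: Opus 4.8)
The plan is to prove the cycle of implications (a)$\Rightarrow$(b)$\Rightarrow$(c)$\Rightarrow$(a), mirroring the structure of the proof of \cref{lem.Voronoi char minimal geo ext} but working in the desingularized manifold $M_x$ (via \cref{lem.desingularization}) rather than in $[P(G)]$, and using $\overline S\cdot z$ in place of the full orbit $[z]$. Throughout I would keep in mind that, by \cref{lem.desingularization S sym and trans} and \cref{lem.desingularization disjoint fibers}, the slices $S\cdot p$ for $p\in V_x$ are disjoint and $S = S^{-1}$, so that ``nearest point of $S\cdot z$ to $q$'' behaves like a genuine local nonlinear projection, and \cref{prop.daduk proposition} can be applied to the embedded submanifold $S\cdot z$ (which is embedded by \cref{lem.desingularization embedded fiber}).

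For (a)$\Rightarrow$(b): if $x\in V_z^{loc}$, then by the definition of the local open Voronoi cell there are neighborhoods $U$ of $z$ and $V$ of $x$ so that every $q\in V$ has a unique nearest point in $[z]\cap U$; shrinking $U$ so that $[z]\cap U$ sits inside $S\cdot z$ (possible since the $S$-slices over $V_x$ are disjoint and $z\in V_x$), we can pick $q$ on the ray from $z$ through $x$, slightly beyond $x$, still inside $V$, so that $d(q,Sz)=\|q-z\|$ and $x\in[z,q)$. One subtlety: one must check $\arg\min_{p\in Sz}\|q-p\| = \{z\}$ rather than merely unique within a neighborhood, but pushing $q$ close enough to $x$ and invoking \cref{prop.daduk argmax in normal space}/\cref{prop.daduk interval in Omega} on the interval $(q,x]$ handles this, exactly as in \cref{lem.UV facts intersection V open in normal}.

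For (b)$\Rightarrow$(c): given such a $q$, the straight segment $[z,q]$ is the unique distance-minimizing straight line from $q$ to $\overline S\cdot z$, and it passes through $x\in V_x$. I would like to invoke \cref{lem.desingularization WP} (with the roles of the two slices being $S\cdot z$ and $S\cdot q$ — note $q$ may be taken in $V_x$ after shrinking, since $x\in (z,q)\subseteq V_x$ is open along the ray), to conclude that $\pi([z,q])$ is a minimal geodesic in $M_x$ from $\pi(z)$ to $\pi(q)$ with $\pi(x)$ in its interior. For (c)$\Rightarrow$(a): reversing this, a minimal geodesic through $\pi(x)$ from $\pi(z)$ lifts (by \cref{lem.desingularization WP}, which characterizes minimal geodesics as $\pi$-images of straight-line minimizers to $\overline S\cdot q$) to a straight segment $[z,q]$ with $d(q,Sz) = \|q-z\|$ and $x\in(z,q)$; then \cref{prop.daduk interval in Omega} applied to $(q,x]\ni x$ gives a neighborhood on which the nearest-point-in-$Sz$ map is single-valued, which is precisely the condition witnessing $x\in V_z^{loc}$ after translating the $U$ in that definition to be a neighborhood of $z$ inside which $[z]$ coincides with $Sz$.

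The main obstacle I anticipate is bookkeeping around the distinction between the global orbit $[z]$ (which appears in the definition of $V_z^{loc}$) and the local slice $S\cdot z$ (which is what $M_x$ and \cref{lem.desingularization WP} see): one needs that near $z$, the set $[z]$ equals $S\cdot z$, so that ``unique nearest point in $[z]\cap U$'' and ``unique nearest point in $Sz$'' are interchangeable — this should follow from \cref{lem.desingularization disjoint fibers} together with the fact that $G\cdot z$ locally near $z$ is covered by the $S$-slices over a neighborhood of $z$ in $V_x$, but it requires care to state precisely. A secondary point is ensuring the $q$ produced in (a)$\Rightarrow$(b) and consumed in (b)$\Rightarrow$(c) can be taken inside $V_x$; this is where star-convexity of $V_x$ at $x$ (\cref{lem.UV facts intersection V open in normal}) and openness of $V_x$ in $N_x$ do the work, letting us slide $q$ along the segment to stay in $V_x$.
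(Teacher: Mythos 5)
Your overall route---proving the equivalence through the desingularized manifold $M_x$, with (a)$\Leftrightarrow$(b) handled by the Dudek--Holly projection results applied to the embedded slice $S\cdot z$, and (b)$\Leftrightarrow$(c) handled by \cref{lem.desingularization WP} after sliding $q$ along the segment into $W_z\subseteq V_x$---is the same as the paper's. Your sketches of (a)$\Rightarrow$(b) and (b)$\Rightarrow$(c), including the two subtleties you flag (upgrading ``unique nearest point in $[z]\cap U$'' to ``$z$ is the nearest point of all of $Sz$,'' and the $[z]$-versus-$S\cdot z$ bookkeeping, which the paper settles via \cref{lem.desingularization disjoint fibers} and \cref{lem.desingularization embedded fiber}), match what the paper does.

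The gap is in your (c)$\Rightarrow$(a). You assert that the minimal geodesic from $\pi(z)$ through $\pi(x)$ ``lifts to a straight segment $[z,q]$ with $d(q,Sz)=\|q-z\|$.'' But \cref{lem.desingularization WP} only says that the horizontal lift starting at $z$ is a straight-line distance minimizer from $z$ to the \emph{fiber} $\overline S\cdot q$; its endpoint is therefore some $uq$ with $u\in S$, and what you get for free is $\|uq-z\|=d(z,Sq)=d(q,Sz)$---not $d(uq,Sz)=\|uq-z\|$, which is what you need in order to apply \cref{prop.daduk interval in Omega} to the segment $(uq,z]\ni x$ and conclude $x\in V_z^{loc}$. (The Riemannian-submersion heuristic that the reversed lift minimizes distance from $uq$ to the fiber $Sz$ only controls the intrinsic distance inside the open set $S\cdot V_x$; a shorter Euclidean segment from $uq$ to another point of $Sz$ could exit $S\cdot V_x$, so it need not project to a competing path in $M_x$.) The paper closes exactly this hole by proving $u=e$: the lift contains $[z,x]\subseteq N_x$, so its endpoint $uq$ lies in $N_x\cap[q]=V_x\cap[q]$, whence $q\in V_x\cap V_{u^{-1}x}$, so $u\in G_x$ by \cref{lem.UV facts intersection} and then $u\in S\cap G_x=\{e\}$ by \cref{lem.desingularization S sym and trans}. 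Some such endpoint identification is indispensable; without it neither statement (b) nor the single-valuedness near $x$ required for (a) is established.
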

    \begin{proof}
        (a)$\Rightarrow$(b). 
        By the definition of $V_x^{loc}$, there exists an open neighborhood $V$ of $x$ and an open neighborhood $U$ of $z$ such that
        \[\forall q\in V, \ \ \Big|\arg\min_{p\in[z]\cap U}\| p-q\| \Big|= 1.\]
        Since $z\in V_x$, we have $\{z\}=\arg\min_{p\in [z]\cap U}\|p - x\|$. By \cref{prop.daduk interval in Omega}, there exists an open neighborhood $Y$ of $(x,z]$ such that $\{z\}=\arg\min_{p\in [z]\cap U}\|n - p\|$ for each $n\in Y\cap N_{z}$. By \cref{prop.daduk Omega normal neighborhood}, it follows that $x\in Y$. Consequently, there exists $\varepsilon > 0$ such that $q:= x + \varepsilon(z-x) \in Y\cap N_z$. Then $x\in [z,q)$ and $d(q,[z]\cap U) = \|q-z\| = d(q, Sz\cap U)$.

        It remains to show that $d(q,Sz\cap U) = d(q,Sz)$ for small $\varepsilon$. If such $\varepsilon$ does not exist, then $d(q_n,Sz\cap U^c) = d(q_n,Sz)$ for a sequence $\varepsilon_n\to 0$. By taking limits, we obtain that $d(x,Sz\cap U^c) = d(x,Sz)$, which is absurd since $\arg\min_{p\in Sz}\|x-p\| = \{z\}\subseteq Sz\cap U$.
    
        (b)$\Rightarrow$(a). This is immediate by \cref{prop.daduk interval in Omega} and the fact that $Sz = [z]\cap U$ for some open neighborhood $U$ ($Sz$ is an open subset of $[z]$ by \cref{lem.desingularization embedded fiber}.)
    
        (b)$\Rightarrow$(c). Since $S=S^{-1}\subseteq \Oname(d)$ by~\cref{lem.desingularization S sym and trans}, it holds that
        \[d(z,Sq) = d(S^{-1}z,q) = d(q,Sz) = \|q-z\|.\]
        By \cref{prop.daduk interval in Omega}, the last equality remains true when taking $q$ close enough to $x$ so that $q\in W_z$ and $x\in [z,q)$, where $W_z$ is given in \cref{lem.desingularization WP}. Then the $\pi$-image of the straight line $[z,q]$ minimizing distance from $z$ to $Sq$ yeilds the desired conclusion.

        (c)$\Rightarrow$(b). By assumption, say $\pi(x)$ lies in the interior of a minimal geodesic $\eta$ joining $\pi(z)$ to some $\pi(q)$, where $q\in V_x$. By \cref{lem.desingularization}, $\pi(W_z)$ is open and $\pi^{-1}(\pi(W_z))\cap V_x = W_z$. As such, we may take $\pi(q)$ close enough to $\pi(x)$ so that $\pi(q)\in \pi(W_z)$ and $q\in W_z$.
        
        Then by \cref{lem.desingularization} and since there is a unique minimal geodesic $\pi([z,x])$ joining $\pi(z)$ to $\pi(x)$, the horizontal lift of $\eta$ initiating from $z$ is given by a straight line $[z,uq]$ which contains $[z,x]$, where $u\in S$ and $\|uq-z\| = d(z,Sq) = d(q,Sz)$. Then $uq\in N_x\cap [q] = V_x\cap [q]$ since the straight line $[z,x]\subseteq N_x$ is contained in $[z,uq]$. As such, $q \in V_x\cap V_{u^{-1}x}$ and so $u\in S\cap G_x = \{e\}$ by \cref{lem.desingularization S sym and trans} and \cref{lem.UV facts intersection}. The desired conclusion is satisfied by $uq=q$.
    \end{proof}
    
We are now ready to give an almost immediate proof of \cref{lem.regular char}.
\begin{proof}[Proof of \cref{lem.regular char}]
(a)$\Rightarrow$(b). This is immediate by combining \cref{lem.hypo desing,lem.Voronoi global implies RGRG,lem.Voronoi char minimal geo ext RGRG}.

(b)$\Rightarrow$(a). As noted after \cref{def.regular} and as shown in \cref{lem.Q invariant,lem.Q open dense}, $R(G)$ and $Q_x = G\cdot V_x$ are $G$-invariant, open and dense. As such, we are able to pick $z\in R(G)\cap V_x$. Then it holds that $G_xz \subseteq R(G) \cap V_x$ because $R(G)$ and $V_x$ are $G_x$-invariant. Moreover, if $gz \in V_x$, then $z \in V_x\cap V_{g^{-1}x}$ which means $g\in G_x$ by \cref{lem.UV facts intersection}. As such,
\[G_xz = R(G)\cap V_x\cap [z].\] 
By assumption, we obtain that $x\in V_{hz}^{loc}$ for each $h\in G_x$. By definition of $V_{hz}^{loc}$ and by \cref{lem.UV facts Gx char}, we obtain that the set $G_xz= \arg\min_{q\in [z]}\|q-x\|\subseteq V_x$ is discrete and $G_x$-transitive. Moreover by \cref{prop.stab subset}, we have that $G_{z}\leq G_x$. By transitivity and the orbit stabilizer theorem, we deduce that $G_x/G_z$ is discrete and hence finite (here, we view $G_z$ as the stabilizer of $z\in G_xz$ under the transitive action of $G_x$.) In particular, $\dim(G_x)=\dim(G_z)$. By the orbit stabilizer theorem, we get that
\[\dim(G\cdot x) = \dim(G)-\dim(G_x) = \dim(G)-\dim(G_z)=\dim(G\cdot z).\]
Since $z\in R(G)$, we obtain that $x\in R(G)$ as desired.
\end{proof}
\section{Local Bilipschitzness at Regular Orbits}\label{sec.local lower stability}
In this section, we provide proofs of the main results \cref{thm.regular lower lip generic,thm.almost free actions}. The core intermediate result is given by the following lemma, whose proof is long and technical and thus postponed to the appendix.

\begin{lemma}
    \label{lem.regular local avoidance}
    Suppose $G\leq \operatorname{O}(d)$ is compact and define $c:= d- \max_{x\in \mathbb R^d}\dim([x])$.
    For $z_1,\dots,z_n\in \mathbb R^d$, denote the corresponding max filter bank by $\Phi\colon \mathbb R^d/G\to \mathbb R^n$. Then the set
    \begin{equation}
        \label{eq.R def}
    R := \Big\{\{z_i\}_{i=1}^n \in (\mathbb R^d)^n: \Phi \text{ fails to be locally lower Lipschitz at every $x\in R(G)$}\Big\}
    \end{equation}
    is semialgebraic, and it holds that
    \begin{equation}
        \label{eq.R bound}
        \dim(R)\leq nd - 1 - \left(\left\lceil\frac{n}{\chi(G)}\right\rceil - 2c + 1\right).
    \end{equation}
\end{lemma}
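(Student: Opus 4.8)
I would first convert the failure of local lower Lipschitzness into a scale‑free, finite‑dimensional condition on normal spaces, using the Voronoi/slice machinery built above, and then bound the dimension of the resulting semialgebraic set by a stratified count in which $\chi(G)$ controls a redundancy among maximizers. Semialgebraicity comes for free: since $G$ is a compact Lie group it is a real‑algebraic subgroup of $\Oname(d)$, so the graph of $(x,z)\mapsto\llangle[x],[z]\rrangle=\sup_{q\in[z]}\langle x,q\rangle$, the quotient distance $d$, and the predicate ``$x\in R(G)$'' (via $\dim[x]=d-\dim N_x$ with $N_x=(\mathfrak g\cdot x)^{\perp}$) are semialgebraic; unfolding ``$\Phi$ is locally lower Lipschitz at $x$'' as $\exists\alpha>0\,\exists\varepsilon>0\,\forall y\,[\,d([x],[y])<\varepsilon\Rightarrow\|\Phi([x])-\Phi([y])\|\ge\alpha\, d([x],[y])\,]$ gives a first‑order formula over $\mathbb R$, and negating it, conjoining with $x\in R(G)$, and projecting away $x$ keeps us semialgebraic by Tarski--Seidenberg.

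Next I would prove an infinitesimal characterization of failure. Fix $x\in R(G)$ and templates $\{z_i\}$, and put $A_i:=\arg\max_{p\in[z_i]}\langle x,p\rangle$. Picking smooth local selections of the maximizer on each orbit $[z_i]$ near the points of $A_i$ (\cref{prop.daduk proposition}), using that $x\in N_a$ for $a\in A_i$ (so the envelope‑correction term vanishes and each selection has gradient $a$ at $x$), the $G_x$‑invariance of $T_x,N_x$ (\cref{prop.Nx Tx Gx invariant}), and $\langle gx,a\rangle=\langle x,a\rangle$ for $g\in G_x$, a short computation with the lifts of $[x+v]$ near $x$ gives, for small $v\in N_x$,
\[\Phi([x+v])_i-\Phi([x])_i=\max_{a\in A_i}\langle v,a\rangle+o(\|v\|),\qquad d([x],[x+v])=\|v\|\,(1+o(1)),\]
while tangential perturbations ($v\in T_x$) affect both sides only at second order (Kleiner‑type drift control). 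As the leading term is positively $1$‑homogeneous in $v$ and the remainder is uniform over the compact sphere $S(N_x)$, $\Phi$ fails to be locally lower Lipschitz at $x$ iff some unit $v\in N_x$ satisfies $\max_{a\in A_i}\langle v,a\rangle=0$ for all $i$, i.e.\ $\bigcup_iA_i$ lies in the closed half‑space $\{w\in N_x:\langle v,w\rangle\le 0\}$ with every $A_i$ meeting the bounding hyperplane.

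For the dimension bound, set $\tilde R:=\{(x,\{z_i\})\in R(G)\times(\mathbb R^d)^n:\Phi\text{ fails to be l.l.L.\ at }x\}$, a semialgebraic set whose image under projection away from $x$ is $R$. The projection $\tilde R\to R(G)$ is onto with fibers $R_x$, so $\dim\tilde R\le d+\sup_x\dim R_x$; the projection $\tilde R\to R$ is onto, and its generic $z$‑fiber is the nonempty, $G$‑invariant set of points of $R(G)$ at which the fixed generic $\Phi$ fails, hence of dimension $\ge\dim[x]=d-c$, so $\dim R=\dim\tilde R-(\text{generic }z\text{-fiber dim})\le c+\sup_x\dim R_x$. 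It remains to bound $R_x$ for $x$ in each of the finitely many orbit‑type strata. By the characterization above, $R_x$ is swept out, as $v$ ranges over the $(c-1)$‑dimensional sphere $S(N_x)$, by the tuples whose $A_i$ all lie in the half‑space $\{\langle v,\cdot\rangle\le 0\}$ of $N_x$ and touch its boundary. For $\{z_i\}$ generic within the product stratum each $A_i$ is a single $G_x$‑orbit contained in $N_x$, and the stabilizer inclusions forced by the Voronoi decomposition (\cref{prop.stab subset lemma whole}, read with the two arguments of $\llangle\cdot,\cdot\rrangle$ interchanged) bound $|A_i|\le\chi(G)$ — this is exactly where the regular Voronoi complexity enters. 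A Steinitz‑type argument (at most $2c$ points among $\bigcup_iA_i$ witness $0\in\operatorname{int}_{N_x}\operatorname{conv}\bigcup_iA_i$, and a supporting hyperplane absorbs $2c-1$ further linear degrees of freedom) then forces, after grouping the $n$ templates into at most $\chi(G)$ classes by how their maximizers meet the moving hyperplane, at least $\lceil n/\chi(G)\rceil$ of the classes to each carry an independent codimension‑one constraint, netting against the $c-1$ parameters in $v$ and the $c$ parameters in the orbit of $x$; checking that the lower‑dimensional loci where some $A_i$ is not a single orbit or meets $N_x$ non‑transversally do not dominate yields $\dim R_x\le nd-\lceil n/\chi(G)\rceil+c-2$, hence $\dim R\le c+(nd-\lceil n/\chi(G)\rceil+c-2)=nd-1-(\lceil n/\chi(G)\rceil-2c+1)$.

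The main obstacle is the last step: making the half‑space/Steinitz bookkeeping precise enough to surface \emph{exactly} $\lceil n/\chi(G)\rceil-2c+1$ rather than something weaker. This requires (i) pinning down that the maximizer‑redundancy $|A_i|$ is governed by $\chi(G)$, which needs the stabilizer inclusions at \emph{regular} — not merely principal — points and some care with closures of Voronoi cells; (ii) a uniform treatment of all non‑generic strata and of the degenerate loci of maximizer sets; and (iii) running the count jointly in $x$, $v$, and $\{z_i\}$ without double‑counting, so that the $(c-1)$‑sphere of directions and the $c$‑dimensional family of orbits exactly offset. A secondary technical nuisance is the uniformity over $v\in S(N_x)$ of the $o(\|v\|)$ remainder, which is what makes ``locally'' a genuinely scale‑free condition.
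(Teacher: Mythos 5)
Your semialgebraicity argument is sound and essentially matches the paper's (first-order definability plus Tarski--Seidenberg), and your instinct to linearize the max filter via nearest-point selections and to let $\chi(G)$ control the multiplicity of maximizers is in the right spirit. But the core of the proof has a genuine gap. Your ``infinitesimal characterization of failure'' only compares $\Phi([x+v])$ with $\Phi([x])$, i.e.\ the one-point condition, whereas local lower Lipschitzness at $x$ is a two-point condition: one must control $\|\Phi([x_j])-\Phi([y_j])\|/d([x_j],[y_j])$ for \emph{two} independently varying sequences $[x_j],[y_j]\to[x]$. For componentwise convex maps these are genuinely different (compare $t\mapsto|t|$ at $0$: the directional derivative is $\|v\|$ in every direction, yet the two-point quotient vanishes along $(\varepsilon,-\varepsilon)$). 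The two-point setting is exactly where the difficulty lives: the maximizers in $[z_i]$ selected along $x_j$ and along $y_j$ may converge to \emph{different} points of $G_xz_i$, and the paper must pigeonhole over the at most $\chi(G)$ translates in $G_xz_i$ to extract a subset $I$ with $|I|\geq\lceil n/\chi(G)\rceil$ on which both selections converge to a common $w_i$; only then does linearization give a limit $\langle w_i,u\rangle$ for a single $u\in N_x$. The resulting obstruction is ``$\langle w_i,u\rangle=0$ for some $w_i\in G_xz_i$ and all $i\in I$,'' which is neither your condition ``$\max_{a\in A_i}\langle v,a\rangle=0$ for all $i$'' nor equivalent to it.

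Second, the decisive counting step is not actually carried out — you flag the ``half-space/Steinitz bookkeeping'' as the main obstacle, and indeed the exponent $\lceil n/\chi(G)\rceil-2c+1$ is never derived. The paper's mechanism is different and concrete: a bad tuple forces non-injectivity of one of finitely many linear operators $\{\langle h_iz_i,\cdot\rangle|_{N_x}\}_{i\in I}$ with $h_i\in G_x$ and $|I|=\lceil n/\chi(G)\rceil$, and the non-injective locus is bounded via an explicit SVD parametrization, giving codimension $|I|-c+1$ in $N_x^{|I|}$. Two further problems: (i) your step $\dim R=\dim\tilde R-(\text{generic fiber dimension})$ uses conservation of dimension in the wrong direction — the cited inequality bounds $\dim\tilde R$ \emph{above} by $\dim R$ plus the \emph{maximal} fiber dimension, so it yields a lower bound on $\dim R$, not the upper bound you need; the paper avoids this by restricting $x$ to a fixed $(c-1)$-dimensional slice $R(G)\cap\mathbb S^{d-1}\cap N_p$ that meets every orbit. (ii) Inside the bad set you cannot assume the templates are generic (e.g.\ that each $A_i$ is a single $G_x$-orbit in $N_x$) — that is precisely what is being bounded — so one must stratify explicitly by which templates lie in $Q_x$ and reduce the rest into $V_x$ before counting, as the paper does.
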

\begin{proof}
    See \cref{app.regular local avoidance proof}.
\end{proof}
Here, a map $f\colon \mathbb R^d/G \to \mathbb R^n$ is said to be locally lower Lipschitz at $x\in \mathbb R^d$ if it is lower Lipschitz when restricted to a neighborhood of $x$, with respect to the quotient distance induced by $G$. The proof of \cref{thm.regular lower lip generic} follows immediately from the above lemma and the fact that max filter banks are (globally) upper Lipschitz. Next, an adjustment of the proof of Theorem~5(c) in~\cite{MixonQ:22} gives the following proposition on the generic injectivity of max filter banks.
\begin{proposition}
    \label{prop.injective}
    If $G\leq \Oname(d)$ is compact with $c:= d-\max_{x\in\mathbb R^d}\dim([x])$, then for generic $z_1,\ldots,z_n\in\mathbb{R}^d$, the max filter bank $[x]\mapsto\{\llangle [z_i],[x]\rrangle\}_{i=1}^n$ is injective provided $n\geq 2c$.
\end{proposition}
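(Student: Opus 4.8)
The plan is to show that the set $Z\subseteq(\mathbb R^d)^n$ of template tuples for which the associated max filter bank $\Phi$ is \emph{not} injective is a proper semialgebraic subset; its complement is then open and dense in $(\mathbb R^d)^n$, which is exactly the asserted genericity. Semialgebraicity of $Z$ follows from Tarski--Seidenberg: non-injectivity of $\Phi$ unwinds to the first-order formula $\exists x,y\,\big(d([x],[y])>0\wedge\bigwedge_{i=1}^n\llangle[x],[z_i]\rrangle=\llangle[y],[z_i]\rrangle\big)$, whose atomic predicates are built from the semialgebraic maps $d(\cdot,\cdot)$ and $\llangle\cdot,\cdot\rrangle$ (cf.\ \cref{lem.UV facts semialgebraic} and \cref{prop.distance is semialgebraic}). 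It therefore suffices to prove $\dim Z<nd$ whenever $n\ge 2c$.

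Form the semialgebraic incidence set
\[
\Sigma:=\big\{(x,y,z_1,\dots,z_n):d([x],[y])>0,\ \llangle[x],[z_i]\rrangle=\llangle[y],[z_i]\rrangle\ \text{for all }i\big\}\subseteq\mathbb R^d\times\mathbb R^d\times(\mathbb R^d)^n,
\]
so that $Z=\operatorname{pr}(\Sigma)$ for the projection $\operatorname{pr}$ forgetting $(x,y)$. Since $\dim[x]=\dim T_x$ equals the rank of the linear map $\mathfrak g\to\mathbb R^d$, $\omega\mapsto\omega\cdot x$, it is a semialgebraic function of $x$ taking finitely many values, so I may stratify $\Sigma=\bigsqcup_{a,b}\Sigma_{a,b}$ by $(a,b)=(\dim[x],\dim[y])$ and bound $\dim\operatorname{pr}(\Sigma_{a,b})$ for each pair $(a,b)$.

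I estimate $\dim\operatorname{pr}(\Sigma_{a,b})$ by playing two fibrations of $\Sigma_{a,b}$ against each other. Projecting $\Sigma_{a,b}$ to the $(x,y)$-coordinates, the image lies in $\{x:\dim[x]=a\}\times\{y:\dim[y]=b\}$, of dimension $D_a+D_b$ with $D_a:=\dim\{x:\dim[x]=a\}$, while the fiber over $(x,y)$ is $(W_{x,y})^n$, where $W_{x,y}:=\{z:\llangle[x],[z]\rrangle=\llangle[y],[z]\rrangle\}$. The polarization identity~\eqref{prop.max filter lemma2 distance} shows $W_{x,y}\ne\mathbb R^d$ whenever $d([x],[y])>0$: were $\llangle[x],[z]\rrangle=\llangle[y],[z]\rrangle$ for all $z$, then $d([x],[z])^2-d([y],[z])^2=\|x\|^2-\|y\|^2$ for all $z$, and evaluating at $z=x$ and at $z=y$ forces $d([x],[y])=0$. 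Hence $\dim W_{x,y}\le d-1$ and $\dim\Sigma_{a,b}\le D_a+D_b+n(d-1)$. On the other hand, every nonempty fiber of $\operatorname{pr}|_{\Sigma_{a,b}}$, namely $\{(x,y):d([x],[y])>0,\ \dim[x]=a,\ \dim[y]=b,\ \Phi([x])=\Phi([y])\}$, is invariant under $(x,y)\mapsto(tgx,thy)$ for all $t>0$, $g,h\in G$ (as max filtering is positively $1$-homogeneous in each argument and $G$-invariant), and since $G$ preserves norms the dilation direction is transverse to the orbits; so each such fiber contains a semialgebraic set of dimension $\dim[x]+\dim[y]+1=a+b+1$. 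By Hardt's triviality theorem, writing $\operatorname{pr}|_{\Sigma_{a,b}}$ as a semialgebraically trivial fibration over a finite partition of its image, this yields $\dim\operatorname{pr}(\Sigma_{a,b})\le\dim\Sigma_{a,b}-(a+b+1)\le D_a+D_b+n(d-1)-(a+b+1)$.

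Summing up, $\dim Z\le n(d-1)-1+2\max_a(D_a-a)$, and the concluding ingredient is the bound $D_a-a\le c$ for every orbit dimension $a$, with equality at $a=d-c$ since $\{x:\dim[x]=d-c\}=R(G)$ is open and dense, so $D_{d-c}=d$. (For $a=0$ this is elementary: $\{x:\dim[x]=0\}\subseteq\operatorname{Fix}(G^0)$ for the identity component $G^0$, and $\dim\operatorname{Fix}(G^0)\le c$ because $\operatorname{Fix}(G^0)^\perp$ is a $G$-invariant subspace whose dimension is at least $\max_y\dim[y]$; the general case is the standard dimension estimate for orbit-type strata.) This gives $\dim Z\le n(d-1)-1+2c=nd-(n-2c+1)<nd$ precisely when $n\ge 2c$, which proves the claim. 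I expect the main obstacle to be this last estimate $D_a-a\le c$: it is exactly the ``adjustment'' needed to promote Theorem~5(c) of~\cite{MixonQ:22} (whose finite-group hypothesis makes $a=b=0$, $D_0=d$, $c=d$, so the estimate is vacuous there) to a general compact $G$, and it amounts to checking that passing to a slice representation $N(H)/H\curvearrowright\operatorname{Fix}(H)$ never increases the codimension of a principal orbit, which can be read off from the slice theorem in~\cite{AlexandrinoB:15}. Everything else — the polarization identity, the semialgebraic dimension bookkeeping, and Hardt's theorem — is routine.
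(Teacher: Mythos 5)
Your overall architecture is sound, and it is worth noting that the paper itself gives no written proof of \cref{prop.injective} (it only asserts that an adjustment of Theorem~5(c) of \cite{MixonQ:22} works), so your incidence-set dimension count is a legitimately self-contained route: semialgebraicity of the bad set, stratification by orbit dimensions, an upper bound on $\dim\Sigma_{a,b}$ from the template-fibers, a lower bound on the $(x,y)$-fibers from dilation and $G\times G$ invariance via Hardt, and the final arithmetic $n(d-1)-1+2c<nd\iff n\ge 2c$ are all correct in outline. However, there are two places where the written justification does not support the claim.

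First, the inference ``$W_{x,y}\ne\mathbb R^d$, hence $\dim W_{x,y}\le d-1$'' is a non sequitur: a proper closed semialgebraic subset of $\mathbb R^d$ can be full-dimensional (a half-space, or the zero set of $z\mapsto\max(0,z_1)$), and $W_{x,y}$ is the zero set of a continuous semialgebraic function, not of a polynomial. The conclusion is nonetheless true, but it needs an argument such as: if $W_{x,y}$ had nonempty interior, pick $z$ in that interior lying in the open dense set $Q_x\cap Q_y$ (\cref{lem.Q open dense}); near $z$ both $\llangle[x],[\cdot]\rrangle$ and $\llangle[y],[\cdot]\rrangle$ are smooth with gradients equal to the unique maximizers $v_x(\cdot)\in[x]$ and $v_y(\cdot)\in[y]$ (\cref{prop.daduk interval in Omega}, or the subgradient formula in the introduction), so equality of the two functions on a neighborhood of $z$ forces $v_x(z)=v_y(z)\in[x]\cap[y]$ and hence $[x]=[y]$, a contradiction. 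Second, the estimate $D_a-a\le c$, which you correctly identify as the crux, is only sketched. It does hold and can be proved with the paper's own tools without any slice-representation analysis: letting $N:=N_p$ for a regular $p$, the set $N$ meets every orbit (as observed at the start of \cref{app.regular local avoidance proof}), so the $G$-invariant semialgebraic stratum $\Sigma_a:=\{y:\dim[y]=a\}$ satisfies $\Sigma_a=G\cdot(\Sigma_a\cap N)$; applying \cref{prop.conservation of dimension} to the incidence set $\{(y,w):y\in\Sigma_a\cap N,\ w\in[y]\}$ gives $\dim\Sigma_a\le\dim(\Sigma_a\cap N)+a\le\dim(N)+a=c+a$. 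With these two repairs your proof goes through, and it is arguably more transparent than the citation the paper relies on.
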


We are now ready to give an almost immediate proof of \cref{thm.almost free actions}.

\begin{proof}[Proof of \cref{thm.almost free actions}]
    Fix an arbitrary max filter bank $\Phi([x]):=\{\llangle [x],[z_i]\rrangle\}_{i=1}^n$.  Since $\Phi$ is $\|\{z_i\}_{i=1}^n\|_F$-Lipschitz, it fails to be bilipschitz if and only if it fails to be lower Lipschitz. This occurs if and only if there exist sequences $[x_j]\neq [y_j]$ such that 
\begin{equation}
    \label{eq.corr proof conv}
    \frac{\Phi([x_j])-\Phi([y_j])}{d([x_j],[y_j])} \rightarrow 0.
\end{equation}
Since \eqref{eq.corr proof conv} is symmetric and invariant under simultaneous positive dilations of $x_j$ and $y_j$, we may without loss of generality assume that $\|y_j\|\leq \|x_j\|=1$. By taking subsequences, there exist $x,y\in\mathbb R^d$ such that $[x_j]\to [x]$ and $[y_j]\to [y]$. Notably, since $\|x\|=1$, we have $x\in \mathbb R^d-\{0\}\subseteq R(G)$. 

In the case $[x]\neq [y]$, it follows that $d([x_j],[y_j])\gg 0$, and so $\Phi([x])=\Phi([y])$, meaning that $\Phi$ fails to be injective.

In the case $[x] = [y]$, it follows that $\Phi$ fails to be locally lower Lipschitz at $x\in R(G)$. 

As such, if $\Phi$ is injective and locally lower Lipschitz at every $x\in R(G)$, then $\Phi$ is bilipschitz. The result now follows by combining \cref{thm.regular lower lip generic,prop.injective}.
\end{proof}
    
\section{Discussion}
\label{sec.discussion}
In this paper, we demonstrated that sufficiently many generic templates ensure max filter banks are bilipschitz when all nonzero orbits of $G\leq \Oname(d)$ have maximal dimension, i.e., lie in $R(G)$. To achieve this, we established that max filter banks are generally locally bilipschitz on $R(G)$.

This work leaves open two key questions.
\begin{problem}$ $
\begin{itemize}
    \item[(a)]  Is every max filter bank bilipschitz provided enough generic templates?
    \item[(b)]  Is every injective max filter bank bilipschitz?
\end{itemize}
\end{problem}

Indeed, the second question is much stronger than the first. To address the first, one would need to generalize \cref{lem.regular local avoidance} to establish local lower Lipschitzness at nonregular points. A specific, unresolved example is the real irreducible representation of $\operatorname{SO}(3)$ within $\Oname(7)$.

For the stronger question, a counterexample arises if an injective max filter bank is found such that the image of $D$ in \eqref{eq.D defn} is not closed. The smallest example worth investigating involves the circle group $S^1$ on $\mathbb C^2\times \mathbb R$ given by $\theta \mapsto \operatorname{diag}\{e^{i\theta},e^{i\theta}, 1\}$. Conversly, proving the affirmative would likely require adapting the techniques in \cite{BalanT:23} to the setting of infinite groups.

In \cref{sec.relevance of results}, we demonstrated how max filter banks offer a theoretically desirable tool for a nearest neighbor problem in cryo-EM. Testing the hypothesis numerically could uncover potential improvements over the bispectrum embedding used in \cite{ZhaoS:14}.

In \cref{sec.voronoi BIG SECTION}, we derived geometric characterizations of regularity and principality using Voronoi cell decompositions. Extending these characterizations to include nonregular points remains an intriguing direction for future research.

\section*{Acknowledgments}

The author thanks Dustin G. Mixon for helpful remarks, as well as Efstratios Tsoukanis for enlightening discussions.

\appendix

\section{Riemannian geometric arguments}
\label{app.riemmanian arguments main}
This section is dedicated to proving \cref{lem.manifold unique neighborhoods,lem.desingularization}. Before providing their proofs in \cref{app.cut point symmetry,app.desingularization proof}, respectively, we begin by stating necessary preliminary results in \cref{app.riemannian prelim}.
\subsection{Preliminaries}
\label{app.riemannian prelim}
The first preliminary we need is for \cref{app.cut point symmetry}, and it concerns subgeodesics of minimal geodesics. It is left as an exercise in Corollary~2.111 in~\cite{GallotHL:90}. We view the result as well-known, but we provide a proof for the sake of convenience.

\begin{proposition}
    \label{prop.geodesic minimal whole prop}
    Let $M$ be a Riemannian manifold and suppose that $c\colon [a,b]\to M$ is a minimal geodesic joining $c(a)$ to $c(b)$. Then $c|_I$ is minimal over every subinterval $I\subseteq [a,b]$.
\end{proposition}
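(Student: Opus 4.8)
The plan is the standard cut-and-paste argument. First I would reduce to a statement purely about minimality: since $c\colon[a,b]\to M$ is a geodesic, its restriction $c|_I$ to any subinterval $I=[s,t]\subseteq[a,b]$ is again a geodesic (geodesics are local notions, unaffected by restriction of the parameter domain), so it suffices to show that $c|_{[s,t]}$ is \emph{minimal}, i.e.\ that its length equals the Riemannian distance $d(c(s),c(t))$. The degenerate cases $I=\{s\}$ and $I=[a,b]$ are trivial, so assume $a\le s<t\le b$.

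Next I would record the length decomposition. Writing $L(\cdot)$ for the length functional on piecewise-$C^1$ curves, additivity of length under concatenation at the intermediate parameters $s$ and $t$ gives $L(c|_{[a,b]}) = L(c|_{[a,s]}) + L(c|_{[s,t]}) + L(c|_{[t,b]})$, and the hypothesis that $c$ is a minimal geodesic means $L(c|_{[a,b]})=d(c(a),c(b))$. Then I would argue by contradiction: if $c|_{[s,t]}$ were not minimal, pick a piecewise-$C^1$ curve $\sigma$ from $c(s)$ to $c(t)$ with $L(\sigma)<L(c|_{[s,t]})$, and form the concatenation $\tilde c := c|_{[a,s]}\ast\sigma\ast c|_{[t,b]}$, a piecewise-$C^1$ curve from $c(a)$ to $c(b)$. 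Then
\begin{align*}
L(\tilde c) &= L(c|_{[a,s]}) + L(\sigma) + L(c|_{[t,b]}) \\
&< L(c|_{[a,s]}) + L(c|_{[s,t]}) + L(c|_{[t,b]}) \\
&= L(c|_{[a,b]}) = d(c(a),c(b)),
\end{align*}
contradicting the fact that every piecewise-$C^1$ curve joining $c(a)$ to $c(b)$ has length at least $d(c(a),c(b))$. Hence $L(c|_{[s,t]})=d(c(s),c(t))$, so $c|_{[s,t]}$ is a minimal geodesic.

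I do not expect any genuine obstacle here; the only points requiring a word of care are the convention that a ``minimal geodesic'' is a constant-speed geodesic whose length realizes the distance between its endpoints, and the fact that the Riemannian distance is the infimum of $L$ over the admissible (piecewise-$C^1$) curve class in which $\tilde c$ and $\sigma$ live — both of which are built into the definitions, so the proof is essentially bookkeeping with the length functional.
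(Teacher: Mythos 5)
Your proposal is correct and is essentially the same argument as the paper's: both are the standard cut-and-paste argument in which a hypothetical shorter middle segment would concatenate with the outer pieces to contradict minimality of the whole curve. The paper merely phrases it as a direct chain of inequalities (via the triangle inequality for the geodesic distance) rather than by explicit contradiction, which is a cosmetic difference.
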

\begin{proof}
Let $d$ denote the geodesic distance of $M$, and let $I = [i_0,i_1]\subseteq [a,b]$ be a subinterval. Let $\eta\colon I\to M$ be a unit speed piecewise $C^1$ curve joining $c(i_0)$ to $c(i_1)$ such that its length satisfies $L(\eta)\leq L(c|_I)$. By the traingle inequality and the definition of the geodesic distance, we get that 
\[
\begin{aligned}
    L(c) &= d(c(a),c(b))\\
    &\leq d(c(a),c(i_0))+d(c(i_0),c(i_1))+d(c(i_1),c(b))\\
    &\leq L(c|_{[a,i_0]}) + L(\eta) + L(c|_{[i_1,b]})\\
    &\leq L(c|_{[a,i_0]}) + L(c|_I) + L(c|_{[i_1,b]})\\
    &=L(c).
\end{aligned}
\]
As such, $L(\eta) = L(c|_I)$ and so $c|_I$ is minimal as desired.
\end{proof}

Next, we give an essential preliminary for \cref{app.desingularization proof}. It is a collection of statements regarding the orthogonal slice and tubular neighborhood geometry of compact Lie group isometric actions on manifolds. Before stating the preliminary, we define what we mean by a smooth isometric action.
\begin{definition}
    Let $G$ be a Lie group with identity $e$ and let $M$ be a Riemannian manifold. A smooth map $\mu\colon G\times M\to M$ is called a (left) smooth isometric action of $G$ on $M$ if:
    \begin{itemize}
    \item[(a)]$\mu(e,x)=x$, for all $x\in M$.
    \item[(b)]$\mu(g_1,\mu(g_2,x)) = \mu(g_1g_2,x)$, for all $g_1,g_2\in G$ and $x\in M$.
    \item[(c)]$\mu(g,\cdot)\colon M\to M$ is an isometry of $M$, for all $g\in G$.   
    \end{itemize}
\end{definition}

In the following proposition, we denote $g\cdot x := \mu(g,x)$.
\begin{proposition}[Tubular Neighborhood Theorem]
    \label{prop.slice theorem}
    Let $G$ be a compact Lie group acting smoothly and isometrically on a Riemannian manifold $M$, and fix any $x\in M$. Then $G\cdot x$ is an embedded submanifold of $M$, and there exists an open neighborhood $B$ of $0$ in $(T_x(G\cdot x))^\perp$ such that each of the following statements holds:
    \begin{propenum}
    \item\label{prop.slice is manifold} $S_x := \exp_{x}(B)$ is a $G_x$-invariant embedded submanifold of $M$. 
    \item  \label{prop.slice tube is open}$G\cdot S_x$ is an open neighborhood of $G\cdot x$.
    \item \label{prop.slice tube diffeo}Let $G\times_{G_x} S_x$ denote the orbit space of the smooth and free $G_{x}$-action on $G\times S_x$ given by $h\cdot (g,s):= (gh^{-1},hs)$ for $h\in G_x$. Then the map $\Psi_x\colon G\times_{G_x} S_x\to G\cdot S_x$, induced by $(g,s)\mapsto g\cdot s$, is a diffeomorphism.
    \item \label{prop.slice technical trans diffeo}Suppose that $H$ is an embedded submanifold of $G$ transverse to $G_x$ at $e$, i.e., $T_eH\cap T_eG_x = \{0\}$ and $T_eH + T_eG_x = T_eG$. Moreover, suppose that the multiplication map $H\times G_x\to H\cdot G_x$ is a diffeomorphism. Then the map $F\colon H\times S_x \to HG_x \times_{G_x} S_x$, induced by $(h,s)\mapsto (h,s)$, is a diffeomorphism.
    \end{propenum}
\end{proposition}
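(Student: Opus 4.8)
The plan is to read parts (a)--(c) off the classical slice theorem for compact-group actions (which I would cite from \cite{AlexandrinoB:15}), and to reserve the real work for the technical addendum (d), which packages a trivialization over a group-transversal that is not usually stated in this form. The one object driving everything is the normal exponential at $x$: since $G$ is compact, $G\cdot x$ is an embedded submanifold (e.g.\ \cite[Prop.~3.41]{AlexandrinoB:15}) and, being diffeomorphic to $G/G_x$, is compact; and since $G_x$ fixes $x$ it acts orthogonally on $T_xM$ through the differentials $k\mapsto dk_x$, preserving $T_x(G\cdot x)$ and hence $N_x:=(T_x(G\cdot x))^{\perp}$. I would take $B$ to be a small metric ball about $0$ in $N_x$ --- automatically $G_x$-invariant --- on which $\exp_x$ restricts to an embedding, and set $S_x:=\exp_x(B)$.

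For (a): an isometry $k\in G_x$ fixes $x$ and carries geodesics through $x$ to geodesics through $x$, so $k\cdot\exp_x(v)=\exp_x(dk_x v)$ for small $v\in N_x$; together with $G_x$-invariance of $B$ this gives $G_x$-invariance of $S_x$. For (b) and (c): the same identity makes $\Psi_x\colon G\times_{G_x}S_x\to M$, $[g,s]\mapsto g\cdot s$, well defined and smooth, and its differential at $[e,x]$ is an isomorphism because $T_{[e,x]}(G\times_{G_x}S_x)$ splits as $T_x(G\cdot x)\oplus T_xS_x=T_x(G\cdot x)\oplus N_x=T_xM$, with $d\Psi_x$ restricting to the canonical inclusions $T_x(G\cdot x)\hookrightarrow T_xM$ and $N_x\hookrightarrow T_xM$ on the two summands. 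By $G$-equivariance, $\Psi_x$ is then a local diffeomorphism all along the compact zero section $\{[g,x]:g\in G\}$, hence --- after shrinking $B$ --- on all of $G\times_{G_x}S_x$; a standard compactness argument (if no shrinking of $B$ makes $\Psi_x$ injective, extract convergent sequences to contradict local injectivity at a point of the zero section) lets me shrink once more to force injectivity. An injective local diffeomorphism is a diffeomorphism onto its image, the open set $G\cdot S_x$, which gives (b) and (c). (No completeness of $M$ is needed, since everything lives near the compact orbit; alternatively I would simply invoke the slice theorem verbatim and check that it yields this exact packaging.)

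For (d): transversality at $e$ forces $\dim(H\times G_x)=\dim G$, so the hypothesis that $H\times G_x\to HG_x$ is a diffeomorphism makes $HG_x$ an open, right-$G_x$-invariant subset of $G$; hence $HG_x\times S_x$ is open and $G_x$-invariant in $G\times S_x$, its quotient $HG_x\times_{G_x}S_x$ is an open submanifold of $G\times_{G_x}S_x$, and $F(h,s):=[(h,s)]$ is smooth. Writing each $g\in HG_x$ uniquely as $g=h(g)k(g)$ with $h(g)\in H$ and $k(g)\in G_x$ --- a smooth assignment, being inverse to $H\times G_x\to HG_x$ --- and using the relation $(g,s)\sim(gk^{-1},ks)$ for $k\in G_x$, one gets $[(hk,s)]=[(h,ks)]$ with $ks\in S_x$ by (a), so $F$ is surjective; uniqueness of the $H\cdot G_x$ factorization forces injectivity; and $(g,s)\mapsto(h(g),k(g)\cdot s)$ is a smooth $G_x$-invariant map $HG_x\times S_x\to H\times S_x$ (replacing $(g,s)$ by $(gk^{-1},ks)$ leaves $h(g)$ unchanged and turns $k(g)$ into $k(g)k^{-1}$, giving $(h(g),k(g)s)$ again), so it descends to a smooth two-sided inverse of $F$, proving $F$ a diffeomorphism. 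I do not anticipate a deep obstacle anywhere: in a self-contained treatment the only genuinely non-formal step is the classical compactness/shrinking argument securing injectivity of $\Psi_x$ in (c), and otherwise the only care required is keeping the convention $k\cdot(g,s)=(gk^{-1},ks)$ straight while manipulating classes in part (d).
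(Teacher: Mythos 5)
Your proposal is correct, but it takes a different route from the paper: the paper's proof of this proposition is essentially a proof by citation, deferring the embeddedness of $G\cdot x$ to Proposition~3.41, parts (a)--(c) to the statement and proof of Theorem~3.57 (and the discussion preceding Definition~3.72), and part (d) to Claim~3.52 of Alexandrino--Bettiol applied with $P=G$, $F=S_x$, $S=H$, $U=HG_x$. You instead reprove the slice/tubular neighborhood theorem from scratch via the normal exponential: a $G_x$-invariant metric ball $B\subseteq N_x$, the identity $k\cdot\exp_x(v)=\exp_x(dk_x v)$ for $k\in G_x$, the computation that $d\Psi_x$ is an isomorphism at $[e,x]$, equivariant propagation along the compact zero section, and the standard shrinking argument for injectivity; and for (d) you verify the diffeomorphism by hand using the unique factorization $g=h(g)k(g)$ and checking that $(g,s)\mapsto(h(g),k(g)s)$ is $G_x$-invariant and descends to a two-sided inverse. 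Your argument is sound (the checks of well-definedness, surjectivity, injectivity, and invariance of the candidate inverse in (d) are all correct, and the tangent-space identification $T_{[e,x]}(G\times_{G_x}S_x)\cong(\mathfrak g/\mathfrak g_x)\oplus N_x$ is the standard one), and it has the advantage of delivering exactly the packaging stated in the proposition --- which the paper itself concedes ``differs from the standard approach in the literature'' --- without relying on the reader to extract it from the cited proofs; the cost is length, which is why the paper opts for references. The only mildly glossed points are the smooth structure on $G\times_{G_x}S_x$ (the quotient of a free proper action, so that $G_x$-invariant smooth maps descend) and the details of the compactness argument for injectivity, both of which you correctly flag as standard.
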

For our purposes, we call any $S_x$ which satisfies the statements of \cref{prop.slice theorem} an \textbf{orthogonal slice} at $x$ to the left action of $G$ on $M$. This differs from the standard approach in the literature (e.g. Definition~3.47 in~\cite{AlexandrinoB:15}.)

Before giving a proof by references, we state a few remarks. In the proposition above, $G\times_{G_x} S_x$ is equipped with the unique smooth structure that turns the corresponding free actoin's orbit map $G\times S_x\to G\times_{G_x}S_x$ into a smooth submersion. This is a special case of Theorem~3.34 in~\cite{AlexandrinoB:15} which applies to general proper free actions. We also note that the name `Tubular Neighborhood Theorem' stems from viewing $G\cdot S_x$ as an open `tubular' neighborhood of $G\cdot x$.

\begin{proof}[Proof of \cref{prop.slice theorem}]
    The assertion that $G\cdot x$ is an embedded submanifold of $M$ is given by Proposition~3.41 in~\cite{AlexandrinoB:15} (the result there is for so called proper actions, but we note that every smooth action of a compact Lie group is proper.) The statements of (a), (b) and (c) follow from the statement and proof of Theorem~3.57 as well as the paragraph preceeding Definition~3.72 in~\cite{AlexandrinoB:15}. Lastly, (d) follows from Claim~3.52 in~\cite{AlexandrinoB:15} applied to $P=G$, $F=S_x$, $S=H$ and $U=HG_x$.
\end{proof}

\subsection{Proof of Lemma~\ref{lem.manifold unique neighborhoods}}\label{app.cut point symmetry}

\begin{figure}[t]
    \centering
    \includegraphics[scale=1.2]{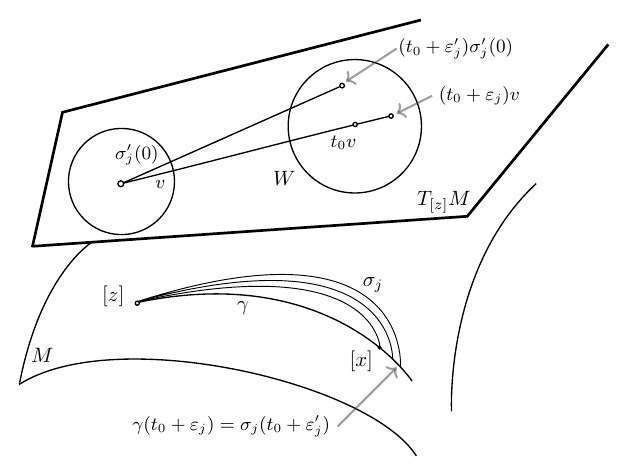}
    \caption{This figure is an aiding illustration for \cref{app.cut point symmetry}. It is an adaptation of Figure~13.2.1 in~\cite{DoCarmo:92}.}
    \label{fig:cutlocusaiding}
  \end{figure}

Let $d$ denote the geodesic distance of $M$, and let $\gamma^r:[0,d([x],[q])]\to M$ be the minimal geodesic joining $[x]$ to $[q]$, as given by the first hypothesis. By \cref{prop.geodesic minimal whole prop}, its restriction $\gamma^r|_{[x]\to [z]}$ is a minimal geodesic joining $[x]$ to $[z]$.

Let $\gamma:[0,d([z],[x])]\to M$ be the minimal geodesic joining $[z]$ to $[x]$ defined as the reverse parameterization of $\gamma^r|_{[x]\to [z]}$. We aim to show that $\gamma$ remains minimizing shortly beyond $[x]$.

We begin by reframing the aim in terms of the exponential map. Since $\gamma$ is a geodesic, we have $\gamma(t) = \operatorname{exp}_{[z]}(tv)$, where $v:= \gamma'(0)$ and $\exp\colon TM\to M$ is the Riemannian exponential map of $M$. By Proposition~5.19 in~\cite{Lee:18}, the exponential map is smooth and its domain is open. It follows that $\operatorname{exp}_{[z]}(tv)$ is defined in a neighborhood of $d([z],[x])v\in T_{[z]}M$.

We seek to show that $\operatorname{exp}_{[z]}(tv)$ remains minimizing for $t$ shortly beyond $d([z],[x])$. Suppose otherwise for the sake of contradiction. We closely follow the proof of Proposition~13.2.2 in~\cite{DoCarmo:92} which treats the case of complete manifolds. Put $t_0 := d([z],[x])$ and let $\{t_0+\varepsilon_i\}$ be a sequence in which $\varepsilon_i >0$ and $\varepsilon_i\to 0$. By the second hypothesis and for large $i$, there exists a sequence of minimizing geodesics $\sigma_i$ joining $[z]$ to $\operatorname{exp}_{[z]}((t_0+\varepsilon_i)v)$, and $\sigma_i'(0)\in T_{[z]}M$ is the corresponding sequence of tangent vectors at $[z]$. By the third hypothesis, $\sigma_i \rightarrow \gamma$ pointwise in $M$ and $\sigma_i'(0)\rightarrow \gamma'(0)$ in $T_{[z]}M$. See \cref{fig:cutlocusaiding} for an aiding illustration.

 We show that $d \operatorname{exp}_{[z]}$ is singular at $t_0\gamma'(0)$. Suppose otherwise for the sake of contradiction. Then, there exists a neighborhood $W$ of $t_0\gamma'(0)$ such that $\operatorname{exp}_{[z]}\big|_W$ is a diffeomorphism. By definition of $\sigma_j$, $\gamma(t_0+\varepsilon_j)=\sigma_j(t_0+\varepsilon_j')$, where $\varepsilon_j'\leq \varepsilon_j$ because $\sigma_j$ is minimizing. Take $\varepsilon_j$ sufficiently small so that $(t_0+\varepsilon_j')\sigma_j'(0)$ and $(t_0+\varepsilon_j)\gamma'(0)$ belong to $W$. Then,
\[\operatorname{exp}_{[z]}((t_0+\varepsilon_j)\gamma'(0)) = \operatorname{exp}_{[z]}((t_0+\varepsilon_j')\sigma_j'(0)).\]
Thus $(t_0+\varepsilon_j)\gamma'(0)=(t_0+\varepsilon_j')\sigma_j'(0)$, and so $\gamma'(0)=\sigma_j'(0)$. This contradicts the assumption that $\gamma$ is no longer minimizing beyond $t_0$. As such, there exists nonzero $u\in T_{[z]}M$ such that $d_{t_0\gamma'(0)}\operatorname{exp}_{[z]}u = 0$. By Corollary~3.46 and Definition~3.72 in~\cite{GallotHL:90}, it holds that $[z]$ and $[x]$ are \textit{conjugate} along $\gamma$.

Now, let $\gamma^{r}\colon [0,d([x],[z])]\to M$ be the unique minimal geodesic joining $[x]$ to $[z]$. Since $\gamma^r$ is the reverse parameterization of $\gamma$, $[x]$ and $[z]$ are also conjugate along $\gamma^r$. However, by the first hypothesis, $\gamma^r$ remains minimizing shortly beyond $[z]$. This contradicts the fact that geodesics fail to be minimizing beyond conjugate points (Theorem~3.73(ii) in \cite{GallotHL:90}.)
    \subsection{Proof of Lemma~\ref{lem.desingularization}}
    \label{app.desingularization proof}
    This section is dedicated to the proof of \cref{lem.desingularization}, which is both long and technical. To enhance readability and organization, the proof is divided into subsections, each corresponding to a specific part (a)-(f) of the lemma, in order. Within each subsection, we present a sequence of claims, each accompanied by proof.
    
    Notably, the last claim in each subsection is the key result used in subsequent subsections; all other claims within the subsection serve as intermediate steps and are only relevant locally.     Once the last claim in a subsection is proven, it completes the proof of the corresponding part of the lemma. To aid in visualization, we provide diagrams throughout the proof. We hope that this structure allows the reader to follow the argument linearly while minimizing the need to reference earlier claims from previous subsections.
    
    Throughout, we view $G$ as a compact Lie subgroup and embedded submanifold of $\operatorname{O}(d)$. In particular, its Lie exponential agrees with the matrix exponential. We denote its Lie algebra by $\mathfrak g\subseteq \mathbb R^{d\times d}$ and its identity by $e:= \operatorname{id}_{\mathbb R^d}$. For a compact subgroup $H\leq G$, we denote by $H^0$ the connected component of $H$ which contains $e$. 
    
    \subsubsection{Proof of Lemma~\ref{lem.desingularization S sym and trans}}

    We begin by constructing the desired submanifold $S$. The following is a stronger version of \cref{lem.desingularization S sym and trans}.
    \begin{claim}[Construction of $S$]
        \label{claim.construction of S}
        There exists an embedded submanifold $S\subseteq G$ passing through $e$ such that each of the following statements holds: 
        \begin{claimenum}
        \item \label{claim.S symmetric}$S$ is symmetric, i.e., $S^{-1}=S$.
        \item \label{claim.S transverse}For each $g\in G$, $gSg^{-1}$ is transverse to $gG_xg^{-1} = G_{gx}$ at $e$, i.e., $T_e(gSg^{-1}) \cap T_e G_{gx} = \{0\}$ and $T_e(gSg^{-1}) + T_e G_{gx} = \mathfrak g$.
        \item \label{claim.S with stabilizer intersect}$\overline{S^5}\cap G_x \subseteq G_x^0$ and $\overline S\cap G_x = \{e\}$.
        \item \label{claim.S exp diffeo}There exists an open neighborhood $T$ of $0$ in $T_eS$ such that $\exp|_T\colon T\to S$ is a diffeomorphism, where $\exp\colon \mathfrak g\to G$ is the matrix exponential.
        \item \label{claim.S right mult diffeo}The multiplication map $S\times G_x\to S\cdot G_x$ is a diffeomorphism. In particular, $S\cdot G_x$ is an open subset of $G$.
        \end{claimenum}
    \end{claim}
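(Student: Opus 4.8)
The plan is to build $S$ as the exponential image of a small symmetric ball in a fixed complement $\mathfrak m$ of $\mathfrak g_x:=\operatorname{Lie}(G_x)$ inside $\mathfrak g:=\operatorname{Lie}(G)$, and then shrink the ball enough that all five properties hold at once. Concretely, I would first fix an $\operatorname{Ad}(G)$-invariant inner product on $\mathfrak g$ — the Frobenius form $\langle A,B\rangle=\operatorname{tr}(A^\top B)$ on $\mathfrak g\subseteq\mathbb R^{d\times d}$ works, being $\operatorname{Ad}(G)$-invariant since $G\le\Oname(d)$ — and set $\mathfrak m:=\mathfrak g_x^{\perp}$, so $\mathfrak g=\mathfrak g_x\oplus\mathfrak m$ orthogonally and $\operatorname{Ad}(G_x)\mathfrak m=\mathfrak m$. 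Since $d_0\exp=\operatorname{id}_{\mathfrak g}$, the inverse function theorem provides $r>0$ such that $\exp$ carries the closed ball $\overline B_r\subseteq\mathfrak g$ diffeomorphically onto a closed neighborhood of $e$ in $G$. I then put $T:=\{v\in\mathfrak m:\|v\|<r\}$ (after the further shrinkings of $r$ described below) and $S:=\exp(T)$. Because $T=\overline B_r\cap\mathfrak m$ is an embedded submanifold of $\overline B_r$, its image $S$ is an embedded submanifold of $G$ through $e$ with $T_eS=d_0\exp(\mathfrak m)=\mathfrak m$; this is exactly \cref{claim.S exp diffeo}. Property \cref{claim.S symmetric} is then $S^{-1}=\exp(-T)=\exp(T)=S$, using $\exp(v)^{-1}=\exp(-v)$ and $T=-T$. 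Property \cref{claim.S transverse} is formal: conjugation $c_g$ by $g$ fixes $e$ with $d(c_g)_e=\operatorname{Ad}(g)$, so $T_e(gSg^{-1})=\operatorname{Ad}(g)\mathfrak m$ and $T_eG_{gx}=T_e(gG_xg^{-1})=\operatorname{Ad}(g)\mathfrak g_x$, and applying the linear automorphism $\operatorname{Ad}(g)$ to $\mathfrak g=\mathfrak g_x\oplus\mathfrak m$ gives transversality for every $g$.

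For \cref{claim.S with stabilizer intersect}, I would use that $G_x$ is an embedded Lie subgroup whose exponential is $\exp|_{\mathfrak g_x}$, a local diffeomorphism at $0$. Shrinking $r$, any $g=\exp(v)\in\overline S\cap G_x$ with $v\in\overline T\subseteq\mathfrak m$ also equals $\exp(w)$ for some small $w\in\mathfrak g_x$, and injectivity of $\exp$ on $\overline B_r$ forces $v=w\in\mathfrak m\cap\mathfrak g_x=\{0\}$, so $g=e$; hence $\overline S\cap G_x=\{e\}$. For the $\overline{S^5}$ part, note $G_x^0$ is open in $G_x$, say $G_x^0=V\cap G_x$ with $V\ni e$ open in $G$; the $5$-fold product map $\mathfrak g^5\to G$, $(v_1,\dots,v_5)\mapsto\exp(v_1)\cdots\exp(v_5)$, is continuous and sends $0$ to $e\in V$, so after a final shrink of $r$ we get $(\overline S)^5\subseteq V$. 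Since $(\overline S)^5$ is compact (a continuous image of a compact set), it is closed, so $\overline{S^5}\subseteq(\overline S)^5$, and therefore $\overline{S^5}\cap G_x\subseteq V\cap G_x=G_x^0$.

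The main obstacle is \cref{claim.S right mult diffeo}, which I would handle via the principal $G_x$-bundle $\pi\colon G\to G/G_x$, a smooth submersion since $G$ is compact and $G_x$ is closed. The composite $\pi\circ\exp|_{\mathfrak m}$ has differential $v\mapsto d\pi_e v$ at $0$, an isomorphism $\mathfrak m\xrightarrow{\sim}T_{eG_x}(G/G_x)$ because $\ker d\pi_e=\mathfrak g_x$ is complementary to $\mathfrak m$. Shrinking $r$ once more, $\pi\circ\exp|_T$ is a diffeomorphism onto an open neighborhood $U$ of $eG_x$; composing with $(\exp|_T)^{-1}$ shows $\pi|_S\colon S\to U$ is a diffeomorphism, so $\sigma:=(\pi|_S)^{-1}\colon U\to S$ is a smooth section of $\pi$ with image $S$. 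Since $\pi^{-1}(U)=\pi^{-1}(\pi(S))=SG_x$, this set is open; and the standard local trivialization $U\times G_x\to\pi^{-1}(U)$, $(u,h)\mapsto\sigma(u)h$, is a diffeomorphism, which after precomposing with the diffeomorphism $\pi|_S\times\operatorname{id}\colon S\times G_x\to U\times G_x$ becomes exactly the multiplication map $(s,h)\mapsto sh$. Hence $S\cdot G_x$ is open and $S\times G_x\to S\cdot G_x$ is a diffeomorphism.

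I expect the only genuine subtlety is bookkeeping: parts \cref{claim.S with stabilizer intersect,claim.S exp diffeo,claim.S right mult diffeo} all need the \emph{same} radius $r$, so the cleanest write-up fixes $\mathfrak m$ first and then chooses $r$ small enough to meet the finitely many constraints (injectivity of $\exp$ on $\overline B_r$, $(\overline S)^5\subseteq V$, and $\pi\circ\exp|_T$ a diffeomorphism onto an open set) simultaneously. Everything else is a formal consequence of $d_0\exp=\operatorname{id}$, the direct sum $\mathfrak g=\mathfrak g_x\oplus\mathfrak m$, and standard homogeneous-space theory.
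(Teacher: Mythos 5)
Your construction is correct and matches the paper's in its essentials: both take $S=\exp(T)$ for a small symmetric neighborhood $T$ of $0$ in the orthogonal complement of $T_eG_x$ inside $\mathfrak g$, and both handle (a), (c), (d) by the same shrink-the-radius arguments — in particular, the paper's Step~1 is exactly your ``bookkeeping'' of finitely many constraints, including the same $5$-fold multiplication trick for $\overline{S^5}\cap G_x \subseteq G_x^0$. The one substantive divergence is (e): the paper extracts the diffeomorphism $S\times G_x\to S\cdot G_x$ from its Tubular Neighborhood Theorem (\cref{prop.slice tube diffeo}) applied to the free inverted right-multiplication action of $G_x$ on $G$ equipped with a bi-invariant metric, whereas you obtain it from the standard local trivialization of the principal bundle $G\to G/G_x$ induced by the section $\sigma=(\pi|_S)^{-1}$; these are equivalent routes, yours being marginally more self-contained while the paper's reuses machinery it needs elsewhere anyway. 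A cosmetic difference in (b): the paper argues via $\operatorname{Ad}$-isometry of the bi-invariant metric to get \emph{orthogonal} transversality, but, as you observe, linearity of $\operatorname{Ad}(g)$ applied to the direct sum already yields the transversality the claim actually asserts.
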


        \begin{figure}[t]
        \centering
        \includegraphics[scale=0.8]{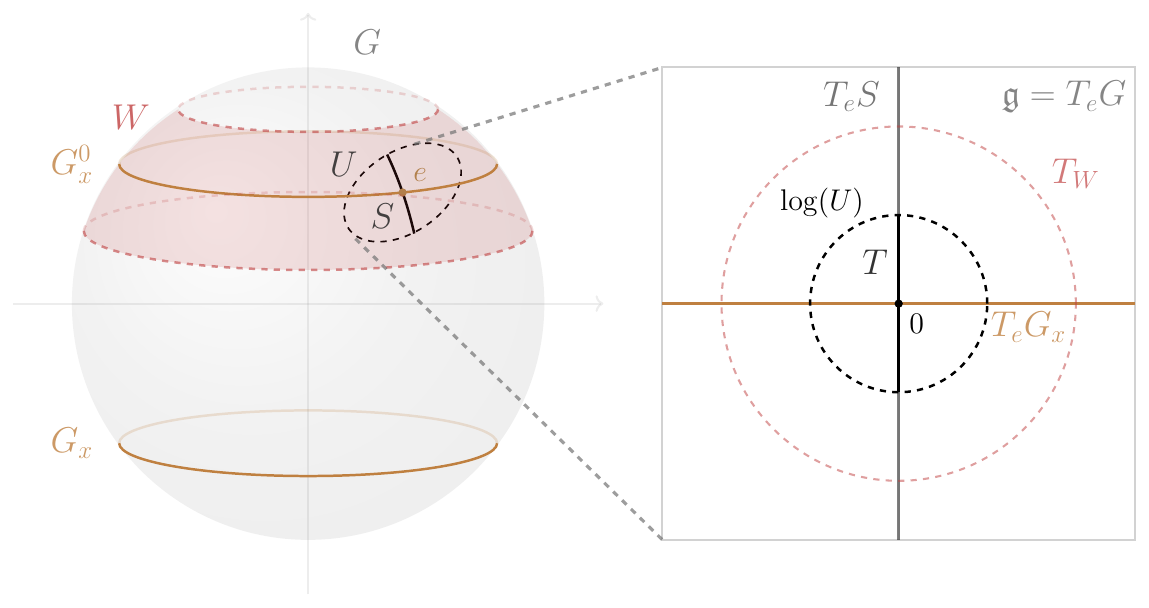}
        \caption{This figure is an aiding illustration for the proof of \cref{claim.construction of S}. For brevity, we define $\log(U) := \exp|_{T_W}^{-1}(U)$. The dashed lines connecting the left and right halves of the figure are solely for visualizing the transition by magnification from $G$ to its Lie algebra.}
        \label{fig:spheregroup}
      \end{figure}

    \begin{proof}
        See \cref{fig:spheregroup} for an aiding illustration.

        \vspace{\claimstepvspace}\noindent \textit{Step 1. We construct a nice neighborhood of the identity $e \in G$.}

        Since $G$ is a Lie group, the $5$-fold multiplication map $m_G\colon G\times G\times G\times G\times G\to G$ is well-defined and continuous. Since $G_x$ has finitely many connected components each of which is compact, there exists an open neighborhood $W$ of $G_x^0$ such that $\overline W \cap G_x = G_x^0$ and such that the matrix exponential map restricts to a diffeomorphism of an open neighborhood $T_W \subseteq \mathfrak g$ of $0$ onto a subset $\exp(T_W)\subseteq W$ that satisfies $\overline{\exp(T_W)}\cap G_x = \exp(\overline{T_W}\cap T_eG_x)$. The latter is possible since $G_x$ is a compact subgroup hence an embedded submanifold of $G$.        

        Then for a simple open neighborhood $W_1\times W_2\times W_3 \times W_4\times W_5\subseteq m_G^{-1}(W)$ which contains $(e,e,e,e,e)$, define $U:=W_1\cap W_2\cap W_3\cap W_4\cap W_5\cap \exp(T_W)$. One immediately observes that $U$ is a neighborhood of $e$, $\overline{U^5}\cap G_x \subseteq \overline{W}\cap G_x = G_x^0$, and $\overline U\subseteq \overline{\exp(T_W)}$.

        \vspace{\claimstepvspace}\noindent \textit{Step 2. We construct $S$.}

        Equip $G$ with a bi-invariant Riemannian metric $\beta$ so that $G_x$ acts freely and smoothly on $G$ by inverted right multiplication isometries. Let $(T_eG_x^0)^\perp$ denote the orthogonal complement of $T_eG_x^0$ in $\mathfrak g$, with respect to $\beta$. By the Tubular Neighborhood Theorem (\cref{prop.slice theorem}), there exists $T$, an open neighborhood of $0$ in $(T_eG_x^0)^\perp\cap \exp|_{T_W}^{-1}(U)$, such that $-T = T$ and $S:= \exp(T) \subseteq U$ is an orthogonal slice at $e$ to the aforementioned action of $G_x$ on $G$.
        
        \vspace{\claimstepvspace}\noindent \textit{Step 3. We verify that $S$ satisfies all the properties we seek.}

         For (a), since $-T=T$, it follows that $S=S^{-1}$. For (c), since $S\subseteq U$ and $\overline U \subseteq \overline{\exp(T_W)}$, it follows that $\overline{S^5}\cap G_x \subseteq G_x^0$ and $\overline S\cap G_x = \{e\}$. For (d), observe that $T\subseteq T_W$.

        Next, we prove (e). Since the aforementioned action of $G_x$ on $G$ is free, \cref{prop.slice tube diffeo} entails that the right inverted (and hence noninverted) multiplication map $S\times G_x\to S\cdot G_x$ is a diffeomorphism.
        
        Lastly, we prove (b). By \cref{prop.slice is manifold}, it follows that $S$ is an embedded submanifold orthogonally transverse to $G_x$ at $e$. Next, the adjoint map $\operatorname{Ad}_g\colon \mathfrak g\to \mathfrak g$, namely the derivative of conjugation $h\mapsto ghg^{-1}$ at the identity, is an isometry of $\mathfrak g$ with respect to the bi-invariant Riemannian metric $\beta$. As such, for each $g\in G$, it holds that $gSg^{-1}$ is orthogonally transverse to $gG_xg^{-1} = G_{gx}$ at $e$ as desired.
    \end{proof}
    \subsubsection{Proof of Lemma~\ref{lem.desingularization disjoint fibers}}

    Before proceeding, we need the following observation regarding stabilizers. It is an analogue of \cref{prop.stab PG to PG}.

    \begin{claim}
        \label{claim.connected comp stab agree}
        For each $y\in R(G)$ and $p\in V_y$, it holds that $G_p\leq G_y$ and $G_p^0=G_y^0$. In particular, $V_y\subseteq R(G)$.
    \end{claim}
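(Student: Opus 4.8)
The plan is to exploit that, in contrast with the principal case treated in \cref{prop.stab subset lemma whole}, the defining condition for $R(G)$ is purely a statement about orbit dimension. So I would keep the inclusion $G_p\leq G_y$ exactly as in part~(a) of \cref{prop.stab subset lemma whole}, and replace the use of part~(b) there (which forced $G_x\leq G_z$ for principal $x$) by the remark that a regular orbit is an orbit of maximal dimension; this reduces the claim to a short dimension count.

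First I would note $p\in V_y=\operatorname{relint}(U_y)\subseteq U_y$, so \cref{prop.stab subset} gives $G_p\leq G_y$. Since $G$ is a compact Lie group, $G_y$ is a compact Lie subgroup and $G_p$ is a closed subgroup of it, hence an embedded Lie subgroup with $\dim G_p\leq\dim G_y$; moreover $G_p^0$ is a connected subgroup of $G$ containing $e$ and contained in $G_y$, so $G_p^0\leq G_y^0$.

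The key step is the reverse dimension inequality, where the hypothesis $y\in R(G)$ does all the work. By \cref{def.regular}, $\dim([y])=\max_{w\in\mathbb R^d}\dim([w])$, so in particular $\dim([p])\leq\dim([y])$. For any $w\in\mathbb R^d$ the orbit map identifies $G\cdot w$ with $G/G_w$, giving $\dim([w])=\dim G-\dim G_w$; applying this with $w=p$ and $w=y$ converts $\dim([p])\leq\dim([y])$ into $\dim G_p\geq\dim G_y$. Combined with the previous paragraph, $\dim G_p=\dim G_y$. Now $G_p^0$ and $G_y^0$ are connected Lie subgroups of $G$ with $G_p^0\leq G_y^0$ and equal dimension (equivalently $\mathfrak g_p=\mathfrak g_y$, since $\mathfrak g_p\subseteq\mathfrak g_y$ with matching dimensions), which forces $G_p^0=G_y^0$ --- e.g.\ because $G_p^0$ is then open in the connected group $G_y^0$. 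Finally $\dim([p])=\dim G-\dim G_p=\dim G-\dim G_y=\dim([y])$ is maximal, so $p\in R(G)$; as $p\in V_y$ was arbitrary, $V_y\subseteq R(G)$.

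I do not anticipate a genuine obstacle. The only two points needing a sentence of justification are that a connected Lie subgroup contained in another of the same dimension must coincide with it (the Lie-algebra correspondence, or the open-subgroup argument above) and the orbit--stabilizer identity $\dim([w])=\dim G-\dim G_w$, which is already invoked elsewhere in the paper (e.g.\ in the proof of \cref{lem.regular char}); everything else is a direct citation of \cref{prop.stab subset lemma whole} and \cref{def.regular}. The contrast with \cref{prop.stab subset lemma whole}(c) is worth flagging: for regular non-principal points one genuinely cannot upgrade $G_p^0=G_y^0$ to $G_p=G_y$, as \cref{ex.3d voronoi SOReflection} shows.
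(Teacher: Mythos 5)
Your proposal is correct and follows essentially the same route as the paper: $G_p\leq G_y$ via \cref{prop.stab subset}, the orbit--stabilizer dimension count $\dim([w])=\dim G-\dim G_w$ using $y\in R(G)$ to force $\dim G_p=\dim G_y$, and hence $T_eG_p=T_eG_y$ giving $G_p^0=G_y^0$. The only cosmetic difference is that the paper concludes $G_p^0=G_y^0$ from surjectivity of the exponential map onto the identity components of compact Lie subgroups, whereas you use the equivalent open-subgroup/Lie-algebra-correspondence argument.
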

    \begin{proof}
        The assertion $G_p\leq G_y$ follows from \cref{prop.stab subset}. In particular, $\dim(G_p)\leq \dim(G_y)$ and $T_eG_p \subseteq T_eG_y$. For the second assertion and since $y\in R(G)$, the orbit stabilizer theorem entails that
        \[\dim(G/G_p) = \dim(G\cdot p)\leq \dim(G\cdot y) = \dim(G/G_y).\]
        As such, $\dim(G_y) \leq \dim(G_p)$ and so $T_eG_y = T_eG_p$. Since $G_y$ and $G_p$ are compact embedded Lie subgroups of $G$, their respective exponential maps are surjective onto their respective connected components. Thus
        \[G_y^0 = \exp(T_eG_y) = \exp(T_eG_p) = G_p^0.\]
    \end{proof}

    The construction of $S$ and the aforementioned claim allow us to prove a stronger version of the statement of \cref{lem.desingularization disjoint fibers}. For the following claim, see \cref{fig:desingfigure} for an illustration.
    
    \begin{claim}[Fiber Disjointness]
    \label{claim.fiber disjoint}
    Let $a$ and $b$ be nonnegative integers such that $a+b \leq 5$, and fix an arbitrary $p\in V_x$. For all $p_1,p_2\in V_p$, if $p_1\neq p_2$, then $\overline{S^a} p_1\cap \overline{S^b} p_2 = \varnothing$. In particular, it holds that $\overline{S^4}p'\cap S\cdot V_x = Sp'$, for all $p'\in V_x$.
    \end{claim}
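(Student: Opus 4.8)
The plan is to prove the first assertion by contradiction and then deduce the ``in particular'' clause from it. Suppose $\overline{S^a}p_1\cap\overline{S^b}p_2\neq\varnothing$ for some distinct $p_1,p_2\in V_p$ with $a+b\le 5$, and choose $s_1\in\overline{S^a}$, $s_2\in\overline{S^b}$ with $s_1p_1=s_2p_2$. Put $g:=s_2^{-1}s_1$, so that $gp_1=p_2$. Using $S=S^{-1}$ from \cref{claim.S symmetric}, the continuity of inversion and multiplication on the compact group $G$, and the fact that $e\in S$, one checks $g\in(\overline{S^b})^{-1}\,\overline{S^a}\subseteq\overline{S^{a+b}}\subseteq\overline{S^5}$; in particular $g\in G$ is an isometry of $\mathbb R^d$ permuting the orbit $[p_1]=[p_2]$. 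It then suffices to show $g\in G_{p_1}$, since that forces $p_1=gp_1=p_2$, a contradiction.

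The first step is to show $g\in G_p$. Since $p_1,p_2\in V_p\subseteq U_p$, \cref{def.normal voronoi} gives $\{p\}=\arg\min_{q\in[p]}\|p_1-q\|=\arg\min_{q\in[p]}\|p_2-q\|$. Applying the isometry $g$ to the relations $\|p_1-p\|\le\|p_1-q\|$ for $q\in[p]$ that witness $p_1\in U_p$, and using that $q\mapsto gq$ is a bijection of $[p]$, yields $\|p_2-gp\|\le\|p_2-r\|$ for every $r\in[p]$, i.e.\ $gp\in\arg\min_{r\in[p]}\|p_2-r\|=\{p\}$. Hence $gp=p$, that is $g\in G_p$.

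The second step upgrades $g\in G_p$ to $g\in G_{p_1}$. Since $p\in V_x$ with $x\in R(G)$, \cref{claim.connected comp stab agree} gives $G_p\le G_x$, $G_p^0=G_x^0$, and also $p\in V_x\subseteq R(G)$. Combining $g\in G_p\le G_x$ with $g\in\overline{S^5}$ and \cref{claim.S with stabilizer intersect} gives $g\in\overline{S^5}\cap G_x\subseteq G_x^0=G_p^0$. Applying \cref{claim.connected comp stab agree} once more, now to $p\in R(G)$ and $p_1\in V_p$, gives $G_{p_1}^0=G_p^0$, so $g\in G_p^0=G_{p_1}^0\le G_{p_1}$, completing the contradiction. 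For the last sentence of the claim, the inclusion $Sp'\subseteq\overline{S^4}p'\cap S\cdot V_x$ is immediate since $e\in S$ and $p'\in V_x$; conversely, if $z$ lies in the intersection, write $z=s_1p'$ with $s_1\in\overline{S^4}$ and $z=s_2q$ with $s_2\in S$, $q\in V_x$, and apply the first assertion with $p=x$ (note $x\in V_x$), $a=4$, $b=1$: this forces $q=p'$, since otherwise $z$ would lie in the empty set $\overline{S^4}p'\cap\overline{S}q$, whence $z=s_2p'\in Sp'$.

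I do not anticipate a serious obstacle; the argument is a combination of the uniqueness-of-projection property of Voronoi cells, the defining properties of $S$ from \cref{claim.construction of S}, and \cref{claim.connected comp stab agree}. The only point requiring care is the first step, where one must use that $g$ is a \emph{global} isometry permuting the whole orbit $[p]$ and that \emph{both} $p_1$ and $p_2$ project uniquely onto $p$ within $[p]$; the remaining manipulations with closures of powers $\overline{S^k}$ are routine once $S=S^{-1}$ and $e\in S$ are invoked.
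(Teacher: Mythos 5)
Your proposal is correct and follows essentially the same route as the paper: the connecting element $g=s_2^{-1}s_1$ is first shown to lie in $G_p$ via the unique-projection property of $V_p$ (the paper phrases this as $V_{gp}\cap V_p\neq\varnothing$ together with \cref{lem.UV facts equivariance,lem.UV facts intersection}), then placed in $\overline{S^5}\cap G_x\subseteq G_x^0=G_p^0=G_{p_1}^0$ using \cref{claim.S with stabilizer intersect,claim.connected comp stab agree}, forcing $p_1=p_2$. The paper states the argument contrapositively and leaves the ``in particular'' clause implicit, which you spell out correctly with $p=x$, $a=4$, $b=1$.
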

    \begin{proof}
     We prove the contrapositive. If $u_1 \in \overline {S^a}$ and $u_2\in \overline{S^b}$ are such that $u_1p_1 = u_2p_2$, then $p_1 = u_1^{-1}u_2p_2$ and so $u_1^{-1}u_2 V_p\cap V_p\neq \varnothing$. By \cref{lem.UV facts,prop.stab subset} and \cref{claim.S with stabilizer intersect}, it follows that $u_1^{-1}u_2 \in G_p \cap \overline{S^a}^{-1}\overline{S^b} \subseteq G_x^0$ (since $G_p\leq G_x$ and $\overline{S^a}^{-1}\cdot \overline{S^b} = \overline{S^a}\cdot\overline{S^b} \subseteq \overline{S^5}$.) By \cref{claim.connected comp stab agree}, we get that $u_1^{-1}u_2 \in G_x^0 = G_p^0 = G_{p_2}^0\leq G_{p_2}$ and so $p_1 = u_1^{-1}u_2p_2 = p_2$ as desired.
    \end{proof}

    \begin{figure}[t]
        \centering
        \includegraphics[scale=0.95]{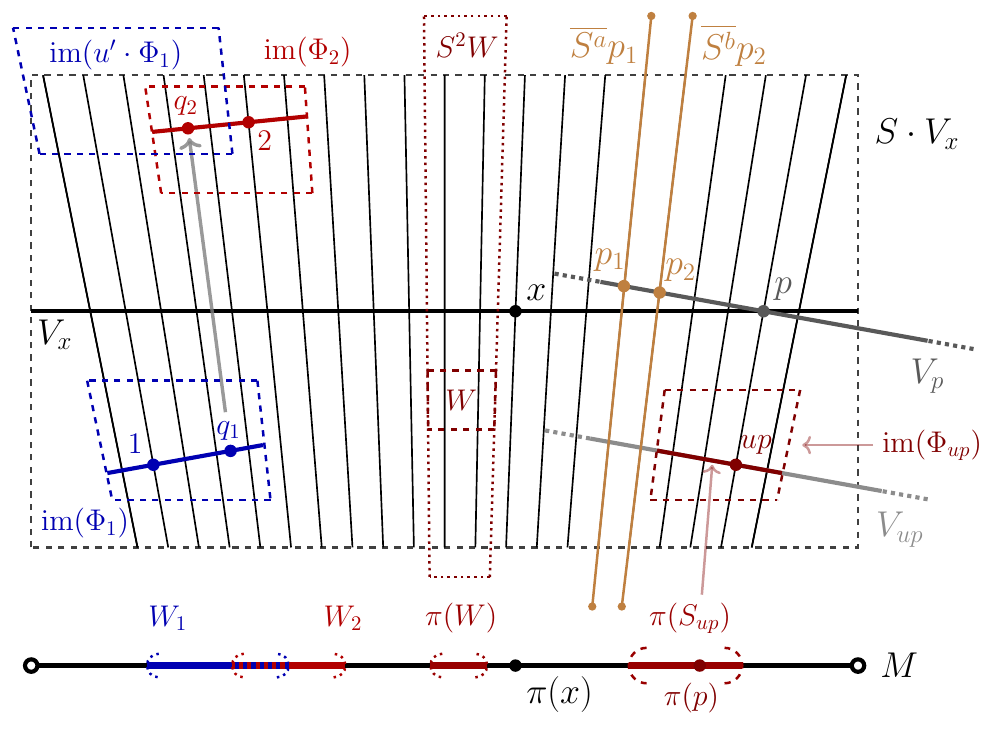}
        \caption{In order from left to right, this figure is an aiding illustration for the proof of \cref{claim.smooth structure} and the statements of \cref{claim.pi is open,claim.fiber disjoint,claim.nice charts SVX}, respectively.}
        \label{fig:desingfigure}
      \end{figure}

    \subsubsection{Proof of Lemma~\ref{lem.desingularization open SVX}}
    We restate and prove \cref{lem.desingularization open SVX} in the following claim.
    \begin{claim}[Open Parent Space]
        \label{claim.openness}
        $S\cdot V_x$ is an open subset of $\mathbb R^d$.
    \end{claim}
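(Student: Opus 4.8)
The plan is to exhibit $S\cdot V_x$ as the preimage of an open set under a continuous map defined on the open set $Q_x$, so that no transversality or differential computation on $S\times V_x$ is needed.

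First I would invoke \cref{lem.Q open dense}: the open Voronoi diagram $Q_x=\bigsqcup_{q\in[x]}V_q$ is open in $\mathbb R^d$, and by \cref{lem.UV facts intersection} the cells $\{V_q\}_{q\in[x]}$ are pairwise disjoint. Since $V_q\subseteq U_q$ for each $q$, every $z\in Q_x$ has a \emph{unique} nearest point in $[x]$, which I denote $r(z)$; explicitly $z\in V_{r(z)}$. I would then check that $r\colon Q_x\to[x]$ is continuous by the standard argument that limits of minimizers are minimizers: if $z_n\to z$ in $Q_x$ and $r(z_n)\to q^\ast$ along a subsequence (possible since $[x]$ is compact), then passing to the limit in $\|z_n-r(z_n)\|\le\|z_n-q\|$ gives $\|z-q^\ast\|\le\|z-q\|$ for all $q\in[x]$, so $q^\ast=r(z)$ by uniqueness on $Q_x$; as every convergent subsequence of $r(z_n)$ converges to $r(z)$ and the sequence lies in the compact set $[x]$, we get $r(z_n)\to r(z)$.

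Next I would show that $Sx:=\{s\cdot x:s\in S\}$ is open in the orbit $[x]$. Consider $\phi\colon S\to[x]$, $\phi(s)=s\cdot x$. It is injective: if $s_1x=s_2x$ then $s_2^{-1}s_1\in G_x$, so $s_1=s_1\cdot e$ and $s_1=s_2\cdot(s_2^{-1}s_1)$ are two $S\cdot G_x$-factorizations of $s_1$, forcing $s_1=s_2$ by \cref{claim.S right mult diffeo}. Differentiating the diffeomorphism $S\times G_x\to S\cdot G_x$ of that same claim at $(s,e)$ yields $T_sS\oplus s\cdot T_eG_x=s\cdot\mathfrak g=T_sG$ for every $s\in S$; hence $s^{-1}T_sS\cap T_eG_x=\{0\}$, which shows $d_s\phi$ is injective, and moreover $\dim S=\dim\mathfrak g-\dim T_eG_x=\dim[x]$. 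Thus $\phi$ is a local diffeomorphism, in particular an open map, so $Sx=\phi(S)$ is open in $[x]$.

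Finally I would verify the set identity $S\cdot V_x=r^{-1}(Sx)$ and conclude. By \cref{lem.UV facts equivariance}, $S\cdot V_x=\bigcup_{s\in S}V_{sx}$; since $\phi$ is injective and the cells $\{V_q\}_{q\in[x]}$ are disjoint pieces of $Q_x$, this union is disjoint and contained in $Q_x$, and for $z\in V_{sx}\subseteq U_{sx}$ one has $r(z)=sx\in Sx$. Conversely, if $z\in Q_x$ with $r(z)=sx$ for some $s\in S$, then the unique cell of $Q_x$ containing $z$ is $V_{r(z)}=V_{sx}$, so $z\in S\cdot V_x$. Hence $S\cdot V_x=r^{-1}(Sx)$, which is open in $Q_x$ because $r$ is continuous and $Sx$ is open, and therefore open in $\mathbb R^d$ because $Q_x$ is open in $\mathbb R^d$. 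I do not anticipate a real obstacle here: the only points needing care are the continuity of $r$ on $Q_x$ and the extraction of the splitting $T_sS\oplus s\cdot T_eG_x=s\cdot\mathfrak g$ from \cref{claim.S right mult diffeo}, both of which are short. (Note that $x\in R(G)$ is not invoked directly in this argument beyond the earlier construction of $S$.)
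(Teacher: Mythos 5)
Your proof is correct, but it takes a genuinely different route from the paper. The paper argues by contradiction with sequences: assuming $S\cdot V_x$ is not open in $Q_x=G\cdot V_x$, it extracts convergent sequences $g_n\in G$, $p_n\in V_x$, uses compactness of $G$ together with the openness of $SG_x$ in $G$ (from \cref{claim.S right mult diffeo}) and the cell-disjointness of \cref{lem.UV facts intersection} to force the limiting group element into $SG_x$, a contradiction. You instead give a direct structural identification $S\cdot V_x=r^{-1}(Sx)$, where $r\colon Q_x\to[x]$ is the (continuous, by the standard limits-of-minimizers argument) nearest-point retraction and $Sx$ is open in $[x]$ because $s\mapsto sx$ is a local diffeomorphism of $S$ onto its image --- the latter extracted by differentiating the diffeomorphism of \cref{claim.S right mult diffeo} to get the splitting $T_sS\oplus s\cdot T_eG_x=T_sG$ at every $s\in S$, not just at $e$. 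Both proofs lean on the same two ingredients (\cref{claim.S right mult diffeo} and the disjointness of the cells $\{V_q\}_{q\in[x]}$ inside $Q_x$), but yours buys an explicit description of $S\cdot V_x$ as a saturated preimage, at the cost of a short differential computation and the verification that the infinitesimal stabilizer equals $T_eG_x$; the paper's is shorter and purely topological but nonconstructive. All the delicate points in your write-up (uniqueness of $r$ on $Q_x$ via $V_q\subseteq U_q$, continuity of $r$, injectivity of $d_s\phi$ from $T_sS\cap s\cdot T_eG_x=\{0\}$, and the two inclusions of the set identity) check out.
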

    \begin{proof}
     Since $Q_x = G\cdot V_x$ is open by \cref{lem.Q open dense}, it holds that $S\cdot V_x\subseteq Q_x$ is open in $\mathbb R^d$ if and only if $S\cdot V_x$ is open in $G\cdot V_x$. Note that $(SG_x) \cdot V_x = S\cdot V_x$ since $G_x$ fixes $V_x$ by \cref{lem.UV facts equivariance}. Suppose for the sake of contradiction that $S\cdot V_x$ is not open in $G\cdot V_x$. Then there exist sequences $g_n\in G$ and $p_n \in V_x$ such that $g_np_n \to uq \in S\cdot V_x$ yet $g_np_n\notin SG_x\cdot V_x$. In particular, $g_n\notin SG_x$. By compactness of $G$, we may assume $g_n\to g\in G$. Note that $g\notin SG_x$ since $SG_x$ is open in $G$ by \cref{claim.S right mult diffeo}. Additionally, $p_n\to p:=g^{-1}uq \in V_{g^{-1}u x}\cap \overline V_x$. By \cref{lem.UV facts intersection}, we get that $g^{-1}u \in G_x$ and so $g \in uG_x \subseteq SG_x$, a contradiction.
    \end{proof}

    \subsubsection{Proof of Lemma~\ref{lem.desingularization Mx smooth structure}}

    By virtue of \cref{claim.fiber disjoint}, let $M$ denote the space of equivalence classes $\{S\cdot p\}_{p\in V_x}$ equipped with the quotient topology. Let $\pi\colon S\cdot V_x \to M$ denote the quotient map. We build up towards proving that $M$ is a topological manifold that admits a Riemannian structure that makes $\pi$ into a Riemannian submersion. We begin with describing saturations and proving that $\pi$ is an open map. For the following claim, see \cref{fig:desingfigure} for an illustration.
    \begin{claim}[Open Quotient Map]
    \label{claim.pi is open}
    For $W\subseteq S\cdot V_x$, it holds that \[\pi^{-1}(\pi(W))= (S^2\cdot W)\cap S\cdot V_x.\]
     As a consequence, $\pi$ is an open map.
    \end{claim}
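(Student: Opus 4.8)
The plan is to first turn the defining equivalence relation of $M$ into something concrete to chase elements through. By \cref{claim.fiber disjoint} the sets $\{S\cdot p:p\in V_x\}$ are pairwise disjoint, and since $e\in S$ they cover $S\cdot V_x$; hence they partition $S\cdot V_x$, so for $y,y'\in S\cdot V_x$ one has $\pi(y)=\pi(y')$ exactly when $y$ and $y'$ lie in a common cell $S\cdot p$ with $p\in V_x$. I will also repeatedly use $S=S^{-1}$ from \cref{claim.S symmetric}, so that $S S^{-1}=S^2$ and $S^{-1}SSS=S^4$; keeping every power of $S$ at most $5$ is precisely what makes \cref{claim.fiber disjoint} applicable.

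For the inclusion $\pi^{-1}(\pi(W))\subseteq(S^2\cdot W)\cap S\cdot V_x$: given $y'$ with $\pi(y')\in\pi(W)$, pick $w\in W$ with $\pi(y')=\pi(w)$; then $y'=s_1p$ and $w=s_2p$ for some $p\in V_x$ and $s_1,s_2\in S$, whence $y'=s_1s_2^{-1}w\in S^2\cdot W$, and of course $y'\in S\cdot V_x$. For the reverse inclusion, take $y'=s_1s_2w\in(S^2\cdot W)\cap S\cdot V_x$ with $s_1,s_2\in S$, $w\in W$, and write $w=s_3p$ with $s_3\in S$, $p\in V_x$, and $y'=s_4q$ with $s_4\in S$, $q\in V_x$. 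Then $q=s_4^{-1}s_1s_2s_3\,p\in S^4\cdot p\subseteq\overline{S^4}\cdot p$, and since also $q\in V_x\subseteq S\cdot V_x$, the ``in particular'' clause of \cref{claim.fiber disjoint} forces $q\in S\cdot p$. Hence $q$ and $p$ lie in the same cell, so $\pi(q)=\pi(p)$; combined with $\pi(y')=\pi(q)$ (as $y'=s_4q$, $q\in V_x$) and $\pi(w)=\pi(p)$ (as $w=s_3p$, $p\in V_x$), this yields $\pi(y')=\pi(w)\in\pi(W)$, i.e.\ $y'\in\pi^{-1}(\pi(W))$.

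For the consequence, openness of $\pi$ follows formally once the identity is known: if $W\subseteq S\cdot V_x$ is open then, since $S\cdot V_x$ is open in $\mathbb R^d$ by \cref{claim.openness}, $W$ is open in $\mathbb R^d$; each $s\in S^2\subseteq\Oname(d)$ is a linear isometry of $\mathbb R^d$, so $s\cdot W$ is open and $S^2\cdot W=\bigcup_{s\in S^2}s\cdot W$ is open in $\mathbb R^d$; therefore $\pi^{-1}(\pi(W))=(S^2\cdot W)\cap S\cdot V_x$ is open in $S\cdot V_x$, and by the definition of the quotient topology $\pi(W)$ is open in $M$.

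I do not anticipate a real obstacle here. The single point demanding care is conceptual rather than computational: the relation defining $M$ comes from a partition into $S$-orbits, not from a group action, so its symmetry and transitivity are not automatic — they are exactly what \cref{claim.fiber disjoint} supplies via $\overline{S^4}\cdot p\cap S\cdot V_x=S\cdot p$. Correspondingly, the only delicate bookkeeping is ensuring that the products of the $s_i$ never leave $S^5$, which is why writing $s_4^{-1}s_1s_2s_3\in S^4$ (using $S=S^{-1}$) is the crux of the reverse inclusion.
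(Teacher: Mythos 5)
Your proof is correct and follows essentially the same route as the paper's: both arguments reduce everything to \cref{claim.fiber disjoint} together with $S=S^{-1}$, keeping all products of slice elements inside $S^4\subseteq\overline{S^4}$ so that the identity $\overline{S^4}p\cap S\cdot V_x=Sp$ pins down the fiber, and both derive openness from the openness of $S\cdot V_x$ plus the fact that each element of $S^2$ is a homeomorphism of $\mathbb R^d$. The only cosmetic difference is that the paper computes $\pi^{-1}(\pi(k))$ for a single point via the sandwich $Sp\subseteq S^2k\cap S\cdot V_x\subseteq S^3p\cap S\cdot V_x=Sp$ and then takes unions, whereas you make the partition structure explicit and chase the two inclusions separately.
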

    \begin{proof}
        Let $W\subseteq S\cdot V_x$. An element $k \in W$ has the form $k = up$ for some $u\in S$ and $p\in V_x$. Then, $Sp \subseteq SS^{-1}k = S^2k \subseteq S^3p$, which when combined with \cref{claim.fiber disjoint} gives
        \[Sp \subseteq S^2k \cap S\cdot V_x \subseteq S^3p\cap S\cdot V_x = Sp.\]
        As such, we get that $S^2k \cap S\cdot V_x  = Sp = \pi^{-1}(\pi(k))$, and so
        \[\pi^{-1}(\pi(W)) = \cup_{k\in W}\pi^{-1}(\pi(k)) = \cup_{k\in W}S^2k \cap S\cdot V_x = S^2\cdot W \cap S\cdot V_x.\]
        To see that $\pi$ is an open map, let $W\subseteq S\cdot V_x$ be open and observe that
        \[\pi^{-1}(\pi(W)) = (\cup_{u\in S^2} u\cdot W) \cap S\cdot V_x.\]
        The latter set is open in the subspace topology of $S\cdot V_x$ since each $u\in S^2$ is a homeomorphism of $\mathbb R^d$. By definition of the quotient topology, we obtain that $\pi(W)$ is open as desired.
    \end{proof}
    
    As an immediate application, we prove the following claim.

    \begin{claim}
    \label{claim.Hausdorff}
    $M$ is Hausdorff and second countable.    
    \end{claim}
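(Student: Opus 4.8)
The plan is to dispose of second countability by a soft argument and to obtain the Hausdorff property by showing that the orbit relation on $S\cdot V_x$ is closed, exploiting that $\pi$ is an open map (\cref{claim.pi is open}). For second countability: by \cref{claim.openness}, $S\cdot V_x$ is an open subset of $\mathbb R^d$, hence second countable, and since $\pi$ is a continuous open surjection, the $\pi$-images of a countable base of $S\cdot V_x$ form a countable base of $M$.

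For Hausdorffness, I would first reduce to closedness of the relation. Since $\pi$ is an open continuous surjection, $M$ is Hausdorff provided
\[R:=\bigl\{(y_1,y_2)\in (S\cdot V_x)\times(S\cdot V_x): \pi(y_1)=\pi(y_2)\bigr\}\]
is closed in $(S\cdot V_x)\times(S\cdot V_x)$: given $\pi(a)\neq\pi(b)$, pick $a,b$ in their respective fibers so that $(a,b)\notin R$, take a product neighborhood $A\times B\ni(a,b)$ disjoint from $R$, and note $\pi(A)\ni\pi(a)$ and $\pi(B)\ni\pi(b)$ are then disjoint open sets. Since $(S\cdot V_x)\times(S\cdot V_x)\subseteq\mathbb R^{2d}$ is metrizable, I would check closedness of $R$ sequentially. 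Let $(a_n,b_n)\in R$ with $a_n\to y_1$ and $b_n\to y_2$ in $S\cdot V_x$. By the computation in the proof of \cref{claim.pi is open}, $\pi^{-1}(\pi(a_n))=S^2a_n\cap S\cdot V_x$, so $b_n=w_na_n$ for some $w_n\in S^2$. Since $\overline{S^2}$ is a closed subset of the compact group $G$, after passing to a subsequence we may assume $w_n\to w\in\overline{S^2}$, whence $y_2=wy_1$ by continuity of the linear action. Writing $y_1=up$ and $y_2=u'p'$ with $u,u'\in S$ and $p,p'\in V_x$ (possible as $y_1,y_2\in S\cdot V_x$), we obtain $p'=(u')^{-1}wu\,p$ with $(u')^{-1}wu\in S\,\overline{S^2}\,S\subseteq\overline{S^4}$. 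Since $p'\in V_x\subseteq S\cdot V_x$ (as $e\in S$), \cref{claim.fiber disjoint} gives $p'\in\overline{S^4}p\cap S\cdot V_x=Sp$, say $p'=sp$ with $s\in S$. Then $p'\in V_x\cap s\cdot V_x=V_x\cap V_{sx}$ by \cref{lem.UV facts equivariance}, so $U_x\cap\overline{U_{sx}}\neq\varnothing$ and \cref{lem.UV facts intersection} forces $sx=x$, i.e.\ $s\in S\cap G_x=\{e\}$ by \cref{claim.S with stabilizer intersect}. Hence $p'=p$, so $y_1,y_2$ both lie in the single equivalence class $S\cdot p$ and $(y_1,y_2)\in R$, as needed.

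The main obstacle is the bookkeeping in this last argument: one must carefully track which power $\overline{S^a}$ connects two lifts $y_1,y_2$ of a common point of $M$ so that the hypothesis $a+b\le 5$ of \cref{claim.fiber disjoint} is respected (here the relevant power is $\overline{S^4}$, coming from $S\cdot\overline{S^2}\cdot S$), and then to upgrade the membership $p'\in Sp$ to the equality $p'=p$ by combining the Voronoi cell separation \cref{lem.UV facts intersection} with the transversality fact $S\cap G_x=\{e\}$. Everything else (second countability, the reduction to closedness of $R$, continuity of the action) is routine.
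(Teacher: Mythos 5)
Your proof is correct. The second-countability argument is identical to the paper's. For Hausdorffness you take a genuinely different (though closely related) route: the paper separates the two compact fibers $\overline S p$ and $\overline S q$ (disjoint by \cref{claim.fiber disjoint}) by open sets $O_p,O_q$ in $\mathbb R^d$ and asserts that $\pi(O_p\cap S\cdot V_x)$ and $\pi(O_q\cap S\cdot V_x)$ are disjoint, whereas you prove that the orbit relation is closed in $(S\cdot V_x)\times(S\cdot V_x)$ and invoke the standard criterion for open quotient maps. Both arguments rest on the same two pillars, the openness of $\pi$ (\cref{claim.pi is open}) and the disjointness of $\overline{S^a}$-translates of distinct points of $V_x$ (\cref{claim.fiber disjoint}); but your version is the more careful one, since disjointness of the images of two disjoint open sets under an open quotient map is not automatic --- one must rule out a single fiber $Sp'$ meeting both $O_p$ and $O_q$, and doing so requires exactly the kind of compactness-plus-power-of-$S$ bookkeeping ($w_n\in S^2$, $w\in\overline{S^2}$, $(u')^{-1}wu\in\overline{S^4}$, $4+1\le 5$) that you carry out. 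Your concluding step via $V_x\cap V_{sx}$ and \cref{lem.UV facts intersection} is valid but could be shortened: once $p'\in Sp\subseteq\overline{S}p$ with $p,p'\in V_x$, \cref{claim.fiber disjoint} with $(a,b)=(1,0)$ already forces $p'=p$.
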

    \begin{proof}
    For each $p,q\in V_x$, we have $\overline Sp\cap \overline Sq = \varnothing$ by \cref{claim.fiber disjoint}. By compactness, these two sets are separated by open neighborhoods $O_p$ and $O_q$ in $\mathbb R^d$. Since $\pi$ is an open map, we get that $\pi(O_p\cap S\cdot V_x)$ and $\pi(O_q\cap S\cdot V_x)$ are open in $M$ and separate $\pi(p)$ and $\pi(q)$. This shows that $M$ is Hausdorff. Second countability is immediate from the fact that $S\cdot V_x$ is second countable and $\pi$ is a continuous surjective open map, as shown in \cref{claim.pi is open}.
    \end{proof}

    In the aim of constructing charts on $M$, we construct nice charts in the parent space $S\cdot V_x$. For the following claim, see \cref{fig:desingfigure} for an illustration.

    \begin{claim}[Nice Charts]
    \label{claim.nice charts SVX}
    For each $u\in S$ and $p\in V_x$, there exists an open neighborhood $S_{up}$ of $up$ in $V_{up}\cap S\cdot V_x$ and an open neighborhood $T_{up}$ of $0$ in $T=\exp^{-1}(S)\subseteq \mathfrak g$ such that each of the following holds:
    \begin{claimenum}
        \item \label{claim.pi sup injective}$\pi|_{S_{up}}$ is injective.
        \item \label{claim.PHI UP lands in SVX}$u\exp (T_{up})u^{-1} \cdot S_{up}\subseteq S\cdot V_x$.
        \item \label{claim.PHI UP construction} The adjoint multiplication map
        $\Phi_{up}\colon T_{up}\times S_{up}\to  u\exp(T_{up})u^{-1}\cdot S_{up}$ given by 
        \[\Phi_{up}(t,q):= u\exp(t)u^{-1}q\] 
        is a diffeomorphism. In particular, $u\exp(T_{up})u^{-1}\cdot S_{up}$ is open in $\mathbb R^d$.
        \item \label{claim.first comm diagram}The following diagram commutes,
        \begin{equation}
            \label{diag.nice Phi}
            \begin{tikzcd}
                u\exp(T_{up})u^{-1}\cdot S_{up} \arrow[r, "\pi"]& M\\
                T_{up}\times S_{up} \arrow[u,"\Phi_{up}"]\arrow[r, "\Pi_{S_{up}}"] & S_{up}\arrow[u,"\pi|_{S_{up}}" swap]
            \end{tikzcd}
        \end{equation}
        where $\Pi_{S_{up}}$ is given by projection onto the component of $S_{up}$.
    \end{claimenum}
    \end{claim}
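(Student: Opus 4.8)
The plan is to recognize $\Phi_{up}$ as a local diffeomorphism near $(0,up)$ via the inverse function theorem, and then read off (a)--(d) by shrinking the domain.

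The technical heart is the computation of $D\Phi_{up}$ at $(0,up)$. Write an element of the domain as $(\tau,\nu)$ with $\tau\in T_eS$ and $\nu\in N_{up}$; the latter makes sense because $V_{up}$, hence any neighborhood $S_{up}\subseteq V_{up}$, is open in $N_{up}$ by \cref{lem.UV facts intersection V open in normal}. The product rule gives
\[
D\Phi_{up}\big|_{(0,up)}(\tau,\nu)=(\operatorname{Ad}_u\tau)\cdot(up)+\nu,
\]
where $\operatorname{Ad}_u\tau=u\tau u^{-1}$ acts on the vector $up$. As $\tau$ runs over $T_eS$, $\operatorname{Ad}_u\tau$ runs over $T_e(uSu^{-1})$, which by \cref{claim.S transverse} is a complement to $T_eG_{ux}$ in $\mathfrak g$; and since $up\in uV_x=V_{ux}$ with $ux\in R(G)$, \cref{claim.connected comp stab agree} gives $G_{up}^0=G_{ux}^0$, so $T_eG_{up}=T_eG_{ux}$ equals the kernel of the infinitesimal orbit map $\omega\mapsto\omega\cdot(up)$. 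Hence $\tau\mapsto(\operatorname{Ad}_u\tau)\cdot(up)$ carries $T_eS$ onto $\mathfrak g\cdot(up)=(N_{up})^{\perp}$, so $D\Phi_{up}|_{(0,up)}$ is surjective; a dimension count ($\dim T_eS+\dim N_{up}=(\dim G-\dim G_x)+(d-\dim G+\dim G_x)=d$, using \cref{claim.S right mult diffeo} and $\dim G_{up}=\dim G_x$) shows it is a linear isomorphism. By the inverse function theorem, $\Phi_{up}$ restricts to a diffeomorphism of some product neighborhood $T_{up}\times S_{up}$ of $(0,up)$ onto an open subset of $\mathbb R^d$; shrinking $T_{up}\subseteq T$ and $S_{up}$ (which preserves the diffeomorphism property) so that the image lies in the open set $S\cdot V_x$ (open by \cref{claim.openness}) and $S_{up}\subseteq V_{up}\cap S\cdot V_x$, one obtains (b), (c), and the openness of $u\exp(T_{up})u^{-1}\cdot S_{up}$.

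For the commuting diagram (d) it suffices to show $\pi(u\exp(t)u^{-1}q)=\pi(q)$ for $(t,q)\in T_{up}\times S_{up}$. Using $S=S^{-1}$ and $T_{up}\subseteq T=\exp^{-1}(S)$, we have $u,u^{-1},\exp(t)\in S$, so $u\exp(t)u^{-1}\in S^3$; writing $q=sp'$ with $s\in S$, $p'\in V_x$, both $q$ and $u\exp(t)u^{-1}q$ lie in $\overline{S^4}p'\cap(S\cdot V_x)=Sp'$ by \cref{claim.fiber disjoint}, and $Sp'$ is the $\pi$-fiber through $q$. Thus $\pi\circ\Phi_{up}=(\pi|_{S_{up}})\circ\Pi_{S_{up}}$, which is exactly diagram~\eqref{diag.nice Phi}.

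The step I expect to be the main obstacle is the injectivity (a), since the fibers of $\pi$ are positive-dimensional and so it is \emph{not} enough to take $S_{up}$ small. Suppose $q_1,q_2\in S_{up}\subseteq V_{up}$ with $\pi(q_1)=\pi(q_2)$. By \cref{claim.pi is open} there is $s'\in S^2$ with $s'q_1=q_2$; then $q_1\in V_{up}\cap (s')^{-1}V_{up}=V_{up}\cap V_{(s')^{-1}(up)}$ by \cref{lem.UV facts equivariance}, so $s'\in G_{up}$ by \cref{lem.UV facts intersection}. Writing $G_{up}=uG_pu^{-1}$ gives $u^{-1}s'u\in G_p\cap(u^{-1}S^2u)\subseteq G_p\cap\overline{S^5}$, and since $G_p\leq G_x$ with $G_p^0=G_x^0$ (\cref{claim.connected comp stab agree}), \cref{claim.S with stabilizer intersect} forces $u^{-1}s'u\in G_p^0$, i.e.\ $s'\in G_{up}^0$. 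Finally $q_1\in V_{up}$ with $up\in R(G)$, so \cref{claim.connected comp stab agree} yields $G_{q_1}^0=G_{up}^0\ni s'$, whence $s'q_1=q_1$ and $q_1=q_2$. The crux is precisely this last chain: injectivity of $\pi|_{S_{up}}$ rests on the fact — packaged in \cref{claim.connected comp stab agree} and ultimately on the regularity of $x$ — that $G_{up}^0$ fixes $V_{up}$ pointwise.
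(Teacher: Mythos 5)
Your proof is correct, but it reaches parts (a) and (c) by a genuinely different route than the paper. For (c), the paper does not invoke the inverse function theorem: it assembles $\Phi_{up}$ as a composition of diffeomorphisms supplied by the Tubular Neighborhood Theorem (\cref{prop.slice theorem}) — the slice chart $\Psi_{up}\colon G\times_{G_{up}}S_{up}\to G\cdot S_{up}$ together with the transversality map $F\colon uSu^{-1}\times S_{up}\to uSG_pu^{-1}\times_{G_{up}}S_{up}$ from \cref{prop.slice technical trans diffeo} — which yields a \emph{global} diffeomorphism of $T\times S_{up}$ onto $uSu^{-1}\cdot S_{up}$ before any shrinking. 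Your IFT argument only produces the local statement, but that is all the claim requires, and your derivative computation $D\Phi_{up}|_{(0,up)}(\tau,\nu)=(\operatorname{Ad}_u\tau)\cdot(up)+\nu$, together with the identification of $T_e(uSu^{-1})$ as a complement of $T_eG_{up}=T_eG_{ux}$, is exactly the computation the paper defers to Step~2 of the proof of \cref{claim.riemannian submersion}; you are front-loading it. For (a), the paper avoids stabilizers entirely: from $\pi(q_1)=\pi(q_2)$ it gets $q_1,q_2\in Sp'$, hence $p'\in S^2u^{-1}q_1\cap S^2u^{-1}q_2$ with $u^{-1}q_1,u^{-1}q_2\in V_p$, and concludes in one line from \cref{claim.fiber disjoint}. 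Your argument instead re-derives the mechanism behind that claim (forcing $s'\in G_{up}\cap$ (a power of $S$), hence $s'\in G_{up}^0=G_{q_1}^0$ via \cref{claim.S with stabilizer intersect,claim.connected comp stab agree}); this is correct but duplicates work already packaged in \cref{claim.fiber disjoint}. Part (d) matches the paper's argument. The trade-off: your route is more elementary and self-contained at this particular claim, while the paper's reuses slice-theorem machinery it needs anyway and keeps the injectivity step to a single application of fiber disjointness.
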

    \begin{proof}
        
    \noindent \textit{Step 1. We invoke the Tubular Neighborhood Theorem, i.e., \cref{prop.slice theorem}.}

     In our particular case, the theorem entails that at each $up \in S\cdot V_x$, there exists an open $G_{up}$-invariant neighborhood $S_{up}$ of $up$ in $V_{up}\cap S\cdot V_x\subseteq N_{up}$ such that $G\cdot S_{up}$ is an open $G$-invariant tubular neighborhood of $G\cdot up$ and the map $\Psi_{up}\colon G \times_{G_{up}} S_{up}\to G\cdot S_{up}$, induced by multiplication $(g,s)\mapsto gs$, is a diffeomorphism.
    
    
    By \cref{prop.slice technical trans diffeo} and since $uSu^{-1}$ is orthogonally transverse to $G_{up}^0 = uG_x^0u^{-1}$ and since the right multiplication map $uSu^{-1}\times G_{up}\to uS\cdot G_{p}u^{-1}$ is a diffeomorphism as shown in \cref{claim.S right mult diffeo}, the map $F\colon uSu^{-1}\times S_{up} \to uSG_{p}u^{-1}\times_{G_{up}} S_{up}$ defined by $F(r,s):= [(r,s)]$ is a diffeomorphism.
    
    \vspace{\claimstepvspace}\noindent\textit{Step 2. We construct $\Phi_{up}$ via composition and we prove (b) and (c).}

    Recall that $S=\exp(T)$ as in \cref{claim.S exp diffeo}, and define $\phi_{up}\colon T\times S_{up}\to G\cdot S_{up}$ by
    \[\phi_{up}:=\Psi_{up} \circ F \circ (\operatorname{C}_u \circ \exp \times \operatorname{id}_{S_{up}}),\]
    where $\operatorname{C}_u\colon G\to G$ is the diffeomorphism given by conjugation $g\mapsto ugu^{-1}$. Then $\phi_{up}$ is a diffeomorphism of $T\times S_{up}$ onto $uSu^{-1}\cdot S_{up}\subseteq \mathbb R^d$ given by $\phi_{up}(t,s)= u\exp(t)u^{-1}\cdot s$, for all $t\in T$ and $s\in S_{up}$. In particular, $uSu^{-1}\cdot S_{up}$ is open in $\mathbb R^d$ since $\dim(T\times S_{up}) = \dim(uSu^{-1}\cdot S_{up}) = d$.

    Now, it holds that $\{0\}\times S_{up} \subseteq \phi^{-1}_{up}(uSu^{-1}\cdot S_{up}\cap S\cdot V_x)$ and so by continuity, there exists an open neighborhood $T_{up}$ of $0$ in $T$ such that $T_{up}\times S_{up} \subseteq \phi^{-1}_{up}(uSu^{-1}\cdot S_{up}\cap S\cdot V_x)$. This proves (b). Additionally, $\Phi_{up}:=\phi_{up}|_{T_{up}\times S_{up}}$ is the diffeomorphism desired in (c).

    \vspace{\claimstepvspace}\noindent\textit{Step 3. We prove (a).}

    We aim to show that $\pi|_{S_{up}}$ is injective. Let $q_1,q_2\in S_{up} \subseteq S\cdot V_x\cap V_{up}$ and suppose that $\pi(q_1)=\pi(q_2)$. Then there exists $p'\in V_x$ such that $q_1,q_2\in Sp'$ and so $p'\in S^{-1}q_j = Sq_j$. Now, $u^{-1}q_1,u^{-1}q_2\in V_p$ and $p'\in S^2u^{-1}q_1\cap S^2u^{-1}q_2$. By \cref{claim.fiber disjoint}, we obtain that $u^{-1}q_1=u^{-1}q_2$ and so $q_1=q_2$ as desired.
    
    \vspace{\claimstepvspace}\noindent\textit{Step 4. We prove (d).}

    In order to show that the diagram in \eqref{diag.nice Phi} commutes, we show that $\pi(\Phi_{up}(t,q))=\pi(q)$ for each $t\in T_{up}$ and $q\in S_{up}\subseteq V_{up}\cap S\cdot V_x$. Let $p'\in V_x$ be such that $q \in Sp'$. Then $\Phi_{up}(t,q)\in S^3q\cap S\cdot V_x \subseteq S^4p'\cap S\cdot V_x$. By \cref{claim.fiber disjoint}, it follows that $\Phi_{up}(t,q)\in Sp'$ and so $\pi(\Phi_{up}(t,q))=\pi(q)=\pi(p')$ as desired.
\end{proof}

    We give a first application of these nice chart constructions.
    \begin{claim}[Topological Manifold]
        \label{claim.topological manifold}
    $M$ is a topological manifold. In fact, a cover by Euclidean charts is given by the collection $\{\pi|_{S_{up}}\}_{(u,p)\in S\times V_x}$, where $S_{up}$ is defined in \cref{claim.nice charts SVX}.
    \end{claim}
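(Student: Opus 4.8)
The plan is to check the three defining properties of a topological manifold separately: Hausdorffness, second countability, and being locally Euclidean of a fixed dimension. The first two are already in hand --- they are precisely \cref{claim.Hausdorff} --- so the work lies entirely in producing the charts, for which I would use the maps $\pi|_{S_{up}}$ furnished by \cref{claim.nice charts SVX}. First I would observe that these charts cover $M$: since $\pi\colon S\cdot V_x\to M$ is surjective, every point of $M$ has the form $\pi(q)$ with $q\in S\cdot V_x$, and writing $q=up$ for $u\in S$, $p\in V_x$ gives $q=up\in S_{up}$, so $\pi(q)\in\pi(S_{up})$.

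The heart of the argument is to show that each $\pi|_{S_{up}}\colon S_{up}\to M$ is a homeomorphism onto an open subset of $M$. Injectivity and continuity are immediate (\cref{claim.pi sup injective} and the continuity of $\pi$), so it remains to show that $\pi|_{S_{up}}$ is an open map onto its image and that this image is open in $M$. Here I would lean on three ingredients: the commuting square $\pi\circ\Phi_{up}=\pi|_{S_{up}}\circ\Pi_{S_{up}}$ of \cref{claim.first comm diagram}; the fact from \cref{claim.PHI UP construction,claim.PHI UP lands in SVX} that $\Phi_{up}$ is a homeomorphism of $T_{up}\times S_{up}$ onto $u\exp(T_{up})u^{-1}\cdot S_{up}$, a subset of $S\cdot V_x$ that is open in $\mathbb R^d$; and the openness of $\pi$ from \cref{claim.pi is open}. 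For any open $O\subseteq S_{up}$, the set $\Pi_{S_{up}}^{-1}(O)=T_{up}\times O$ is open, so $\Phi_{up}(T_{up}\times O)$ is open in $\mathbb R^d$, hence open in $S\cdot V_x$; applying $\pi$ (open) and then the commuting square yields that $\pi(O)=\pi\bigl(\Phi_{up}(T_{up}\times O)\bigr)$ is open in $M$. Taking $O=S_{up}$ shows $\pi(S_{up})$ is open in $M$, and the same computation for general $O$ shows $\pi|_{S_{up}}$ is open onto $\pi(S_{up})$; together with injectivity and continuity this makes it a homeomorphism onto an open set.

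Finally I would identify the model space and conclude. Since $V_{up}$ is open in $N_{up}$ by \cref{lem.UV facts intersection V open in normal} and $S\cdot V_x$ is open in $\mathbb R^d$ by \cref{claim.openness}, the set $S_{up}\subseteq V_{up}\cap S\cdot V_x$ is open in the linear subspace $N_{up}$. Moreover $p\in V_x\subseteq R(G)$ by \cref{claim.connected comp stab agree}, hence $up\in[p]\subseteq R(G)$ by $G$-invariance of $R(G)$, so $\dim([up])=\max_{y\in\mathbb R^d}\dim([y])$ and $\dim N_{up}=d-\max_{y\in\mathbb R^d}\dim([y])$, independently of $(u,p)$. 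Thus every $\pi|_{S_{up}}$ is a homeomorphism from an open subset of $\mathbb R^{d-\max_{y}\dim([y])}$ onto an open subset of $M$, and with \cref{claim.Hausdorff} this shows $M$ is a topological manifold with the claimed atlas. The step I expect to be the main obstacle --- and the reason \cref{claim.nice charts SVX} was arranged around the ``thickened'' neighborhoods $u\exp(T_{up})u^{-1}\cdot S_{up}$ --- is precisely the openness and homeomorphism argument of the second paragraph: the slice $S_{up}$ is \emph{not} open in $S\cdot V_x$, so one cannot feed it directly into the open map $\pi$, and the commuting square is exactly the device that shuttles openness between the full-dimensional neighborhoods (where $\pi$ is visibly open) and the lower-dimensional slices that serve as chart domains.
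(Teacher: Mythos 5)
Your proposal is correct and follows essentially the same route as the paper: Hausdorffness and second countability from \cref{claim.Hausdorff}, injectivity and continuity of $\pi|_{S_{up}}$ from \cref{claim.pi sup injective}, and openness via the commuting square of \cref{claim.first comm diagram} together with the openness of $\pi\circ\Phi_{up}$. The extra details you supply (the covering property and the constancy of $\dim N_{up}$ via $up\in R(G)$) are correct and only make explicit what the paper leaves implicit.
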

    \begin{proof}
    We have already shown that $M$ is Hausdorff and second countable in \cref{claim.Hausdorff}. It is left show that $\{\pi|_{S_{up}}\}_{(u,p)\in S\times V_x}$ do indeed form a cover by Euclidean charts.
    
    To this end, let $u\in S$ and $p\in V_x$ be arbitrary and put $W:= \pi(S_{up})$. Then $\pi|_{S_{up}}\colon S_{up}\to W$ is continuous and bijective by \cref{claim.pi sup injective}.
    
    It is left to show that $\pi|_{S_{up}}$ is an open map. To this end, let $T_{up}$ and $\Phi_{up}$ be as defined in \cref{claim.nice charts SVX}, and let $Y$ be an open subset of $S_{up}$. Then by commutativity of the diagram in \eqref{diag.nice Phi}, it holds that $\pi|_{S_{up}}(Y) = \pi\circ \Phi_{up}(T_{up}\times Y)$. The latter set is open since $\pi \circ \Phi_{up}$ is an open map and $T_{up}\times Y$ is an open subset of $T_{up}\times S_{up}$.
    \end{proof}
    
    Next, we give $M$ a smooth structure so that $\pi$ becomes a smooth submersion. For the proof of the following claim, see \cref{fig:desingfigure} for an illustration.
    \begin{claim}[Smooth Structure]
        \label{claim.smooth structure}
        The charts $\{\pi|_{S_{up}}\}_{(u,p)\in S\times V_x}$ generate a smooth structure on $M$ which turns $\pi$ into a smooth submersion.
    \end{claim}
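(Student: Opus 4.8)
See \cref{fig:desingfigure} for an illustration. The plan is to verify that the charts $\pi|_{S_{up}}$ are pairwise smoothly compatible and then read off from the commutative diagram \eqref{diag.nice Phi} that $\pi$ is a submersion in these charts. I would set up notation by writing $O_{up}:=u\exp(T_{up})u^{-1}\cdot S_{up}$, an open subset of $\mathbb R^d$ containing $up$ by \cref{claim.nice charts SVX}, and introducing the smooth retraction $\rho_{up}:=\Pi_{S_{up}}\circ\Phi_{up}^{-1}\colon O_{up}\to S_{up}$; since $\Phi_{up}$ is a diffeomorphism and $\Pi_{S_{up}}$ is a coordinate projection, $\rho_{up}$ is a smooth submersion, and commutativity of \eqref{diag.nice Phi} gives $\pi|_{O_{up}}=\pi|_{S_{up}}\circ\rho_{up}$. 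The submersion part is then immediate: every point of $S\cdot V_x$ has the form $up$ with $u\in S$, $p\in V_x$ and lies in the open set $O_{up}$, and in the ambient chart on $O_{up}\subseteq\mathbb R^d$ and the target chart $\pi|_{S_{up}}$ the map $\pi$ reads as $\rho_{up}$ --- a diffeomorphism followed by a projection. So once the collection $\{\pi|_{S_{up}}\}$ is shown to be a smooth atlas on the topological manifold $M$ of \cref{claim.topological manifold}, the lemma follows; the remaining task is to prove that each transition map $\tau:=(\pi|_{S_2})^{-1}\circ\pi|_{S_1}$, for two charts $S_1:=S_{u_1p_1}$ and $S_2:=S_{u_2p_2}$, is smooth on its open domain $D=\{s\in S_1:\pi(s)\in\pi(S_2)\}$.

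The tempting shortcut --- to read $\tau$ off from the (smooth) ambient transition $\Phi_{u_2p_2}^{-1}\circ\Phi_{u_1p_1}$ between the parent charts --- breaks down because a point $s_0\in D$ need not lie in $O_2$; reconciling this mismatch is the main obstacle. I would resolve it by first sliding $s_0$ into $O_2$ along its $\pi$-fiber. Put $s_0':=\tau(s_0)\in S_2$, so $\pi(s_0)=\pi(s_0')$. Then $s_0,s_0'\in S\cdot V_x$ lie in the same equivalence class, and since the classes $\{Sp\}_{p\in V_x}$ are pairwise disjoint by \cref{claim.fiber disjoint}, there exist $p\in V_x$ and $a,b\in S$ with $s_0=ap$ and $s_0'=bp$; setting $g:=ba^{-1}$ we obtain $g\in S\cdot S^{-1}=S\cdot S$ by \cref{claim.S symmetric} and $gs_0=s_0'$.

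Since left translation by $g$ is a linear isometry of $\mathbb R^d$, the set $O:=O_1\cap g^{-1}(O_2)$ is an open neighborhood of $s_0$ in $\mathbb R^d$ and $\Theta\colon O\to O_2$, $\Theta(y):=gy$, is smooth with $\Theta(s_0)=s_0'$. The key point to check is that $\Theta$ preserves $\pi$-fibers: for $y\in O$ we have $gy\in S\cdot S\cdot y$, and $gy\in O_2\subseteq S\cdot V_x$ by \cref{claim.PHI UP lands in SVX}, so $gy\in(S\cdot S\cdot y)\cap(S\cdot V_x)=\pi^{-1}(\pi(y))$ by \cref{claim.pi is open}, i.e.\ $\pi(\Theta(y))=\pi(y)$. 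Hence for $y\in S_1\cap O$ the point $\rho_2(\Theta(y))\in S_2$ satisfies $\pi|_{S_2}(\rho_2(\Theta(y)))=\pi(\Theta(y))=\pi(y)=\pi|_{S_1}(y)$; in particular $\pi(y)\in\pi(S_2)$, so $S_1\cap O\subseteq D$, and $\rho_2(\Theta(y))=\tau(y)$. Thus $\tau$ agrees with the smooth map $\rho_2\circ\Theta$ on the neighborhood $S_1\cap O$ of $s_0$ in $S_1$. As $s_0\in D$ was arbitrary, $\tau$ is smooth on $D$, and interchanging the roles of the two charts shows $\tau^{-1}$ is smooth as well. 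Therefore $\{\pi|_{S_{up}}\}_{(u,p)\in S\times V_x}$ is a smooth atlas (see, e.g., \cite{Lee:13}), it generates a smooth structure on $M$, and $\pi$ is a smooth submersion for that structure by the first paragraph.
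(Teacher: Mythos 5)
Your proof is correct, and its overall skeleton matches the paper's: both reduce the submersion claim to smooth compatibility of the charts via the commutative diagram \eqref{diag.nice Phi}, and both verify compatibility by relating the two charts through a single element of $S^2$ together with \cref{claim.fiber disjoint}. Where you diverge is in how that verification is executed. The paper translates the entire product chart $\Phi_1$ by $u'\in S^2$, assembles a three-square commutative diagram over the overlap sets $K_1,K_2$, and extracts the transition map as the second component of the diffeomorphism $(u'\cdot\Phi_1|_{K_1})^{-1}\circ\Phi_2|_{K_2}$, arguing separately that this component is a submersion and a homeomorphism. You instead isolate the one fact doing the work: left multiplication by $g=ba^{-1}\in S^2$ preserves $\pi$-fibers on $O_1\cap g^{-1}(O_2)$, which follows cleanly from the saturation formula $\pi^{-1}(\pi(y))=S^2y\cap S\cdot V_x$ of \cref{claim.pi is open}, so the transition map is locally $\rho_2\circ(g\,\cdot)$, manifestly smooth, with the inverse handled by symmetry of the roles of the two charts. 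This buys a shorter and more transparent compatibility argument and avoids the paper's explicit manipulation of the component functions $(A,\tilde B)$; the paper's version, in exchange, records the full chart-level diagram, which it reuses implicitly in later claims. The only points you leave tacit --- that the domain $D$ is open (which follows from \cref{claim.topological manifold}, which you cite) and that $S_1\subseteq O_1$ so $O$ is indeed a neighborhood of $s_0$ --- are routine.
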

    \begin{proof}

    \noindent\textit{Step 1. We deduce the submersion claim from smooth transitions.}

        Observe that the diagram in \eqref{diag.nice Phi} is a commutative diagram of Euclidean charts that turn $\pi$ into $\Pi_{S_{up}}\colon T_{up}\times S_{up}\to S_{up}$, the projection onto the component of $S_{up}$. Due to this and once we show that the charts $\pi|_{S_{up}}$ transition smoothly, we obtain that $\pi$ is a smooth submersion as desired.
    
        \vspace{\claimstepvspace}\noindent\textit{Step 2. We align two nice charts on $S\cdot V_x$ given in \cref{claim.nice charts SVX} and setup notation.}

        We are left to show that the transitions between charts $\pi|_{S_{up}}$ are smooth. Let $\pi|_{S_1}\colon S_1\to W_1$ and $\pi|_{S_2}\colon S_2\to W_2$ be two arbitrary charts, where $S_j := S_{u_jp_j}$ for some $u_j\in S$ and $p_j\in V_x$. Similarly, put $T_{j}:=T_{u_jp_j}$ and $\Phi_j := \Phi_{u_jp_j}$ as given by \cref{claim.nice charts SVX}. Suppose that $W_1$ and $W_2$ overlap, i.e., $\pi|_{S_1}(q_1) = \pi|_{S_2}(q_2) \in W_1\cap W_2$ for some $q_j\in S_j$. Then there exists $u'\in S^2$ such that $q_2 = u'q_1$. Now, the map $u'\cdot \Phi_1\colon T_1\times S_1 \to u'\cdot u_1\exp(T_1)u_1^{-1}\cdot S_1$ is a diffeomorphism, and we have that $q_2 \in \operatorname{im}(u'\cdot \Phi_1)\cap \operatorname{im}(\Phi_2)$.
                
        Put $K_1 := (u'\cdot \Phi_1)^{-1}(\operatorname{im}(u'\cdot \Phi_1)\cap \operatorname{im}(\Phi_2))$, $K_2:= \Phi_2^{-1}(\operatorname{im}(u'\cdot \Phi_1)\cap \operatorname{im}(\Phi_2))$ and $B_j:= \pi|_{S_j}^{-1}(W_1\cap W_2)$. Moreover, let $\Pi_j\colon T_j\times S_j\to S_j$ denote projection onto the component of $S_j$. Our main goal is to show that $\pi|_{S_1}^{-1}\circ \pi|_{S_2}$ is a diffeomorphism near $q_2\in B_2$. Then the result folows since $q_2\in \pi|_{S_2}^{-1}(W_1\cap W_2)$ was chosen arbitrarily.\vspace{\claimstepvspace}

\begin{samepage}
        \noindent\textit{Step 3. We show that the following diagram is well-defined and that its bottom square commutes.}
        \[
        \begin{tikzcd}
        & \operatorname{im}(u'\cdot \Phi_1)\cap \operatorname{im}(\Phi_2) \arrow[rrr, "\pi"] & & & W_1\cap W_2\\
        K_1 \arrow[rrrrr, bend right = 12, shift right = 1.2, "\Pi_1", swap]\arrow[ur, "u'\cdot \Phi_1|_{K_1}"]  & & K_2 \arrow[ll, "(u'\cdot \Phi_1|_{K_1})^{-1}\circ \Phi_2|_{K_2}",swap]\arrow[r,bend right, "\Pi_2"] \arrow[ul,"\Phi_2|_{K_2}",swap] & B_2\arrow[rr, "\pi|_{S_1}^{-1}\circ \pi|_{S_2}"]\arrow[ur,"\pi|_{S_2}"] & & B_1\arrow[ul,"\pi|_{S_1}",swap]
        \end{tikzcd}
        \]
\end{samepage}
        In other words, we will verify that $\pi(\operatorname{im}(u'\cdot \Phi_1)\cap \operatorname{im}(\Phi_2)) \subseteq W_1\cap W_2$, $\Pi_2(K_2)\subseteq B_2$, $\Pi_1(K_1)\subseteq B_1$ and that the bottom square commutes. We begin with showing that the outermost square commutes, i.e., $\pi\circ u'\cdot \Phi_1|_{K_1} = \pi|_{S_1}\circ \Pi_1|_{K_1}$ (in particular, this entails that $\pi(\operatorname{im}(u'\cdot \Phi_1)\cap \operatorname{im}(\Phi_2)) = \pi|_{S_1}(\Pi_1(K_1))\subseteq W_1$.) By the commutativity of the diagram in \eqref{diag.nice Phi}, this is equivalent to $\pi\circ u'\cdot \Phi_1|_{K_1} = \pi\circ \Phi_1|_{K_1}$. 
        To establish this, take any $(t,q)\in K_1\subseteq T_1\times S_1$ and let $p'\in V_x$ be such that $\Phi_1|_{K_1}(t,q)\in Sp'$. Then $u'\cdot \Phi_1|_{K_1}(t,q) \in S^3p'$ and so \cref{claim.fiber disjoint} entails that $\pi\circ u'\cdot \Phi_1|_{K_1}(t,q) = \pi\circ \Phi_1|_{K_1}(t,q) = \pi(p')$ as desired.
        
        Next, note that the middle square commutes by $\eqref{diag.nice Phi}$, i.e., $\pi|_{S_2}\circ \Pi_2|_{K_2}  = \pi\circ \Phi_2|_{K_2}$. In particular, this entails that $\pi(\operatorname{im}(u'\cdot \Phi_1)\cap \operatorname{im}(\Phi_2)) = \pi|_{S_2}(\Pi_2(K_2)) \subseteq W_2$. As such,
        \[\pi(\operatorname{im}(u'\cdot \Phi_1)\cap \operatorname{im}(\Phi_2)) = \pi|_{S_1}(\Pi_1(K_1)) = \pi|_{S_2}(\Pi_2(K_2)) \subseteq W_1\cap W_2,\] 
        and so $\Pi_j(K_j)\subseteq B_j$ for each $j\in \{1,2\}$ as we wished to verify. In particular, the bottom square is well-defined and we are now ready to show that it commutes by the following chain of equalities:
        \begin{equation}
            \label{eq.bottom square}
            \pi|_{S_1}^{-1}\circ \pi|_{S_2} \circ \Pi_2|_{K_2} =
            \pi|_{S_1}^{-1}\circ \pi \circ \Phi_2|_{K_2} = \Pi_1|_{K_1}\circ (u'\cdot \Phi_1|_{K_1})^{-1} \circ \Phi_2|_{K_2},
        \end{equation}
        where the first and second equalities hold since the middle and the outermost squares commute, respectively.

        \vspace{\claimstepvspace}\noindent\textit{Step 4. We finish the proof by showing that $\pi|_{S_1}^{-1}\circ \pi|_{S_2}$ is a diffeomorphism near $q_2$.}

        Since $(u'\cdot \Phi_1|_{K_1})^{-1} \circ \Phi_2|_{K_2}$ is a diffeomorphism, it has the form $(A(t,q),B(t,q))$ for $(t,q)\in K_2$. By \eqref{eq.bottom square}, we get that $\pi|_{S_1}^{-1}\circ \pi|_{S_2}(q) = B(t,q)$ for each $(t,q)\in K_2$. In particular, $B(t,q) = \tilde B(q)$, where $\tilde B := (\pi|_{S_1}^{-1}\circ \pi|_{S_2})|_{\Pi_2(K_2)}$ is now a smooth homeomorphism. Moreover, $\tilde B$ is a submersion since $(A,\tilde B)$ is a submersion. It follows that $(\pi|_{S_1}^{-1}\circ \pi|_{S_2})|_{\Pi_2(K_2)}$ is a diffeomorphism. Lastly, note that $q_2 \in \Pi_2(K_2)$ since $q_2 \in \operatorname{im}(u'\cdot \Phi_1)\cap \operatorname{im}(\Phi_2)$ and so $\Pi_2(\Phi_2|_{K_2}^{-1}(q_2)) = \pi|_{S_2}^{-1}(\pi(q_2)) = q_2$ as desired.
    \end{proof}

    Next, we complete the proof for the statement of \cref{lem.desingularization Mx smooth structure}.

    \begin{claim}[Riemannian Structure]
        \label{claim.riemannian submersion}
        Equip $M$ with the symmetric bilinear form $\gamma$ defined by
        \begin{equation}
            \label{eq.gamma in claim}
        \gamma_{\pi(up)}(d\pi(up)|_{uX_p}, d\pi(up)|_{uY_{p}}) := \langle uX_{p}, uY_{p}\rangle = \langle X_p,Y_p\rangle,\end{equation}
        for $X_{p},Y_{p}\in N_{p}$, $p\in V_x$ and $u\in S$. Then $\gamma$ is a well-defined (independent of $u$) smooth Riemannian metric and it turns $\pi\colon(S\cdot V_x,\langle{,}\rangle)\to(M,\gamma)$ into a smooth Riemannian submersion whose horizontal subspaces are given by
        \[H_{up} := \ker(d\pi(up))^\perp = N_{up},\]
        for $u\in S$ and $p\in V_x$.
    \end{claim}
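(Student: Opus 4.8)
First I would nail down the pointwise structure of $\pi$. Every point of $S\cdot V_x$ has a unique representation $up$ with $u\in S$, $p\in V_x$ (uniqueness of $p$ by \cref{claim.fiber disjoint}, then of $u$ by \cref{claim.S right mult diffeo}), and a point $m\in M$ determines its fibre $\pi^{-1}(m)=Sp$, hence $p$, uniquely. In the chart $\Phi_{up}$ of \cref{claim.nice charts SVX}, $\pi$ becomes a coordinate projection whose fibre through $up$ is $\{u\exp(t)u^{-1}\cdot up:t\text{ near }0\text{ in }T_eS\}$; differentiating at $t=0$, and using that $T_eS=(T_eG_p)^\perp$ (via \cref{claim.connected comp stab agree}) maps onto $\mathfrak g\cdot p=T_p$, one obtains $\ker d\pi(up)=u\cdot T_p=T_{up}$ by \cref{prop.Nx Tx Gx invariant}. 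Thus the horizontal subspace is $H_{up}:=(\ker d\pi(up))^\perp=N_{up}$, as claimed. Granting that $\gamma$ is a well-defined smooth Riemannian metric, the Riemannian submersion property is then immediate, since for $X,Y\in N_{up}=H_{up}$ the defining relation $\gamma(d\pi(up)X,d\pi(up)Y)=\langle X,Y\rangle$ says exactly that $d\pi(up)|_{H_{up}}$ is a linear isometry onto $(T_{\pi(up)}M,\gamma)$. So the real content is the well-definedness and smoothness of $\gamma$.

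For well-definedness, the only ambiguity in \eqref{eq.gamma in claim} is the choice of $u\in S$, i.e.\ of the reference point $up$ within the fibre $Sp$ over $m$. Given $u,u'\in S$, the plan is to exploit the orthogonal transformation $g:=u'u^{-1}\in G\subseteq\Oname(d)$: it satisfies $g(up)=u'p$ and, by \cref{prop.Nx Tx Gx invariant}, carries $N_{up}$ isometrically onto $N_{u'p}$. The key point is that $g$ is $\pi$-fibre preserving near $up$: for $w\in S\cdot V_x$ close to $up$ we have $gw\in S\cdot V_x$ by openness (\cref{claim.openness}), and writing $w=sp''$ with $s\in S$, $p''\in V_x$ gives $gw=u'u^{-1}sp''\in S^3 p''\subseteq\overline{S^4}p''$, so $gw\in\overline{S^4}p''\cap S\cdot V_x=Sp''$ by \cref{claim.fiber disjoint}; thus $gw$ lies in the same fibre as $w$ and $\pi(gw)=\pi(w)$. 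Consequently, if $X\in N_{up}$ is the horizontal lift of some $v\in T_mM$, then $gX\in N_{u'p}$ and $d\pi(u'p)(gX)=d(\pi\circ g)(up)(X)=d\pi(up)(X)=v$, so $gX$ is the horizontal lift of $v$ at $u'p$; since $g$ is orthogonal, $\langle gX,gX\rangle=\langle X,X\rangle$, proving independence of $u$. Symmetry and positive-definiteness of $\gamma$ at each point are then automatic, because $X\mapsto d\pi(up)X$ is a linear isomorphism $N_{up}\to T_mM$ and $\langle\cdot,\cdot\rangle$ restricts to a symmetric positive-definite form on $N_{up}$.

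For smoothness I would pass to the charts $\pi|_{S_{up}}$ of \cref{claim.topological manifold,claim.smooth structure}. As $S_{up}$ is open in the affine subspace $N_{up}$, its tangent space at any $q\in S_{up}$ is the fixed subspace $N_{up}$ and $d(\pi|_{S_{up}})(q)=d\pi(q)|_{N_{up}}$; decomposing $Z\in N_{up}$ as $Z=Z^H+Z^V$ with $Z^H\in N_q$ and $Z^V\in T_q=\ker d\pi(q)$ identifies the horizontal lift at $q$ of $d\pi(q)Z$ with $Z^H=\mathrm{proj}_{N_q}Z$, so the coordinate expression of $\gamma$ in this chart is
\[
\gamma\big|_q(Z,W)=\big\langle \mathrm{proj}_{N_q}Z,\ \mathrm{proj}_{N_q}W\big\rangle,\qquad Z,W\in N_{up}.
\]
Now $V_x\subseteq R(G)$ by \cref{claim.connected comp stab agree} and $R(G)$ is $G$-invariant, so $S\cdot V_x\subseteq R(G)$; hence the subspaces $T_q=\mathfrak g\cdot q$ have constant dimension over $q\in S\cdot V_x$ and depend smoothly on $q$, whence $N_q=(\mathfrak g\cdot q)^\perp$ and $\mathrm{proj}_{N_q}$ depend smoothly on $q$ and the displayed formula shows $\gamma$ is smooth on the chart. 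Since these charts cover $M$, $\gamma$ is a smooth Riemannian metric, which completes the proof.

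I expect the well-definedness step to be the main obstacle: one has to check carefully that the orthogonal map $g=u'u^{-1}$ really does preserve the $\pi$-fibres in a neighbourhood of $up$, and this is precisely where the power-of-$S$ bookkeeping of \cref{claim.fiber disjoint}, together with the openness of $S\cdot V_x$, is indispensable; the remaining steps are routine unwindings of the chart structure already in place.
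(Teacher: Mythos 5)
Your proposal is correct and follows essentially the same route as the paper: compute $\ker d\pi(up)=\mathfrak g\cdot up$ via the nice charts to get $H_{up}=N_{up}$, establish well-definedness of $\gamma$ by showing that multiplication by a slice element preserves $\pi$-fibres locally (the power-of-$S$ bookkeeping of \cref{claim.fiber disjoint}) and acts orthogonally on normal spaces, and prove smoothness by expressing $\gamma$ in the chart $\pi|_{S_{up}}$ through the smoothly varying orthogonal projections onto $N_q$. The only cosmetic difference is that you compare two slice representatives directly via $g=u'u^{-1}$, whereas the paper compares each $u$ to the identity; both rest on the same fibre-disjointness mechanism.
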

    \begin{proof}Fix arbitrary $u\in S$ and $p\in V_x$, and let $\pi|_{S_{up}}$ and $\Phi_{up}$ be charts as defined in \cref{claim.nice charts SVX}.

        \vspace{\claimstepvspace}\noindent \textit{Step 1. We describe an orthogonal decomposition of the tangent space $T_{up}(S\cdot V_x)$.}

        The linear map $\mathcal T_{up}\colon \mathfrak g\to \mathbb R^d$ given by $\mathcal T_{up}(\omega) = u\cdot \omega \cdot p$ has kernel $T_eG_x^0$ and image $\mathfrak g \cdot up$. By the first isomorphism theorem and since $S$ intersects $G_x^0$ transversally at $e$, we get that $\mathcal T_{up}|_{T_eS}\colon T_eS\to \mathfrak g \cdot up$ is a vector space isomorphism. As such, an arbitrary tangent vector to $up$ has an orthogonal decomposition $\mathcal T_{up}(\omega) \oplus uX_p \in (\mathfrak g\cdot up)\oplus N_{up}$ for some unique $X_p\in N_p$ and $\omega \in T_eS$. 
        
        \vspace{\claimstepvspace}\noindent \textit{Step 2. We show that $d\pi(up)|_{\mathcal T_{up}(\omega) \oplus uX_p} = d\pi|_{S_{up}}(up)|_{uX_p}$ and so $H_{up}:=\ker(d\pi(up))^\perp = N_{up}$.}

        By the commutative diagram in \eqref{diag.nice Phi}, it holds that 
        \[d\pi(up)|_{\mathcal T_{up}(\omega) \oplus uX_p} = d\pi|_{S_{up}}\circ d\Pi_{S_{up}}\circ d\Phi_{up}^{-1}(up)|_{\mathcal T_{up}(\omega) \oplus uX_p}.\]
    Taking a curve $(t\omega,up+tuX_p)\in T_{up}\times S_{up}$, we use the product rule to obtain that
        \[d\Phi_{up}(0,up)|_{(\omega,uX_p)} = \left.\frac{d(u\exp(t\omega)u^{-1}(up+tuX_p))}{dt}\right|_{t=0} = \mathcal T_{up}(\omega) + uX_p.\]
        As such,
        \[d\Phi_{up}^{-1}(up)|_{\mathcal T_{up}(\omega)  \oplus uX_p} = (\omega,uX_p) \in T_eS\times N_{up}.\]
        Next, $d\Pi(0,up)|_{(\omega, uX_p)} = uX_p \in N_{up}$. As such, $d\pi(up)|_{\mathcal T_{up}(\omega) \oplus uX_p} = d\pi|_{S_{up}}(up)|_{uX_p}$ and so $\ker(d\pi(up)) = \mathfrak g\cdot up$. This proves the assertion that $\ker(d\pi(up))^\perp = N_{up}$.
        
        \vspace{\claimstepvspace}\noindent \textit{Step 3. We prove the isometry invariance $d\pi(up)|_{uX_p} = d\pi(p)|_{X_p}$ for every $X_p\in N_p$.}

        Consider the curve $t\mapsto up + tuX_p$ where $|t| <\varepsilon$ and $\varepsilon>0$ is small enough so that $up + tuX_p\in S_{up}$ and $p + tX_p\in S_p$ for each $t$. Then $\pi(up+ tuX_p) = \pi(p+ tX_p)$ for each $t$. To see this, set $q:= p+tX_p$ and let $p'\in V_x$ be such that $q\in Sp'$. Then $uq\in S^2p'\cap S\cdot V_x$ and so \cref{claim.fiber disjoint} entails that $\pi(q)=\pi(uq)=\pi(p')$ as desired. Taking derivatives at $t=0$, we arrive to the desired equality $d\pi(up)|_{uX_p} = d\pi(p)|_{X_p}$.
        
        \vspace{\claimstepvspace}\noindent \textit{Step 4. Finishing the proof.}

        By Step~3, we obtain that \eqref{eq.gamma in claim} is independent of $u\in S$, i.e., $\gamma$ is well-defined. By Step~2 and \eqref{eq.gamma in claim} and since $\pi$ is a smooth submersion, we obtain that $d\pi(up)|_{H_{up}}\colon H_{up} \to T_{\pi(up)}M$ is a vector space isomorphism that pushes forward the inner product $\langle{,}\rangle$ over $H_{up}$ into $\gamma_{\pi(up)}$. This entails that $(T_{\pi(up)}M,\gamma_{\pi(up)})$ is a Hilbert space and $d\pi(up)|_{H_{up}}\colon (H_{up},\langle{,}\rangle) \to (T_{\pi(up)}M,\gamma_{\pi(up)})$ is a Hilbert space isometry. Once we show that $\gamma$ smooth, we obtain that $\gamma$ is a Riemannian metric for $M$ that turns $\pi$ into a Riemannian submersion as desired.

        We are left to show that $\gamma$ is smooth. Set $n:= \dim(T_eS)$ and let $\omega_1,\dots, \omega_{n}$ be a basis of $T_eS$. Let $p\in V_x$ be arbitrary. Then the diffeomorphism $\Phi_p\colon T_{p}\times S_p\to \exp(T_p)\cdot S_p$ entails that $\{\omega_i\cdot q\}$ is a basis of $\mathfrak g\cdot q$ for each $q\in S_p$. This basis varies smoothly in $q$ and so we obtain smoothness of the map $P\colon S_p\times \mathbb R^d \to \mathbb R^d$ where $P_qx := P(q,x)$ sends $x\in\mathbb R^d$ to its orthogonal projection onto $N_q$. This is due to smoothness of the Gram-Schmidt process over a smoothly varying basis. Now, put $W:= \pi(S_p)$ and observe that $\sigma:= \pi|_{S_p}^{-1}\colon W\to S_p$ is a smooth chart that is also a local section of $\pi$, i.e., $\pi\circ \sigma = \operatorname{id}_W$. As such, we get that
        \[\gamma_{w}(v_1,v_2) = \gamma_{\pi(\sigma(w))}(d\pi\circ d\sigma(w)|_{v_1}, d\pi \circ d\sigma(w)|_{v_2}) = \langle P_{\sigma(w)}\circ d\sigma(w)|_{v_1}, P_{\sigma(w)}\circ d\sigma(w)|_{v_2}\rangle,\]
        which proves that $\gamma_{w}(v_1,v_2)$ is smooth in $w\in W$ and $v_1,v_2\in T_wM$ as desired.
    \end{proof}

    \subsubsection{Proof of Lemma~\ref{lem.desingularization embedded fiber}}
    Recall that fibers of smooth submersions are embedded submanifolds (e.g. Corollary~5.13 in~\cite{Lee:13}.) For each $p\in V_x$ and since $\pi$ is a smooth (Riemannian) submersion by \cref{claim.riemannian submersion}, the fiber $S\cdot p$ of $\pi$ is an embedded submanifold of $S\cdot V_x$ and hence of $\mathbb R^d$ and $[p]$ since $S\cdot V_x$ is an open subset of $\mathbb R^d$ as we have shown in \cref{claim.openness}. Lemma~\ref{lem.desingularization embedded fiber} now follows by noting that $\dim(S\cdot p) = \dim(N_p^\perp) = \dim([p])$.
    \subsubsection{Proof of Lemma~\ref{lem.desingularization WP}}

    The last two claims are geared to proving the statement of \cref{lem.desingularization WP}, after which we are done. For the proof of the following claim, see \cref{fig:claimWp} for an illustration.

    \begin{figure}[t]
        \centering
        \includegraphics[scale=1]{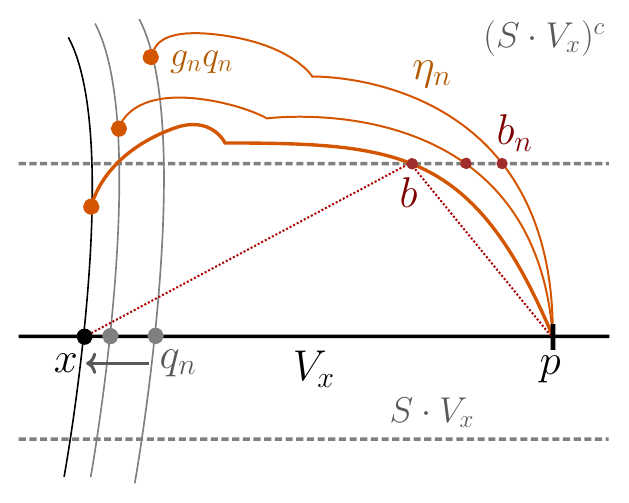}
        \caption{This figure is an aiding illustration for the proof of \cref{claim.Wp neighbhorhood}. While it is depicted that the curves $\eta_n$ converge pointwise to a limiting curve, this need not be the case.}
        \label{fig:claimWp}
      \end{figure}

    \begin{claim}[No Escape Neighborhood]
        \label{claim.Wp neighbhorhood}
        For each $p\in V_x$, there exists an open neighborhood $W_p$ of $x$ in $V_x$ such that $\pi(W_p)$ is open in $M$ and for each $q\in W_p$, $g\in G$ and piecewise unit speed $C^1$ curve $\eta\colon[0,L(\eta)]\to \mathbb R^d$ joining $p$ to $gq$ with length $L(\eta)\leq d(p,\overline Sq) = d(p,Sq)$, it holds that $\im(\eta)\subseteq S\cdot V_x$.
    \end{claim}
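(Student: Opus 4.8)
The plan is to argue by contradiction: if no such $W_p$ existed, we could extract a sequence of admissible short curves $\eta_n$ escaping $S\cdot V_x$, and we would force their first exit points to converge to a point on the straight segment $[p,x]$, which is impossible since $[p,x]\subseteq V_x\subseteq S\cdot V_x$.

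First I would record three preliminaries. (1) Since $S$ is dense in $\overline S$ and $w\mapsto\|p-w\|$ is continuous, $\inf_{w\in Sq}\|p-w\|=\inf_{w\in\overline Sq}\|p-w\|$ for every $q$, so $d(p,\overline Sq)=d(p,Sq)$ identically (the two differ only in whether the infimum is attained). (2) Because $\overline S\subseteq G$ is compact, $\overline Sq\subseteq[q]$ and $q\mapsto d(p,\overline Sq)$ is continuous; since $x=ex\in\overline Sx$ while $p\in V_x\subseteq U_x$ gives $\arg\min_{w\in[x]}\|p-w\|=\{x\}$, we get $d(p,\overline Sx)=\|p-x\|$ with the minimizer uniquely $x$. (3) By the star-convexity of $V_x$ at $x$ established in \cref{lem.UV facts intersection V open in normal}, $[p,x]\subseteq V_x$, and $V_x\subseteq S\cdot V_x$ since $e\in S$.

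Now suppose no $W_p$ as in the statement exists. Then there are $q_n\to x$ in $V_x$, $g_n\in G$, and unit-speed piecewise $C^1$ curves $\eta_n\colon[0,L_n]\to\mathbb R^d$ with $\eta_n(0)=p$, $\eta_n(L_n)=g_nq_n$, $L_n\le d(p,\overline Sq_n)$, and $\im(\eta_n)\not\subseteq S\cdot V_x$. Since $S\cdot V_x$ is open by \cref{claim.openness} and $p\in V_x\subseteq S\cdot V_x$, the first exit time $t_n:=\inf\{t\in[0,L_n]:\eta_n(t)\notin S\cdot V_x\}$ lies in $(0,L_n]$ and, by continuity together with openness of $S\cdot V_x$, $\eta_n(t_n)\in\overline{S\cdot V_x}\setminus(S\cdot V_x)$, a closed set. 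By (2), $L_n\le d(p,\overline Sq_n)\to\|p-x\|$, so $L_n$, $t_n$, $\eta_n(t_n)$ are all bounded; passing to a subsequence (using compactness of $G$ for $g_n$) we obtain $t_n\to t_*$, $\eta_n(t_n)\to y_*\in\overline{S\cdot V_x}\setminus(S\cdot V_x)$, and $g_n\to g_*$, whence $g_nq_n\to g_*x$. From $\|p-g_nq_n\|\le L_n\le d(p,\overline Sq_n)$ and (2) we get $\|p-g_*x\|\le\|p-x\|$ with $g_*x\in[x]$, so $g_*x=x$ and therefore $g_nq_n\to x$.

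Bounding the two arcs of $\eta_n$ by their lengths gives $\|p-\eta_n(t_n)\|\le t_n$ and $\|\eta_n(t_n)-g_nq_n\|\le L_n-t_n\le d(p,\overline Sq_n)-t_n$; taking limits yields $\|p-y_*\|\le t_*$ and $\|y_*-x\|\le\|p-x\|-t_*$. Adding these and comparing with $\|p-x\|\le\|p-y_*\|+\|y_*-x\|$ forces equality everywhere, so $y_*\in[p,x]\subseteq V_x\subseteq S\cdot V_x$ by (3), contradicting $y_*\notin S\cdot V_x$. Hence there is an open neighborhood $W_p^{0}$ of $x$ in $V_x$ with the no-escape property. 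To also make $\pi(W_p)$ open in $M$, set $W_p:=W_p^{0}\cap S_{ex}$, where $S_{ex}$ is the nice-chart domain at $x$ from \cref{claim.nice charts SVX} (with $u=e$); since $\pi|_{S_{ex}}$ is a chart, hence a homeomorphism onto an open subset of $M$, $\pi(W_p)$ is open, while shrinking to $W_p\subseteq W_p^{0}$ preserves no-escape and (1) supplies $d(p,\overline Sq)=d(p,Sq)$. The only genuinely clever step is the triangle-inequality squeeze pinning $y_*$ onto the segment $[p,x]$; everything else is compactness and length bookkeeping. This is also why we track only the exit points $\eta_n(t_n)$ and never the whole curves $\eta_n$, which — as the figure warns — need not converge.
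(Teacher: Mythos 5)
Your proof is correct and follows essentially the same route as the paper's: assume failure, extract a convergent subsequence of escape points, and use the (in)equality case of the triangle inequality to pin the limit onto the segment $[p,x]\subseteq V_x\subseteq S\cdot V_x$, a contradiction with openness of $S\cdot V_x$ (\cref{claim.openness}). Your version is in fact slightly more careful than the paper's, since you explicitly verify $g_nq_n\to x$ before bounding the second arc of the curve, a step the paper leaves implicit.
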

    \begin{proof}
        We establish the second assertion. This way, the first assertion follows from shrinking $W_p$ so that it lies in the neighborhood $S_x$ over which \cref{claim.topological manifold} applies and gives that $\pi(W_p)$ is open in $M_x$. To this end, suppose for the sake of contradiction that there are sequences $g_n\in G$, $q_n\in V_x$ with $q_n\to x$ and unit speed piecewise $C^1$ curves $\eta_n\colon [0,L(\eta_n)]\to \mathbb R^d$ joining $p$ to $g_nq_n$ with length $L(\eta_n)\leq d(p,\overline Sq_n)$ such that there exists a `witness of escape' $b_n \in \im(\eta_n)\cap (S\cdot V_x)^c$ for each $n$.
        
        Since $d(p,\overline Sq_n)\to d(p,\overline Sx)$ and $L(\eta_n)\leq d(p,\overline Sq_n)$, we may take subsequences so that $L(\eta_n)\rightarrow L$ and we have that
        \begin{equation}
            \label{eq.claim L px}
            L \leq d(p, \overline Sx) \leq d(p,Gx) = \|p-x\|.
        \end{equation}
        We may also take subsequences so that $g_n\to g$. Then since we have that $\|p-g_nq_n\|\leq L(\eta_n)$, we take limits to obtain that $\|p-gx\| \leq \|p-x\|$. Thus, $gx=x$ as $p \in V_x$.

        Next, $\|b_n-p\|\leq L(\eta_n)$ is bounded and hence we may assume $b_n\to b$ after taking subsequences. Then  $b\in (S\cdot V_x)^c$ since $b_n \in (S\cdot V_x)^c$ and $S\cdot V_x$ is open by \cref{claim.openness}. Since $[p,x]\subseteq S\cdot V_x$, it follows that $b\notin [p,x]$ so that the strictness of the triangle inequality yields
        \begin{equation}
        \label{eq.claim 1 over M}
        \|p-x\| < \|p-b\| + \|b-x\| - \frac{1}{M},
        \end{equation}
        for some large $M>0$. Then by \eqref{eq.claim L px} and \eqref{eq.claim 1 over M} and for large $n$, it holds that
        \[L(\eta_n) < L + \frac{1}{2M} \leq \|p-x\| + \frac{1}{2M}< \|p-b\| + \|b-x\| - \frac{1}{2M}.\]
        With this and since $b_n\in \im(\eta_n)$, the following chain of inequalities holds for large $n$
        \[\|p-b_n\| + \|b_n-g_nq_n\|\leq L(\eta_n) < \|p-b\| + \|b-x\| - \frac{1}{2M}.\]
        This contradicts the limits $b_n\to b$ and $g_nq_n \to gx=x$.
    \end{proof}
     
    We are alas ready to finish the proof with the following claim; see \cref{fig:liftpieces} for a proof aid illustration.

    \begin{claim}[Minimal Geodesics]
        \label{claim.minimallifts}
        Suppose that $p\in V_x$ and $q\in W_p$, where $W_p$ is given in \cref{claim.Wp neighbhorhood}. Then, minimal geodesics from $\pi(p)$ to $\pi(q)$ in $M$ exist and are precisely the $\pi$-images of straight line distance minimizers from $p$ to $Sq$, all of which lie in $S\cdot V_x$.
    \end{claim}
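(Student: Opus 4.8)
The plan is to reduce the statement to the distance identity $d_M(\pi(p),\pi(q)) = d(p,Sq)$, between the geodesic distance of $M$ and the Euclidean distance from $p$ to the fiber $Sq$, and then to read off the characterization of minimal geodesics from the rigidity of straight lines in $\mathbb R^d$. Throughout I would use three structural facts: by \cref{claim.riemannian submersion}, $\pi\colon S\cdot V_x\to M$ is a Riemannian submersion with horizontal distribution $H_{up}=N_{up}$, so $\pi$ does not increase lengths of curves and every curve in $M$ has horizontal lifts defined on maximal subintervals; by \cref{lem.desingularization embedded fiber} each fiber $Sr$ is an embedded submanifold of $\mathbb R^d$; and — the crux — by \cref{claim.Wp neighbhorhood}, for $q\in W_p\subseteq V_x$ every piecewise-$C^1$ curve from $p$ to a point of $Sq$ whose length is at most $d(p,Sq)=d(p,\overline Sq)$ has image inside $S\cdot V_x$. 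A first consequence is that $\arg\min_{s\in Sq}\|p-s\|$ is nonempty and contained in $S\cdot V_x$: the distance $d(p,\overline Sq)$ is attained at some $\bar q\in\overline Sq$, the segment $[p,\bar q]$ realizes it, \cref{claim.Wp neighbhorhood} puts $[p,\bar q]\subseteq S\cdot V_x$, and then $\bar q\in\overline{S^4}q\cap S\cdot V_x=Sq$ by \cref{claim.fiber disjoint}; the same reasoning gives $[p,q']\subseteq S\cdot V_x$ for every minimizer $q'$.

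Next I would prove $d_M(\pi(p),\pi(q))=d(p,Sq)$. The bound ``$\le$'' is witnessed by the curve $\pi\circ[p,q']$, which joins $\pi(p)$ to $\pi(q')=\pi(q)$ and has length $\le\|p-q'\|=d(p,Sq)$. For ``$\ge$'' — and simultaneously for the characterization — I would prove the \emph{non-escape statement}: if $c\colon[0,1]\to M$ is a curve from $\pi(p)$ to $\pi(q)$ with $L(c)\le d(p,Sq)$, then its horizontal lift $\tilde c$ starting at $p$ is defined on all of $[0,1]$, has image in $S\cdot V_x$, satisfies $L(\tilde c)=L(c)$, and ends at a point of $\pi^{-1}(\pi(q))=Sq$. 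Granting this, $L(c)\ge\|p-\tilde c(1)\|\ge d(p,Sq)$, so no curve from $\pi(p)$ to $\pi(q)$ is shorter than $d(p,Sq)$ and the distance equals $d(p,Sq)$; consequently $\pi\circ[p,q']$ is a minimizing curve, hence, after unit-speed reparametrization, a minimal geodesic, so minimal geodesics exist. For the converse, let $\sigma$ be any minimal geodesic from $\pi(p)$ to $\pi(q)$. Then $L(\sigma)=d(p,Sq)$, so by the non-escape statement its horizontal lift from $p$ is a curve in $S\cdot V_x\subseteq\mathbb R^d$ from $p$ to some $q''\in Sq$ of length $d(p,Sq)\le\|p-q''\|\le L(\sigma)=d(p,Sq)$; hence $\|p-q''\|=d(p,Sq)$, and a curve whose length equals the Euclidean distance between its endpoints must be the straight segment, so the lift is $[p,q'']$ with $q''\in\arg\min_{s\in Sq}\|p-s\|$ and $\sigma=\pi\circ[p,q'']$. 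This identifies the minimal geodesics exactly with the $\pi$-images of the straight-line distance minimizers from $p$ to $Sq$, all of which lie in $S\cdot V_x$.

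The main obstacle is the non-escape statement. A horizontal lift of finite length can fail to be defined on all of $[0,1]$ only by converging to a point of $\partial(S\cdot V_x)$ (since $S\cdot V_x$ is open, \cref{claim.openness}), and \cref{claim.Wp neighbhorhood} is precisely designed to forbid this; the delicacy is that \cref{claim.Wp neighbhorhood} is stated for curves that actually reach $Sq$, whereas an escaping lift terminates a priori on $\partial(S\cdot V_x)$ at a point one must still identify as lying in $\overline Sq$. I would handle an escape at the right endpoint by a compactness argument: writing $\tilde c(t)=u_tr_t$ with $u_t\in S$ and $r_t\in V_x$, and using $\pi(r_t)=c(t)\to\pi(q)$ together with the local chart at $q$ and the compactness of $\overline S$, the escape limit $b$ lies in $\overline Sq$ with $\|p-b\|\le L(c)\le d(p,\overline Sq)$, which forces $b$ to be a minimizer of $\|p-\cdot\|$ over $\overline Sq$ and hence, by the consequence of \cref{claim.Wp neighbhorhood} recorded above, to lie in $S\cdot V_x$ — contradicting $b\notin S\cdot V_x$. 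An escape at an interior time needs extra care: one first truncates $c$ at its first return to $\pi(q)$, so that every interior point of $c$ projects away from $\pi(q)$, and then rules out such escapes by the analogous compactness argument at the relevant intermediate fiber. Making this interior case airtight is the step I expect to be the most technical; everything else — the Riemannian-submersion length bookkeeping and the rigidity of Euclidean geodesics — is routine.
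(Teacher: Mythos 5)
Your first paragraph and your treatment of the easy direction match the paper: the existence of minimizers in $\overline Sq$, the inclusion $[p,\bar q]\subseteq S\cdot V_x$ via \cref{claim.Wp neighbhorhood} and $\overline{S^4}q\cap S\cdot V_x=Sq$, and the fact that $\pi([p,\bar q])$ is a geodesic of length $d(p,Sq)$ because the segment is horizontal, are all exactly as in the paper. The divergence — and the gap — is in how you control an arbitrary competitor curve $c$ with $L(c)\le d(p,Sq)$. You propose to take the \emph{global} horizontal lift of $c$ starting at $p$ and to rule out escape from $S\cdot V_x$ using \cref{claim.Wp neighbhorhood}. Your endpoint-escape argument is essentially sound (the escape limit $b$ lies in $\overline Sq$ with $\|p-b\|\le d(p,\overline Sq)$, so $[p,b]$ is a curve from $p$ to a point of $Gq$ of admissible length and \cref{claim.Wp neighbhorhood} forces $b\in S\cdot V_x$), though identifying $b\in\overline Sq$ already needs the chart and fiber-disjointness bookkeeping you allude to. But the interior-escape case is a genuine gap, not merely a technicality: if the lift dies at $t_0<1$ with limit $b\in\overline Sr\setminus Sr$ for an \emph{intermediate} fiber $Sr$, the only information you have is $\|p-b\|\le L(c|_{[0,t_0]})\le d(p,Sq)$. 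This bears no relation to $d(p,\overline Sr)$ (it is an upper bound on $d(p,\overline Sr)$, not a lower one), and $r$ need not lie in $W_p$; so the hypotheses of \cref{claim.Wp neighbhorhood} — a curve ending in $G\cdot q'$ for some $q'\in W_p$ with length at most $d(p,\overline Sq')$ — are simply not available at intermediate fibers, and truncating $c$ at its first return to $\pi(q)$ does nothing to supply them. There is no ``analogous compactness argument'' to run.

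The paper's proof is engineered precisely to avoid ever needing a globally defined horizontal lift. It first replaces $c$ by a piecewise geodesic, refines the partition so that each geodesic piece admits a local horizontal lift that is a straight segment $[p_i,q_{i+1}]$ inside $S\cdot V_x$, and then \emph{stitches} these segments into a single piecewise-$C^1$ curve $\eta$ in $\mathbb R^d$ by applying group elements $g_i$ chosen to minimize the bi-invariant distance to $S$ among all elements carrying $p_{i+1}$ to $g_iq_{i+1}$. The stitched curve provably joins $p$ to $g_nq$ with $g_n\in G$ and has length $L(c)\le d(p,\overline Sq)$, so \cref{claim.Wp neighbhorhood} applies to it \emph{as a curve in $\mathbb R^d$}, before one knows it lies in $S\cdot V_x$; the conclusion $\operatorname{im}(\eta)\subseteq S\cdot V_x$ then feeds an induction showing every $g_i\in S$, after which rigidity of Euclidean segments finishes the proof as in your last step. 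If you want to salvage your route, you would need an independent argument that the horizontal lift of a curve of length at most $d(p,Sq)$ cannot terminate at an intermediate fiber — and that is essentially what the paper's stitching construction replaces.
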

    \begin{proof}
        By \cref{claim.Wp neighbhorhood}, every distance minimizing straight line $[p,uq]$ joining $p$ to $\overline Sq$ lies in $S\cdot V_x$ and is as such a straight line distance minimizer from $p$ to $Sq$ since $\overline Sq\cap S\cdot V_x = Sq$ by \cref{claim.fiber disjoint}. Since $\pi$ is a Riemannian submersion and $p-uq\in N_{uq}$ is a horizontal direction, we obtain that $\pi([p,uq])$ is a geodesic in $M$ joining $\pi(p)$ to $\pi(q)$ and of length $d(p,Sq)$ (see Proposition~2.109 in~\cite{GallotHL:90}.)
        
        Now, let $c_\gamma:[0,L(c_\gamma)]\to M$ be a unit speed piecewise $C^1$ curve joining $\pi(p)$ to $\pi(q)$ where $L(c_\gamma)$ denotes the length of $c_\gamma$, and suppose that $L(c_\gamma)\leq d(p,Sq)$. The claim follows once we show that $L(c_\gamma)= d(p,Sq)$ and that $c_\gamma$ is the $\pi$-image of a straight line distance minimizer from $p$ to $Sq$ in $S\cdot V_x$. The rest of the proof is dedicated to this objective.
        
        \vspace{\claimstepvspace}\noindent \textit{Step 1. We reduce to $c_\gamma$ being a piecewise geodesic curve.}
        
        By covering $\im(c_\gamma)$ with finitely many geodesically convex neighborhoods $\{B_i\}_{i=1}^N$ and taking a partition $0=T_0 < T_1 < \cdots < T_N = L(c_\gamma)$ that satisfies $c_\gamma([T_{i-1},T_i])\subseteq B_i$ for each $i$, we may redraw each $c_\gamma|_{[T_{i-1},T_i]}$ as the unique minimal geodesic joining $c_\gamma(T_i)$ to $c_\gamma(T_{i+1})$. If this results in $L(c_\gamma) < d(p,Sq)$, then the mere proof of $L(c_\gamma)= d(p,Sq)$ yields a contradiction; otherwise, the redrawing keeps $c_\gamma$ unaltered by the unique minimality of the pieced geodesics in each $B_i$. As such, we may now assume without loss of generality that $c_\gamma$ is a piecewise geodesic with each geodesic piece given by $c_\gamma|_{[T_{i-1},T_i]}$.

        \vspace{\claimstepvspace}\noindent \textit{Step 2. We horizontally lift $c_\gamma$ piece by piece.}

        By local horizontal lifting of geodesics under Riemannian submersions (Proposition~2.109 in~\cite{GallotHL:90},) there exists a refined partition $0=t_0 < t_1 < \cdots < t_n = L(c_\gamma)$ and for each $i\in \{0,\dots, n-1\}$, there exists $p_i \in \pi^{-1}(c_\gamma(t_i))\cap V_x$ and $q_{i+1}\in \pi^{-1}(c_\gamma(t_{i+1}))$ such that $[p_i,q_{i+1}]\subseteq N_{p_i}$ are straight lines connecting $p_i$ to $Sp_{i+1}$, entirely lying in $S\cdot V_x$, and horizontally lifting $c_\gamma|_{[t_i,t_{i+1}]}$ to $S\cdot V_x$ starting from $p_i$, i.e., $\pi([p_i,q_{i+1}])= \im(c_\gamma|_{[t_i,t_{i+1}]})$. We take $p_0 := p$ and $p_n := q$. In particular, observe that $\sum_{i=0}^{n-1} \|q_{i+1}-p_i\| = L(c_\gamma)$.
    
        \begin{figure}[t]
            \centering
            \includegraphics[scale=1.3]{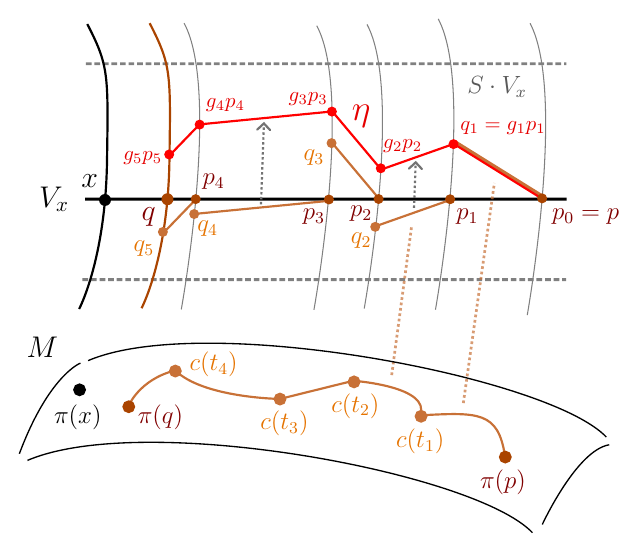}
            \caption{This figure is an aiding illustration for the proof of \cref{claim.minimallifts}, where $n=5$. It represents the situation after we conclude that $\im(\eta)\subseteq S\cdot V_x$ in Step~4 but before we conclude that $\eta$ is a straight line and $c:= c_\gamma$ is a minimal geodesic in Step~5. The dotted orange lines connecting portions of $c$ in $M$ to portions of $\eta$ in $S\cdot V_x$ are solely for illustrating the process of horizontal lifting. The gray arrows joining $[p_i,q_{i+1}]$ to $[g_ip_i, g_{i+1}p_{i+1}]$ depict the notion of making minimal moves to stitch $\eta$ together.}
            \label{fig:liftpieces}
          \end{figure}    

        \vspace{\claimstepvspace}\noindent \textit{Step 3. We take minimal moves in $G$ to attach the lifts together into a piecewise $C^1$ curve.} 

        For each $i\in \{0,\dots, n-1\}$, there exists $u_{i+1}\in S$ such that $u_{i+1}p_{i+1} = q_{i+1}$. Take $u_0:=e$ and equip $G$ with a bi-invariant Riemannian metric $\beta$. We inductively construct a sequence of `moves' $\{g_i\}_{i=0}^n\in G$. First, set $g_0:=e$. Assuming that we have defined $g_i$ for some $i \in \{0,\dots, n-1\}$, we proceed to define $g_{i+1}$.
        
        Put $F_i = \{g\in G: gp_{i+1}=g_iq_{i+1}\}\neq\varnothing$. Then $F_i$ is a compact subset of $G$ and its nonemptiness is witnessed by $g_iu_{i+1}\in F_i$. With this, we define $g_{i+1}\in G$ by fixing a minimal choice 
        \begin{equation}
            \label{eq.minimalchoiceg}
            g_{i+1} \in 
            \arg\min_{g\in F_i}d_\beta(g, S).
        \end{equation}
        Then $g_{i+1}\in F_i$, i.e., $g_{i+1}p_{i+1}=g_iq_{i+1}$, for each $i\in \{0,\dots, n-1\}$, and the sequential concatenation of the straight line segments $g_i[p_i,q_{i+1}] = [g_ip_i,g_{i+1}p_{i+1}]$, for $i\in \{0,\dots, n-1\}$, yields a piecewise unit speed $C^1$ curve $\eta\colon [0,L(\eta)]\to \mathbb R^d$ joining $p$ to $g_nq$ with $L(\eta) = L(c_\gamma)$. 
        
        \vspace{\claimstepvspace}\noindent \textit{Step 4. We invoke \cref{claim.Wp neighbhorhood} and show that the minimal moves are in $S$.} 
        
        By construction of $\eta$, it holds that
        \begin{equation}
            \label{claim eq.Length eta bounds}
            \|p-g_nq\| \leq L(\eta) = L(c_\gamma)\leq d(p,Sq),
        \end{equation}
        and so $\im(\eta)\subseteq S\cdot V_x$ by \cref{claim.Wp neighbhorhood}. In particular, $g_iq_{i+1} = g_{i+1}p_{i+1}\in S\cdot V_x$ for each $i\in \{0,\dots, n-1\}$.
        
        From this, we shall find that $g_i\in S$ for each $i\in \{0,\dots, n\}$. We proceed by induction. The base case holds since $g_0=e\in S$. Next, suppose that $g_{i_0}\in S$ for some $i_0 \in \{0,\dots,n-1\}$. Then by definition of $u_{i_0+1}\in S$ and since $g_{i_0}q_{i_0+1} \in S\cdot V_x$, we have that
        \[g_{i_0}u_{i_0+1}p_{i_0+1} = g_{i_0}q_{i_0+1}\in S\cdot V_x.\]
        Since $g_{i_0}u_{i_0+1}\in S^2$, it follows that $g_{i_0}q_{i_0+1} \in S^2p_{i_0+1}\cap S\cdot V_x$. By \cref{claim.fiber disjoint}, we obtain that $g_{i_0}q_{i_0+1}\in Sp_{i_0+1}$, and so $g_{i_0}q_{i_0+1} = up_{i_0+1}$ for some $u\in S$. As such, we have $u\in F_i\cap S$, and so $d_\beta(F_i,S) = 0$. By \eqref{eq.minimalchoiceg}, we obtain that $g_{i_0+1}\in S$ as desired. In particular, it follows that $g_n\in S$.
        
        \vspace{\claimstepvspace}\noindent \textit{Step 5. We finish the proof.} 
        
        By \eqref{claim eq.Length eta bounds} and since $g_n\in S$, we get that
        \[d(p,Sq)\leq \|p-g_nq\| \leq L(\eta) = L(c_\gamma) \leq d(p,Sq).\]
        Hence, equalities hold and $\eta$ is precisely the straight line $[p,g_nq]$, which is a minimizer of distance from $p$ to $Sq$ lying entirely in $S\cdot V_x$. Lastly, $\pi\circ\eta = c_\gamma$ since each piece of $\eta$ is given by $g_i[p_i,q_{i+1}] \subseteq S\cdot V_x$ where $g_i\in S$ and $\pi(g_i[p_i,q_{i+1}])=\pi([p_i,q_{i+1}])= c_\gamma|_{[t_i,t_{i+1}]}$ (the first equality follows from \cref{claim.fiber disjoint} and the second equality follows by definition of $p_i$ and $q_{i+1}$.)
        \end{proof}

        \section{Semialgebraic geometric arguments}

This section is dedicated to proving \cref{lem.regular local avoidance}. We begin with a preliminary introduction to semialgebraic geometry in \cref{app.prelim semialg}. 
Notably, the statement of \textit{conservation of dimension} (\cref{prop.conservation of dimension}) is a key result and will be frequently referenced in \cref{app.regular local avoidance proof}, where we provide the proof of \cref{lem.regular local avoidance}.
\subsection{Preliminary on Semialgebraic Sets and Groups}
\label{app.prelim semialg}
A basic semialgebraic set is any set of the form $\{x\in\mathbb R^n:p(x)\geq0\}$, where $p\colon\mathbb{R}^n\to\mathbb{R}$ is a polynomial function.
A \textbf{semialgebraic set} is any set obtained from some combination of finite unions, finite intersections, and complements of basic semialgebraic sets. By Proposition~2.9.10 in~\cite{BochnakCR:13}, every semialgebraic set is a finite union of manifolds. As such, the \textbf{dimension} of a semialgebraic set is defined as the maximum dimension of said manifolds.
We say a subgroup of $\operatorname{GL}(d)$ is a \textbf{semialgebraic group} if it is semialgebraic as a subset of $\mathbb{R}^{d\times d}$.
We say a function $\mathbb{R}^s\to\mathbb{R}^t$ is a \textbf{semialgebraic function} if its graph is semialgebraic as a subset of $\mathbb{R}^{s+t}$.

A pivotal observation is that, starting with a fixed collection of finitely many semialgebraic sets, one can construct new semialgebraic sets through the application of first-order logic.

\begin{definition}
A first-order formula of the language of ordered fields
with parameters in $\mathbb R$ is a formula written with a finite number of conjunctions, disjunctions, negations, and universal or existential quantifiers
on variables, starting from atomic formulas which are formulas of the kind $f(x_1,\dots,x_n) = 0$ or $g(x_1,\dots,x_n) > 0$, where $f$ and $g$ are polynomials with
coefficients in $\mathbb R$. The free variables of a formula are those variables of the polynomials appearing in the formula, which are not quantified.
\end{definition}

\begin{proposition}[Proposition~2.2.4 in~\cite{BochnakCR:13}]\label{prop.firstorder semialgebraic}
    Let $\phi(x_1,\dots,x_n)$ be a first-order formula of the language of ordered fields, with parameters in $\mathbb R$ and with free variables $x_1,\dots,x_n$. Then $\{x \in \mathbb R^n: \phi(x)\}$ is a semialgebraic set.
\end{proposition}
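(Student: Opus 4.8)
The plan is to derive the statement from the Tarski--Seidenberg projection theorem, treating that theorem as the substantive content.

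\textbf{Step 1 (reduction to eliminating one existential quantifier).} First I would observe that any \emph{quantifier-free} first-order formula $\psi$ in variables $x_1,\dots,x_n$ defines a semialgebraic set outright: each atomic formula $f=0$ or $g>0$ defines a basic semialgebraic set (writing $\{f=0\}=\{f\ge0\}\cap\{-f\ge0\}$ and $\{g>0\}=\mathbb R^n\setminus\{-g\ge0\}$), and the connectives $\wedge,\vee,\neg$ correspond exactly to $\cap,\cup$, and complement, each of which preserves semialgebraicity by definition. Next I would put an arbitrary $\phi$ into prenex normal form $Q_1y_1\cdots Q_my_m\,\psi$ with $\psi$ quantifier-free, and use $\forall y\,\theta\equiv\neg\exists y\,\neg\theta$ together with closure of semialgebraic sets under complement to reduce, by induction on $m$, to a single claim: if $S\subseteq\mathbb R^{n+1}$ is semialgebraic and $\pi\colon\mathbb R^{n+1}\to\mathbb R^n$ forgets the last coordinate, then $\pi(S)$ is semialgebraic.

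\textbf{Step 2 (the projection theorem).} Writing $S$ as a finite union of sets $\{x\in\mathbb R^{n+1}: f_i(x)=0\ (i\le k),\ g_j(x)>0\ (j\le l)\}$ and using that the image of a union is the union of images, it suffices to handle one such set. I would regard each $f_i,g_j$ as a polynomial in the last variable $t$ whose coefficients are polynomials in $a=(x_1,\dots,x_n)$. Then $a\in\pi(S)$ precisely when the univariate system $f_i(a,t)=0$, $g_j(a,t)>0$ has a real root $t$, and the crux is that solvability of such a system is a finite boolean combination of sign conditions on polynomial expressions in $a$. To see this I would argue by induction on $\sum_i\deg_t f_i+\sum_j\deg_t g_j$: split into finitely many cases according to which leading coefficients (polynomials in $a$) vanish, and in each case use Euclidean division, gcd's, derivatives, and subresultant/Sturm sequences, which over the real closed field $\mathbb R$ count the real roots of $\gcd_i f_i(a,\cdot)$ and detect positivity of the $g_j$ at such roots using only the signs of finitely many polynomials in $a$. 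The set of $a$ for which the system is solvable is therefore a finite boolean combination of sets $\{h(a)\ge0\}$, hence semialgebraic; this proves Step 2, and together with Step 1 the proposition.

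\textbf{Main obstacle.} Step 1 is routine bookkeeping. The real difficulty is Step 2: the effective elimination of the last variable --- showing that whether a parametrized family of univariate polynomial (in)equality systems is solvable is decided semialgebraically in the parameters. Organizing the case analysis on vanishing of leading coefficients and invoking the correct root-counting machinery (subresultants, Sturm's theorem, Thom's lemma) is where essentially all of the work resides; alternatively one could package this via a cylindrical algebraic decomposition of $\mathbb R^{n+1}$ adapted to the $f_i,g_j$, which makes the projection step transparent at the cost of building the decomposition.
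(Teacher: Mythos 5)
The paper gives no proof of this statement: it is quoted verbatim as Proposition~2.2.4 of the cited reference \cite{BochnakCR:13} and used as a black box. Your sketch follows exactly the route taken in that reference (and in every standard treatment): quantifier-free formulas define semialgebraic sets because $\wedge,\vee,\neg$ match $\cap,\cup$, complement; prenex normal form plus $\forall\equiv\neg\exists\neg$ reduces everything to closure under projection of one coordinate; and that projection step is the Tarski--Seidenberg theorem, proved by the sign-condition/subresultant machinery (or, equivalently, cylindrical algebraic decomposition) you name. Step~1 is complete and correct; Step~2 is only an outline of the genuinely hard content, but it correctly identifies where the work lies and the standard tools for it, which is an appropriate level of detail for a cited textbook theorem.
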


The above principle allows one to reveal structure in the family of semialgebraic sets. The following proposition demonstrates aspects of this, and proofs of statements therein can be found in Appendix~A of~\cite{AmirGARD:23}.
\begin{proposition}
    The following statements regarding semialgebraic sets and functions hold:
    \begin{propenum}
        \item \label{prop.semialg set family} The family of semialgebraic sets is closed under coordinate projection, complement, finite union, finite intersection and coordinate slicing.
        \item \label{prop.semialg function family} The family of semialgebraic functions is closed under addition, multiplication, division (when defined), composition and concatenation. Moreover, fibers and images of semialgebraic functions are semialgebraic sets.
        \item \label{prop.conservation of dimension}(Conservation of Dimension) If $\pi\colon\mathbb R^{n+d}\mapsto \mathbb R^n$ is a coordinate projection and $A$ is a semialgebraic subset of $\mathbb R^{n+d}$, then
        \begin{equation}
            \dim(\pi(A)) \leq \dim(A) \leq \dim(\pi(A)) + \max_{x\in\pi(A)}\dim(\pi^{-1}(x)\cap A).
        \end{equation}
    \end{propenum}
\end{proposition}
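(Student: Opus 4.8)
The plan is to deduce parts (a) and (b) from the Tarski--Seidenberg principle already recorded as \cref{prop.firstorder semialgebraic}, and to prove the Conservation of Dimension (c) by means of a cylindrical algebraic decomposition of $\mathbb R^{n+d}$ adapted to the coordinate projection $\pi$. Complete arguments appear in Appendix~A of~\cite{AmirGARD:23} and in Chapter~2 of~\cite{BochnakCR:13}; here I outline the key steps.

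For part (a), closure under finite unions, intersections and complements is immediate from the definition, since the semialgebraic sets are by construction the Boolean algebra generated by the basic ones. To handle projection and slicing, I would write a semialgebraic set $A\subseteq\mathbb R^n\times\mathbb R^d$ as $A=\{(x,y):\phi(x,y)\}$ for a quantifier-free first-order formula $\phi$ --- a finite Boolean combination of polynomial equalities and inequalities is exactly such a formula. Then the projection of $A$ onto $\mathbb R^n$ is $\{x:\exists y\,\phi(x,y)\}$ and the slice of $A$ over $c\in\mathbb R^n$ is $\{y:\phi(c,y)\}$; each is cut out by a first-order formula, hence is semialgebraic by \cref{prop.firstorder semialgebraic}.

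For part (b), I would work throughout with graphs, recalling that $f\colon\mathbb R^s\to\mathbb R^t$ is semialgebraic precisely when $\Gamma_f\subseteq\mathbb R^{s+t}$ is semialgebraic. If $f,g\colon\mathbb R^s\to\mathbb R$ are semialgebraic, then the graph of $f+g$ is $\{(x,z):\exists u\,\exists v\,((x,u)\in\Gamma_f\wedge(x,v)\in\Gamma_g\wedge z=u+v)\}$, which is semialgebraic by \cref{prop.firstorder semialgebraic} together with part (a); the same recipe handles products, quotients (adjoining the clause $g(x)\neq 0$), composition (via $\exists y\,((x,y)\in\Gamma_f\wedge(y,z)\in\Gamma_g)$), and concatenation (the fibered product of the two graphs). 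A fiber $f^{-1}(c)$ is then a slice of $\Gamma_f$, and an image $f(A)$ is the projection of $\Gamma_f\cap(A\times\mathbb R^t)$ onto the last $t$ coordinates; both are semialgebraic by part (a).

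For part (c), the lower bound $\dim(\pi(A))\leq\dim(A)$ is the statement that semialgebraic maps do not increase dimension. I would prove it by partitioning $A$ into finitely many cells $C_i$ on each of which $\pi$ has locally constant rank; by the constant rank theorem $\pi(C_i)$ is then locally contained in a $C^1$ submanifold of dimension $\leq\dim(C_i)\leq\dim(A)$, so that $\dim(\pi(A))=\max_i\dim(\pi(C_i))\leq\dim(A)$. For the upper bound, I would take a cylindrical algebraic decomposition of $\mathbb R^{n+d}$ adapted to $A$ and compatible with the successive last-coordinate projections down to $\mathbb R^n$, partitioning $A$ into cells $C_1,\dots,C_N$ such that each $C_i$ lies over a cell $D_i\subseteq\pi(A)$ and, for every $x\in D_i$, the fiber $\pi^{-1}(x)\cap C_i$ is a single cell of some dimension $e_i$ with $\dim(C_i)=\dim(D_i)+e_i$. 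Since $\dim(D_i)\leq\dim(\pi(A))$ and $e_i=\dim(\pi^{-1}(x)\cap C_i)\leq\dim(\pi^{-1}(x)\cap A)$ for any $x\in D_i$, taking the maximum over $i$ yields
\[\dim(A)=\max_{1\leq i\leq N}\dim(C_i)\leq\dim(\pi(A))+\max_{x\in\pi(A)}\dim(\pi^{-1}(x)\cap A).\]
The bookkeeping for parts (a) and (b) is routine once \cref{prop.firstorder semialgebraic} is in hand. The main obstacle is part (c): its two inequalities are not formal consequences of Tarski--Seidenberg and genuinely rest on the cell-decomposition structure theory of semialgebraic sets --- in particular on the additivity of dimension along a cylinder, which is exactly the content provided by cylindrical algebraic decomposition.
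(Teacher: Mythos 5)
Your proposal is correct and follows exactly the route the paper itself takes, namely deferring to the Tarski--Seidenberg principle (recorded as \cref{prop.firstorder semialgebraic}) for parts (a) and (b) and to the cylindrical-decomposition structure theory of Appendix~A of~\cite{AmirGARD:23} and Chapter~2 of~\cite{BochnakCR:13} for part (c); the paper offers no independent argument beyond these citations. Your outline is a faithful and accurate summary of those standard proofs.
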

As mentioned in the beginning of the section, \textit{conservation of dimension} is essential to many arguments in this paper. The next proposition highlights that semialgebraicity of a group is equivalent to its compactness.

\begin{proposition}[Proposition~7 in~\cite{MixonQ:22}]
    \label{prop.closed subgroups}
    Suppose $G\leq\operatorname{O}(d)$.
    The following are equivalent:
    \begin{itemize}
    \item[(a)]
    $G$ is topologically closed.
    \item[(b)]
    $G$ is algebraic.
    \item[(c)]
    $G$ is semialgebraic.
    \end{itemize}
\end{proposition}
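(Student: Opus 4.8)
The plan is to prove the cycle $(b)\Rightarrow(c)\Rightarrow(a)\Rightarrow(b)$. The implication $(b)\Rightarrow(c)$ requires no work: an algebraic set is a basic semialgebraic set, so an algebraic subgroup is in particular semialgebraic.

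For $(c)\Rightarrow(a)$ I would argue by comparing dimensions. Let $\overline G\subseteq\mathbb R^{d\times d}$ be the topological closure of $G$; since the closure of a subgroup of a topological group is again a subgroup, $\overline G\leq\Oname(d)$. The key input is the semialgebraic estimate $\dim(\overline S\setminus S)<\dim S$, valid for every nonempty semialgebraic $S$ (Proposition~2.8.13 in~\cite{BochnakCR:13}). Applied to $S=G$ (nonempty since $e\in G$), it gives $\dim(\overline G\setminus G)<\dim G$. Suppose toward a contradiction that $G\neq\overline G$ and pick $g_0\in\overline G\setminus G$; then the whole coset $g_0G$ lies in $\overline G\setminus G$ because $G$ is a group, and $g_0G$ is the image of the semialgebraic set $G$ under the linear automorphism $A\mapsto g_0A$ of $\mathbb R^{d\times d}$, so $\dim(g_0G)=\dim G$. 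This forces $\dim(\overline G\setminus G)\geq\dim G$, a contradiction; hence $G=\overline G$ is closed.

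For $(a)\Rightarrow(b)$ I would use that $G$, being a closed subset of the compact group $\Oname(d)$, is compact and carries a normalized bi-invariant Haar measure $\mu$. Let $I(G):=\{p\in\mathbb R[x_{ij}]:p|_G\equiv 0\}$ and $V(I(G)):=\{A:p(A)=0\ \forall p\in I(G)\}$; I claim $V(I(G))=G$, which establishes $(b)$. Since the entries of $x^\top x-\mathrm{Id}$ lie in $I(G)$, we have $V(I(G))\subseteq\Oname(d)$, so it suffices to separate each $g_0\in\Oname(d)\setminus G$ from $G$ by some $q\in I(G)$. The cosets $G$ and $g_0G$ are disjoint compact subsets of $\Oname(d)$, so Urysohn gives a continuous $h\colon\Oname(d)\to[0,1]$ with $h|_G\equiv 0$ and $h|_{g_0G}\equiv 1$; since $\mathbb R[x_{ij}]|_{\Oname(d)}$ separates points and contains the constants, Stone--Weierstrass produces a polynomial $p$ with $\sup_{\Oname(d)}|p-h|<\tfrac13$. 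Now average: $\bar p(x):=\int_G p(xg)\,d\mu(g)$ lies in the finite-dimensional space of polynomials of degree $\leq\deg p$, hence is a polynomial, and invariance of $\mu$ makes $\bar p$ constant on each coset $xG$. Thus $q:=\bar p-\bar p(e)$ vanishes identically on $G$, while
\[
q(g_0)=\int_G p(g_0g)\,d\mu(g)-\int_G p(g)\,d\mu(g)
\]
differs by less than $\tfrac23$ from $\int_G h(g_0g)\,d\mu(g)-\int_G h(g)\,d\mu(g)=1-0=1$, so $q(g_0)\neq 0$, as required.

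The main obstacle is this last implication: it is the only one that genuinely uses compactness (it is false for general closed $G\leq\operatorname{GL}(d,\mathbb R)$), and it hinges on the averaging construction; one must take care to verify that $\bar p$ really is a polynomial and that the side of invariance of $\mu$ is handled correctly. Alternatively, $(a)\Rightarrow(b)$ may simply be quoted as the classical theorem that a compact subgroup of $\operatorname{GL}(d,\mathbb R)$ is real algebraic (Chevalley; Onishchik--Vinberg). The other two implications are routine once the semialgebraic boundary-dimension estimate is in hand.
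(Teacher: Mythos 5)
Your proof is correct. Note that the paper itself gives no argument for \cref{prop.closed subgroups}; it simply cites Proposition~7 of~\cite{MixonQ:22}, so there is nothing internal to compare against. What you have written is the standard proof that the citation delegates to: (b)$\Rightarrow$(c) is immediate, (c)$\Rightarrow$(a) is the boundary-dimension argument via $\dim(\overline{S}\setminus S)<\dim S$ together with translation-invariance of semialgebraic dimension, and (a)$\Rightarrow$(b) is the classical Chevalley-type fact that a compact subgroup of $\operatorname{GL}(d,\mathbb R)$ is real algebraic, which your Haar-averaging of a Stone--Weierstrass approximant establishes correctly (the only delicate points --- that $\bar p$ is again a polynomial, and that right-invariance of $\mu$ makes $\bar p$ constant on right cosets $xG$ --- are both handled properly). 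No gaps.
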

As a first application of the above propositions, we prove that the collections of principal and regular points, as well as the components of the Voronoi decomposition, form semialgebraic sets. Additionally, we demonstrate that quotient metrics and max filtering maps are semialgebraic functions.
\begin{proposition}
    \label{prop.distance is semialgebraic}
    For any compact $G\leq \Oname(d)$, the quotient metric $d([\cdot],[\cdot])\colon\mathbb R^d\times \mathbb R^d \to \mathbb R$ and the max filtering map $\llangle[\cdot],[\cdot]\rrangle\colon \mathbb R^d\times \mathbb R^d \to \mathbb R$ are semialgebraic functions. Moreover, the sets $P(G)$, $R(G)$, $N_x$, $U_x$, $V_x$ and $Q_x$ are semialgebraic subsets of $\mathbb R^d$, for all $x\in\mathbb R^d$.
\end{proposition}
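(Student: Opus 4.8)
The plan is to translate every object into a first-order formula of the language of ordered fields with parameters in $\mathbb R$, and then invoke \cref{prop.firstorder semialgebraic}. The whole argument rests on one input: since $G$ is compact it is topologically closed, so \cref{prop.closed subgroups} gives that $G$ is a semialgebraic subset of $\mathbb R^{d\times d}$; hence there is a (quantifier-free) formula $\gamma(g)$ in the $d^{2}$ matrix entries with $g\in G\iff\gamma(g)$. With $\gamma$ available, and using that $[x]=Gx$ is compact so every supremum over an orbit is attained, the max filtering map is handled by writing its graph as
\[\big\{(x,z,w):(\exists g)[\gamma(g)\wedge\langle gz,x\rangle=w]\ \wedge\ (\forall g)[\gamma(g)\to\langle gz,x\rangle\le w]\big\},\]
a first-order set. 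For the quotient metric one may either write an analogous formula with $\|gx-z\|^{2}$ in place of $\langle gz,x\rangle$ and with $w\ge 0,\ w^{2}=t$ in place of $w=t$, or combine the polarization identity \eqref{prop.max filter lemma2 distance} with the facts that $t\mapsto\sqrt t$ is semialgebraic and that semialgebraic functions compose (\cref{prop.semialg function family}).

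Next I would treat the Voronoi data, keeping $x$ as a free variable so that both the ``for each $x$'' and the ``uniformly in $x$'' readings follow. The normal space $N_x=(\mathfrak g\cdot x)^{\perp}$ is the joint zero set of the linear maps $v\mapsto\langle v,\omega_i x\rangle$, where $\omega_1,\dots,\omega_r$ is a fixed basis of the Lie algebra $\mathfrak g$; it is therefore cut out by finitely many bilinear equations and is trivially semialgebraic. For $U_x$, the defining condition $\{x\}=\arg\max_{p\in[x]}\langle p,z\rangle$ reads $\langle x,z\rangle=\llangle[x],[z]\rrangle$ together with $(\forall g)[\gamma(g)\wedge\langle gx,z\rangle=\langle x,z\rangle\to gx=x]$ — a first-order condition. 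For $V_x$ and $Q_x$ I would use the already-proven characterizations \cref{lem.V char} and \cref{lem. Q char}: the predicate ``$|\arg\max_{p\in[x]}\langle p,t\rangle|=1$ for all $t$ in some neighbourhood of $z$'' is first-order, since it says ``there is $\varepsilon>0$ such that for every $t$ with $\|t-z\|^{2}<\varepsilon$, any $g_1,g_2\in G$ attaining $\langle g_i x,t\rangle=\llangle[x],[t]\rrangle$ satisfy $g_1x=g_2x$'' (existence of a maximizer being automatic), and each clause is first-order via $\gamma$ and the max filter graph. Intersecting this predicate with $U_x$ gives $V_x$; the bare predicate gives $Q_x$. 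Alternatively $Q_x=G\cdot V_x$ by \cref{lem.Q invariant}, the image of a semialgebraic set under the polynomial map $(g,v)\mapsto gv$, so \cref{prop.semialg function family} applies again.

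Finally $P(G)$ and $R(G)$. Principality is immediate: $x\in P(G)$ iff $(\forall z)\big[(\forall g)(\gamma(g)\wedge gz=z\to gx=x)\ \to\ (\forall g)(\gamma(g)\wedge gx=x\to gz=z)\big]$, which is first-order. For regularity the only step that is not pure quantifier bookkeeping is to express ``$\dim(Gy)\ge k$'' in the first-order language. Here I would use that every semialgebraic set is a finite union of manifolds together with conservation of dimension (\cref{prop.conservation of dimension}) to see that, for a semialgebraic $S\subseteq\mathbb R^d$, one has $\dim S\ge k$ if and only if some coordinate projection $\pi\colon\mathbb R^d\to\mathbb R^{k}$ maps $S$ onto a subset of $\mathbb R^{k}$ with nonempty interior; this is a finite disjunction of first-order conditions (``$\pi(Gy)$ has nonempty interior'' being first-order because membership in $\pi(Gy)$ is). Then $x\in R(G)\iff(\forall y)\bigwedge_{k=0}^{d}\big[\dim(Gy)\ge k\to\dim(Gx)\ge k\big]$, which is first-order and hence semialgebraic. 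I expect this encoding of orbit dimension to be the only mildly delicate point; everything else is a routine unwinding of definitions into quantifiers followed by an appeal to \cref{prop.firstorder semialgebraic}.
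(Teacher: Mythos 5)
Your proposal is correct, and for every object except $R(G)$ it is essentially the paper's proof: reduce to semialgebraicity of $G$ via \cref{prop.closed subgroups}, write first-order formulas for the graphs of $d([\cdot],[\cdot])$ and $\llangle[\cdot],[\cdot]\rrangle$, cut out $N_x$ with the Lie algebra generators, define $U_x$ by the maximizer-uniqueness formula, and get $Q_x$ and $V_x$ from \cref{lem. Q char,lem.V char} (the paper takes $V_x=Q_x\cap U_x$, which is the same as your intersection with the local-uniqueness predicate). Your formula for $P(G)$ is logically equivalent to the paper's.

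The one place you genuinely diverge is $R(G)$. The paper exploits that $\dim(G\cdot y)=\dim(\mathfrak g\cdot y)$ equals the rank of the finite family $\{\omega_i\cdot y\}_{i=1}^N$, so ``$\dim(G\cdot y)=D$'' is a quantifier-free polynomial condition (vanishing/nonvanishing of minors), and $R(G)$ is written down in one line with $D:=\max_y\dim([y])$ as a fixed parameter. You instead encode ``$\dim(G\cdot y)\ge k$'' intrinsically, via the finite disjunction over coordinate projections $\pi\colon\mathbb R^d\to\mathbb R^k$ of the first-order condition ``$\pi(G\cdot y)$ has nonempty interior,'' justified by the stratification of semialgebraic sets into manifolds and \cref{prop.conservation of dimension}. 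This is sound (the forward direction needs the small observation that a $k$-dimensional tangent space surjects onto some $k$ coordinates, making the projection a local submersion), and it has the virtue of not using the infinitesimal description of the orbit at all, so it would apply verbatim to any semialgebraically parametrized family of semialgebraic sets. The cost is a heavier formula and an extra lemma to verify; the paper's linear-algebraic shortcut is the more economical route here since the Lie algebra basis is needed for $N_x$ anyway.
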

\begin{proof}
By \cref{prop.closed subgroups}, $G$ is a semialgebraic group. Then the graph of $d([\cdot],[\cdot])$ is expressed through first order logic in the following form
\[\big\{(x,z,r)\in(\mathbb R^d)^2\times \mathbb R: (\forall g\in G, r\leq \|gx-z\|) \wedge (\forall\varepsilon\in\mathbb R, \exists g\in G, \varepsilon > 0 \Rightarrow r+\varepsilon > \|gx - z\|)\big\},\]
to which \cref{prop.firstorder semialgebraic} applies. A similar argument applies to the max filtering map. To establish the rest of the proposition, first observe that $P(G)$ is expressed through first order logic in the following form
\[P(G)=\big\{x\in\mathbb R^d: \forall p\in \mathbb R^d, (\exists g\in G, gp=p \wedge gx\neq x) \vee (\forall g\in G, gp = p \iff gx=x)\big\}.\]
In other words, it is either the case that $G_p$ is not a subgroup $G_x$ or otherwise, $G_p = G_x$. Next, set $D:=\max_{x\in \mathbb R^d}\dim([x])$ and let $\omega_1,\dots, \omega_N$ be a basis of $\mathfrak g$, the Lie algebra of $G$. Then $R(G)$ is expressed with semialgebraic conditions in the following form
\[R(G)=\big\{x\in\mathbb R^d: \{\omega_i\cdot x\}_{i=1}^N \text{ contains $D$ linearly independent vectors}\big\}.\]
Next, for any $x\in \mathbb R^d$, $N_x$ is expressed with semialgebraic conditions in the following form
\[N_x = \big\{y\in\mathbb R^d: \langle y,\omega_i\cdot x\rangle = 0\ \forall\,  i\in\{1,\dots,N\}\big\}.\]
As for the Voronoi decomposition components, we define $U:=\{(x,y)\in(\mathbb R^d)^2: y\in U_x\}$ and we express it with semialgebraic conditions in the following form
\[
U = \big\{(x,y)\in (\mathbb R^d)^2: \|x-y\|=d([x],[y]) \wedge \forall\,  q\in G\cdot x-\{x\}, \|q-y\|\neq d([x],[y])\big\}.
\] 
By \cref{prop.semialg set family} and since $U_x$ is a coordinate slice of $U$, it follows that $U_x$ is a semialgebraic subset of $\mathbb R^d$.

Next, by \cref{lem. Q char}, $Q_x$ is expressed with semialgebraic conditions in the following form
\[Q_x=\big\{y\in\mathbb R^d: \exists \epsilon >0, \forall\, y'\in \mathbb R^d, |y'-y|<\epsilon \implies (\exists p\in G\cdot x,(p,y')\in U)\big\}.\]
Lastly, \cref{prop.semialg set family} applies to the finite intersection $V_x = Q_x\cap U_x$.
\end{proof}

\subsection{Proof of Lemma~\ref{lem.regular local avoidance}}
\label{app.regular local avoidance proof}

This section is dedicated to the proof of \cref{lem.regular local avoidance}, which is both long and technical. To enhance readability and organization, the proof is divided into subsections, each corresponding to a specific phase of the argument. The first phase is a sequence of reduction steps which force the `bad' templates $\{z_i\}$ to lie within a common open Voronoi cell $V_x$. The second phase demonstrates that such bad templates lead to a nontrivial kernel for one of finitely many linear operators formed by the templates. The third and final phase shows that such noninjectivity fails to hold for sufficiently many generic templates.

Each phase is divided into a sequence of claims, each accompanied by proof. We hope this structure enables the reader to follow the argument smoothly.

For $z_1,\dots,z_n\in \mathbb R^d$ and $x,y\in \mathbb R^d$ with $[x]\neq [y]$, we define
    \begin{equation}
        \label{eq.D defn}
        D(x,y) := \left\{\frac{\llangle[x],[z_i]\rrangle - 
    \llangle [y],[z_i]\rrangle}{d([x],[y])}\right\}_{i=1}^n.
    \end{equation}
    Notably, $D$ is dilation invariant and $G$-invariant, meaning that
    \[D(x,y) = D(rgx,rg'y), \text{ for all $r>0$ and $g,g'\in G$}.\] 
    
    \subsubsection{Reduction to templates lying within a fixed open Voronoi cell}
    We begin by reducing to a fixed but arbitrary witness of failure.
    \begin{claim}
        The set $R$ given by \eqref{eq.R def} is semialgebraic and to obtain the bound \eqref{eq.R bound}, it suffices to show that 
        \begin{equation}
            \label{eq.R1}
        \dim(R_1(x))\leq nd - 1 - \left(\left\lceil\frac{n}{\chi(G)}\right\rceil - c\right),
        \end{equation}
        for all $x\in R(G)$, where
        \[R_1(x) := \big\{\{z_i\}_{i=1}^n \in (\mathbb R^d)^n: \Phi \text{ fails to be locally lower Lipschitz at $x$}\big\}.\]
        Moreover, $R_1(x)$ is semialgebraic.
    \end{claim}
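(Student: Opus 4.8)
The plan is to package the sets $R_1(x)$, $x\in R(G)$, into a single semialgebraic subset $\hat R\subseteq R(G)\times(\mathbb R^d)^n$, and then bound $\dim R$ by playing the two coordinate projections of $\hat R$ against each other via \cref{prop.conservation of dimension}. For semialgebraicity, recall from \cref{prop.distance is semialgebraic} that $R(G)$ is semialgebraic and that $d([\cdot],[\cdot])$ and $\llangle[\cdot],[\cdot]\rrangle$ are semialgebraic functions. Unwinding the definition (and reading ``$\Phi$ fails to be locally lower Lipschitz at every $x\in R(G)$'' in \eqref{eq.R def} as the negation of ``$\Phi$ is locally lower Lipschitz at every $x\in R(G)$'', i.e.\ as failure at \emph{some} regular point, which is exactly what is needed to derive \cref{thm.regular lower lip generic}), $\Phi$ fails to be locally lower Lipschitz at $x$ precisely when for all $\alpha,\epsilon>0$ there exist $p,q\in\mathbb R^d$ with $d([p],[q])>0$, $d([p],[x])<\epsilon$, $d([q],[x])<\epsilon$, and $\sum_{i=1}^n\big(\llangle[p],[z_i]\rrangle-\llangle[q],[z_i]\rrangle\big)^2<\alpha^2\,d([p],[q])^2$. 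This is a first-order formula in the free variables $(x,\{z_i\})$ over the semialgebraic relations $d$, $\llangle[\cdot],[\cdot]\rrangle$ and $R(G)$, so by \cref{prop.firstorder semialgebraic} and \cref{prop.semialg set family} the set
\[
\hat R:=\big\{(x,\{z_i\})\in R(G)\times(\mathbb R^d)^n:\Phi\text{ fails to be locally lower Lipschitz at }x\big\}
\]
is semialgebraic; then for $x\in R(G)$ the set $R_1(x)$ is a coordinate slice of $\hat R$, and $R=\bigcup_{x\in R(G)}R_1(x)$ is the projection $\pi_z(\hat R)$ of $\hat R$ onto the $\{z_i\}$-coordinates, both semialgebraic by \cref{prop.semialg set family}.

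The crux is an invariance of the fibers of $\pi_z\colon\hat R\to R$. Because $d$ and $\llangle[\cdot],[\cdot]\rrangle$ are $G$-invariant and positively $1$-homogeneous, the difference-quotient condition above is unchanged under $x\mapsto gx$ (for $g\in G$, since $[gx]=[x]$) and under $x\mapsto rx$ (for $r>0$, since neighborhoods of $[rx]$ and difference quotients rescale together). Hence $R_1(grx)=R_1(x)$ for all $g\in G$, $r>0$, which says that each nonempty fiber $F_z:=\{x\in R(G):\{z_i\}\in R_1(x)\}$ is invariant under $x\mapsto grx$. Consequently, for any $z\in R$, picking $x_1\in F_z$ gives $F_z\supseteq\mathbb R_{>0}\cdot(G\cdot x_1)$; since $x_1\in R(G)$ its orbit has the maximal dimension $d-c$, and — as $G\cdot x_1$ lies on a sphere about the origin, with $x_1\neq 0$ unless $G$ is trivial — this cone has dimension $(d-c)+1$. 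Therefore $\dim F_z\ge d-c+1$ for every $z\in R$.

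Now apply \cref{prop.conservation of dimension} twice. To the projection $\hat R\to R(G)$: since $R(G)$ is open in $\mathbb R^d$, hence $d$-dimensional, and the fiber over $x$ is $\{x\}\times R_1(x)$, we get $\dim\hat R\le d+\sup_{x\in R(G)}\dim R_1(x)$. To $\pi_z\colon\hat R\to R$: since every fiber over $R$ has dimension at least $d-c+1$, the standard fact that a semialgebraic surjection all of whose fibers have dimension $\ge m$ has total space of dimension $\ge\dim(\mathrm{image})+m$ (a consequence of \cref{prop.conservation of dimension} after stratifying the base so that the fiber dimension is locally constant) gives $\dim\hat R\ge\dim R+(d-c+1)$. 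Combining these,
\[
\dim R\ \le\ \dim\hat R-(d-c+1)\ \le\ \sup_{x\in R(G)}\dim R_1(x)+c-1,
\]
so if \eqref{eq.R1} holds for every $x\in R(G)$ then $\dim R\le\big(nd-1-(\lceil n/\chi(G)\rceil-c)\big)+(c-1)=nd-1-(\lceil n/\chi(G)\rceil-2c+1)$, which is \eqref{eq.R bound}.

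The main obstacle is the dimension bookkeeping of the last step. One must justify the fiber-dimension lower bound in the semialgebraic category, and — crucially — one must use the \emph{scaling} invariance of $R_1$ and not merely its $G$-invariance, since it is exactly the extra ``$+1$'' in $\dim F_z\ge(d-c)+1$ that upgrades the naive estimate $\dim R\le\sup_x\dim R_1(x)+c$ to the $\sup_x\dim R_1(x)+c-1$ demanded by \eqref{eq.R bound}. An alternative that sidesteps the fiber-dimension fact is to build $\hat R$ over an explicit semialgebraic cross-section of $R(G)/(G\times\mathbb R_{>0})$, which has dimension $c-1$; either way the essential content is the same. A lesser point of care is pinning down ``locally lower Lipschitz at $x$'' (a positive lower bound on difference quotients over arbitrarily small neighborhoods of $[x]$, matching the usage in the proof of \cref{thm.almost free actions}) so that its negation is genuinely first-order.
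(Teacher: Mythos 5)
Your argument is correct in substance but routes the dimension count differently from the paper. The paper first uses the dilation- and $G$-invariance of the failure condition to replace $R(G)$ by an explicit $(c-1)$-dimensional semialgebraic cross-section $S:=R(G)\cap\mathbb S^{d-1}\cap N_p$ (using that $N_p$ meets every orbit), builds the lift $L$ over $S$ only, and then applies the \emph{upper-bound} direction of \cref{prop.conservation of dimension} to $L\to S$: $\dim(R)\le\dim(L)\le(c-1)+\max_x\dim(R_1(x))$. You instead lift over all of $R(G)$ and recover the same $c-1$ by a \emph{lower} bound on $\dim\hat R$ via the invariant fibers $F_z\supseteq\mathbb R_{>0}\cdot(G\cdot x_1)$. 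Both exploit exactly the same invariance; the paper's version is cleaner because it never needs a fiber-dimension lower bound. That lower bound (``a semialgebraic surjection with all fibers of dimension $\ge m$ has total space of dimension $\ge\dim(\mathrm{image})+m$'') is a true and standard fact, but it is \emph{not} a consequence of \cref{prop.conservation of dimension} as stated, even after stratifying the base: the stated proposition only bounds $\dim(A)$ from above by base plus max fiber, and the reverse inequality requires Hardt's semialgebraic trivialization (or the theorem on the dimension of fibers in~\cite{BochnakCR:13}). Your own suggested alternative — working over a cross-section of $R(G)/(G\times\mathbb R_{>0})$ — is precisely what the paper does, so you should either cite the trivialization theorem or switch to that alternative.

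Two smaller points. First, your parenthetical ``$x_1\neq 0$ unless $G$ is trivial'' is off: $0\in R(G)$ exactly when $G$ is finite, so for finite nontrivial $G$ the fiber $F_z$ could a priori contain only $0$, in which case $\dim F_z\ge d-c+1=1$ is not immediate. (The paper's reduction to $S\subseteq\mathbb S^{d-1}$ silently makes the same assumption that failure at $0$ entails failure at some nonzero regular point, so this is a shared blind spot rather than a defect unique to your argument.) Second, your first-order rendering of ``fails to be locally lower Lipschitz at $x$'' and your reading of \eqref{eq.R def} as ``fails at \emph{some} regular point'' both agree with the paper's usage, so the semialgebraicity part is fine.
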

    \begin{proof}
    Fix any $p\in R(G)$ and put $N:= N_p$. By \cref{lem.Q open dense}, $Q_p=G\cdot V_p\subseteq G\cdot N$ is dense. Since $G\cdot N$ is closed, it follows that $G\cdot N = \mathbb R^d$ and hence $[N] = \mathbb R^d/G$, i.e., $N$ meets every orbit in at least one point.  Since $p\in R(G)$, observe that $\dim(N) = c$.

    Next, observe that $\Phi$ fails to be locally lower Lipschitz at $x\in R(G)$ if and only if there exist sequences $[x_j],[y_j] \to [x]$ such that $[x_j]\neq [y_j]$ and $D(x_j,y_j)\to 0$. Note that these conditions are dilation invariant and $G$-invariant. Then by setting $S:=R(G)\cap \mathbb S^{d-1}\cap N$, we have that $R$ is a coordinate projection of the following lift
    \[L := \big\{(\{z_i\}_{i=1}^n, x) \in (\mathbb R^d)^n \times S: \Phi \text{ fails to be locally lower Lipschitz at $x$}\big\}.\]
    By \cref{prop.semialg set family,prop.distance is semialgebraic}, $S$ is semialgebraic. By \cref{prop.firstorder semialgebraic}, $L$ is semialgebraic since it may be expressed in the language of first order logic as follows
    \[\begin{aligned}
        L &:= \big\{(\{z_i\}_{i=1}^n, x) \in (\mathbb R^d)^n \times S: \forall \varepsilon \in \mathbb R_{>0}, \exists x_0,y_0\in \mathbb R^d,\\
         & \qquad \qquad [x_0]\neq [y_0]\wedge [x_0],[y_0] \in B_{[x]}(\varepsilon) \wedge |D(x_0,y_0)| < \varepsilon \big\}.
     \end{aligned}
     \]
     By \cref{prop.semialg set family}, it follows that $R$ is semialgebraic since it is a coordinate projection of $L$. A similar first order expression shows that $R_1(x)$ is semialgebraic for each $x \in R(G)$. Moreover, by \cref{prop.conservation of dimension}, we have the bounds
     \[\dim(R)\leq \dim(L)\leq \dim(S) + \max_{x\in S}\dim(R_1(x)).\]
    Since $\dim(S) = c - 1$ and $S\subseteq R(G)$, the claim follows.
    \end{proof}

    Next, we reduce to templates lying within the open Voronoi diagram $Q_x$.
    \begin{claim}
    Fix arbitrary $x\in R(G)$. To obtain the bound \eqref{eq.R1}, it suffices to show that
    \begin{equation}
        \label{eq.R2}
        \dim(R_2) \leq md - 1 - \left(\left\lceil\frac{m}{\chi(G)}\right\rceil - c\right),
    \end{equation}
    for each $m\geq 0$, where
    \[R_2 := \big\{\{z_i\}_{i=1}^m \in Q_x^m: \Phi \text{ fails to be locally lower Lipschitz at $x$}\big\}.\]
    Moreover, $R_2$ is semialgebraic.
    \end{claim}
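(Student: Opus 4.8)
The plan is to stratify the space of $n$-tuples of templates according to which of the $z_i$ land inside the open Voronoi diagram $Q_x = G\cdot V_x$, and then peel off the templates lying outside $Q_x$ one Euclidean factor at a time. The engine is the observation that deleting templates can never cure a failure of local lower Lipschitzness: if sequences $[x_j]\neq[y_j]\to[x]$ witness $D(x_j,y_j)\to 0$ for the full bank $\{z_i\}_{i=1}^n$, then for any index set $I$ the subvector $\big(D(x_j,y_j)\big)_{i\in I}$ also tends to $0$, so the sub-bank $\{z_i\}_{i\in I}$ already fails to be locally lower Lipschitz at $x$. Consequently, for each $I\subseteq\{1,\dots,n\}$ the stratum
\[A_I := \big\{\{z_i\}_{i=1}^n\in R_1(x)\ :\ z_i\in Q_x \iff i\in I\big\}\]
projects, under the coordinate projection $\pi_I$ onto the $I$-coordinates, into $R_2$ with parameter $m:=|I|$, while the complementary coordinates are confined to $Q_x^c$.

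First I would dispatch the auxiliary semialgebraicity claims. The set $Q_x$ is semialgebraic by \cref{prop.distance is semialgebraic}, so $Q_x^m$ is semialgebraic by \cref{prop.semialg set family}; intersecting it with the set of $m$-tuples at which $\Phi$ fails to be locally lower Lipschitz at the fixed point $x$ --- which is first-order definable with $x$ as a parameter, exactly as in the preceding claim, hence semialgebraic by \cref{prop.firstorder semialgebraic} --- shows $R_2$ is semialgebraic. Likewise each $A_I$ is semialgebraic, being the intersection of the (already semialgebraic) set $R_1(x)$ with the semialgebraic conditions $z_i\in Q_x$ for $i\in I$ and $z_i\notin Q_x$ for $i\notin I$. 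I would also record that since $Q_x$ is open and dense (\cref{lem.Q open dense}) and semialgebraic, its complement $Q_x^c$ has empty interior and is a finite union of lower-dimensional manifolds, so $\dim(Q_x^c)\le d-1$.

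Now I would assume \eqref{eq.R2} holds for every $m\ge 0$ and bound each stratum. Applying conservation of dimension (\cref{prop.conservation of dimension}) to $\pi_I|_{A_I}$, whose image lies in $R_2$ and each of whose fibers lies in $(Q_x^c)^{\,n-m}$, gives
\[\dim(A_I)\ \le\ \dim(R_2) + (n-m)(d-1)\ \le\ \Big(md-1-\big(\lceil m/\chi(G)\rceil-c\big)\Big)+(n-m)(d-1),\]
which after expanding equals $nd-1+c+\big(m-n-\lceil m/\chi(G)\rceil\big)$. The elementary inequality $\lceil n/\chi(G)\rceil-\lceil m/\chi(G)\rceil\le\lceil(n-m)/\chi(G)\rceil\le n-m$ (valid for $0\le m\le n$ and $\chi(G)\ge 1$) rearranges to $m-n-\lceil m/\chi(G)\rceil\le-\lceil n/\chi(G)\rceil$, so $\dim(A_I)\le nd-1-\big(\lceil n/\chi(G)\rceil-c\big)$. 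Since the finitely many $A_I$ partition $R_1(x)$, taking the maximum over $I$ yields \eqref{eq.R1}. I do not foresee a genuine obstacle here: the only points needing a word of care are the degenerate cases $m=0$ and $Q_x=\mathbb R^d$ (so $Q_x^c=\varnothing$ and only the stratum $I=\{1,\dots,n\}$ is nonempty), both of which make the bound immediate.
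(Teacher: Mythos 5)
Your proposal is correct and follows essentially the same route as the paper: partition $R_1(x)$ by which templates lie in $Q_x$, project the $Q_x$-coordinates into $R_2$ while confining the rest to $(Q_x^c)^{n-m}$ (which has dimension at most $(n-m)(d-1)$ by openness and density of $Q_x$), and close with the same ceiling inequality $\lceil n/\chi(G)\rceil-\lceil m/\chi(G)\rceil\le n-m$. The only cosmetic difference is that you make explicit the (correct, and implicitly used in the paper) observation that deleting templates cannot repair a failure of local lower Lipschitzness, and you establish semialgebraicity of $R_2$ directly by a first-order formula rather than as a coordinate projection of a stratum.
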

    \begin{proof}
    We decompose analysis based on which $z_i$ lies in $Q_x$. For $z_1,\dots,z_n\in \mathbb R^d$, define $I_{\{z_i\}}:=\{1_{z_i\in Q_x}\}_{i=1}^n\in \mathbb \{0,1\}^n$. Then by \cref{lem.Q semialgebraic}, $I$ is semialgebraic in $\{z_{i}\}_{i=1}^n$. We obtain a finite partition $R_1 = \bigsqcup_{I\in\{0,1\}^n} R_1^I$, where
    \[R_1^I:= R_1 \cap \big\{\{z_i\}_{i=1}^n \in (\mathbb R^d)^n: I_{\{z_i\}} = I\big\}.\]
    Then each $R_1^I$ is semialgebraic, and we have that $\dim(R_1)= \max_{I\in \{0,1\}^n}\dim(R_1^I)$. Without loss of generality, we may assume that the maximum is achieved by $I_* := \{1_{j\leq m}\}_{j=1}^n$ for some $m\in \{0,\dots, n\}$. Then $R_2$ is a coordinate projection of $R_1^{I_*}$, so it is semialgebraic by \cref{prop.semialg set family}. Additionally, since $R_1^{I_*}\subseteq R_2\times (Q_x^c)^{n-m}$, it follows that
    \[\dim(R_1) =\dim(R_1^{I_*}) \leq \dim(R_2) + \dim((Q_x^c)^{n-m}).\]
    Since $Q_x$ is open and dense in $\mathbb R^d$ (\cref{lem.Q open dense}), it holds that
    \[\dim((Q_x^c)^{n-m})\leq (n-m)(d-1).\]
     The claim follows by combining the above two inequalities and noting that
     \[m-n \leq \left\lceil\frac{m}{\chi(G)}\right\rceil - \left\lceil\frac{n}{\chi(G)}\right\rceil.\]
    \end{proof}

    Next, we reduce to the case of all templates lying in the open Voronoi cell $V_x$.
    \begin{claim}
        To obtain the bound \eqref{eq.R2}, it suffices to show that
        \begin{equation}
            \label{eq.R3}
            \dim(R_3) \leq mc - 1 - \left(\left\lceil\frac{m}{\chi(G)}\right\rceil - c\right),
        \end{equation}
        where
        \[R_3 := \big\{\{z_i\}_{i=1}^m \in V_x^m: \Phi \text{ fails to be locally lower Lipschitz at $x$}\big\}.\]
        Moreover, $R_3$ is semialgebraic.
        \end{claim}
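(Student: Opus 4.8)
The plan is to realize $R_2$ as the image, under a semialgebraic map, of a semialgebraic set of dimension at most $m(d-c)+\dim(R_3)$, and then to invoke \emph{conservation of dimension} (\cref{prop.conservation of dimension}). Concretely, I aim to prove
\[\dim(R_2)\;\leq\; m(d-c) + \dim(R_3),\]
and then, substituting the hypothesized bound \eqref{eq.R3} and writing $d=(d-c)+c$,
\[\dim(R_2)\;\leq\; m(d-c) + mc - 1 - \left(\left\lceil\frac{m}{\chi(G)}\right\rceil - c\right) \;=\; md - 1 - \left(\left\lceil\frac{m}{\chi(G)}\right\rceil - c\right),\]
which is exactly \eqref{eq.R2}. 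Here I use that $\dim([x]) = \dim(G\cdot x) = d-c$, valid because $x\in R(G)$ and $c=d-\max_{y}\dim([y])$.

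The structural inputs are that $Q_x = G\cdot V_x = \bigsqcup_{p\in[x]}V_p$ (so every element of $Q_x$ lies in some $V_p$ with $p\in[x]$), that $V_{gx}=g\cdot V_x$ by \cref{lem.UV facts equivariance}, and that the max filtering map depends only on $G$-orbits, i.e.\ $\llangle[y],[gz]\rrangle=\llangle[y],[z]\rrangle$. First I would note that $[x]=G\cdot x$ is semialgebraic (the image of the semialgebraic linear map $G\to\mathbb R^d$, $g\mapsto gx$), and obtain a semialgebraic section $s\colon[x]\to G$ with $s(p)\cdot x=p$ for all $p\in[x]$ — either by semialgebraic definable choice (\cite{BochnakCR:13}), or, if one prefers, by covering $[x]$ by finitely many semialgebraic pieces each admitting a local section and passing to the induced finite cover of $[x]^m$, which does not affect dimension. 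Then define the semialgebraic map
\[\Theta\colon [x]^m\times R_3\longrightarrow Q_x^m,\qquad \Theta\big((p_i)_{i=1}^m,(z_i)_{i=1}^m\big):=\big(s(p_i)\cdot z_i\big)_{i=1}^m,\]
which indeed lands in $Q_x^m$ since $s(p_i)z_i\in s(p_i)V_x = V_{s(p_i)x}=V_{p_i}\subseteq Q_x$.

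Next I would check $\im(\Theta)=R_2$. For $\im(\Theta)\subseteq R_2$: since $[s(p_i)z_i]=[z_i]$, the max filter bank with templates $\{s(p_i)z_i\}$ equals the one with templates $\{z_i\}$, and the latter fails to be locally lower Lipschitz at $x$ because $(z_i)\in R_3$; also each $s(p_i)z_i\in Q_x$. For $R_2\subseteq\im(\Theta)$: given $(w_i)\in R_2$, each $w_i\in V_{p_i}$ for some $p_i\in[x]$; put $z_i:=s(p_i)^{-1}w_i$, so $z_i\in s(p_i)^{-1}V_{p_i}=V_x$ and $[z_i]=[w_i]$, whence the max filter bank with templates $\{z_i\}$ coincides with the one for $\{w_i\}$ and so fails locally lower Lipschitz at $x$, i.e.\ $(z_i)\in R_3$; moreover $\Theta((p_i),(z_i))=(w_i)$. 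Applying \cref{prop.conservation of dimension} to the graph of $\Theta$ (whose dimension equals that of its source since it projects bijectively onto it), one concludes $\dim(R_2)\le\dim([x]^m\times R_3)=m\dim([x])+\dim(R_3)=m(d-c)+\dim(R_3)$, as needed. Semialgebraicity of $R_3$ follows from the identity $R_3=R_2\cap V_x^m$ (both sets semialgebraic, using \cref{lem.UV facts semialgebraic} and the previous claim), or directly from a first-order description analogous to the ones used for $R$, $R_1$, and $R_2$.

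The main obstacle here is avoiding the naive parametrization of $R_2$ by $G^m\times R_3$: its dimension $m\dim(G)+\dim(R_3)$ is too large, since $\dim(G)\ge d-c$ always holds, so the crude bound $\dim(R_2)\le m\dim(G)+\dim(R_3)$ is useless. The fix is to cut $G$ down to a semialgebraic transversal to the $G_x$-cosets — equivalently, to use the section $s$ — so that the parametrizing set has the sharp dimension $m\dim([x])+\dim(R_3)$. The only mild technical point is the existence of a genuinely \emph{semialgebraic} section $s$ (as opposed to merely continuous), for which semialgebraic definable choice or cylindrical algebraic decomposition suffices.
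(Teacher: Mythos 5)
Your proof is correct and takes essentially the same route as the paper: both establish $\dim(R_2)\leq \dim(R_3)+m(d-c)$ by applying conservation of dimension to the correspondence that moves each template in $V_x$ along its $G$-orbit (whose dimension is $d-c$ since $x\in R(G)$), and both deduce semialgebraicity of $R_3$ from that of $V_x$. The only difference is in the setup: the paper works with the incidence variety $L_3=\{(\{z_i\},\{v_i\}) : [z_i]=[v_i]\}$ and two coordinate projections, bounding the fibers over $V_x^m$ by $m(d-c)$, which sidesteps the semialgebraic definable-choice argument you invoke to produce the section $s\colon [x]\to G$ — that step is standard and correct, but not needed.
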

    \begin{proof}    
     By \cref{lem.UV facts semialgebraic} and a first order logic expression, it holds that $R_3$ is semialgebraic. Since $Q_x = G\cdot V_x$, we have $R_2 = G^m\cdot R_3$. Then $R_2$ is a coordinate projection of $L_3\cap (\mathbb R^d\times R_3)$, where
     \[L_3 := \big\{(\{z_i\}_{i=1}^m,\{v_i\}_{i=1}^m)\in (\mathbb R^d)^m \times V_x^m: [z_i]= [v_i]\ \, \forall\, i \in \{1,\dots,m\}\big\}.\]
     By \cref{prop.conservation of dimension} and considering the coordinate projection $\pi_2\colon (\mathbb R^d)^m \times V_x^m\to V_x^m$, it follows that
     \[\dim(R_2)\leq \dim(L_3\cap (\mathbb R^d\times R_3)) \leq \dim(R_3) + \max_{\{v_i\}_{i=1}^m\in V_x^m}\dim(\pi_2^{-1}(\{v_i\}_{i=1}^m)).\]
     Since $\dim(\pi_2^{-1}(\{v_i\}_{i=1}^m))\leq m\cdot \max_{x\in\mathbb R^d}\dim(G\cdot x) = m(d-c)$, the claim follows.
    \end{proof}

    \subsubsection{Reduction to linear operator injectivity}
    The following core claim establishes that the bad templates in $R_3$ result in a failure of injectivity for one of finitely many linear operators determined by the templates. The proof is intricate and therefore divided into six steps for clarity. Furthermore, the proof relies on the geometric characterization of regular points provided in \cref{lem.regular char}.
    \begin{claim}
    \label{claim.R3E intersection}
    Define    
    \begin{equation}
        \label{eq.E definition}
        E:= \bigcap_{\{h_l\}_{l=1}^m\in (G_x)^m}
        \bigcap_{\substack{I\subseteq \{1,\dots,m\}\\
        |I|=\left\lceil\frac{m}{\chi(G)}\right\rceil}}\big\{ \{z_i\}_{i=1}^m\in N_x^m: \text{$\{\langle h_iz_i,\cdot\rangle|_{N_x}\}_{i\in I}$ is injective}\big\}.
    \end{equation}
        Then $R_3 \cap E = \varnothing$. 
    \end{claim}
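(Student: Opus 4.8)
The plan is to argue by contradiction: assume $\{z_i\}_{i=1}^m\in R_3\cap E$, so the templates lie in $V_x$, the bank $\Phi$ fails to be locally lower Lipschitz at $x$, and for every $\{h_l\}_{l=1}^m\in(G_x)^m$ and every $I\subseteq\{1,\dots,m\}$ with $|I|=\lceil m/\chi(G)\rceil$ the family $\{\langle h_iz_i,\cdot\rangle|_{N_x}\}_{i\in I}$ is injective. The first task is to record the local picture at $x$. Since $x\in R(G)$ and $z_i\in V_x$, \cref{lem.regular char} gives $x\in V^{loc}_{z_i}$; since $z_i\in U_x$, \cref{lem.UV facts Gx char} together with polarization (as $[z_i]$ lies on a sphere) identifies $\Gamma_i:=\arg\min_{q\in[z_i]}\|q-x\|=\arg\max_{q\in[z_i]}\langle q,x\rangle$ with the orbit $G_x\cdot z_i$; arguing as in the proof of (b)$\Rightarrow$(a) of \cref{lem.regular char}, the condition $x\in V^{loc}_{z_i}$ forces $\Gamma_i$ to be discrete, hence (being compact) finite, and by \cref{claim.connected comp stab agree} together with \cref{prop.stab subset} we have $z_i\in R(G)$ and $G_{z_i}\le G_x$, whence $|\Gamma_i|=[G_x:G_{z_i}]\le\chi(G)$; moreover $\Gamma_i\subseteq V_x\subseteq N_x$ by \cref{lem.UV facts equivariance} and \cref{lem.UV facts intersection V open in normal}. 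Finally, applying \cref{prop.daduk proposition} at each $w\in\Gamma_i$ to the embedded submanifold $[z_i]$, there is a neighborhood of $x$ on which $\llangle[\cdot],[z_i]\rrangle$ agrees with $\max_{w\in\Gamma_i}g_{i,w}$, where $g_{i,w}(q):=\langle q,v_{i,w}(q)\rangle$, $v_{i,w}$ is a smooth local nearest-point branch of $[z_i]$ with $v_{i,w}(x)=w$, and, using $q\in N_{v_{i,w}(q)}$ from \cref{prop.daduk argmax in normal space}, one computes $g_{i,w}(x)=\langle x,z_i\rangle$ and $\nabla g_{i,w}(x)=w$.

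The second task is to distill a single limiting direction from the failure. That failure produces $[x_j]\ne[y_j]$ with $[x_j],[y_j]\to[x]$ and $\|\Phi([x_j])-\Phi([y_j])\|/d([x_j],[y_j])\to0$. Pick representatives $p_j\in[x_j]$ with $p_j\to x$ and let $q_j$ be a nearest point of $[y_j]$ to $p_j$; then $q_j\to x$, $t_j:=\|p_j-q_j\|\ge d([x_j],[y_j])$, so $(\llangle[p_j],[z_i]\rrangle-\llangle[q_j],[z_i]\rrangle)/t_j\to0$ for every $i$, and $p_j-q_j\in N_{q_j}$. Passing to a subsequence, $e_j:=(p_j-q_j)/t_j\to e$ with $\|e\|=1$, and since $e_j\perp\mathfrak g\cdot q_j$ we get $e\in N_x$. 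After a further subsequence, fix for each $i$ the active branch $w^i\in\Gamma_i$ at $p_j$ and $u^i\in\Gamma_i$ at $q_j$. Comparing $\llangle[p_j],[z_i]\rrangle-\llangle[q_j],[z_i]\rrangle$ with $g_{i,w^i}(p_j)-g_{i,w^i}(q_j)$ and with $g_{i,u^i}(p_j)-g_{i,u^i}(q_j)$ via the mean value theorem and $\nabla g_{i,w}(x)=w$, one obtains $\langle u^i,e\rangle\le0\le\langle w^i,e\rangle$; and, crucially, whenever the active branches agree ($w^i=u^i$) the same estimate collapses to $\langle w^i,e\rangle=0$, i.e.\ $\langle h_iz_i,e\rangle=0$ for the unique $h_i\in G_x$ with $h_iz_i=w^i$.

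The remaining, and hardest, task is combinatorial: to produce a set $I$ with $|I|=\lceil m/\chi(G)\rceil$ and elements $h_i\in G_x$ ($i\in I$) with $\langle h_iz_i,e\rangle=0$ — for then $e$ is a nonzero vector of $N_x$ lying in $\bigcap_{i\in I}\ker(\langle h_iz_i,\cdot\rangle|_{N_x})$, so this family is not injective, contradicting $\{z_i\}_{i=1}^m\in E$ and establishing $R_3\cap E=\varnothing$. The natural route is to show that, after passing to a further subsequence and, if needed, renormalizing the witness sequences according to the limiting ratio $\rho:=\lim\|p_j-x\|/t_j\in[0,\infty]$, the active branches at $p_j$ and $q_j$ coincide for at least $\lceil m/\chi(G)\rceil$ of the indices $i$. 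The underlying mechanism is that the active branch for template $i$ near $x$ is governed by which Voronoi chamber of the finite configuration $\Gamma_i\subseteq N_x$ contains $p_j$ and $q_j$ (or their rescalings): when $\rho=\infty$ these chambers are cones about the common macroscopic direction $r:=\lim(p_j-x)/\|p_j-x\|$, and $q_j-x$ is asymptotically parallel to $p_j-x$, so agreement is forced except for templates whose configuration $\Gamma_i$ is degenerate in the direction $r$; when $\rho<\infty$ one argues similarly with the rescaled limits $\alpha=\lim(p_j-x)/t_j$ and $\beta=\alpha-e$. Controlling the number of exceptional templates using the bound $|\Gamma_i|\le\chi(G)$ and the combinatorics of the hyperplanes separating the branches of the configurations $\Gamma_i$ is where I expect the real difficulty to lie; this is presumably the content of the multi-step argument the paper alludes to. Combining this with the previous two paragraphs yields the claim.
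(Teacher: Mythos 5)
Your first two paragraphs are essentially sound and match the paper's Steps~1--4: the identification $\Gamma_i=G_xz_i\subseteq V_x$ with $|\Gamma_i|\le\chi(G)$, the local smooth branches $v_{i,w}$ obtained from \cref{lem.regular char} and \cref{prop.daduk proposition} (note you need $x\in V_w^{loc}$ for \emph{each} $w\in\Gamma_i$, not just for $z_i$, but this follows from the same lemma since $\Gamma_i\subseteq V_x$), the envelope computation $\nabla g_{i,w}(x)=w$, and the derivation that matching active branches force $\langle w^i,e\rangle=0$. But the proof stands or falls on the third paragraph, and there you have not proved anything: you explicitly defer the claim that the active branches at $p_j$ and $q_j$ coincide for at least $\lceil m/\chi(G)\rceil$ indices, and the route you sketch (Voronoi chambers of $\Gamma_i$ relative to the macroscopic direction $r$, case split on $\rho$) does not obviously close. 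When $r$ lies on a wall of the chamber decomposition of $\Gamma_i$ — which can happen simultaneously for all $m$ templates — the first-order argument gives only the one-sided inequalities $\langle u^i,e\rangle\le 0\le\langle w^i,e\rangle$, and nothing in your setup converts enough of these into equalities or produces the specific count $\lceil m/\chi(G)\rceil$.

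The paper's mechanism for this step is different and is worth contrasting with yours. It keeps $x_j,y_j\in V_x$ (rather than choosing $q_j$ as the nearest point of $[y_j]$ to $p_j$) and exploits the $G_x$-invariance of the hypotheses: since $D(x_j,hy_j)=D(x_j,y_j)$ and $N_i(hy_j)=hN_i(y_j)$, one is free to replace the entire sequence $(y_j)$ by $(hy_j)$ for a single $h\in G_x$. Because each limit set of nearest points is a nonempty subset of the orbit $G_xz_i$, which has at most $\chi(G)$ elements, the set of $h\in G_x$ that make the limiting nearest-point sets from $x_j$ and $hy_j$ intersect for template $i$ has Haar measure at least $1/\chi(G)$; a pigeonhole/averaging over $G_x$ then produces one $h$ working for at least $\lceil m/\chi(G)\rceil$ templates simultaneously. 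This is exactly where the count $\lceil m/\chi(G)\rceil$ comes from, and it is the translation freedom in $h$ — which your canonical choice of $q_j$ forfeits — that makes it possible. The price the paper pays is that $\|x_j-y_j\|$ need not equal $d([x_j],[y_j])$, so it must separately bound the ratio $\|x_j-y_j\|/d([x_j],[y_j])$ (its Step~5, using the spanning property built into $E$) before concluding $\langle w_i,u\rangle=0$ for a nonzero $u\in N_x$; your normalization avoids that step but at the cost of the pigeonhole. As written, your proposal has a genuine gap at the crux of the argument.
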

    \begin{proof}
    \noindent \textit{Step 1. Establishing a concrete goal.}

    Suppose that there exists $\{z_i\}_{i=1}^m\in R_3\cap E$.
     By definition of $R_3$, there exist sequences $[x_j],[y_j]\to [x]$ such that $[x_j]\neq [y_j]$ and $\lim_{j\to \infty} D(x_j,y_j) = 0$. By $G$-invariance and since $x\in Q_x$, we may assume $x_j,y_j\in V_x\subseteq N_x$ and $x_j,y_j\to x$. Note that these assumptions remain $G_x$-invariant.
     
     To obtain a contradiction, it suffices to show that there exist $I\subseteq \{1,\dots,m\}$, with $|I|\geq \left\lceil\frac{m}{\chi(G)}\right\rceil$, and $u\in N_x$, with $\|u\| \geq 1$, such that after taking subsequences, the following convergence holds for each $i\in I$ and some $w_i\in [z_i]_{G_x}$:
    \begin{equation}
        \label{eq.convergence RG}
        \frac{\llangle[x_j],[z_i]\rrangle-\llangle[y_j],[z_i]\rrangle}{d([x_j],[y_j])} \to \langle w_i, u\rangle.
    \end{equation}
    This would contradict the injectivity of $\{\langle w_i,\cdot\rangle|_{N_x}\}_{i\in I}$, which is guaranteed by the definition of $E$. 

    \vspace{\claimstepvspace}\noindent \textit{Step 2. Finding $w_i$ as limits of nearest points.}

    To the end of establishing \eqref{eq.convergence RG}, define the set of nearest elements in $[z_i]$ to $y\in \mathbb R^d$ by
    \[N_{i}(y) := \{w\in [z_i]: \llangle[z_i],[y]\rrangle = \langle w, y \rangle\}.\]
    In particular, note that $N_i(x) = G_xz_i$ since $z_i\in V_x$. Now, take subsequences so that the limits $\lim_{j\to \infty}N_{i}(x_j)\subseteq G_xz_i$ and $\lim_{j\to \infty}N_{i}(y_j)\subseteq G_xz_i$ exist for each $i$. By the pigeonhole principle, there exists $h\in G_x$ and $I\subseteq \{1,\dots,m\}$ such that $|I|\geq \left\lceil\frac{m}{\chi(G)}\right\rceil$ and $\lim_{j\to \infty}N_{i}(x_j)\cap \lim_{j\to \infty}N_{i}(hy_j) \neq \varnothing$ for each $i\in I$. By $G_x$-invariance of the assumptions on the sequences $x_j$ and $y_j$, we adjust the sequence $(y_j)_{j\in\mathbb N}$ into $(hy_j)_{n\in\mathbb N}$ and define 
    \[L_i:=\lim_{j\to \infty}N_{i}(x_j)\cap \lim_{j\to \infty}N_{i}(y_j) \neq \varnothing,\]
    for each $i\in I$. Take any $w_i\in L_i\subseteq G_xz_i$ for each $i\in I$. We establish \eqref{eq.convergence RG} with these $w_i$.

    \begin{figure}[t]
        \centering
        \includegraphics[scale=1.1]{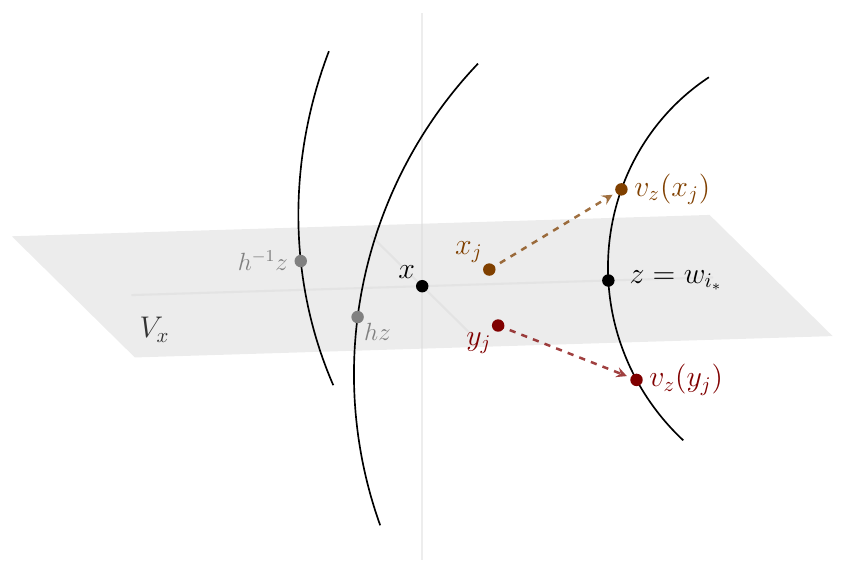}
        \caption{This figure is an aiding illustration for Steps~3~and~4 in the proof of \cref{claim.R3E intersection}. In this plot, we have $G_xz_{i_*} = G_xw_{i_*} = \{z,hz,h^{-1}z\}$. Note that in Step~2, we had already redefined $y_j$ so that there are nearest neighbors of $x_j$ and $y_j$ to $[z_{i_*}]$ (namely, $v_z(x_j)$ and $v_z(y_j)$, respectively) that both lie in a small neighborhood of $z=w_{i_*}$. This allows for analysis to go through in Steps~3~and~4, which this figure aims to visually aid in.} 
        \label{fig:linearanalysis}
      \end{figure}    

    \vspace{\claimstepvspace}\noindent \textit{Step 3. Linearized analysis of the max filter (see \cref{fig:linearanalysis}).}

    Fix arbitrary $i_* \in I$ and put $z := w_{i_*}$. By the definition of $L_{i_*}$, there exist sequences $z_j^x,z_j^y\to z$, where $z^x_j \in N_{i_*}(x_j)$ and $z^y_j \in N_{i_*}(y_j)$ for each $j$.

    Since $z\in G_xz_{i_*} \subseteq V_x$ and $x\in R(G)$, \cref{lem.regular char} entails that $x\in V_z^{loc}$. By definition of $V_z^{loc}$, there exists open neighborhoods $V$ of $x$ and $U$ of $z$ such that
    \[\forall q\in V, \ \ \Big|\arg\sup_{p\in[z]\cap U}\langle p,q\rangle \Big|= 1.\]
    By taking subsequences, we assume that $x_j,y_j\in V$ and $z^x_j,z^y_j \in U$ for each $j$. Furthermore, let $v_z\colon V \to [z]$ be the map such that $v_z(q)$ is the unique element in $\arg\sup_{p\in[z]\cap U}\langle p,q\rangle$. In particular, since $z\in V_x$, we have $v_z(x)=z$.
    
    Note that $z^x_j = v_z(x_j)$, $z^y_j = v_z(y_j)$, and $\|x_j-y_j\|\neq 0$ since $[x_j]\neq [y_j]$ for each $j$. Thus,
    \begin{equation}
        \label{eq.psi multiply divide helper}
        \frac{\llangle[x_j],[z]\rrangle-\llangle[y_j],[z]\rrangle}{d([x_j],[y_j])} = \frac{\|x_j-y_j\|}{d([x_j],[y_j])}\cdot \frac{\langle x_j, v_z(x_j)\rangle-\langle y_j,v_z(y_j)\rangle}{\|x_j-y_j\|}.
        \end{equation}
        In the following step, we work on the right-hand fraction in the product and in the step after, we bound the left-hand fraction in the product.

        \vspace{\claimstepvspace}\noindent \textit{Step 4. Analysis of the right-hand fraction in the right-hand side of \eqref{eq.psi multiply divide helper}.}

        We have that
        \[\begin{aligned}\frac{\langle x_j, v_z(x_j)\rangle-\langle y_j,v_z(y_j)\rangle}{\|x_j-y_j\|} &= \left\langle \frac{x_j-y_j}{\|x_j-y_j\|}, v_z(y_j)\right\rangle\\
            &\qquad\qquad -\frac{\|v_z(x_j)-v_z(y_j)\|}{\|x_j-y_j\|}\left\langle x_j,\frac{v_z(x_j)-v_z(y_j)}{\|v_z(x_j)-v_z(y_j)\|}\right\rangle,\end{aligned}\]
        where we define $\frac{0}{\|0\|} := 0$. By \cref{prop.daduk interval in Omega}, the map $v_z(\cdot)$ is smooth over $V$ and hence locally upper Lipschitz there. Then since $x\in V$, there exists $C>0$ such that after taking subsequences, the following inequality holds for each $j$:
        \[\frac{\|v_z(x_j)-v_z(y_j)\|}{\|x_j-y_j\|} < C.\]
        Moreover, since $[z]$ is a smooth embedded submanifold of $\mathbb R^d$ and $v_z(x_j),v_z(y_j)\to v_z(x) = z$, we get that $\frac{v_z(x_j)-v_z(y_j)}{\|v_z(x_j)-v_z(y_j)\|} \to t\in T_{z}[z]$. Note further that $x\in N_{z}$ since $z$ is a distance minimizer of $x$ to a neighborhood of $z$ in $[z]$ (\cref{prop.daduk argmax in normal space}). By combining the above observations, we obtain
        \[\left|\frac{\|v_z(x_j)-v_z(y_j)\|}{\|x_j-y_j\|}\left\langle x_j,\frac{v_z(x_j)-v_z(y_j)}{\|v_z(x_j)-v_z(y_j)\|}\right\rangle\right|\leq C\left|\left\langle x_j,\frac{v_z(x_j)-v_z(y_j)}{\|v_z(x_j)-v_z(y_j)\|}\right\rangle\right|\to C\left|\langle x,t\rangle\right| = 0.\]
        By taking subsequences, we have $\frac{x_j-y_j}{\|x_j-y_j\|}\to u_0$ for some $u_0\in\mathbb R^d$ with $\|u_0\|=1$. It follows that
        \begin{equation}
            \label{eq.convergence to vz-u0}
            \frac{\langle x_j, v_z(x_j)\rangle-\langle y_j,v_z(y_j)\rangle}{\|x_j-y_j\|} \to \langle v_z(x),u_0\rangle = \langle z,u_0\rangle.
        \end{equation}
        
        \vspace{\claimstepvspace}\noindent \textit{Step 5. Bounding the left-hand fraction in the right-hand side of \eqref{eq.psi multiply divide helper}.}

        We claim that $1\leq \liminf_{n\to \infty}\frac{\|x_j-y_j\|}{d([x_j],[y_j])}$ is upper bounded. To this end, observe that so far, $i_*\in I$ has been arbitrary and $u_0\in N_x$ does not depend on $i_*$. By definition of $E$, it holds that the set $\{w_i\}_{i\in I}$ is spanning. In particular, there exists $i_0\in I$ such that $\langle w_{i_0},u_0\rangle \neq 0$. Now, since the max filtering map $\llangle[\cdot],[w_{i_0}]\rrangle$ is $\|w_{i_0}\|$-Lipschitz, we obtain that
        \[\left|\frac{\llangle[x_j],[w_{i_0}]\rrangle-\llangle[y_j],[w_{i_0}]\rrangle}{d([x_j],[y_j])}\right| \leq \|w_{i_0}\| <\infty.\]
        On the other hand, by~\eqref{eq.convergence to vz-u0} and since $\langle w_{i_0},u_0\rangle\neq 0$, there exists $d > 0$ such that after taking subsequences, we have
        \[\left|\frac{\langle x_j, v_{w_{i_0}}(x_j)\rangle-\langle y_j,v_{w_{i_0}}(y_j)\rangle}{\|x_j-y_j\|} \right|> d.\] 
       Hence by \eqref{eq.psi multiply divide helper}, we get $1\leq \frac{\|x_j-y_j\|}{d([x_j],[y_j])} \leq \frac{\|w_{i_0}\|}{d} < \infty$ for all $j$, which proves the subclaim of this step.
               
       \vspace{\claimstepvspace}\noindent \textit{Step 6. Finishing the proof.}

       Take a further subsequence so that $\frac{\|x_j-y_j\|}{d([x_j],[y_j])}\to  c_0:=\liminf_{n\to \infty}\frac{\|x_j-y_j\|}{d([x_j],[y_j])}\in [1,\infty)$. The desired convergence \eqref{eq.convergence RG} now follows by taking $u := c_0\cdot u_0$ and combining equations \eqref{eq.psi multiply divide helper} and \eqref{eq.convergence to vz-u0}.
    \end{proof}

\subsubsection{Linear operator injectivity analysis}
The proof of \cref{lem.regular local avoidance} is finished by combining the following claim with all preceding ones.

\begin{claim}
    To obtain the bound \eqref{eq.R3}, it suffices to show that
        \begin{equation}
            \label{eq.E bound}
            \dim(N_x^m - E) \leq mc-1-\left(\left\lceil\frac{m}{\chi(G)}\right\rceil - c\right).
        \end{equation}
        Moreover, the bound above holds.
\end{claim}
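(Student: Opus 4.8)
The plan has two parts: the reduction "\eqref{eq.E bound}$\Rightarrow$\eqref{eq.R3}", which is immediate, and the dimension bound \eqref{eq.E bound} itself. For the reduction, note that $V_x\subseteq N_x$ by \cref{lem.UV facts intersection V open in normal}, so $R_3\subseteq V_x^m\subseteq N_x^m$; since \cref{claim.R3E intersection} gives $R_3\cap E=\varnothing$, we get $R_3\subseteq N_x^m-E$ and hence $\dim(R_3)\leq\dim(N_x^m-E)$. Thus \eqref{eq.E bound} forces \eqref{eq.R3} (and $N_x^m-E$ is semialgebraic, being cut out by a first-order formula over the semialgebraic group $G_x$, so its dimension is well defined).

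The one non-routine point is that $G_x^0$ acts trivially on $N_x$ whenever $x\in R(G)$. Indeed, for any $z\in V_x$, \cref{claim.connected comp stab agree} gives $G_z^0=G_x^0$, so $G_x^0\leq G_z$ fixes $z$; since $V_x$ contains a neighborhood of $x$ in $N_x$ (\cref{lem.UV facts intersection V open in normal}), it spans $N_x$, and a linear map fixing a spanning set is the identity, so every element of $G_x^0$ restricts to the identity on $N_x$. Consequently, for $z_i\in N_x$ and $h_i\in G_x$ the vector $h_iz_i$ depends only on the coset $h_iG_x^0$ in the \emph{finite} group $G_x/G_x^0$. This collapses the a priori infinite intersection over $(G_x)^m$ in the definition of $E$ to a finite one: writing $k:=\lceil m/\chi(G)\rceil$ and, for each $I\subseteq\{1,\dots,m\}$ with $|I|=k$,
\[F_I:=\Big\{\{z_i\}_{i\in I}\in N_x^I:\ \exists\,\{h_i\}_{i\in I}\in(G_x)^I\text{ with }\{\langle h_iz_i,\cdot\rangle|_{N_x}\}_{i\in I}\text{ not injective}\Big\},\]
one checks (using that only the $h_i$ with $i\in I$ enter) that $N_x^m-E=\bigcup_I\rho_I^{-1}(F_I)$, where $\rho_I\colon N_x^m\to N_x^I$ is coordinate projection, so $\dim(N_x^m-E)=\max_I\big(\dim(F_I)+(m-k)c\big)$; moreover, by the coset remark, $F_I$ is the \emph{finite} union over $(\bar h_i)_{i\in I}\in(G_x/G_x^0)^I$ of the sets $\{\{z_i\}_{i\in I}:\{\langle\bar h_iz_i,\cdot\rangle|_{N_x}\}_{i\in I}\text{ not injective}\}$.

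It then remains to bound each piece. For fixed coset representatives $\bar h_i\in G_x$, the map $\{z_i\}_{i\in I}\mapsto\{\bar h_iz_i\}_{i\in I}$ is a linear automorphism of $N_x^I$ (each $\bar h_i$ preserves $N_x$ by \cref{prop.Nx Tx Gx invariant} and is orthogonal), so the corresponding set has the same dimension as $\{\{w_i\}_{i\in I}\in N_x^I:\operatorname{span}\{w_i\}_{i\in I}\neq N_x\}$. Since $\dim N_x=c$ for $x\in R(G)$, non-spanning means all $w_i$ lie in a common hyperplane, so this locus is the $\rho_I$-image of $\{(H,\{w_i\}_{i\in I}):H\in\mathrm{Gr}(c-1,c),\ w_i\in H\ \forall i\}$, which has dimension $(c-1)+k(c-1)=(k+1)(c-1)$; \cref{prop.conservation of dimension} then gives $\dim(F_I)\leq(k+1)(c-1)$ (taking the finite maximum over the coset choices). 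Hence
\[\dim(N_x^m-E)\ \leq\ \max_I\big((k+1)(c-1)+(m-k)c\big)\ =\ mc-1-(k-c)\ =\ mc-1-\Big(\Big\lceil\tfrac{m}{\chi(G)}\Big\rceil-c\Big),\]
which is \eqref{eq.E bound}. The main obstacle is exactly the triviality of the $G_x^0$-action on $N_x$: without it the intersection defining $E$ could run over a positive-dimensional family of $h_i$'s and the finite union bound would fail; everything else is the standard hyperplane/determinantal dimension count combined with conservation of dimension.
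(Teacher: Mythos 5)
Your proof is correct, and its skeleton matches the paper's: the reduction is the same containment $R_3\subseteq N_x^m-E$ coming from \cref{claim.R3E intersection}, and the bound \eqref{eq.E bound} comes down to showing that the locus of non-injective tuples in $N_x^{|I|}$ has dimension at most $(|I|+1)(c-1)$ where $|I|=\lceil m/\chi(G)\rceil$. You differ from the paper in two worthwhile ways. First, the paper simply asserts that $N_x^m-E$ is a finite union and then bounds each piece; you supply the justification, namely that for $x\in R(G)$ the identity component $G_x^0$ acts trivially on $N_x$ (via \cref{claim.connected comp stab agree} together with the fact that $V_x$ spans $N_x$ by \cref{lem.UV facts intersection V open in normal}), so the a priori infinite union over $(G_x)^m$ in the complement of \eqref{eq.E definition} collapses to a finite union indexed by $(G_x/G_x^0)^m$. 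This point is genuinely needed --- an infinite union of sets of bounded dimension need not have bounded dimension --- so your write-up is more complete on this step. Second, for the dimension count itself the paper lifts the rank-deficient operators to their singular value decompositions and counts the degrees of freedom of $(U,\Sigma,W)$, whereas you parametrize the non-spanning tuples by the incidence variety over the Grassmannian of hyperplanes of $N_x$ and then apply \cref{prop.conservation of dimension}. Both counts yield $(|I|+1)(c-1)$, which together with the $(m-|I|)c$ unconstrained coordinates gives exactly \eqref{eq.E bound}; your count is arguably the more elementary of the two.
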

\begin{proof}
    By \cref{claim.R3E intersection}, we have that $R_3 \subseteq N_x^m-E$. As such, $\dim(R_3)\leq \dim(N_x^m-E)$, and \eqref{eq.R3} follows from \eqref{eq.E bound}, which we are now left to establish. 

    Since $N_x^m - E$ is a finite union and since the templates $\{z_i\}_{i\notin I}$ are unrestricted in \eqref{eq.E definition}, it suffices to fix $\{h_l\}_{l=1}^m\in (G_x)^m$ and $I\subseteq \{1,\dots,m\}$ with $|I|=\left\lceil\frac{m}{\chi(G)}\right\rceil$ and to show that
    \[\dim(E_1)\leq |I|c-1-\left(|I| - c\right),\]
    where
    \[E_1 :=    \big\{ \{z_i\}_{i\in I}\in N_x^{|I|}: \text{$\{\langle h_iz_i,\cdot\rangle|_{N_x}\}_{i\in I}$ is not injective}\big\}.\]
    Since $N_x$ is $G_x$-invariant (\cref{prop.Nx Tx Gx invariant}) and $\{h_i\}_{i\in I}$ is an isometry of $\mathbb R^I$, it holds that
    \[\dim(E_2) = \dim(\{h_i\}_{i\in I}\cdot E_1) = \dim(E_1),\]
    where 
    \[E_2 = \big\{ \{z_i\}_{i\in I}\in N_x^{|I|}: \text{$\{\langle z_i,\cdot\rangle|_{N_x}\}_{i\in I}$ is not injective}\big\}.\]
    By identifying $N_x$ with $\mathbb R^c$, we lift to the space of singular value decompositions
    \[\begin{aligned} F := &\big\{\big(\{z_i\}_{i\in I},U,\Sigma, W\big)\in  (\mathbb R^c)^{|I|}\times \mathbb R^{c\times (c-1)}\times D_{\geq 0}^{(c-1)\times (c-1)}\times \mathbb R^{|I|\times (c-1)}:\\
        &\qquad\qquad \qquad U^TU = \operatorname{Id}_{c-1} \wedge W^TW = \operatorname{Id}_{c-1}\wedge \{\langle z_i,\cdot \rangle|_{\mathbb R^c}\}_{i\in I} = W\Sigma U^T\big\},
    \end{aligned}\]
    where $D_{\geq 0}^{(c-1)\times (c-1)}\subseteq \mathbb R^{(c-1)\times (c-1)}$ is the cone of diagonal matrices with nonnegative entries. Then $F$ is semialgebraic and $E_2$ is the projection of $F$ onto the first component $\{z_i\}_{i\in I}$. Let $\pi_{\sigma}$ denote the projection onto the other three components $(U,\Sigma,W)$. Then, the fibers of $\pi_\sigma$ are singleton. Hence by \cref{prop.conservation of dimension}, we have $\dim(E_2)\leq \dim(\pi_\sigma(F))$ and it suffices to show that
    \[\dim(\pi_\sigma(F)) = |I|c-1-\left(|I| - c\right).\]
    Observe that
    \[\begin{aligned}\pi_\sigma(F) = \big\{\big(U,\Sigma, W\big)\in & \mathbb R^{c\times (c-1)}\times D_{\geq 0}^{(c-1)\times (c-1)}\times \mathbb R^{|I|\times (c-1)}: U^TU = W^TW = \operatorname{Id}_{c-1}\big\}.
    \end{aligned}\]
    We count dimensions. By the diagonality and orthonormality constraints, it holds that $\Sigma$ has $c-1$ degrees of freedom, $U$ has $c(c-1) - (c-1) - (c-1)(c-2)/2$ degrees of freedom and $W$ has $|I|(c-1)-(c-1)-(c-1)(c-2)/2$ degrees of freedom. The total degrees of freedom are $|I|c - 1 - (|I| - c)$, as desired. This completes the proof of the claim and the lemma.
\end{proof}

\section{Non-differentiability of bilipschitz invariants}
\label{app.nondiff}
In this section, we extend Theorem~21 in~\cite{CahillIM:24} to the case of compact group acting on finite-dimensional real Hilbert spaces. We show that for compact groups, diffentiability of an invariant map at a nonprincipal point forbids it from being lower Lipschitz.

\begin{proposition}
    \label{prop.nondiff}
    Suppose that $G\leq \Oname(d)$ is compact. For any $x\in P(G)^c$ and $G$-invariant map $f\colon \mathbb R^d\to \mathbb R^n$ that is differentiable at $x$, the following statements hold
    \begin{itemize}
        \item[(a)]There exists a unit vector $v\in N_x \cap x^\perp$ such that $Df(x)v=0$.
        \item[(b)]The induced map $f^{\downarrow}\colon \mathbb R^d/G\to \mathbb R^d$ is not lower Lipschitz.
        \item[(c)]If $x\in \mathbb S^{d-1}$, the restriction of the induced map $f^{\downarrow}|_{[\mathbb S^{d-1}]}$ is not lower Lipschitz. 
    \end{itemize}
\end{proposition}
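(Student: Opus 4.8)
The plan is to establish (a) and then realize the failure of lower Lipschitzness in (b) and (c) by feeding explicit sequences of orbits into it. For (a), set $W:=N_x\cap x^\perp$. Since every element of the Lie algebra $\mathfrak g$ is skew-symmetric, $x\perp \mathfrak g\cdot x$, so $x\in N_x$ and $N_x=\mathbb R x\oplus W$ is an orthogonal decomposition which, by \cref{prop.Nx Tx Gx invariant} and because $G_x$ fixes $x$, is $G_x$-invariant. The structural heart of (a) is that $x\notin P(G)$ forces $G_x$ to act nontrivially on $W$ — equivalently, nontrivially on $N_x$. Indeed, if $G_x$ fixed $N_x$ pointwise, then applying the Tubular Neighborhood Theorem (\cref{prop.slice theorem}) at $x$, with slice $S_x=x+B$ for a small ball $B\subseteq N_x$ (geodesics in $\mathbb R^d$ are straight lines), the induced diffeomorphism $G\times_{G_x}S_x\cong G\cdot S_x$ shows that the isotropy group of every point of the open tube $G\cdot S_x$ is conjugate to $G_x$; choosing a principal point in this tube (possible since $P(G)$ is dense) would make $G_x$ a principal isotropy group, i.e.\ $x\in P(G)$, a contradiction. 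Hence $W^{G_x}\subsetneq W$, and I fix a unit vector $v\in W$ orthogonal to $W^{G_x}$. Now $\phi(w):=f(x+w)$, defined on a small ball of $W$, is differentiable at $0$ with $D\phi(0)=Df(x)|_W$, and it is $G_x$-invariant since $\phi(hw)=f(h(x+w))=f(x+w)$ for $h\in G_x$. Averaging against the normalized Haar measure of the compact group $G_x$ — the interchange of the derivative with the integral being legitimate because the first-order remainder of $\phi$ at $0$ is uniformly $o(t)$ over the norm-fixed set $\{hv:h\in G_x\}$ — yields
\[
Df(x)v=D\phi(0)v=\int_{G_x}D\phi(0)(hv)\,dh=D\phi(0)\!\left(\int_{G_x}hv\,dh\right)=0,
\]
since $\int_{G_x}hv\,dh$ is the orthogonal projection of $v$ onto $W^{G_x}$, which vanishes.

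For (b), take $v$ as in (a) and put $x_t:=x+tv$ for small $t>0$. Because $tv\in N_x$ and $x$ trivially minimizes the distance from itself to $[x]$, \cref{prop.daduk interval in Omega} furnishes a neighborhood $U$ of $\{x\}$ such that $x$ is the unique nearest point of $[x]$ to any element of $U\cap N_x$; thus $x_t\in U_x$ for small $t$, and $d([x_t],[x])=\|x_t-x\|=t>0$, so in particular $[x_t]\neq[x]$. Meanwhile $\|f^{\downarrow}([x_t])-f^{\downarrow}([x])\|=\|f(x_t)-f(x)\|=\|t\,Df(x)v+o(t)\|=o(t)$, so the ratio $\|f^{\downarrow}([x_t])-f^{\downarrow}([x])\|/d([x_t],[x])\to 0$ as $t\downarrow 0$ and $f^{\downarrow}$ is not lower Lipschitz. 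For (c), when $x\in\mathbb S^{d-1}$ the segment $x_t$ leaves the sphere, so I instead follow the spherical geodesic $\gamma(t):=(x+tv)/\sqrt{1+t^2}$; since $\langle x,v\rangle=0$ and $\|v\|=1$ we have $\gamma(t)\in\mathbb S^{d-1}$, and since $x,v\in N_x$ and $N_x$ is linear, $\gamma(t)\in N_x$, so exactly as in (b) one gets $\gamma(t)\in U_x$ for small $t$ and $d([\gamma(t)],[x])=\|\gamma(t)-x\|$, which is $\Theta(t)$ as $t\downarrow 0$. Writing $\gamma(t)-x=tv-\tfrac12 t^2 x+O(t^3)$ and using $Df(x)v=0$ gives $Df(x)(\gamma(t)-x)=O(t^2)$, hence $\|f(\gamma(t))-f(x)\|=\|Df(x)(\gamma(t)-x)\|+o(\|\gamma(t)-x\|)=o(t)$. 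Therefore the same ratio, now taken along orbits in $[\mathbb S^{d-1}]$, tends to $0$, proving $f^{\downarrow}|_{[\mathbb S^{d-1}]}$ is not lower Lipschitz.

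The main obstacle is the structural step inside (a): that $x\notin P(G)$ forces the slice representation $G_x\curvearrowright N_x$ to be nontrivial. This is a standard fact about isometric actions (\cite{AlexandrinoB:15}); the self-contained derivation sketched above runs through the Tubular Neighborhood Theorem \cref{prop.slice theorem}, the subtlety being that one must read off from the bundle description $G\times_{G_x}S_x\cong G\cdot S_x$ that all isotropy groups in the tube are conjugate to $G_x$, and then compare with a principal point lying in the tube. (Alternatively one can argue from \cref{prop.stab subset lemma whole}, using that a point $z$ with $G_z\le G_x$ can be $G$-translated into $N_x$ and that for compact groups a subgroup of $G_x$ conjugate to $G_x$ must equal it.) The remaining technical point, the interchange of $\tfrac{d}{dt}$ with the Haar integral over $G_x$, is routine precisely because $G_x$ acts by isometries, so the first-order remainders are controlled uniformly.
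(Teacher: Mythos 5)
Your proof is correct, but it is more self-contained than the paper's and differs in the mechanism for part (a). The paper picks a principal point $p\in P(G)\cap V_x$, extracts a single element $g\in G_x\setminus G_p$, and shows via the $g$-invariant splitting $T_x\oplus N_x$ (\cref{prop.Nx Tx Gx invariant}) that $\ker(g-\operatorname{id})^\perp\cap N_x\neq\{0\}$, deferring the production of $v$ and all of (b)--(c) to the argument of Theorem~21 in~\cite{CahillIM:24}. You instead prove the underlying structural fact directly --- that non-principality of $x$ forces the slice representation $G_x\curvearrowright N_x$ to be nontrivial, via the tube $G\times_{G_x}S_x\cong G\cdot S_x$ of \cref{prop.slice theorem} and density of $P(G)$ --- and then kill $Df(x)v$ by Haar-averaging over all of $G_x$ rather than exploiting one non-fixing element; both steps are sound (the averaging operator restricted to the invariant subspace $W=N_x\cap x^\perp$ is indeed the orthogonal projection onto $W^{G_x}$, and the interchange of derivative and integral is immediate from $D\phi(0)(hv)=D\phi(0)(v)$ plus linearity, so your uniformity remark is not even needed). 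For (b) and (c) you write out explicitly the computations the paper outsources: $x+tv\in U_x$ for small $t$ (justifiable either by \cref{prop.daduk interval in Omega} as you do, or by \cref{lem.UV facts intersection V open in normal} as the paper does) gives $d([x+tv],[x])=t$, and the normalized path $\gamma(t)=(x+tv)/\sqrt{1+t^2}$ stays in $N_x$, hence in $V_x$ for small $t$, with $\|\gamma(t)-x\|=\Theta(t)$ while $\|f(\gamma(t))-f(x)\|=o(t)$. What your route buys is independence from the external reference and a cleaner conceptual statement (nontrivial slice representation at non-principal points); what the paper's route buys is brevity, since it reuses the eigenspace computation of~\cite{CahillIM:24} verbatim.
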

\begin{proof}

    First, we address (a). Since $P(G)$ and $Q_x=G\cdot V_x$ are $G$-invariant, open and dense, we can choose $p\in P(G)\cap V_x$. Then $G_p\leq G_x$ by \cref{prop.stab subset}, and we may pick $g\in G_x\setminus G_p$ since $x$ is not principal. By the same argument as for Theorem~21(a) in~\cite{CahillIM:24}, it suffices to show that $\ker(g-\operatorname{id})^\perp\cap N_x\neq \{0\}$. By \cref{prop.Nx Tx Gx invariant}, we have that $T_x$ and $N_x$ are $g$-invariant. As such, $g$ splits as a block matrix and its $1$-eigenspace $E := \ker(g-\operatorname{id}) = (E\cap T_x)\oplus(E\cap N_x)$ splits into an orthogonal direct sum. It follows that $E^\perp \cap N_x = \{0\}$ if and only if $N_x \subseteq E$, which is false since $p\in V_x \cap E^c\subseteq N_x\cap E^c$ (note that $p\in E^c$ since $g\notin G_p$.)

    For (b) and (c), the same arguments as in the proof of Theorem~21 in~\cite{CahillIM:24} hold as long as we establish that $d([x+tv],[x])=|t|$ for small $t$ and where $v$ is given by (a). This holds since by \cref{lem.UV facts intersection V open in normal}, we have $x+tv \in V_x$ for small $t$.
\end{proof}

\end{document}